\newcommand{\std}{{\mbox{\scriptsize{std}}}}
\newcommand{\pre}{{\mbox{\scriptsize{pre}}}}
\apptocmd{\lim}{\limits}{}{}
\theoremstyle{definition}
\newtheorem{thm}{Theorem}[section]
\newtheorem{theorem}[thm]{Theorem}
\newtheorem{lemma}[thm]{Lemma}
\newtheorem{fact}[thm]{Fact}
\numberwithin{subcase}{case}
\theoremstyle{definition}
\newtheorem{definition}[thm]{Definition}
\newtheorem{corollary}[thm]{Corollary}
\newtheorem{remark}[thm]{Remark}
\newtheorem{example}[thm]{Example}
\def\forkindep{\mathrel{\raise0.2ex\hbox{\ooalign{\hidewidth$\vert$\hidewidth\cr\raise-0.9ex\hbox{$\smile$}}}}}
\def\Ind{\setbox0=\hbox{$x$}\kern\wd0\hbox to 0pt{\hss$\mid$\hss}
	\lower.9\ht0\hbox to 0pt{\hss$\smile$\hss}\kern\wd0}
\def\Notind{\setbox0=\hbox{$x$}\kern\wd0\hbox to 0pt{\mathchardef
		\nn=12854\hss$\nn$\kern1.4\wd0\hss}\hbox to 0pt{\hss$\mid$\hss}\lower.9\ht0
	\hbox to 0pt{\hss$\smile$\hss}\kern\wd0}
\def\phi{\varphi}
\def\lg{{\rm lg}}
\def\<{\langle}
\def\>{\rangle}
\def\blfootnote{\xdef\@thefnmark{}\@footnotetext}
\begin{document}	

	\bibliographystyle{plain}
	
	\author{Douglas Ulrich\!\!\
	\thanks{Partially supported
by Laskowski's NSF grant DMS-1308546.}\\
Department of Mathematics\\University of California, Irvine}
	\title{Pseudosaturation and the Interpretability Orders}
	\date{\today} 
	
	\blfootnote{2010 \emph{Mathematics Subject Classification:} 03C55.}
	
	\maketitle
	
	%To-do: ultrapower reference, def of lambda^+ sat,
	
\begin{abstract}
We streamline treatments of the interpretability orders $\trianglelefteq^*_\kappa$ of Shelah \cite{SH500}, the key new notion being that of pseudosaturation. Extending work of Malliaris and Shelah \cite{ShelahIso} \cite{InterpOrders} \cite{InterpNew}, we classify the interpretability orders on the stable theories. As a further application, we prove that for all countable theories $T_0, T_1$, if $T_1$ is unsupersimple, then $T_0 \trianglelefteq^*_1 T_1$ if and only if $T_0 \trianglelefteq^*_{\aleph_1} T_1$. We thus deduce that simplicity is a dividing line in $\trianglelefteq^*_{\aleph_1}$, and that consistently, $SOP_2$ characterizes maximality in $\trianglelefteq^*_{\aleph_1}$; previously these results were only known for $\trianglelefteq^*_1$ \cite{pEqualsT2} \cite{InterpOrders}. 
\end{abstract}

\section{Introduction}

Keisler proved the following fundamental theorem in \cite{Keisler}; an ultrafilter $\mathcal{U}$ on $\mathcal{P}(\lambda)$ is $\lambda$-regular if there is a family $\mathcal{X} \subseteq \mathcal{U}$ of size $\lambda$, such that every infinite intersection from $\mathcal{X}$ is empty.

\begin{theorem}\label{KeislerOrigSecond}
	Suppose $T$ is a complete countable theory, and $\mathcal{U}$ is a $\lambda$-regular ultrafilter on $\mathcal{P}(\lambda)$, and $M_0, M_1 \models T$. Then $M_0^\lambda/\mathcal{U}$ is $\lambda^+$-saturated if and only if $M_1^\lambda/\mathcal{U}$ is. 

\end{theorem}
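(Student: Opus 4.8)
The plan is to prove both implications at once by proving the contrapositive of one and invoking symmetry in $M_0,M_1$: it suffices to show that if $M_1^\lambda/\mathcal{U}$ is not $\lambda^+$-saturated then $M_0^\lambda/\mathcal{U}$ is not either. First I would reduce to $1$-types: a structure in a countable language is $\lambda^+$-saturated iff it realizes every consistent $1$-type over every subset of size $\le\lambda$ (realize an $n$-type one variable at a time), and such a $1$-type has at most $\lambda$ many formulas. So fix a consistent $1$-type $p(x)=\{\psi_\alpha(x,\bar a^{1}_{\alpha}):\alpha<\lambda\}$ over a subset of $M_1^\lambda/\mathcal{U}$ of size $\le\lambda$ that is omitted in $M_1^\lambda/\mathcal{U}$, where each $\bar a^{1}_{\alpha}$ is represented by a function $t\mapsto \bar a^{1}_{\alpha}(t)\in M_1^{<\omega}$ on $\lambda$. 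The goal is to manufacture a corresponding consistent $1$-type $q(x)$ over a small subset of $M_0^\lambda/\mathcal{U}$ that is omitted there.

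The key mechanism is to use $\lambda$-regularity to make each coordinate "see" only finitely much, and then transfer local configurations across $M_0\equiv M_1$. Fix a regularizing family $\{E_\alpha:\alpha<\lambda\}\subseteq\mathcal{U}$ with every infinite subintersection empty, so that for each $t<\lambda$ the set $u(t):=\{\alpha:t\in E_\alpha\}$ is finite. For each $t$, let $\bar b^{1}(t)$ be the finite tuple obtained by concatenating the $\bar a^{1}_{\alpha}(t)$ for $\alpha\in u(t)$. The following is a finite piece of $\tp^{M_1}(\bar b^{1}(t))$: the record, for each $v\subseteq u(t)$, of whether $M_1\models\exists x\,\bigwedge_{\alpha\in v}\psi_\alpha(x,\bar a^{1}_{\alpha}(t))$. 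Since $M_0\equiv M_1$, there is a tuple $\bar b^{0}(t)$ from $M_0$ with exactly this record; write $\bar b^{0}(t)=(\bar a^{0}_{\alpha}(t))_{\alpha\in u(t)}$ along the same blocks, and set $\bar a^{0}_{\alpha}(t)$ to be an arbitrary tuple of the correct length when $\alpha\notin u(t)$. Put $q(x):=\{\psi_\alpha(x,\bar a^{0}_{\alpha}):\alpha<\lambda\}$, a set of formulas over a subset of $M_0^\lambda/\mathcal{U}$ of size $\le\lambda$.

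Now I would verify, via {\L}o{\'s}'s theorem and the sets $E_\alpha$, that $q$ is consistent and omitted. Consistency: for finite $s\subseteq\lambda$, the set $X^1_s:=\{t:M_1\models\exists x\,\bigwedge_{\alpha\in s}\psi_\alpha(x,\bar a^{1}_{\alpha}(t))\}$ lies in $\mathcal{U}$ (as $p$ is finitely satisfiable), so does $X^1_s\cap\bigcap_{\alpha\in s}E_\alpha$, and on that set $s\subseteq u(t)$, whence the matching record gives $M_0\models\exists x\,\bigwedge_{\alpha\in s}\psi_\alpha(x,\bar a^{0}_{\alpha}(t))$; thus the corresponding fragment of $q$ is satisfiable in $M_0^\lambda/\mathcal{U}$. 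Omission: given any $g\colon\lambda\to M_0$, let $v(t):=\{\alpha\in u(t):M_0\models\psi_\alpha(g(t),\bar a^{0}_{\alpha}(t))\}$; since $g(t)$ witnesses $\exists x\,\bigwedge_{\alpha\in v(t)}\psi_\alpha(x,\bar a^{0}_{\alpha}(t))$ in $M_0$, the matching record gives a witness $g'(t)\in M_1$ to $\exists x\,\bigwedge_{\alpha\in v(t)}\psi_\alpha(x,\bar a^{1}_{\alpha}(t))$. Then for each $\alpha$ we have $\{t:M_1\models\psi_\alpha(g'(t),\bar a^{1}_{\alpha}(t))\}\supseteq E_\alpha\cap\{t:M_0\models\psi_\alpha(g(t),\bar a^{0}_{\alpha}(t))\}$; so if $g$ realized $q$ in $M_0^\lambda/\mathcal{U}$, then each right-hand side would be in $\mathcal{U}$ and $g'$ would realize $p$ in $M_1^\lambda/\mathcal{U}$, contradicting the choice of $p$. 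Hence $q$ is a consistent $1$-type over a set of size $\le\lambda$ omitted in $M_0^\lambda/\mathcal{U}$, as desired.

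I expect the main point requiring care to be the bookkeeping in the omission step: $g'$ dominates $g$ only on the formulas active at $t$, namely those indexed by $u(t)$, and one must notice that for each fixed $\alpha$ the coordinates where domination could fail lie in $\lambda\setminus E_\alpha\notin\mathcal{U}$, which is exactly why intersecting with $E_\alpha$ rescues the inclusion above. The conceptual content, which is what makes the theorem true, is simply that realizability of a $1$-type in a regular ultrapower is determined by the first-order theory together with the coordinate-wise "local" quantifier data of the parameters, and that this data transfers between elementarily equivalent models; the reductions to $1$-types and to $\lambda$ many formulas, and the existence of the regularizing family, are routine.
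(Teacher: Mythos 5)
Your proof is correct; it is essentially the classical direct argument for Keisler's theorem, organized cleanly around a regularizing family. I checked the omission step carefully: the inclusion $\{t:M_1\models\psi_\alpha(g'(t),\bar a^1_\alpha(t))\}\supseteq E_\alpha\cap\{t:M_0\models\psi_\alpha(g(t),\bar a^0_\alpha(t))\}$ holds for exactly the reason you flag, and the ``arbitrary'' choices for $\bar a^0_\alpha(t)$ when $\alpha\notin u(t)$ are harmless because $\lambda\setminus E_\alpha\notin\mathcal U$.

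The route you take differs from the paper's, and the difference is precisely the paper's point. You work directly inside the ultrapower, bundling two steps into one: (i) regularity localizes the type's data to a finite record at each coordinate $t$, and (ii) $M_0\equiv M_1$ lets you transfer that local record between the two models. The paper instead factors this into Theorem~\ref{RegularUltsThm1} (regularity implies every subset of $V^\lambda/\mathcal U$ of size at most $\lambda$ is pseudofinite in the generalized ultrapower $\hat V=V^\lambda/\mathcal U$) and Theorem~\ref{ModKeislerOrder} (for \emph{any} $\omega$-nonstandard $\mathbf j:V\preceq\hat V$, if $p(x)$ is a pseudofinite type over $\mathbf j_{\std}(M_0)$, one can realize it by transferring across a $\hat V$-internal bijection $\hat f$ to a pseudofinite type over $\mathbf j_{\std}(M_1)$ and back). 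Your finite record at coordinate $t$ is the concrete shadow of the paper's internal bijection $\hat f:\hat\Delta(x)\to\hat\Gamma(x)$ with properties (I), (II); your set $u(t)$ is exactly what makes $X=\{\bar a^1_\alpha/\mathcal U:\alpha<\lambda\}$ pseudofinite. What your argument buys is directness and self-containment. What the paper's abstraction buys is that Theorem~\ref{ModKeislerOrder} applies to arbitrary $\omega$-nonstandard elementary extensions $\hat V$, not only ultrapowers, which is what makes it usable for the interpretability orders $\trianglelefteq^*_\kappa$ where one cannot assume $\hat V$ is an ultrapower; the regularity hypothesis is then isolated to the step converting ``small'' to ``pseudofinite.'' In short: same mechanism, but you prove it at the concrete ultrapower level and recover only Keisler's original theorem, while the paper proves it at the level of $\mathbf j:V\preceq\hat V$ and recovers it for all the generalized ultrapowers it later needs.
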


Motivated by this theorem, Keisler investigated the following  pre-ordering $\trianglelefteq$ on complete first-order theories; $\trianglelefteq$ is now called Keisler's order.

\begin{definition}
	Suppose $\mathcal{U}$ is a $\lambda$-regular ultrafilter on $\mathcal{P}(\lambda)$. Then say that $\mathcal{U}$ $\lambda^+$-saturates $T$ if for some or every $M \models T$, $M^\lambda/\mathcal{U}$ is $\lambda^+$-saturated.

	Given complete countable theories $T_0, T_1$, say that $T_0 \trianglelefteq_\lambda T_1$ if whenever $\mathcal{U}$ is a $\lambda$-regular ultrafilter on $\mathcal{P}(\lambda)$, if $\mathcal{U}$ $\lambda^+$-saturates $T_1$ then $\mathcal{U}$ $\lambda^+$-saturates $T_0$. Say that $T_0 \trianglelefteq T_1$ if $T_0 \trianglelefteq_\lambda T_1$ for all $\lambda$.

\end{definition}

When proving positive reductions $T_0 \trianglelefteq T_1$ in Keisler's order, the ultrapower context is often just notational baggage. The interpetability orders $\trianglelefteq^*_{\lambda \kappa}$ are abstractions of Keisler's order  which eliminate this baggage; they were introduced by Shelah \cite{SH500}. As a convenient piece of notation (following \cite{InterpOrders}), we say that every structure $M$ is $1$-saturated.

\begin{definition}
	Suppose $T, T_*$ are complete countable theories, and $\mathfrak{C}_*$ is the monster model of $T_*$. Then an interpretation of $T$ in $T_*$ is given by some definable subset $X$ of $\mathbf{C}_*^n$, and for each $m$-ary relation symbol $R \in \mathcal{L}_T$, an $m$-ary definable subset $R_* \subseteq X^m$, and for each $m$-ary function symbol $f \in \mathcal{L}_T$, an $m$-ary definable function $f_*: X^m \to X$, such that $(X, \ldots) \models T$. Given $M_* \models T_*$ we always get an interpreted model $M \models T$. We depict interpretations as functions $I: M_* \mapsto M$. 
	
	Suppose $\kappa$ is an infinite cardinal or $1$, and $\lambda \geq \aleph_0$. Suppose $T_0, T_1$ are complete countable theories. Then say that $T_0 \trianglelefteq^*_{\lambda \kappa} T_1$ if there is a complete countable theory $T_*$ and interpretations $I_0, I_1$ of $M_*$ in $T_*$, such that for all $\kappa$-saturated $M_* \models T_*$, if we let $M_i = I_i(M_*)$ be the interpreted model of $T_i$ in $M_*$, then: if $M_1$ is $\lambda^+$-saturated, then so is $M_0$. Say that $T_0 \trianglelefteq^*_{ \kappa} T_1$ if $T_0 \trianglelefteq^*_{\lambda \kappa} T_1$ for all infinite $\lambda$.\footnote{We follow the indexing system of \cite{SH500}, where these orders were first introduced. In later papers, e.g. in the recent \cite{InterpOrders}, $\lambda^+$-saturation is replaced by $\lambda$-saturation, and $T_0 \trianglelefteq^*_{\kappa} T_1$ is defined to mean $T_0 \trianglelefteq^*_{\lambda \kappa} T_1$ for sufficiently large regular $\lambda$. Under this alternative definition, it is no longer necessarily true that $\trianglelefteq^*_{\aleph_1} \subseteq \trianglelefteq$. 
		
	In any case, conjecturally the two systems give the same ordering.}
	
\end{definition} 

Clearly, if $\kappa \leq \kappa'$ then $\trianglelefteq^*_{\lambda \kappa} \subseteq \trianglelefteq^*_{\lambda \kappa'}$ for all $\lambda$, and $\trianglelefteq^*_{\kappa} \subseteq \trianglelefteq^*_{\kappa'}$. It is consistent with what we know that $\trianglelefteq = \trianglelefteq^*_\kappa$ for every $\kappa \geq \aleph_1$. Malliaris and Shelah show in \cite{InterpOrders} that $\trianglelefteq^*_1 \subsetneq \trianglelefteq^*_{\aleph_1}$, and in the present work we show in fact that $\trianglelefteq^*_1 \subsetneq \trianglelefteq^*_{\aleph_0} \subsetneq \trianglelefteq^*_{\aleph_1}$.

Shelah observed in \cite{SH500} that $\trianglelefteq^*_{\lambda \aleph_1} \subseteq \trianglelefteq_{\lambda}$ for all $\lambda$, and hence $\trianglelefteq^*_{\aleph_1} \subseteq \trianglelefteq$; this is not hard to see. Indeed, suppose $T_0 \trianglelefteq^*_{\lambda \aleph_1} T_1$, via interpretations $I_0$ and $I_1$ of $T_0$ and $T_1$ in $T_*$. Let $\mathcal{U}$ be a $\lambda$-regular ultrafilter on $\mathcal{P}(\lambda)$. Let $M_* \models T_*$ be given and write $M_i = I_i(M_*)$. Then for each $i < 2$, $I_i(M_*^\lambda/\mathcal{U}) \cong M_i^\lambda/\mathcal{U}$. Hence, if $M_1^\lambda/\mathcal{U}$ is $\lambda^+$-saturated, then so is $M_0^\lambda/\mathcal{U}$. In other words, if $\mathcal{U}$ $\lambda^+$-saturates $T_1$, then $\mathcal{U}$ $\lambda^+$-saturates $T_0$. Thus $T_0 \trianglelefteq_\lambda T_1$.

 In Section \ref{SurveyInterpOrderSec}, we first restrict attention to the only interpretation that we need, namely set theory. Suppose $V$ is a transitive model of $ZFC^-$ ($ZFC$ without powerset), and $M \in V$ is an $\mathcal{L}$-structure. Given an elementary embedding $\mathbf{j}: V \preceq \hat{V}$, let $\mathbf{j}_{\std}(M)$ be the natural $\mathcal{L}$-structure with domain $\mathbf{j}[M]$. Say that $\hat{V} \models ZFC^-$ is $\omega$-nonstandard if it contains nonstandard natural numbers.
 
 The following lemma is straightforward:
 
 \begin{lemma}\label{SatAndPseudo0First}
 	Suppose $\lambda$ is an infinite cardinal, $\kappa$ is an infinite cardinal or $1$, and $T_0, T_1$ are complete countable theories. Then the following are equivalent:
 	\begin{itemize}
 		\item[(A)] $T_0 \trianglelefteq^*_{\lambda \kappa} T_1$;
 		\item[(B)] There is some countable transitive $V \models ZFC^-$ with $T_0, T_1 \in V$, and some $M_i \models T_i$ both in $V$, such that whenever $\mathbf{j}:V \preceq \hat{V}$, if $\hat{V}$ is $\kappa$-saturated and $\omega$-nonstandard, and if $\mathbf{j}_{\std}(M_1)$ is $\lambda^+$-saturated, then $\mathbf{j}_{\std}(M_0)$ is $\lambda^+$-saturated.
 	\end{itemize}
 \end{lemma}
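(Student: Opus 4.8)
The plan is to prove the two implications separately, the key point being that in (A) one can always take $T_*$ to be a set theory (the elementary diagram of $V$) with $I_0,I_1$ the evident ``decoding'' interpretations.

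\emph{(B) $\Rightarrow$ (A).} Given $V,M_0,M_1$ as in (B), I would let $\mathcal{L}_{T_*}$ consist of $\in$ together with a constant $c_a$ for each $a\in V$ (a countable language, since $V$ is countable) and set $T_*:=\Eldiag(V,\in,(c_a)_{a\in V})$, a complete countable theory. Let $I_i$ be the interpretation of $T_i$ in $T_*$ sending $N\models T_*$ to the $\mathcal{L}_{T_i}$-structure decoded from $c_{M_i}^N$: its universe is $\{b\in N: N\models b\in\dom(c_{M_i})\}$ and each symbol of $\mathcal{L}_{T_i}$ is interpreted by the relation/function $N$ believes $c_{M_i}$ carries. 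This genuinely interprets $T_i$: for each standard $\sigma\in T_i$ the $\mathcal{L}_{T_*}$-sentence ``$I_i(-)\models\sigma$'' holds in the canonical model $V$ (there $I_i$ returns $M_i$, and $M_i\models\sigma$), hence lies in $T_*$, hence holds in every model of $T_*$, so $I_i(N)\models T_i$. Now fix a $\kappa$-saturated $N\models T_*$. When $\kappa\ge\aleph_0$, $N$ is automatically $\omega$-nonstandard (an $\aleph_0$-saturated model of $\ZFC^-$ realizes the type $\{v\in\omega\}\cup\{v\ne\underline n:n<\omega\}$), and when $N$ is $\omega$-nonstandard I would put $\hat V:=N\restriction\{\in\}$ and $\mathbf j(a):=c_a^N$. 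Since $N\models\Eldiag(V)$, $\mathbf j:V\preceq\hat V$ is elementary; $\hat V$ is $\kappa$-saturated (a reduct of a $\kappa$-saturated structure) and $\omega$-nonstandard; and by construction $\mathbf j_{\std}(M_i)$ is precisely the structure decoded from $c_{M_i}^N$, namely $I_i(N)$. Applying (B) to $\mathbf j$ then gives: $I_1(N)$ $\lambda^+$-saturated $\Rightarrow$ $I_0(N)$ $\lambda^+$-saturated. Finally, if $N$ is $\omega$-standard (possible only when $\kappa=1$), then since $T_*$ proves $\dom(c_{M_i})$ is countable, $\omega$-standardness of $N$ forces $I_i(N)$ to be genuinely countable, so $I_1(N)$ is a countably infinite structure and the implication is vacuous. (We assume throughout that $T_0,T_1$ have infinite models; the remaining case is trivial.)

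\emph{(A) $\Rightarrow$ (B).} Given $T_*,I_0,I_1$ witnessing $T_0\trianglelefteq^*_{\lambda\kappa}T_1$, I would take $V$ to be any countable transitive model of $\ZFC^-$ with $T_0,T_1,T_*,I_0,I_1\in V$, choose inside $V$ some $M_*\models T_*$, and set $M_i:=I_i(M_*)$ as computed in $V$; then $M_0,M_1\in V$ and (by correctness of $V$) $M_i\models T_i$. Given $\mathbf j:V\preceq\hat V$ with $\hat V$ $\kappa$-saturated and $\omega$-nonstandard, let $\tilde M_*:=\mathbf j_{\std}(M_*)$ be the structure decoded from $\mathbf j(M_*)\in\hat V$. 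I would then verify, each by an induction on standard formulas: (i) internal satisfaction of a standard formula by $\mathbf j(M_*)$ agrees with external satisfaction by $\tilde M_*$ --- and since $\mathbf j(M_*)$ internally satisfies each standard $\sigma\in T_*$ (as $V\models$``$M_*\models\sigma$''), this yields $\tilde M_*\models T_*$; (ii) decoding commutes with the standard-formula-defined operation $I_i$, so from $\hat V\models$``$I_i(\mathbf j(M_*))=\mathbf j(M_i)$'' we obtain $\mathbf j_{\std}(M_i)=I_i(\tilde M_*)$; (iii) $\tilde M_*$ is $\kappa$-saturated. Granting these, $\tilde M_*$ is a $\kappa$-saturated model of $T_*$, so $T_0\trianglelefteq^*_{\lambda\kappa}T_1$ gives $I_1(\tilde M_*)$ $\lambda^+$-saturated $\Rightarrow$ $I_0(\tilde M_*)$ $\lambda^+$-saturated, i.e.\ $\mathbf j_{\std}(M_1)$ $\lambda^+$-saturated $\Rightarrow$ $\mathbf j_{\std}(M_0)$ $\lambda^+$-saturated, which is exactly (B).

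I expect the substantive step to be (iii) --- this is the ``pseudosaturation'' phenomenon and the only real obstacle; (i) and (ii) are routine but pervasive bookkeeping about internal vs.\ external satisfaction of standard formulas. For (iii): let $A\subseteq\tilde M_*$ with $|A|<\kappa$ and let $p(x)$ be a type over $A$ (a set of standard $\mathcal{L}_{T_*}$-formulas with parameters in $A$) finitely satisfiable in $\tilde M_*$. Inside $\hat V$, form the set of formulas $q(y):=\{y\in\dom(\mathbf j(M_*))\}\cup\{\,\text{``}\mathbf j(M_*)\models\varphi(y,\bar a)\text{''}:\varphi(x,\bar a)\in p\,\}$. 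Because each $\varphi$ is a fixed standard formula, each member of $q$ is a bona fide $\mathcal{L}_{\hat V}$-formula with parameters in $\{\mathbf j(M_*)\}\cup A$, a set of size $<\kappa$; and by (i), finite satisfiability of $p$ in $\tilde M_*$ translates into finite satisfiability of $q$ in $\hat V$. Hence $\kappa$-saturation of $\hat V$ produces $b\in\hat V$ realizing $q$, and then $b\in\tilde M_*$ and $b$ realizes $p$.
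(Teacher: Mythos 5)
Your proof takes essentially the same route as the paper's (for (B)$\Rightarrow$(A), take $T_*$ to be the elementary diagram of $V$ with the natural decoding interpretations; for (A)$\Rightarrow$(B), put $T_*, M_*, I_0, I_1$ into a countable transitive $V$), but you spell out considerably more detail, in particular the verification in step (iii) that $\mathbf{j}_{\std}(M_*)$ is $\kappa$-saturated and the explicit treatment of the $\omega$-standard case when $\kappa=1$, neither of which the paper's two-line proof addresses. One small gap in the latter: you assert that $T_*$ proves $\dom(c_{M_i})$ is countable, i.e.\ that $V\models$ ``$M_i$ is countable.'' This need not hold for the $M_i$ supplied by (B) --- a countable transitive $V\models ZFC^-$ can believe some of its elements are uncountable, in which case an $\omega$-standard $N\models T_*$ could decode an uncountable $I_1(N)$ and the implication is no longer vacuous. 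The fix is easy: replace $V$ by a countable transitive $V'\models ZFC^-$ with $V\in V'$ (so that the external bijections $\omega\to\dom(M_i)$ lie in $V'$ and $V'\models$ ``$M_i$ countable''), and check that hypothesis (B) for $(V,M_0,M_1)$ transfers to $(V',M_0,M_1)$ by restricting any $\mathbf{j}':V'\preceq\hat{V}'$ to the ``internal $V$'' of $\hat{V}'$. With that patch, the argument is correct and, if anything, more complete than the paper's.
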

 
 This is reminiscent of Keisler's order, if we view $\mathbf{j}_{\std}$ as a generalized ultrapower. However, the choices of $M_i \in V$ matter and are frequently delicate. Theorem~\ref{KeislerOrigSecond}, on the other hand, shows that if $\hat{V}$ is of the form $V^\lambda/\mathcal{U}$ for some $\lambda$-regular ultrafilter $\mathcal{U}$ on $\mathcal{P}(\lambda)$, then the choices of $M_0, M_1$ do not matter. This is because $\mathbf{j}_{\std}(M_i) \cong M_i^\lambda/\mathcal{U}$, where $\mathbf{j}: V \preceq \hat{V}$ is the {\L}o{\'s} embedding. One of our key observations is that this can be recaptured in a very general setting: the $\lambda$-regularity of $\mathcal{U}$ implies that every subset of $\hat{V}$ of size at most $\lambda$ is contained in some $\hat{X} \in \hat{V}$ which is finite in the sense of $\hat{V}$. This in and of itself is enough to imply that the choice of $M_i \models T_i$ does not matter. 
 
	In more detail, suppose $V \models ZFC^-$ is transitive and $\mathbf{j}: V \preceq \hat{V}$. Then say that $X \subseteq \hat{V}$ is pseudofinite if there is some $\hat{X} \in \hat{V}$ finite in the sense of $\hat{V}$, such that $X \subseteq \hat{X}$ (by this we mean that for all $x \in X$, $x \hat{\in} \hat{X}$, where $\hat{\in}$ is the membership relation of $\hat{V}$). If $M$ is a structure in $V$, then say that $\mathbf{j}_{\std}(M)$ is $\kappa$-pseudosaturated if for every pseudofinite $A \subseteq \mathbf{j}_{\std}(M)$ of cardinality less than $\kappa$, every type over $A$ is realized in $\mathbf{j}_{\std}(M)$. We can then prove the following theorem; this gives a new proof of Theorem~\ref{KeislerOrigSecond}(A).
 
 \begin{theorem}\label{ModKeislerOrderFirst}
 	Suppose $V \models ZFC^-$ is transitive and $\mathbf{j}: V \preceq \hat{V}$ is $\omega$-nonstandard, and $T \in V$ is a complete countable theory, and $M_0, M_1 \in V$ are two models of $T$. Suppose $\kappa$ is an uncountable cardinal. Then $\mathbf{j}_{\std}(M_0)$ is $\kappa$-pseudosaturated if and only if $\mathbf{j}_{\std}(M_1)$ is.
 	
 	Hence, if every subset of $\hat{V}$ of size less than $\kappa$ is pseudofinite (for example if $\hat{V} = V^\lambda/\mathcal{U}$ for some $\lambda$-regular ultrafilter $\mathcal{U}$ on $\mathcal{P}(\lambda)$), then $\mathbf{j}_{\std}(M_0)$ is $\lambda^+$-saturated if and only if $\mathbf{j}_{\std}(M_1)$ is.
 \end{theorem}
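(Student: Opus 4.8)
The plan is to establish the first assertion, the one about $\kappa$-pseudosaturation; the last sentence will then fall out. By symmetry it is enough to show that if $\mathbf{j}_{\std}(M_1)$ is $\kappa$-pseudosaturated then so is $\mathbf{j}_{\std}(M_0)$, so I would fix a pseudofinite $A \subseteq \mathbf{j}_{\std}(M_0)$ with $|A| < \kappa$ and a complete type $p(x)$ over $A$, and realize $p$ in $\mathbf{j}_{\std}(M_0)$. First I would pick $\hat{X} \in \hat{V}$, finite in the sense of $\hat{V}$, with $A \subseteq \hat{X}$ and $\hat{V} \models \hat{X} \subseteq \mathbf{j}(M_0)$ (intersect any witnessing $\hat{X}$ with $\mathbf{j}(M_0)$ inside $\hat{V}$).

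The next step is to observe that, apart from its external indexing, the realization problem is internal to $\hat{V}$. Enumerate $p$ as $\langle \varphi_\xi(x, \bar{a}_\xi) : \xi < \mu \rangle$ with $\mu < \kappa$, each $\varphi_\xi \in \mathcal{L}$ and each $\bar{a}_\xi$ a finite tuple from $A$. The code of $\varphi_\xi$ is a standard natural number and $\bar{a}_\xi$ is a tuple from $\hat{V}$, so each $\varphi_\xi(x, \bar{a}_\xi)$ is an element of $\hat{V}$, read by $\hat{V}$ as a formula over the finite set $\hat{X}$; thus $p$ is an external $\mu$-sequence of such internal formulas. As $p$ is finitely satisfiable in $\mathbf{j}_{\std}(M_0)$ and each sentence $\exists x \bigwedge_{\xi \in w} \varphi_\xi(x, \bar{a}_\xi)$ ($w \subseteq \mu$ finite) is first-order over $\mathbf{j}(M_0)$ with parameters from $\hat{X}$, we get $\hat{V} \models \mathbf{j}(M_0) \models \exists x \bigwedge_{\xi \in w} \varphi_\xi(x, \bar{a}_\xi)$ for every finite $w$. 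So realizing $p$ in $\mathbf{j}_{\std}(M_0)$ means exactly: find $b$ with $\hat{V} \models b \in \mathbf{j}(M_0)$ and $\hat{V} \models \mathbf{j}(M_0) \models \varphi_\xi(b, \bar{a}_\xi)$ for all $\xi$ --- a ``generalized-ultrapower realization problem'' for the pair $(\mathbf{j}(M_0), \hat{X})$ together with the external data of the sequence $\langle \varphi_\xi(x, \bar{a}_\xi) \rangle$ and the record of which of its finite fragments are satisfiable.

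The heart of the matter is model-independence of this problem, which I would get through a common elementary extension. Inside $V$, using completeness of $T$ and taking the model sufficiently saturated and homogeneous there, choose $N \models T$ with elementary embeddings $M_0 \preceq N$ and $M_1 \preceq N$; inside $\hat{V}$ this gives $\mathbf{j}(M_0) \preceq \mathbf{j}(N) \succeq \mathbf{j}(M_1)$ and $\mathbf{j}_{\std}(M_0) \preceq \mathbf{j}_{\std}(N) \succeq \mathbf{j}_{\std}(M_1)$, with $\hat{X}$ a finite subset of $\mathbf{j}(N)$ as well. The key lemma would be: for an elementary pair $M \preceq N$ in $V$, a type over a pseudofinite $A \subseteq \mathbf{j}_{\std}(M)$ of size $< \kappa$ is realized in $\mathbf{j}_{\std}(M)$ if and only if it is realized in $\mathbf{j}_{\std}(N)$. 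Granting this, the theorem follows: the lemma for $M_0 \preceq N$ reduces realizing $p$ in $\mathbf{j}_{\std}(M_0)$ to realizing it in $\mathbf{j}_{\std}(N)$; one then transports $p$ along an $\hat{V}$-elementary matching of $\hat{X}$ with a $\hat{V}$-finite subset of $\mathbf{j}(M_1)$ (this is where the $\kappa$-pseudosaturation of $\mathbf{j}_{\std}(M_1)$ is used), realizes the transported type in $\mathbf{j}_{\std}(M_1) \preceq \mathbf{j}_{\std}(N)$ by hypothesis, and carries the realization back into $\mathbf{j}_{\std}(N)$ and thence into $\mathbf{j}_{\std}(M_0)$; the reverse implication is symmetric. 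The key lemma is where I expect the real work: passing from ``finitely satisfiable as seen inside $\hat{V}$'' to ``realized in $\mathbf{j}(M)$'' should need both that $\hat{V}$ is $\omega$-nonstandard --- so the countably many standard constraints can be absorbed into a single nonstandard one, via overspill applied to the $\mathbf{j}$-image of a definable sequence of finite approximations in $V$ --- and that a pseudofinite parameter set of possibly large external cardinality still contributes only $< \kappa$ actual parameters to the type, keeping the matching with the $M_1$-side within reach of $\kappa$-pseudosaturation. This is the abstract form of Keisler's multiplicative-refinement argument, with ``pseudofinite'' playing the role of ``regular'' and ``$\omega$-nonstandard'' that of ``countably incomplete''.

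For the last sentence: when every subset of $\hat{V}$ of size $< \kappa$ is pseudofinite, ``$\kappa$-pseudosaturated'' and ``$\kappa$-saturated'' coincide, so the first assertion immediately gives that $\mathbf{j}_{\std}(M_0)$ is $\kappa$-saturated iff $\mathbf{j}_{\std}(M_1)$ is. In particular, if $\hat{V} = V^\lambda/\mathcal{U}$ for a $\lambda$-regular $\mathcal{U}$ on $\mathcal{P}(\lambda)$, then a regularizing family $\{X_i : i < \lambda\} \subseteq \mathcal{U}$ shows that any $\lambda$ elements $[g_i]_{\mathcal{U}}$ of $\hat{V}$ lie in the single element $[\, s \mapsto \{ g_i(s) : i < \lambda,\ s \in X_i \} \,]_{\mathcal{U}}$, which $\hat{V}$ sees as finite; hence every subset of size $\leq \lambda$ is pseudofinite, and taking $\kappa = \lambda^+$ recovers Theorem~\ref{KeislerOrigSecond}, since $\mathbf{j}_{\std}(M_i) \cong M_i^\lambda/\mathcal{U}$.
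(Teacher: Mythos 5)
Your key lemma---that for $M \preceq N$ in $V$, a complete type over a pseudofinite subset of $\mathbf{j}_{\std}(M)$ is realized in $\mathbf{j}_{\std}(M)$ iff in $\mathbf{j}_{\std}(N)$---is true, and its proof already contains the essential move: if $b \in \mathbf{j}_{\std}(N)$ realizes $p$, take a $\hat{V}$-finite $\hat{\Delta} \supseteq p$ over $\hat{X} \subseteq \mathbf{j}(M)$ (this uses $\omega$-nonstandardness via Lemma~\ref{PseudoTypeChar}), let $\hat{\Delta}_0 = \{\hat{\phi} \in \hat{\Delta} : \mathbf{j}(N) \models \hat{\phi}(b)\}$, and observe that $\hat{\Delta}_0$ is $\hat{V}$-finite, has all parameters in $\hat{X} \subseteq \mathbf{j}(M)$, is internally satisfiable in $\mathbf{j}(N)$ and hence (by $\mathbf{j}(M) \preceq \mathbf{j}(N)$ inside $\hat{V}$) in $\mathbf{j}(M)$; any witness then realizes $p$. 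That is the paper's mechanism in miniature.

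Where the outline breaks is the assembly: after transporting $p$ to a type $q$ over $\mathbf{j}_{\std}(M_1)$ and realizing $q$ there, you have a realization of $q$ in $\mathbf{j}_{\std}(N)$, not of $p$ --- the parameters are different --- and the key lemma cannot help, because it speaks of a single type over a fixed parameter set, not of transporting a realization between two differently-parametrized types. The intuitive fix, an (internal) automorphism of $\mathbf{j}(N)$ carrying $\hat{X}'$ to $\hat{X}$, would require $\hat{X}$ and $\hat{X}'$ to have the same \emph{internal} type over $\emptyset$ in $\mathbf{j}(N)$; producing such an $\hat{X}' \subseteq \mathbf{j}(M_1)$ is exactly an internal $\aleph_1$-saturation requirement on $\mathbf{j}(M_1)$, which is not available. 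So ``an $\hat{V}$-elementary matching of $\hat{X}$ with a $\hat{V}$-finite subset of $\mathbf{j}(M_1)$'' is not the right object. What the paper constructs instead is an internal bijection $\hat{f}$ between $\hat{V}$-finite \emph{formula sets} $\hat{\Delta}$ over $\mathbf{j}(M_0)$ and $\hat{\Gamma}$ over $\mathbf{j}(M_1)$ that preserves, for every internal subset $\hat{\Delta}_0 \subseteq \hat{\Delta}$, whether $\mathbf{j}(M_0) \models \exists x \bigwedge \hat{\Delta}_0(x)$ --- this is a single first-order condition since $\hat{\Delta}$ is $\hat{V}$-finite, and it follows from $\mathbf{j}(M_0) \equiv \mathbf{j}(M_1)$ inside $\hat{V}$. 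Then the return trip is exactly your key-lemma move applied across $\hat{f}$: realize $\hat{f}[p]$ in $\mathbf{j}_{\std}(M_1)$ by $b$, set $\hat{\Gamma}_0 = \{\hat{\phi} \in \hat{\Gamma} : \mathbf{j}(M_1) \models \hat{\phi}(b)\}$, pull back to $\hat{\Delta}_0 = \hat{f}^{-1}[\hat{\Gamma}_0]$ (a $\hat{V}$-finite set over $\mathbf{j}(M_0)$ that is internally satisfiable there and contains $p$), and take a witness. No common extension $N$ is needed; the amalgam and the key lemma are a detour that, as written, leaves the ``carry back'' unjustified. Your closing paragraph on the $\lambda$-regular ultrafilter case is fine.
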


Inspired by this theorem, we define that $\hat{V}$ $\kappa$-pseudosaturates $T$ if for some or every $M \models T$ with $M \in V$, $\mathbf{j}_{\std}(M)$ is $\kappa$-pseudosaturated. It is then natural to define $T_0 \trianglelefteq^\times_{\lambda \kappa} T_1$ if there is some countable transitive $V \models ZFC^-$ with $T_0, T_1 \in V$, such that whenever $\mathbf{j}: V \preceq \hat{V}$, if $\hat{V}$ is $\kappa$-saturated and $\omega$-nonstandard, and if $\hat{V}$ $\lambda^+$-pseudosaturates $T_1$, then $\hat{V}$ $\lambda^+$-pseudosaturates $T_0$; and we define $T_0 \trianglelefteq^\times_{\kappa} T_1$ if $T_0 \trianglelefteq^\times_{\lambda \kappa} T_1$ for all infinite $\lambda$. 

In Section~\ref{FullBVModelsSec}, we connect the approach of pseudosaturation with the method of full Boolean-valued models from \cite{BVModelsUlrich}.  In Section~\ref{SurveyMaxSOP2Sec}, we follow Malliaris and Shelah's proof that $SOP_2$ theories are maximal in Keisler's order \cite{pEqualsTref} to show that $SOP_2$ theories are maximal in $\trianglelefteq^\times_1$. We remark that in \cite{pEqualsT2}, Malliaris and Shelah prove by a somewhat more involved argument that $SOP_2$ theories are maximal in $\trianglelefteq^*_1$.

 Over the next several sections, we prove that $\trianglelefteq^\times_\kappa \subseteq \trianglelefteq^*_\kappa$ for all $\kappa$ infinite or $1$ (in words: if $T_0 \trianglelefteq^\times_\kappa T_1$, then $T_0 \trianglelefteq^*_\kappa T_1$). Thus, reductions proved in $\trianglelefteq^\times_\kappa$ carry over to $\trianglelefteq^*_\kappa$. We note that many complications encountered when dealing with $\trianglelefteq^*_\kappa$ are absorbed into the proof that $\trianglelefteq^\times_\kappa \subseteq \trianglelefteq^*_\kappa$, and subsequently disappear when working with $\trianglelefteq^\times_\kappa$; for instance, this is the case in our proof that $SOP_2$ theories are maximal in $\trianglelefteq^\times_1$. Many more examples will be given in \cite{InterpOrders2Ulrich}.

%In \cite{BVModelsUlrich}, given a complete Boolean algebra $\mathcal{B}$ and an ultrafilter $\mathcal{U}$ on $\mathcal{B}$ and a complete countable theory $T$, we defined that $\mathcal{U}$ $\lambda^+$-saturates $T$ if for some or any $\lambda^+$-saturated full $\mathcal{B}$-valued model $\mathbf{M}$ of $T$, the specialization $\mathbf{M}/\mathcal{U}$ is $\lambda^+$-saturated (this is compatible with the definition already given above). We also prove that  $T_0 \trianglelefteq_\lambda T_1$ if and only if for every complete Boolean algebra with the $\lambda^+$-c.c. and for every ultrafilter $\mathcal{U}$ on $\mathcal{B}$, if $\mathcal{U}$ $\lambda^+$-saturates $T_1$, then $\mathcal{U}$ $\lambda^+$-saturates $T_0$. We show how to rephrase this in terms which are well-suited to pseudosaturation arguments.

In Section~\ref{InterpOnStableSec0}, we show that if $T_1$ is unstable and if $T_0 \trianglelefteq^\times_\kappa T_1$, then $T_0 \trianglelefteq^*_\kappa T_1$. The key point is to consider a certain expansion $ZFC^-_*$ of $ZFC^-$ which puts the universe in bijection with an initial segment of the natural numbers in a particularly nice way (note that necessarily, models of $ZFC^-_*$ have nonstandard natural numbers). As notation, given $\hat{V} \models ZFC^-$, we say that $\hat{V} \models ZFC^-_{\pre}$ if it can be expanded to a model of $ZFC^-_*$. The key property of models of $ZFC^-_{\pre}$ is: suppose $V \models ZFC^-$ is transitive, and $T \in V$ is a countable unstable theory and $M \models T$ with $M \in V$. If $\mathbf{j}: V \preceq \hat{V} \models ZFC^-_{\pre}$, then for all $\lambda$, $\mathbf{j}_{\std}(M)$ is $\lambda^+$-saturated if and only if it is $\lambda^+$-pseudosaturated.

To finish the proof that $\trianglelefteq^\times_\kappa \subseteq \trianglelefteq^*_\kappa$, we need to understand the behaviors of these orders on the stable theories. Shelah classified Keisler's order on the stable theories in \cite{ShelahIso}, and Malliaris and Shelah extended this classification to $\trianglelefteq^*_{\aleph_1}$ in \cite{InterpOrders}. In work in preparation \cite{InterpNew}, Malliaris and Shelah further extend this classification to $\trianglelefteq^*_1$. They privately communicated a sketch of the proof to me, and with their permission, we adapt their arguments to classify $\trianglelefteq^*_\kappa$ and $\trianglelefteq^\times_\kappa$ on stable theories for every cardinal $\kappa$ (infinite or $1$). 

To begin, in Sections~\ref{InterpOnStableSec1} and ~\ref{InterpOnStableSec2}, we give complete answers to the following questions:

\vspace{1 mm}

\noindent \textbf{Question 1.} Suppose $V \models ZFC^-$ is a transitive model of $ZFC^-$, and $T \in V$ is a countable stable theory, and $\mathbf{j}: V \preceq \hat{V}$ is $\omega$-nonstandard. For which cardinals $\lambda$ does $\hat{V}$ $\lambda$-pseudosaturate $T$?

\vspace{1 mm}

\noindent \textbf{Question 2.} Suppose $V \models ZFC^-$ is a transitive model of $ZFC^-$, and $T \in V$ is a countable stable theory, and $M \models T$ with $M \in V$, and $\mathbf{j}: V \preceq \hat{V} \models ZFC^-_{\pre}$. For which cardinals $\lambda$ is $\mathbf{j}_{\std}(M)$ $\lambda$-saturated?

\vspace{1 mm}

In Section~\ref{KeislerOnStableSection}, we apply the preceding computations to complete our analysis of the interpretability orders $\trianglelefteq^*_\kappa, \trianglelefteq^\times_\kappa, \trianglelefteq$ on the stable theories. We remark that there is a minor gap in Shelah's original argument for the classification of Keisler's order on the stable theories, which we circumvent (see Remark~\ref{GapRemark}).

In Section~\ref{DiscussionSec}, we remark on several consequences of our analysis. In particular, we extend Theorem~\ref{ModKeislerOrderFirst} to the case when $\kappa = \aleph_0$, and we prove the following analogue for saturation:

 \begin{theorem}\label{ModKeislerOrderSatFirst}
	Suppose $V \models ZFC^-$ is transitive and $\mathbf{j}: V \preceq \hat{V} \models ZFC^-_{\pre}$, and $T \in V$ is a complete countable theory, and $M_0, M_1 \in V$ are two models of $T$. Suppose $\kappa$ is an infinite cardinal. Then $\mathbf{j}_{\std}(M_0)$ is $\kappa$-saturated if and only if $\mathbf{j}_{\std}(M_1)$ is.
\end{theorem}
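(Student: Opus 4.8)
The plan is to reduce the statement to two basic cases and then handle each separately. Two preliminary remarks: first, $\hat{V} \models ZFC^-_{\pre}$ is automatically $\omega$-nonstandard, since it expands to a model of $ZFC^-_*$ and such models contain nonstandard natural numbers; thus Theorem~\ref{ModKeislerOrderFirst} applies to $\mathbf{j}: V \preceq \hat{V}$. Second, every finite subset of $\hat{V}$ is pseudofinite (inside $\hat{V}$ one forms the genuinely finite set enumerating its elements), so $\aleph_0$-saturation of $\mathbf{j}_{\std}(M)$ coincides with $\aleph_0$-pseudosaturation. Now, by the standard fact that $\kappa$-saturation of a structure, for $\kappa$ a limit cardinal, is equivalent to $\nu$-saturation for every $\nu < \kappa$, it suffices to prove the theorem when $\kappa = \aleph_0$ and when $\kappa = \mu^+$ for an infinite cardinal $\mu$. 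The case $\kappa = \aleph_0$ is immediate: by the previous remark it is equivalent to the statement that $\mathbf{j}_{\std}(M_0)$ is $\aleph_0$-pseudosaturated iff $\mathbf{j}_{\std}(M_1)$ is, and this is the $\kappa = \aleph_0$ extension of Theorem~\ref{ModKeislerOrderFirst} established in Section~\ref{DiscussionSec}.

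Now fix $\kappa = \mu^+$ with $\mu$ infinite, and split on whether $T$ is stable. Suppose first that $T$ is unstable. The key property of models of $ZFC^-_{\pre}$ recorded in Section~\ref{InterpOnStableSec0} says exactly that, for each $i \in \{0,1\}$, $\mathbf{j}_{\std}(M_i)$ is $\mu^+$-saturated if and only if it is $\mu^+$-pseudosaturated. Since $\hat{V}$ is $\omega$-nonstandard and $\mu^+$ is uncountable, Theorem~\ref{ModKeislerOrderFirst} gives that $\mathbf{j}_{\std}(M_0)$ is $\mu^+$-pseudosaturated if and only if $\mathbf{j}_{\std}(M_1)$ is. Chaining these three equivalences settles the unstable case.

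Suppose finally that $T$ is stable. Here saturation and pseudosaturation of $\mathbf{j}_{\std}(M)$ need not agree, so instead we appeal to the answer to Question~2 obtained in Sections~\ref{InterpOnStableSec1} and \ref{InterpOnStableSec2}: that analysis pins down, for $\mathbf{j}: V \preceq \hat{V} \models ZFC^-_{\pre}$ and a model $M \models T$ in $V$, the exact set of cardinals $\lambda$ for which $\mathbf{j}_{\std}(M)$ is $\lambda$-saturated, and this description is phrased purely in terms of the numerical invariants of the stable theory $T$ and the combinatorics of $\mathbf{j}: V \preceq \hat{V}$ — with no dependence on the particular model $M$. Hence the truth value of ``$\mathbf{j}_{\std}(M)$ is $\kappa$-saturated'' is the same for $M = M_0$ and $M = M_1$, which completes the stable case and the theorem.

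The entire substance of the argument lies in the stable case, that is, in the computation answering Question~2: this is where Shelah's isomorphism-theorem techniques from \cite{ShelahIso}, as refined by Malliaris and Shelah, are brought to bear, and where one must work around the gap discussed in Remark~\ref{GapRemark}. Granting those results together with the $\kappa = \aleph_0$ extension of Theorem~\ref{ModKeislerOrderFirst}, the present theorem is a short deduction, and I expect the only mild subtlety in the write-up to be confirming that the Question~2 characterization is genuinely stated model-independently (which is what makes the stable case work).
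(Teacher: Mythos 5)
Your proposal is correct and follows essentially the same route as the paper's own proof: split on $\kappa=\aleph_0$ (handled by Corollary~\ref{ModKeislerOrder2}), $T$ unstable with $\kappa>\aleph_0$ (handled by chaining Corollary~\ref{SatAndPseudoCor0} with Theorem~\ref{ModKeislerOrderFirst}), and $T$ stable with $\kappa>\aleph_0$ (handled by Theorem~\ref{SaturationLevelsThm}, whose characterization via $\lambda'_{\hat{V}}(T)$ is indeed model-independent). The only nit is a misattribution: the $\aleph_0$ extension of Theorem~\ref{ModKeislerOrderFirst} is Corollary~\ref{ModKeislerOrder2}, established at the end of Section~\ref{InterpOnStableSec1} rather than in Section~\ref{DiscussionSec}, though the introduction's phrasing makes the confusion understandable.
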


Hence, if $\mathbf{j}: V \preceq \hat{V} \models ZFC^-_{\pre}$ and $T \in V$ is a complete countable thoery, we can define that $\hat{V}$ $\kappa$-saturates $T$ if for some or every $M \models T$ with $M \in V$, we have that $\mathbf{j}_{\std}(M)$ is $\kappa$-saturated. We thus obtain the following equivalent formulation of the interpretability orders:

\begin{theorem}\label{SatAndPseudoCorFirst}
	Suppose $T_0, T_1$ are complete countable theories, and suppose $\kappa$ is infinite or $1$. Then the following are equivalent:
	
	\begin{itemize}
		\item[(A)] $T_0 \trianglelefteq^*_{\kappa} T_1$;
		\item[(B)] For all cardinals $\lambda$, there is some countable transitive $V \models ZFC^-$ with $T_0, T_1 \in V$, such that for all $\mathbf{j}: V \preceq \hat{V} \models ZFC^-_{\pre}$ with $\hat{V}$ $\kappa$-saturated, if $\hat{V}$ $\lambda^+$-pseudosaturates  $T_1$, then $\hat{V}$ $\lambda^+$-pseudosaturates $T_0$.
		\item[(C)] For all cardinals $\lambda$, there is some countable transitive $V \models ZFC^-$ with $T_0, T_1 \in V$, such that for all $\mathbf{j}: V \preceq \hat{V} \models ZFC^-_{\pre}$ with $\hat{V}$ $\kappa$-saturated, if $\hat{V}$ $\lambda^+$-saturates  $T_1$, then $\hat{V}$ $\lambda^+$-saturates $T_0$.
	\end{itemize}
\end{theorem}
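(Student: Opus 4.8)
The plan is to prove the cycle (A) $\Rightarrow$ (C) $\Rightarrow$ (B) $\Rightarrow$ (A), using Lemma~\ref{SatAndPseudo0First} as the bridge to the definition of $\trianglelefteq^*_{\lambda\kappa}$ and the model-independence theorems (Theorem~\ref{ModKeislerOrderSatFirst} for saturation, Theorem~\ref{ModKeislerOrderFirst} together with its $\aleph_0$-extension for pseudosaturation) to pass freely between the statements "$\mathbf{j}_{\std}(M_i)$ is $\lambda^+$-(pseudo)saturated'' and "$\hat{V}$ $\lambda^+$-(pseudo)saturates $T_i$''. Throughout, I use that every model of $ZFC^-_{\pre}$ is $\omega$-nonstandard, and that every $\lambda^+$-saturated structure is $\lambda^+$-pseudosaturated (the pseudofinite subsets of size $\le\lambda$ sit inside the class of all subsets of size $\le\lambda$).

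For (A) $\Rightarrow$ (C): fix $\lambda$. Since $T_0 \trianglelefteq^*_{\lambda\kappa} T_1$, Lemma~\ref{SatAndPseudo0First} supplies a countable transitive $V \models ZFC^-$ with $T_0,T_1 \in V$ and models $M_i \models T_i$ in $V$ witnessing clause (B) there. Restricting that clause to $\mathbf{j}: V \preceq \hat{V} \models ZFC^-_{\pre}$ with $\hat{V}$ $\kappa$-saturated (these $\hat{V}$ are $\omega$-nonstandard) gives: if $\mathbf{j}_{\std}(M_1)$ is $\lambda^+$-saturated then so is $\mathbf{j}_{\std}(M_0)$. By Theorem~\ref{ModKeislerOrderSatFirst} the truth of "$\mathbf{j}_{\std}(M_i)$ is $\lambda^+$-saturated'' does not depend on which $M_i \models T_i$ in $V$ we chose, so it is exactly the statement "$\hat{V}$ $\lambda^+$-saturates $T_i$''. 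This is (C) for this $\lambda$, with witness $V$.

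For (C) $\Rightarrow$ (B): fix $\lambda$, let $V$ witness (C), and let $\mathbf{j}:V \preceq \hat{V} \models ZFC^-_{\pre}$ with $\hat{V}$ $\kappa$-saturated and $\hat{V}$ $\lambda^+$-pseudosaturating $T_1$. It suffices to show $\hat{V}$ $\lambda^+$-saturates $T_1$: then (C) gives that $\hat{V}$ $\lambda^+$-saturates $T_0$, hence (by the trivial implication above and Theorem~\ref{ModKeislerOrderSatFirst}) $\hat{V}$ $\lambda^+$-pseudosaturates $T_0$, which is (B). To get "$\lambda^+$-pseudosaturates $T_1$ $\Rightarrow$ $\lambda^+$-saturates $T_1$'' I split on $T_1$. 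If $T_1$ is unstable this is precisely the key property of $ZFC^-_{\pre}$ models recalled in the introduction (combined with the model-independence theorems). If $T_1$ is stable I instead invoke the explicit answers to Questions~1 and~2 from Sections~\ref{InterpOnStableSec1} and~\ref{InterpOnStableSec2}: for a stable theory the set of $\lambda$ for which $\hat{V}$ $\lambda$-pseudosaturates it and the set of $\lambda$ for which $\mathbf{j}_{\std}(M)$ is $\lambda$-saturated (over $ZFC^-_{\pre}$ models) are governed by the same invariant, so over such $\hat{V}$ the two notions of "$\lambda^+$-saturating $T_1$'' coincide. (The mixed combinations, where exactly one of $T_0,T_1$ is stable, are read off from the same dictionary.)

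For (B) $\Rightarrow$ (A): this is the substantive implication and where I expect the main obstacle. The difficulty is a mismatch of model classes and of quantifier order: (B) constrains only the extensions $\hat{V}\models ZFC^-_{\pre}$ and allows the witness $V$ to vary with $\lambda$, whereas $\trianglelefteq^*_{\lambda\kappa}$ — through Lemma~\ref{SatAndPseudo0First} — asks about \emph{all} $\omega$-nonstandard $\kappa$-saturated $\hat{V}$ with a single witness per $\lambda$. The plan is to route through the intermediate order: show that (B) implies $T_0 \trianglelefteq^\times_\kappa T_1$, and then apply the inclusion $\trianglelefteq^\times_\kappa \subseteq \trianglelefteq^*_\kappa$ established over the preceding sections. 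To deduce $T_0 \trianglelefteq^\times_\kappa T_1$ from (B) I again split on $T_1$: when $T_1$ is unstable, the $ZFC^-_{\pre}$-expansion technique of Section~\ref{InterpOnStableSec0} is exactly what converts control over $ZFC^-_{\pre}$ extensions into control over arbitrary $\omega$-nonstandard extensions, absorbing the first half of the mismatch; when $T_1$ is stable, the classification of $\trianglelefteq^\times_\kappa$ and $\trianglelefteq^*_\kappa$ on stable theories carried out in Sections~\ref{InterpOnStableSec1}--\ref{KeislerOnStableSection} shows that $T_0\trianglelefteq^\times_\kappa T_1$ is decided by the same invariants of $T_0$ and $T_1$ that (B) detects, and in particular the answer is uniform in $\lambda$, which removes the quantifier-order discrepancy. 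The bookkeeping that makes these case analyses fit together — and the verification that the two halves of the mismatch are genuinely dissolved by the structural theory of the earlier sections — is the part I expect to require the most care.
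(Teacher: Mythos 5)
Your step (C) $\Rightarrow$ (B) rests on the claim that, over $\hat{V}\models ZFC^-_{\pre}$, ``$\hat{V}$ $\lambda^+$-pseudosaturates $T_1$'' implies ``$\hat{V}$ $\lambda^+$-saturates $T_1$.'' This is true for unstable $T_1$ (Corollary~\ref{SatAndPseudoCor0}), but \emph{false} for stable $T_1$, and the paper says so explicitly: in the remark following the extended definitions of $\lambda_{\hat{V}}(T)$ and $\lambda'_{\hat{V}}(T)$, it notes that for stable $T$ there are exceptions, which are read off from Definition~\ref{LongDef}. The most concrete one is Clause 5: if $T_1$ is stable NFCP then $\lambda_{\hat{V}}(T_1)=\infty$ (Theorem~\ref{PseudosaturationLevelsThm}), so $\hat{V}$ $\lambda^+$-pseudosaturates $T_1$ for all $\lambda$, whereas $\lambda'_{\hat{V}}(T_1)\le|\hat{V}|$, so for $\lambda\geq|\hat{V}|$ the conclusion ``$\hat{V}$ $\lambda^+$-saturates $T_1$'' fails. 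Clause 3 versus Clause 2 gives another class of exceptions for unsuperstable stable $T_1$. So when $T_1$ is stable, $\hat{V}$ may well $\lambda^+$-pseudosaturate $T_1$ without (C) giving you any leverage, and your argument does not produce ``$\hat{V}$ $\lambda^+$-pseudosaturates $T_0$.'' The paper deliberately goes the \emph{other} way: it proves (B) $\Rightarrow$ (C) for stable $T_0,T_1$, which is the direction compatible with ``saturation $\Rightarrow$ pseudosaturation'' being the only free implication, and this is exactly where the genuine content lives (Corollary~\ref{supersimpleCor1} to rule out Clause 3 via the behavior of $T_1$, and Theorem~\ref{SuperStableNonsat} to derive a contradiction with (B) otherwise; then a short argument handling Clause 5).

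There is also a circularity in your (B) $\Rightarrow$ (A): the containment $\trianglelefteq^\times_\kappa\subseteq\trianglelefteq^*_\kappa$ is stated in the paper as a \emph{consequence} of this very theorem (``Note that it follows immediately\dots''), not something ``established over the preceding sections.'' What is available beforehand is Corollary~\ref{SatAndPseudoCor1} (the (A) $\Leftrightarrow$ (B) equivalence when $T_1$ is unstable) and the coincidence of $\trianglelefteq^\times_\kappa$ and $\trianglelefteq^*_\kappa$ on pairs of \emph{stable} theories via Theorems~\ref{InterpOnKeisler1}--\ref{InterpOnKeisler3}. Your sketch does not cleanly dispatch the mixed case $T_0$ unstable, $T_1$ stable, where the paper handles both (A) and (B) failing simultaneously by the proof of Theorem~\ref{InterpOnKeisler1}(B). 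If you reorganize to prove (A) $\Leftrightarrow$ (C) directly (Theorem~\ref{SatAndPseudoThm1} plus Theorem~\ref{ModKeislerOrderSatFirst}), split on the stability of $T_1$, and in the both-stable case prove (A) $\Rightarrow$ (B) via the classification and then (B) $\Rightarrow$ (C) via the Clause~3/Clause~5 analysis, you recover the paper's argument; as written your cycle has the arrow pointing the wrong way through the one genuinely asymmetric implication.
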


Note that it follows immediately that if $T_0 \trianglelefteq^\times_\kappa T_1$, then $T_0 \trianglelefteq^*_\kappa T_1$.

We also prove the following, using ideas from Section~\ref{InterpOnStableSec1}:

\begin{theorem}\label{SatAndPseudoThm2First}
	Suppose $T_0, T_1$ are complete countable theories, and $T_1$ is unsupersimple. Then $T_0 \trianglelefteq^*_1 T_1$ if and only if $T_0 \trianglelefteq^*_{\aleph_1} T_1$.
\end{theorem}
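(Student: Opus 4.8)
The forward direction is immediate from the monotonicity of $\trianglelefteq^*_\kappa$ in $\kappa$ (as $1\leq\aleph_1$), so only the converse has content. I would start by passing to the pseudosaturation formulation: by Theorem~\ref{SatAndPseudoCorFirst} it suffices to verify clause~(B) there for $\kappa=1$, and comparing that clause for $\kappa=1$ with the assumed clause for $\kappa=\aleph_1$, the entire point is to remove the hypothesis ``$\hat{V}$ is $\aleph_1$-saturated''. So, fixing a cardinal $\lambda$ and letting $V$ be a countable transitive model of $ZFC^-$ witnessing $T_0\trianglelefteq^*_{\aleph_1}T_1$ via clause~(B) for this $\lambda$, the goal becomes: for every $\mathbf{j}:V\preceq\hat{V}\models ZFC^-_{\pre}$, with no saturation assumed, if $\hat{V}$ $\lambda^+$-pseudosaturates $T_1$ then $\hat{V}$ $\lambda^+$-pseudosaturates $T_0$.

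Everything reduces to the following claim: \emph{if $T_1$ is unsupersimple, $\hat{V}\models ZFC^-_{\pre}$, and $\hat{V}$ $\aleph_1$-pseudosaturates $T_1$, then $\hat{V}$ is $\aleph_1$-saturated.} Granting the claim, the theorem follows: since $\lambda\geq\aleph_0$, any $\hat{V}\models ZFC^-_{\pre}$ that $\lambda^+$-pseudosaturates $T_1$ also $\aleph_1$-pseudosaturates it, hence is $\aleph_1$-saturated, and then the defining property of $V$ applies and yields that $\hat{V}$ $\lambda^+$-pseudosaturates $T_0$, which is exactly what is needed. (Recall $ZFC^-_{\pre}$ already forces $\hat{V}$ to be $\omega$-nonstandard, so the claim is in the right form to quote, and $\aleph_1$-pseudosaturation of $T_1$ is well defined by Theorem~\ref{ModKeislerOrderFirst}.)

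To prove the claim I would argue contrapositively: assume $\hat{V}$ omits a type over some countable subset of $\hat{V}$, and produce $M_1\models T_1$ with $M_1\in V$ such that $\mathbf{j}_{\std}(M_1)$ omits a consistent type over a pseudofinite subset of size $\leq\aleph_0\leq\lambda$. There are two inputs. The first is unsupersimplicity of $T_1$: in a suitable $M_1\models T_1$ there is a complete type $p$ and an infinite increasing chain of finite parameter sets $B_0\subseteq B_1\subseteq\cdots$ along which $p$ forks at each step; applying $\mathbf{j}$ gives, internally in $\hat{V}$, a forking chain in $M_1$ of each nonstandard length, whose pseudofinite initial segments --- and in particular whose standard-indexed part, which is pseudofinite since it sits inside any nonstandard initial segment of the internal chain --- are the parameter sets we are allowed to quantify over. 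The second is $\hat{V}\models ZFC^-_{\pre}$: the expansion $ZFC^-_*$ puts $\hat{V}$ in definable bijection with an initial segment of $\hat{\omega}$ rigidly enough that a failure of $\aleph_1$-saturation of $\hat{V}$ is witnessed by an omitted countable type over a countable subset of $\hat{\omega}$. The construction then codes that omitted countable type as a descent down the forking chain --- the $n$-th step of the descent records the $n$-th datum of the omitted type --- so that any realization in $\mathbf{j}_{\std}(M_1)$ of the resulting pseudofinite countable type would compute, inside $\hat{V}$, a realization of the omitted set-theoretic type. As in Section~\ref{InterpOnStableSec0} the unstable case is cleanest: there $\lambda^+$-saturation and $\lambda^+$-pseudosaturation of $\mathbf{j}_{\std}(M_1)$ coincide over $ZFC^-_{\pre}$, and one reads the omitted type off a definable linear order. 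For $T_1$ stable (necessarily non-superstable) one runs the construction using the analysis of Question~1 carried out in Section~\ref{InterpOnStableSec1}, which pins down exactly which $\hat{V}$ pseudosaturate a non-superstable stable theory, in terms of level-by-level pseudosaturation together with a cofinality-type condition on $\hat{\omega}$ that $ZFC^-_*$ ties back to $\aleph_1$-saturation of $\hat{V}$.

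The main obstacle is precisely this claim, and within it the combinatorial bookkeeping that converts a generic failure of $\aleph_1$-saturation of $\hat{V}$ into an omitted type on the $T_1$-side: one must keep the parameter set genuinely pseudofinite and of real size $\leq\lambda$, which is why one works along an internal forking chain and passes to its standard part, and one must make the coding tight enough that a realization of the $T_1$-type really does force a realization of the set-theoretic type, which is where the precise shape of $ZFC^-_*$ and, in the stable case, the Section~\ref{InterpOnStableSec1} computations do the work. Once the claim is established, the deduction of the theorem is immediate, as sketched above.
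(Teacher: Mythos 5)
Your overall strategy matches the paper's exactly: the deduction reduces to the same key claim, which is Corollary~\ref{supersimpleCor1}, and you apply it to the reformulation of Theorem~\ref{SatAndPseudoCorFirst} in just the way the paper does. The only difference is that your sketch of that claim is looser than the paper's, which factors cleanly through the cardinal characteristics $\mathfrak{p}_{\hat{V}}$ and $\mathfrak{b}_{\hat{V}}$: Theorem~\ref{BaselineSupersimple} shows $\aleph_1$-pseudosaturation of an unsupersimple theory forces $\mathfrak{p}_{\hat{V}}\geq\aleph_1$, the proof of Theorem~\ref{BBound} gives $\mathfrak{p}_{\hat{V}}\leq\mathfrak{b}_{\hat{V}}$ whenever $\hat{V}\models ZFC^-_{\pre}$, and Theorem~\ref{SaturationCharacterization} then yields $\aleph_1$-saturation of $\hat{V}$, avoiding the need to directly code an arbitrary omitted type of $\hat{V}$ (whose parameter set need not be pseudofinite) into a pseudofinite $T_1$-type.
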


We use this to strengthen two theorems on $\trianglelefteq^*_{1}$ to the context of $\trianglelefteq^*_{\aleph_1}$. First, it follows from results of 
D\v{z}amonja and Shelah \cite{SH692}, Shelah and Usvyatsov \cite{SOP2pt2}, and Malliaris and Shelah \cite{pEqualsTref}, as pieced together by Malliaris and Shelah \cite{pEqualsT2}, that $SOP_2$ consistently characterizes maximality in $\trianglelefteq^*_1$. Second, Malliaris and Shelah prove in \cite{InterpOrders} that simplicity is a dividing line in $\trianglelefteq^*_1$ (i.e. if $T_0$ is unsimple and $T_1$ is simple, then $T_0 \not \trianglelefteq^*_1 T_1$.) We deduce both of these theorems for $\trianglelefteq^*_{\aleph_1}$.

\vspace{1 mm}

\noindent \textbf{Acknowledgements.} We would like to thank Vincent Guingona, Alexei Kolesnikov, Chris Laskowski, Pierre Simon and Jindrich Zapletal for several helpful conversations.

\section{Model-theoretic Preliminaries}\label{Prelim}

We collect together various model-theoretic facts we will need throughout the paper.

\begin{definition}

Suppose $T$ is a complete countable theory with monster model $\mathfrak{C}$, and $\phi(x, \overline{y})$ is a formula of $T$. Then:

\begin{itemize}
	\item  $\phi(\overline{x}, \overline{y})$ has the strict order property of the second kind ($SOP_2$) if there are $(\overline{b}_s: s \in \omega^{<\omega})$ from $\mathfrak{C}^{|\overline{y}|}$, such that for each $\eta \in \omega^\omega$, $(\phi(\overline{x}, \overline{b}_{\eta \restriction_n}): n < \omega)$ is consistent, but whenever $s, t \in \omega^{<\omega}$ are incomparable, $\phi(\overline{x}, \overline{b}_s) \land \phi(\overline{x}, \overline{b}_t)$ is inconsistent. Otherwise, we say that $\phi(\overline{x}, \overline{y})$ has $NSOP_2$. We say that $T$ has $SOP_2$ if some formula of $T$ is $SOP_2$, and otherwise $T$ has $NSOP_2$.
	\item $\phi(\overline{x}, \overline{y})$ has the tree property $(TP)$ if there are $(\overline{b}_s: s \in \omega^{<\omega})$ from $\mathfrak{C}^{|\overline{y}|}$ such that for each $\eta \in \omega^\omega$, $(\phi(\overline{x}, \overline{b}_{\eta \restriction_n}): n < \omega)$ is consistent, but whenever $s \in \omega^{<\omega}$ and $i < j < \omega$, then $\phi(\overline{x}, \overline{b}_{s\,^\frown(i)}) \land \phi(\overline{x}, \overline{b}_s^\frown(j))$ is inconsistent. Otherwise $\phi(\overline{x}, \overline{y})$ has $NTP$.
	
	We say $T$ is unsimple if some formula of $T$ has the tree property; otherwise, $T$ is simple.
	
	\item $\phi(\overline{x}, \overline{y})$ has the independence property ($IP$) if there are $(\overline{b}_n: n < \omega)$ from $\mathfrak{C}^{|\overline{y}|}$ such that for all disjoint $u, v \subseteq \omega$, $\{\phi(\overline{x},\overline{b}_n: n \in )\} \cup \{\lnot \phi(\overline{x}, \overline{b}_n): n \in v\}$ is consistent. Otherwise, $\phi(\overline{x}, \overline{y})$ has $NIP$.
	
	$T$ has $IP$ if some formula of $T$ does; otherwise, $T$ has $NIP$. 
	\item 	$\phi(\overline{x}, \overline{y})$ is unstable if there are $(\overline{b}_n: n < \omega)$ from $\mathfrak{C}^{|\overline{y}|}$ such that for all $n < \omega$, $\{\phi(\overline{x}, \overline{b}_m: m < n)\} \cup \{\lnot \phi(\overline{x}, \overline{b}_m): m \geq n\}$ is consistent. Otherwise $\phi(\overline{x}, \overline{y})$ is stable.
	
	We say $T$ is unstable if some formula of $T$ is; otherwise we say $T$ is stable.
	
	\item $\phi(\overline{x}, \overline{y})$ has the finite cover property ($FCP$) if: for every $n$ there exists $m > n$ and $(\overline{b}_i: i < m)$ from $\mathfrak{C}^{|\overline{y}|}$ such that $\{\phi(\overline{x}, \overline{b}_i): i < m\}$ is inconsistent, but every $n$-element subset is consistent. Otherwise $\phi(\overline{x}, \overline{y})$ has $NFCP$.
	
	We say $T$ has $FCP$ if some formula of $T$ does; otherwise $T$ is $NFCP$.
\end{itemize}

\end{definition}

\begin{remark}
	The tree property of the first kind $TP_1$ is equivalent to $SOP_2$; we pick the term $SOP_2$ to use. See \cite{TP1} for a discussion. 
\end{remark}

\begin{remark}
In all of these definitions, it suffices to consider the case where $\overline{x}$ is a single variable $x$.
\end{remark}

We recall that in simple theories, forking (equivalently dividing) is a well-behaved independence relation. The precise definition will not be important for us.

\begin{definition}
	Suppose $T$ is a complete countable theory. Then:
	
	\begin{itemize}
		\item $T$ is supersimple if $T$ is simple and every type does not fork over a finite subset;
		\item $T$ is superstable if $T$ is supersimple and stable;
		\item $T$ is $\omega$-stable if for every $M \models T$, $|S^1(M)| \leq M$ (it is enough to check when $M$ is countable; this implies stability).
	\end{itemize}
\end{definition}

\begin{definition}
	Suppose $T$ is a complete countable theory, and $p(x) \in S^1(A)$. Then $p(x)$ is stationary if for all $B \supseteq A$, $p(x)$ has a unique nonforking extension to $B$. If $q(x) \in S^1(B)$ and $A \subseteq B$, then say that $q(x)$ is based on $A$ if $q(x)$ does not fork over $A$ and $q(x) \restriction_A$ is stationary.

\end{definition}

The following notation from \cite{AlephEpsilon} will be convenient for discussing superstability; the idea is $0 < \epsilon < 1$, so $\aleph_0 < \aleph_{\epsilon} < \aleph_1$.

\begin{definition}
	If $M$ is a structure, then say that $X \subseteq M$ is $\aleph_{\epsilon}$-finite if it is contained in the algebraic closure of a finite set. Say that the structure $M$ is $\aleph_{\epsilon}$-saturated if every type over an $\epsilon$-finite set is realized in $M$. Say that $T$ is $\epsilon$-small if for every $M \models T$ and for every $\epsilon$-finite $A \subseteq M$, $S^1(A)$ is countable. This is a strengthening of smallness, which asserts the same for every finite $A \subseteq M$.
\end{definition}

We now list several facts we will be using.

\begin{fact}\label{Fact1}
	Among complete countable theories $T$: $NFCP$ implies stable implies simple implies $NSOP_2$.
\end{fact}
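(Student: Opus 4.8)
The statement is a chain of three implications, and I would handle them separately. Two of them are classical theorems of Shelah (see \emph{Classification Theory}, Chapters II and III), which I would recall rather than reprove; the third is immediate from the definitions given above. So the plan is mostly to organize citations, together with one short self-contained argument.

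\emph{Simple implies $NSOP_2$} is immediate from the stated definitions. Suppose $\phi(\overline{x},\overline{y})$ has $SOP_2$, witnessed by $(\overline{b}_s : s \in \omega^{<\omega})$. For any $s \in \omega^{<\omega}$ and $i < j < \omega$, the nodes $s{}^\frown(i)$ and $s{}^\frown(j)$ have equal length and differ, hence are incomparable, so $\phi(\overline{x},\overline{b}_{s{}^\frown(i)}) \wedge \phi(\overline{x},\overline{b}_{s{}^\frown(j)})$ is inconsistent; and the branch-consistency requirement for $TP$ is literally the one in the definition of $SOP_2$. So the very same array witnesses that $\phi$ has $TP$, hence $T$ is unsimple. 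Contrapositively, simple theories have $NSOP_2$.

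\emph{Stable implies simple} I would prove contrapositively: if some $\phi(x,\overline{y})$ has $TP$ then $\phi$, hence $T$, is unstable. The cleanest self-contained route is via local ranks: a $TP$-array of height $n$ shows, inducting down a branch (at each level one has an infinite $2$-inconsistent family of $\phi$-instances over which the previous rank is still attained), that the local $2$-dividing rank $R(\{x=x\},\phi,2)$ is at least $n$ for every $n$, so it is $\infty$; whereas the standard $\phi$-rank computations give that this rank is finite when $\phi$ is stable. Alternatively one invokes Shelah's dichotomy that $TP$ implies $TP_1\ (= SOP_2)$ or $TP_2$, and checks directly that each yields the order property — for $SOP_2$ this is exactly a computation like the one in the previous paragraph (running along one branch to get a half-graph); for $TP_2$ one reads it off a single row of the array. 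Either way this is entirely standard, so I would simply cite it.

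\emph{$NFCP$ implies stable} Again contrapositively: an unstable formula $\phi(x,\overline{y})$ produces, by Ramsey and compactness, an indiscernible half-graph, and from this (equivalently, from the $SOP$/$IP$ dichotomy for unstable theories) one extracts a formula with $FCP$ — the point being that its instances define complements of closed intervals inside a fixed definable interval, and choosing $m$ such intervals that cover the ambient interval while every $m-1$ of them leave a gap exhibits $FCP$ (concretely, in $DLO$ the instances of $\psi(x;u_0,u_1,y,z) := (u_0 < x < u_1)\wedge(x < y \vee x > z)$ with $u_0,u_1$ fixed have this property). This is Shelah's theorem that $NFCP$ is a proper strengthening of stability, and I would cite it. This last implication is the only one with real content: unlike the other two it is not immediate and the $FCP$-witnessing formula is \emph{not} the order formula itself, so if one insisted on a fully self-contained proof, extracting that formula from the half-graph is where the (mild) work lies.
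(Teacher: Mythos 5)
Your proof is correct and follows essentially the same approach as the paper, which simply cites Shelah (Theorem II.4.2 of \emph{Classification Theory} for $NFCP \Rightarrow$ stable), Kim (for stable $\Rightarrow$ simple), and Shelah's simplicity paper (for simple $\Rightarrow NSOP_2$). The one place you go beyond the paper is to give a self-contained argument for the last implication; this is correct and worth emphasizing: because the paper's definition of $SOP_2$ uses $\omega^{<\omega}$-indexed trees and demands inconsistency on \emph{all} incomparable pairs, the sibling-inconsistency required for $TP$ is literally a special case, so an $SOP_2$-array is already a $TP$-array and the implication is immediate from the definitions as stated, with no citation needed.
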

\begin{proof}
	$NFCP$ implies stable is Theorem II.4.2 of \cite{ShelahIso}.
	For stable implies simple, see \cite{KimForking}. Simple implies $NSOP_2$ is one direction of Theorem 0.2 of \cite{ShelahSimple}.
\end{proof}

\begin{fact}\label{Fact2}
	Suppose $T$ is a complete countable theory $T$. 
	
	\begin{itemize}
		\item[(A)] $T$ is simple if and only if every type does not fork over a countable subset of its domain.		
		\item[(B)] $T$ is stable if and only if every type is based on a countable subset of its domain.
		\item[(C)] Suppose $T$ eliminates imaginaries. Then $T$ is superstable if and only if every type is based on an $\epsilon$-finite set.
		\item[(D)] $T$ is $\omega$-stable if and only if $T$ is small, and every type is based on a finite set. 
	\end{itemize}
\end{fact}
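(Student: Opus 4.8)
The plan is to read off all four equivalences from the standard local-character theory of forking in simple and stable theories, together with the combinatorial characterizations of simplicity, stability, superstability and $\omega$-stability recorded in Section~\ref{Prelim}. Throughout I would work, as is customary, inside $\mathfrak{C}^{eq}$, so that canonical bases of stationary types are available as (imaginary) tuples; the stationarity clause in the definition of ``based on'' is really a statement about $\mathfrak{C}^{eq}$, which is why (C) is stated with the elimination-of-imaginaries hypothesis.

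For the left-to-right directions: if $T$ is simple, forking has local character --- every $p \in S(B)$ does not fork over some $B_0 \subseteq B$ with $|B_0| \leq |T| = \aleph_0$ (see \cite{KimForking}) --- which gives (A). If $T$ is stable, $p$ is moreover definable, hence based on a countable $B_0 \subseteq B$: its canonical base is a countable imaginary tuple lying in $\mathrm{acl}^{eq}$ of a countable subset of $B$, and absorbing the finitely many further parameters needed to make the restriction stationary yields (B). For (C), under elimination of imaginaries Shelah's characterization of superstability (via $\kappa(T) = \aleph_0$, \cite{ShelahIso}) forces the canonical base into $\mathrm{acl}$ of a finite subset of $B$, i.e. an $\aleph_\epsilon$-finite set, in the notation of \cite{AlephEpsilon}. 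For (D), an $\omega$-stable theory is small by definition, every type has ordinal-valued Morley rank and finite Morley degree, and the finite subformula witnessing rank and degree shows the type is based on a finite set.

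For the right-to-left directions I would argue by contraposition, converting failure of tameness into a type violating the stated locality. If $T$ is not simple, fix $\phi(\overline{x},\overline{y})$ with $TP$; unwinding the inconsistency of the tree along $\omega^{<\omega_1}$ (equivalently, stacking $\omega_1$ copies of the splitting pattern) produces a consistent type over a parameter set of size $\aleph_1$ that divides over every countable subset of its domain, contradicting (A) and (B). If $T$ is unstable, fix a formula with the order property or the independence property; the resulting abundance of nonforking extensions --- cut types along a dense linear order, or types selecting arbitrary Boolean combinations --- gives a type over a countable (resp.\ size-$\aleph_1$) parameter set that is not based on any countable subset of its domain, yielding (B). If $T$ is stable but not superstable there is an infinite forking chain witnessing $\kappa(T) > \aleph_0$, and under EI this yields a type not based on any $\aleph_\epsilon$-finite set, yielding (C). Finally, if $T$ is not small, or is small but admits a type not based on a finite set, one exhibits an infinite binary tree of consistent formulas (equivalently $\aleph_1$ types over a countable set), contradicting $\omega$-stability; this gives (D).

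The hard part will be the bookkeeping around the stationarity clause: one must check in (B)--(D) that the small parameter set witnessing nonforking can be enlarged, within the prescribed domain, so that the restriction becomes genuinely stationary rather than merely nonforking, which is exactly where passing to $\mathfrak{C}^{eq}$ and the EI hypothesis enter. The other delicate point is the first converse construction, the explicit passage from $TP$ to a size-$\aleph_1$ type dividing over every countable subset of its domain; everything else is an assembly of theorems already in the literature.
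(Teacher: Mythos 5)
The paper's own proof of this Fact is purely a citation: (A) and (B) are referred to \cite{KimForking} and (C), (D) to Chapter 1 of \cite{pillay}, with no arguments supplied. Your proposal instead reconstructs the four equivalences from first principles --- local character of forking, canonical bases, Shelah's $\kappa(T)$-criterion for superstability, Morley rank --- with converses by contraposition. This is the standard development and is correct in outline, and you rightly flag the stationarity bookkeeping as the delicate spot. Two places would need sharpening if written out. First, ``absorbing the finitely many further parameters needed to make the restriction stationary'' understates the problem: the canonical base of $p\in S(B)$ sits in $\acl^{eq}(B)$, not in $B$, so one generally must pass to a countable piece of $\acl^{eq}$; and without EI that piece need not lie inside the domain at all, which is why the paper's item (B) is tacitly read in $T^{eq}$ (as is standard in \cite{pillay}) while (C) makes EI explicit. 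Second, the converse of (A) --- producing from a $TP$ formula a single type over an $\aleph_1$-sized set that divides over every countable subset --- is a genuine construction that the one-clause gesture at ``stacking $\omega_1$ copies of the splitting pattern'' does not discharge. Neither is an error of strategy, just an undischarged obligation. In short, the paper buys brevity by citing; you buy self-containment, at the cost of having to fill in those two details.
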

\begin{proof}
	(A) is frequently given as a definition of simplicity, e.g. as in \cite{KimForking}, where it is proved to be equivalent to no formula having the tree property.
	
	(B) follows from the fact that stability can be alternatively characterized by type-counting, as in \cite{KimForking}.
	
	For the remaining facts, see Chapter 1 of \cite{pillay}.
\end{proof}

The following dichotomy theorem of Shelah is well-known.

\begin{theorem}\label{NonStableDichotomy}
	$T$ is unstable if and only if either $T$ has $IP$ or $SOP_2$.
\end{theorem}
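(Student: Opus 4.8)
The plan is to prove the well-known dichotomy: $T$ is unstable if and only if $T$ has $IP$ or $SOP_2$ (equivalently $TP_1$). This is Shelah's theorem, essentially a repackaging of the fact that an unstable theory must contain either the independence property or the strict order property, combined with the observation that the strict order property implies $SOP_2$ (and the implications go back via Fact~\ref{Fact1}, since simple implies $NSOP_2$ and $NIP \wedge \text{simple} \Rightarrow$ stable).

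For the easy direction, suppose $T$ has $IP$ or $SOP_2$; I would argue that in either case $T$ is unstable. If $T$ has $IP$, then some formula $\phi(\overline{x},\overline{y})$ has $IP$, and by a standard compactness/Ramsey argument one can extract an indiscernible sequence witnessing the order property, so $\phi$ (or a related formula) is unstable. If $T$ has $SOP_2$, then $T$ is unsimple (a witnessing tree for $SOP_2$ is in particular a witness for the tree property $TP$ after thinning, since along any antichain consisting of the successors of a fixed node the formulas are pairwise inconsistent), and since unsimple implies unstable via Fact~\ref{Fact1} (stable implies simple), we conclude $T$ is unstable.

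For the hard direction, suppose $T$ is unstable; I want to show $T$ has $IP$ or $SOP_2$. I would invoke Shelah's classical trichotomy: an unstable theory has either the independence property or the strict order property ($SOP$). If $T$ has $IP$ we are done. Otherwise $T$ has $SOP$ and is $NIP$; I claim $SOP$ (indeed already $SOP_3$, or just $SOP_1$) implies $SOP_2$ — but more carefully, the cleanest route is: $SOP$ implies $SOP_3$ implies $SOP_2$ via the hierarchy of strict order properties established by Shelah--Usvyatsov and the $TP_1$ literature (cf. \cite{TP1}), or alternatively one directly builds a tree witnessing $SOP_2$ from a linear order witnessing $SOP$ by a standard tree-embedding construction. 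The main obstacle here is getting the implication $SOP \Rightarrow SOP_2$ right: $SOP_2 = TP_1$ sits fairly low in the $SOP_n$ hierarchy, and one needs either to cite the correct results organizing $SOP$, $SOP_3$, $SOP_2$, $SOP_1$ or to give the direct combinatorial construction embedding $\omega^{<\omega}$ into a definable partial order coming from the strict order property so that incomparable nodes give inconsistent instances while branches give consistent ones.

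I expect the bookkeeping around the $SOP_n$ hierarchy to be the delicate part; the unstable trichotomy itself (unstable $\Leftrightarrow$ $IP$ or $SOP$) is standard and can be cited from Shelah's classification theory, and the reduction to single-variable $\overline{x}$ is routine by the remark already recorded in the excerpt. In practice I anticipate the author simply citing \cite{TP1} and the standard references for the statement, since it is labeled ``well-known,'' rather than reproving the hierarchy from scratch.
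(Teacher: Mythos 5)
Your proof takes essentially the same route as the paper: cite Shelah's Theorem~II.4.7 of \cite{ShelahIso} for ``unstable iff $IP$ or $SOP$,'' use $SOP \Rightarrow SOP_2$ (via the $SOP_n$ hierarchy) for the forward direction, and observe $SOP_2 \Rightarrow$ unsimple $\Rightarrow$ unstable (Fact~\ref{Fact1}) together with $IP \Rightarrow$ unstable for the converse. The paper states this more telegraphically but uses exactly these three ingredients, so the approaches agree.
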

\begin{proof}
	Theorem II.4.7 of \cite{ShelahIso} states that $T$ is unstable if and only if either $T$ has $IP$ or else $T$ has $SOP$. But if $T$ has $SOP$ then $T$ has $SOP_2$, and if $T$ has $SOP_2$ then $T$ is unstable, so the theorem follows. (The definition of $SOP$ won't be important for us.)
\end{proof}

The following characterization of supersimplicity is the conjunction of Lemmas 2.1 and 3.1 of \cite{Casanovas}.

\begin{lemma}\label{SupersimpleCharacterization}
	Suppose $T$ is a complete theory. Then $T$ is unsupersimple if and only if there are formulas $(\phi_n(\overline{x}, \overline{y}_n): n < \omega)$ and tuples $(\overline{a}_s: s \in \omega^{<\omega})$ from some $M \models T$ such that:
	\begin{itemize}
		\item Each $|\overline{a}_s| = |\overline{y}_{|s|}|$;
		\item For all $\eta \in \omega^\omega$, $\{\phi_n(\overline{x}, \overline{a}_{\eta \restriction_n}): n < \omega\}$ is consistent,
		\item For all $s \in \omega^{<\omega}$, and for all $n < m < \omega$, $\phi_{|s|+1}(\overline{x}, \overline{a}_{s\,^\frown\,(n)}) \land \phi_{|s|+1}(\overline{x}, \overline{a}_{s\,^\frown\,(m)})$ is inconsistent.
	\end{itemize}
\end{lemma}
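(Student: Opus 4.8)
The plan is to prove the two implications separately; the real content is the simple--but--not--supersimple case, since the unsimple case reduces to the definitions. Note first that the tree property as stated above already supplies $2$‑inconsistent siblings together with consistent branches, so if $T$ is unsimple, witnessed by $\psi(x,\overline y)$ and $(\overline b_s:s\in\omega^{<\omega})$, then taking $\phi_n:=\psi$ and $\overline a_s:=\overline b_s$ gives the required configuration; and in that case $T$ is certainly unsupersimple. Thus for $(\Rightarrow)$ it remains to treat $T$ simple but not supersimple, and for $(\Leftarrow)$ we may assume $T$ is simple (so forking $=$ dividing and Kim's lemma is available).

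\textbf{$(\Leftarrow)$, $T$ simple.} Given such $(\phi_n)$ and $(\overline a_s)$, I want a $1$‑type forking over every finite subset of its domain. First I would replace the configuration by a ``spread‑out'' one that is indiscernible with respect to the natural meet‑closed‑ordered‑tree structure on $\omega^{<\omega}$, via the standard tree version of the modelling theorem (Kim--Kim--Scow, building on Takeuchi--Tsuboi); this preserves the formulas $\phi_n$, the consistency of branches, and the $2$‑inconsistency of siblings. Now fix a branch $\eta$, let $c$ realise $\{\phi_n(x,\overline a_{\eta\restriction n}):n<\omega\}$, and put $q:=\tp(c/\mathbb A)$ with $\mathbb A=\{\overline a_s:s\in\omega^{<\omega}\}$. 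For a finite $F\subseteq\mathbb A$, choose $n$ with the node $\eta\restriction n$ properly below the part of $\mathrm{supp}(F)$ lying on $\eta$, and then $k:=\eta(n)$ large enough that $(\eta\restriction n)^\frown(k)$ lies to the right of $\mathrm{supp}(F)$; by tree‑indiscernibility the sequence $(\overline a_{(\eta\restriction n)^\frown(l)})_l$ is then indiscernible over $F\cup\{\overline a_{\eta\restriction m}:m\le n\}$ with $2$‑inconsistent $\phi_{n+1}$‑instances, so $\phi_{n+1}(x,\overline a_{\eta\restriction(n+1)})\in q$ divides over $F$. Since $F$ was arbitrary, $q$ forks over every finite subset of its domain, so $T$ is not supersimple.

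\textbf{$(\Rightarrow)$, $T$ simple not supersimple.} Fix a complete type $p\in S^1(A)$, $A=\dom p$, that forks over every finite subset of $A$. Recursively choose finite $A_0\subseteq A_1\subseteq\cdots\subseteq A$ and $\phi_{n+1}(x,\overline a_{n+1})\in p$ with $\overline a_{n+1}\in A$: since $p$ forks, hence divides, over $A_n$, some formula of $p$ divides over $A_n$; take it to be $\phi_{n+1}(x,\overline a_{n+1})$ and set $A_{n+1}:=A_n\overline a_{n+1}$. Then $\{\phi_n(x,\overline a_n):n\ge 1\}\subseteq p$ is consistent, and $\phi_{n+1}(x,\overline a_{n+1})$ divides over $A_n$. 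Now unfold this chain into a tree: build $(\overline a_s:s\in\omega^{<\omega})$ by recursion on $|s|$, maintaining $\tp(\overline a_{s\restriction 1},\dots,\overline a_{s})=\tp(\overline a_1,\dots,\overline a_{|s|})$ and the consistency of $\{\phi_m(x,\overline a_{s\restriction m}):1\le m\le|s|\}$; at a node $s$ of length $n$ pick $\sigma_s\in\Aut(\mathfrak C)$ sending $(\overline a_m)_{m\le n}$ to $(\overline a_{s\restriction m})_{m\le n}$, note $\phi_{n+1}(x,\sigma_s(\overline a_{n+1}))$ divides over $\{\overline a_{s\restriction m}:m\le n\}$, and let $(\overline a_{s^\frown(k)})_{k<\omega}$ be a Morley sequence over $\{\overline a_{s\restriction m}:m\le n\}$ starting with $\sigma_s(\overline a_{n+1})$. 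By Kim's lemma the $\phi_{n+1}$‑instances along these siblings are $k_n$‑inconsistent (with $k_n$ independent of $s$ by homogeneity), while indiscernibility of the Morley sequence over $\{\overline a_{s\restriction m}:m\le n\}$ keeps every extended branch consistent and preserves the inductive type‑equality. Finally apply the usual conjunction trick level by level (replace $\phi_n$ by a conjunction of $k_n-1$ consecutive copies and re‑index siblings) to turn $k_n$‑inconsistency of siblings into $2$‑inconsistency without disturbing consistency of branches, and reflect the (countable) resulting tree into a small $M\models T$.

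\textbf{Main obstacle.} The delicate point is the ``unfold'' in $(\Rightarrow)$: blowing up a single linear forking chain for one type into the full tree $\omega^{<\omega}$ while keeping all $2^{\aleph_0}$ branch‑types consistent and the siblings uniformly inconsistent. This is exactly where simplicity is genuinely used --- Kim's lemma guarantees that a Morley sequence over $\{\overline a_{s\restriction m}:m\le n\}$ witnesses the dividing of $\phi_{n+1}$, and the homogeneity bookkeeping makes the inconsistency number depend only on the level. Dually, in $(\Leftarrow)$ the extraction of a tree‑indiscernible version of the given configuration is what makes $2$‑inconsistency of siblings translate into dividing over the initial segment below them; both steps are standard but fiddly, and everything else is routine bookkeeping with automorphisms and conjunctions.
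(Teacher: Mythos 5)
The paper itself does not prove this lemma but cites it (as the conjunction of Lemmas 2.1 and 3.1 of Casanovas), so your attempt is being read on its own. Your high-level decomposition is the right one, and the $(\Leftarrow)$ direction goes through once the choice of indices is repaired: you cannot arrange ``$k:=\eta(n)$ large'' because $\eta$ was fixed when you chose $c$; what you actually want is some $n$ larger than the length of every node of $F$, after which strong tree-indiscernibility makes the whole sibling family $(\overline{a}_{(\eta\restriction n)^\frown l})_{l<\omega}$ indiscernible over $F$, and the dividing of $\phi_{n+1}(x,\overline{a}_{\eta\restriction (n+1)})\in q$ over $F$ follows.

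The genuine gap is the closing ``conjunction trick'' in $(\Rightarrow)$. Conjoining $k_n-1$ consecutive siblings at each level means a branch of the new tree encodes a \emph{fat path} through the original tree --- a simultaneous choice of $k_n-1$ siblings at every level --- and consistency of fat paths is strictly stronger than consistency of ordinary branches, which is all your construction supplies. Worse, your own setup shows the naive version cannot work: the witness $c$ to the branch through $s^\frown 0$ cannot satisfy $\phi_{n+1}(x,\overline{a}_{s^\frown j})$ for $k_n$-many $j$ (else those instances would not divide), and since the Morley sequence was taken over $\{\overline{a}_{s\restriction m}:m\le n\}$ rather than over that set together with $c$ (the latter would contradict dividing), there is no reason for $c$ to satisfy any sibling instance beyond $j=0$. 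Minimality of $k_n$ gives consistency of the $(k_n-1)$-fold conjunction in isolation, but not jointly with the path below it, and certainly not with the grouped conjunctions at every later level along a branch. Converting $k$-inconsistent siblings to $2$-inconsistent ones while preserving every branch is exactly the nontrivial content of the cited lemmas (the same difficulty already appears in Shelah's reduction of the tree property with arbitrary inconsistency number to the $2$-inconsistent form); it needs a real argument, not a re-indexing. The rest of your sketch --- the forking chain, the automorphism bookkeeping, Kim's lemma for the sibling Morley sequences, and the modeling theorem step in $(\Leftarrow)$ --- is sound.
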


We will need some machinery on indiscernible sets in stable theories. As notation:

\begin{definition}
	Let $T$ be a complete countable theory. Then $\Delta$ is a set of partitioned formulas if $\Delta = \{\phi_i(\overline{x}, \overline{y}_i): i \in I\}$, i.e. it is a set of partitioned formulas with distinguished variables $\overline{x}$ . The arity of $\Delta$ is $|\overline{x}|$. Suppose $\Delta$ is a set of partitioned formulas; then a (positive) $\Delta$-formula is a (positive) boolean combination of formulas of the form $\phi(\overline{x}, \overline{a})$ for $\phi(\overline{x}, \overline{y}) \in \Delta$ and parameters $\overline{a} \in \mathfrak{C}$. A (positive) $\Delta$-type is a partial type $p(\overline{x})$, such that every $\phi(\overline{x}, \overline{a}) \in p(\overline{x})$ is a (positive) $\Delta$-formula. If $A$ is a set we let $S_\Delta(A)$ be the set of all maximal $\Delta$-types over $A$; so we have the obvious restriction map from $S_n(A)$ to $S_\Delta(A)$, where $n$ is the arity of $\Delta$. If $\overline{a}\in \mathfrak{C}$ then $tp_\Delta(\overline{a}/A) \in S_\Delta(A)$ is the set of all $\Delta$-formulas $\phi(\overline{x}, \overline{b})$ such that $\overline{b} \in A$ and $ \models \phi(\overline{a}, \overline{b})$. If $\Delta$ is the single formula $\phi(\overline{x}, \overline{y})$ we write $\phi(\overline{x}, \overline{y})$ instead of $\{\phi(\overline{x}, \overline{y})\}$.

	Suppose $I$ is an index set and $\Delta$ is a set of partitioned formulas of $T$ of arity $m$. For an index set $I$, a set $\{\overline{a}_i: i \in I\}$ is $(\Delta, n)$-indiscernible if each $\overline{a}_i$ has length $m$ and for every tuples of distinct elements $(i_0, \ldots, i_{n-1})$, $(j_0, \ldots, j_{n-1})$ from $I^n$, $tp_\Delta(\overline{a}_{i_0}/\overline{a}_{i_1} \ldots \overline{a}_{i_{n-1}}) = tp_\Delta(\overline{a}_{j_0}/ \overline{a}_{j_1} \ldots \overline{a}_{j_{n-1}})$.  $I$ is $\Delta$-indiscernible if it is $(\Delta, n)$-indiscernible for all $n$. $I$ is indiscernible if it is $\Delta$-indiscernible, where $\Delta$ is the set of all partitioned $\mathcal{L}$-formulas of $T$ of arity $m$.
\end{definition}

The following equivalents of the finite cover property are proved in \cite{ShelahIso}. (A) if and only if (B) is Theorem II.4.2, and (C) implies (A) is Theorem II.4.6. (B) implies (C) is trivial.
\begin{theorem}\label{fcpEquiv}
	For $T$ a countable stable theory, the following are equivalent:
	\begin{itemize}
		\item[(A)] $T$ has the finite cover property.
		\item[(B)] There is a formula $\phi(x, y, \overline{z})$ such that for every $\overline{c} \in \mathfrak{C}$ of length $|\overline{z}|$, $\phi(x, y, \overline{c})$ defines an equivalence relation $E_{\overline{c}}$, and for arbitrarily large $n$ there is $\overline{c}_n \in \mathfrak{C}^{|\overline{z}|}$ such that $E_{\overline{c}_n}$ has exactly $n$ classes.
		\item[(C)] There is some finite set $\Delta$ of partitioned formulas and some $M \models T$, such that for arbitrarily large $m$ there is a $(\Delta,n)$-indiscernible set $\{\overline{a}_i: i < m\}$ from $M$ which cannot be extended to an infinite $(\Delta,n)$-indiscernible set from $M$.
	\end{itemize}
\end{theorem}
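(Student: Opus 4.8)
The plan is to run the cycle $(A)\Rightarrow(B)\Rightarrow(C)\Rightarrow(A)$, invoking Chapter~II of \cite{ShelahIso} for the two substantive links and supplying the elementary one directly.

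For $(A)\Leftrightarrow(B)$ I would cite Theorem~II.4.2 of \cite{ShelahIso}, the classical characterization of $FCP$ by a definable family of equivalence relations with finitely but unboundedly many classes. For concreteness I would record the easy half $(B)\Rightarrow(A)$ by hand: given $\phi(x,y,\bar z)$ as in (B) and, for arbitrarily large $n$, a tuple $\bar c_n$ with $E_{\bar c_n}$ having exactly $n$ classes, choose representatives $d^n_0,\dots,d^n_{n-1}$, one per class, and apply the definition of $FCP$ to $\theta(x;y\bar z):=\lnot\phi(x,y,\bar z)$ with parameters $\bar b^n_j:=d^n_j\bar c_n$ for $j<n$. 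The set $\{\theta(x;\bar b^n_j):j<n\}$ is inconsistent, since $x$ must fall into one of the $n$ classes, while deleting any single $\bar b^n_{j_0}$ leaves a set realized by $d^n_{j_0}$ (distinct representatives are $E_{\bar c_n}$-inequivalent); letting $n$ range, this is $FCP$. The reverse implication $(A)\Rightarrow(B)$ — extracting the equivalence relations from a bare $FCP$ witness — is the nontrivial content of II.4.2, which I would simply quote.

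For $(B)\Rightarrow(C)$ I would take $\Delta$ to be a finite set of partitioned formulas with object variable $\bar x = x_0\bar x_1$, $|\bar x_1|=|\bar z|$, containing a formula asserting $\bar x_1=\bar y_1$ and one asserting $\phi(x_0,y_0,\bar x_1)$, and, for each large $n$, the set $\{\bar a^n_i : i<n\}$ with $\bar a^n_i:=d^n_i\bar c_n$, of (representative, parameter) pairs taken from the $E_{\bar c_n}$ with exactly $n$ classes. Any two distinct members share the coordinate $\bar c_n$ and have object-coordinates in distinct $E_{\bar c_n}$-classes, so all pairs realize a common $\Delta$-type; hence $\{\bar a^n_i:i<n\}$ is $\Delta$-indiscernible, in particular $(\Delta,n)$-indiscernible. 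It does not extend to any $(\Delta,2)$-indiscernible set of size $n+1$, since a further tuple would be forced to share $\bar c_n$ and to have object-coordinate outside all $n$ classes of $E_{\bar c_n}$, which is impossible. These sets witness (C); this step is routine.

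Finally $(C)\Rightarrow(A)$ is Theorem~II.4.6 of \cite{ShelahIso}: for a stable $NFCP$ theory, for each finite $\Delta$ and each $n$ there is a finite bound on the sizes of $(\Delta,n)$-indiscernible sets failing to extend to infinite ones, whose contrapositive is precisely $(C)\Rightarrow(A)$. (Note that $NFCP$ already implies stability by Fact~\ref{Fact1}, so no generality is lost.) The main obstacle, were one to avoid citing the book, is exactly these two implications, $(A)\Rightarrow(B)$ and $(C)\Rightarrow(A)$: both rest on the rank and multiplicity theory of $NFCP$ developed in Chapter~II of \cite{ShelahIso} and the uniform finite bounds it yields. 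Granting those, the theorem assembles with no further work.
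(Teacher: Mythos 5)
Your proposal matches the paper's proof exactly: the paper also cites Theorem II.4.2 of \cite{ShelahIso} for $(A)\Leftrightarrow(B)$, Theorem II.4.6 for $(C)\Rightarrow(A)$, and dismisses $(B)\Rightarrow(C)$ as trivial (which you flesh out correctly). Same decomposition, same references — the only difference is that you supply the routine details the paper leaves implicit.
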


The last bit of notation we need is that of average types of indiscernible sets.

\begin{definition}
	Suppose $T$ is a countable stable theory, and $M \models T$, and $I \subseteq M$ is indiscernible. Then the average type of $I$ over $M$, $\mbox{Av}(I, M)$, is the set of all $\phi(x, \overline{a})$ with $\overline{a} \in M^{<\omega}$, such that for all but finitely many $b \in I$, $M \models \phi(b, \overline{a})$; this is a complete type over $M$. Two indiscernible sequences $I, J$ are equivalent if they have the same average type. (See Definitions III.1.5, III.1.6 and Lemmas III.1.7 and III.1.8 of \cite{ShelahIso}.)
	
\end{definition}

\begin{lemma}\label{InterpKeislerLemma0}
	Suppose $T$ is a countable stable theory, $M  \models T$, $A \subseteq M$, and $p(x) \in S(A)$ is stationary; let $q(x)$ be the nonforking extension of $p(x)$ to $M$. Suppose $I \subseteq M$ is an independent set of realizations of $p(x)$. Then $I$ is indiscernible over $A$, and for every $b \in I$, $b$ realizes $q(x) \restriction_{A \cup (I \backslash \{b\})}$. Further, $\mbox{Av}(I, M) = q(x)$.
\end{lemma}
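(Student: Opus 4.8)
The plan is to establish the three assertions in the order in which they are stated, each one using the previous. Throughout one works inside a monster model $\mathfrak{C} \succeq M$ and freely uses the basic calculus of nonforking in stable theories (symmetry, finite character, transitivity and monotonicity in the base), the fact that a nonforking extension of a stationary type is again stationary, and the elementary theory of average types: for $J$ an infinite indiscernible set with $J \subseteq N$, $\mbox{Av}(J,N)$ is a complete type over $N$ that does not fork over $J$, nor over any infinite subset of $J$ (see \cite{ShelahIso}, III.1). Recall that ``$I$ is an independent set of realizations of $p$ over $A$'' unwinds to: each $b \in I$ realizes $p$, and for each $b \in I$ one has $b \ind_A (I \setminus \{b\})$.

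For the assertion that each $b \in I$ realizes $q \restriction_{A \cup (I \setminus \{b\})}$: fix $b \in I$. Since $A \subseteq M$ and $I \subseteq M$, the set $A \cup (I\setminus\{b\})$ lies inside $M$, and $\tp(b/A\cup(I\setminus\{b\}))$ is a nonforking extension of $\tp(b/A) = p$. Because $q$ does not fork over $A$, its restriction $q\restriction_{A\cup(I\setminus\{b\})}$ is also a nonforking extension of $p$; by stationarity of $p$ the two coincide. For indiscernibility of $I$ over $A$, I would prove by induction on $n$ that $\tp(b_1,\dots,b_n/A)$ depends only on $n$ for distinct $b_1,\dots,b_n \in I$; this is total indiscernibility over $A$, which in particular gives indiscernibility in the sense of the definitions above. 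The case $n=1$ is immediate. For the step, given distinct tuples $(b_1,\dots,b_{n+1})$ and $(c_1,\dots,c_{n+1})$ from $I$, the inductive hypothesis provides $\sigma \in \Aut(\mathfrak{C}/A)$ with $\sigma(b_i)=c_i$ for $i \le n$. By the previous assertion $b_{n+1}$ realizes $q\restriction_{A\cup\{b_1,\dots,b_n\}}$; applying $\sigma$ (which fixes $A$ pointwise and preserves nonforking), $\sigma(q\restriction_{A\cup\{b_1,\dots,b_n\}})$ is a nonforking extension of $p$ to $A\cup\{c_1,\dots,c_n\}$, hence equals $q\restriction_{A\cup\{c_1,\dots,c_n\}}$ by stationarity, which $c_{n+1}$ also realizes. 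So some automorphism fixing $A\cup\{c_1,\dots,c_n\}$ sends $\sigma(b_{n+1})$ to $c_{n+1}$; composing with $\sigma$ yields an element of $\Aut(\mathfrak{C}/A)$ carrying $(b_1,\dots,b_{n+1})$ to $(c_1,\dots,c_{n+1})$.

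For the identity $\mbox{Av}(I,M)=q$ (here $I$ must be infinite, else $\mbox{Av}(I,M)$ is not defined), fix $b_0 \in I$ and set $J = I\setminus\{b_0\}$, an infinite set indiscernible over $A$ with $J \subseteq M$. Using indiscernibility of $I$ over $A$ — swap $b_0$ with any element of $J$ not occurring in the parameter tuple — one sees $\tp(b_0/A\cup J) = \mbox{Av}(J,A\cup J)$; combined with the first assertion this gives $\mbox{Av}(J,A\cup J) = q\restriction_{A\cup J}$, which does not fork over $A$. On the other hand $\mbox{Av}(J,M)$ does not fork over $J$, hence (monotonicity) not over $A\cup J$, and it restricts there to $\mbox{Av}(J,A\cup J) = q\restriction_{A\cup J}$; by transitivity $\mbox{Av}(J,M)$ does not fork over $A$ and restricts to $p$ over $A$, so $\mbox{Av}(J,M)=q$ by stationarity. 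Since $I$ is infinite and $I,J$ differ in one point, $\mbox{Av}(I,M)=\mbox{Av}(J,M)$, so $\mbox{Av}(I,M)=q$.

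The one genuinely delicate point is this last chain of nonforking manipulations — using monotonicity to pass from ``does not fork over $J$'' up to ``does not fork over $A\cup J$'' and then transitivity to descend back to $A$ — together with the observation identifying $\tp(b_0/A\cup J)$ with an average type (and the tacit check that $\mbox{Av}(J,A\cup J)$ is a complete type, which it is, being the average type of an infinite indiscernible set in a stable theory). Everything else is a direct appeal to stationarity of $p$.
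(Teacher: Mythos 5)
Your proof is correct and follows essentially the same route as the paper's: both reduce everything to the observation that, by stationarity of $p$, each $b\in I$ realizes $q\restriction_{A\cup(I\setminus\{b\})}$, and both then derive indiscernibility over $A$ and the identity $\mbox{Av}(I,M)=q$ from this. The only difference is cosmetic — the paper delegates those last two steps to Lemma III.1.10 of Shelah's book, whereas you unwind that lemma by the standard automorphism/induction argument and the nonforking calculus for average types, which is exactly the content of the cited lemma.
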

\begin{proof}
	This is essentially Lemma III.1.10 of \cite{ShelahIso}. More precisely, enumerate $I = (b_\alpha: \alpha \leq \alpha_*)$, so that $b_{\alpha_*} = b$. Since $I$ is independent, we have that each $b_\alpha$ realizes $q(x) \restriction_{A \cup \{b_\beta: \beta < \alpha\}}$, and hence $b$ realizes $q(x) \restriction_{A \cup (I \backslash \{b\})}$. Further, the hypotheses of Lemma III.1.10 are now met, and so we conclude.
\end{proof}

The following is a piece of Lemma III.3.10 of \cite{ShelahIso}. 
\begin{lemma}\label{ShelahStableSatLemma2}
	Suppose $T$ is a countable stable theory, and $M \models T$, and $p(x) \in S^1(M)$. Suppose that there is an indiscernible set $I \subseteq M$ such that $\mbox{Av}(I, M) = p(x)$.	Then for every $A \subseteq M$ with $|A|+ \aleph_0 < |I|$, $p(x) \restriction_A$ is realized in $M$.
\end{lemma}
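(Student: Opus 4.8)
I would prove this by a direct cardinality argument, using only that the average type is governed by the cofinite-majority condition on $I$ (which is exactly the content of the definition of $\mbox{Av}(I,M)$ recalled above for stable $T$, cf.\ Lemmas III.1.7--III.1.8 of \cite{ShelahIso}). Fix $A \subseteq M$ with $|A| + \aleph_0 < |I|$, and let $\Sigma(x) := p(x) \restriction_A$; this is a complete type over $A$, being the restriction of the complete type $p(x) \in S^1(M)$.

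The key step: for each formula $\psi(x, \overline{a}) \in \Sigma(x)$ (so $\overline{a} \in A^{<\omega}$ and $\psi(x,\overline{a}) \in p(x) = \mbox{Av}(I,M)$), the ``exceptional set'' $E_\psi := \{b \in I : M \not\models \psi(b, \overline{a})\}$ is finite, directly from the definition of the average type. Now I count how many such formulas there are: since $\mathcal{L}$ is countable and $\overline{a}$ ranges over finite tuples from $A$, the number of formulas appearing in $\Sigma(x)$ is at most $\aleph_0 \cdot (|A| + \aleph_0) = |A| + \aleph_0$. Hence $E := \bigcup_{\psi \in \Sigma} E_\psi$ is a union of at most $|A| + \aleph_0$ finite sets, so $|E| \le (|A| + \aleph_0) \cdot \aleph_0 = |A| + \aleph_0 < |I|$.

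Therefore there exists $b \in I \setminus E$. By the definition of $E$, we have $M \models \psi(b,\overline{a})$ for every $\psi(x,\overline{a}) \in \Sigma(x)$, so $\tp(b/A) \supseteq \Sigma(x)$; since both are complete types over $A$, in fact $\tp(b/A) = \Sigma(x) = p(x)\restriction_A$. As $b \in I \subseteq M$, this exhibits a realization of $p(x)\restriction_A$ inside $M$, as required.

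I do not expect a genuine obstacle here: the entire content is the bound ``a union of $\le |A| + \aleph_0$ finite sets has size $< |I|$,'' and the only hypothesis used beyond $I \subseteq M$ is that $\mbox{Av}(I,M)$ is a complete type each of whose formulas over $M$ holds of all but finitely many elements of $I$. (Stability enters only insofar as it is what guarantees that an indiscernible set has a well-defined complete average type.)
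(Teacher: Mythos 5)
Your proof is correct and is essentially the same argument the paper gives: restrict $p$ to $A$, observe that each formula in the restriction fails on only finitely many elements of $I$, remove those exceptional points, and note that fewer than $|I|$-many finite sets cannot cover $I$. Your write-up is merely a bit more explicit about the cardinal arithmetic bounding the number of formulas by $|A|+\aleph_0$.
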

\begin{proof}
	Let $q(x) = p(x) \restriction_A$. Note that $|q(x)| < |I|$. For each $\phi(x) \in q(x)$, let $I_\phi \subseteq I$ be the cofinite set of all $ a \in I_\phi$ such that $M \models \phi(a)$. Let $J = \bigcap_{\phi \in q(x)} I_\phi$; note that $|J| = |I|$ as we are removing fewer than $|I|$-many finite sets. But any $a \in J$ realizes $q(x)$.
\end{proof}

\section{Pseudosaturation}\label{SurveyInterpOrderSec}

Traditionally, to show that $T_0 \trianglelefteq T_1$, one carefully picks a theory $T_*$ interpreting both $T_0$ and $T_1$, and containing enough set theory to make the argument at hand work. We find it more convenient to just expand to a model of set theory, which gives us everything we could possibly need.

\begin{definition} $ZFC^-$ is $ZFC$ without powerset, and with replacement strengthened to collection, and with choice strengthened to the well-ordering principle; this is as in \cite{ZFCminus}. We like $ZFC^-$ since it is a reasonably strong fragment of $ZFC$ and it has many transitive models; in particular, for every hereditarily countable set $A$, there is a countable transitive $V \models ZFC^-$ with $A \in V$. The reader who is comfortable with mild large cardinals should feel free to replace $ZFC^-$ by $ZFC$ (or more) everywhere. In the other direction, really we just need weak fragments of arithmetic, as worked out by Malliaris and Shelah \cite{pEqualsT2} \cite{InterpOrders} \cite{pEqualsTref}. 
\end{definition}

\begin{remark}
	$ZFC^-$ is strong enough to prove most standard theorems which do not explicitly mention powersets. For instance, $ZFC^-$ proves that every consistent theory has a model, and every countable consistent theory has a countable model. Also, if $V$ is a transitive model of $ZFC^-$, then $\Pi^1_1$ statements are absolute to $V$, so (for instance) if $T \in V$ is a consistent theory, then $T$ is consistent in $V$, and so $T$ has a model in $V$; further, if $T$ is countable in $V$ then $T$ has a countable model in $V$. 
\end{remark}

\begin{definition}
	Say that $\hat{V} \models ZFC^-$ is an $\omega$-model, or is $\omega$-standard, if every natural number of $\hat{V}$ is standard (i.e. has finitely many $\hat{\in}$-predecessors).
	
	$V$ will denote a transitive model of $ZFC^-$. $\hat{V}$ will denote an $\omega$-nonstandard model of $ZFC^-$. Frequently $\hat{V}$ will come from an embedding $\mathbf{j}:V \preceq \hat{V}$, where $V$ is transitive. Whenever $\hat{V} \models ZFC^-$, we will identify $HF$ (the hereditarily finite sets) with its copy in $\hat{V}$. Other elements of $\hat{V}$ will usually be decorated with a hat, for instance we write $\hat{\omega}$ rather than $(\omega)^{\hat{V}}$; but sometimes readability takes precedence. Given $X \subseteq \hat{V}$, we say that $X$ is an internal subset of $\hat{V}$ if there is some $\hat{X} \in \hat{V}$ such that $X = \{\hat{y} \in \hat{V}: \hat{y} \hat{\in} \hat{X}\}$. In this case, we usually identify $X$ with $\hat{X}$ and will write that $X \in \hat{V}$. 

	Suppose $V \models ZFC^-$ is transitive, and $\mathbf{j}: V \preceq \hat{V}$, and $M$ is an $\mathcal{L}$-structure in $V$. Note that $\mathbf{j}(M)$ is a $\mathbf{j}(\mathcal{L})$-structure, where possibly some of the symbols of $\mathbf{j}(\mathcal{L})$ are nonstandard; let $\mathbf{j}_{\std}(M)$ be the ``reduct" to $\mathcal{L}$. Formally, $\mathbf{j}_{\std}(M)$ is the structure with universe $\{a \in \hat{V}: a \in \mathbf{j}(M)\}$, and where for each $n$-ary relation symbol $R \in \mathcal{L}$, $R^{\mathbf{j}_{\std}(M)}$ is $\{\overline{a} \in (\mathbf{j}_{\std}(M))^n: \hat{V} \models \mathbf{j}(R^M)(\overline{a})\}$, etc.
\end{definition}

\begin{example}
	Suppose $\mathcal{U}$ is an ultrafilter on $\mathcal{P}(\lambda)$; let $V$ be any transitive model of $ZFC^-$, write $\hat{V} = V^\lambda/\mathcal{U}$ and let $\mathbf{j}: V \preceq \hat{V}$ be the {\L}o{\'s} embedding. Then for any structure $M \in V$, $\mathbf{j}_{\std}(M) \cong M^\lambda/\mathcal{U}$.

\end{example}

Thus we can view the $\mathbf{j}_{\std}$ operator as a generalized ultrapower.

\vspace{2 mm}

\noindent \textbf{Conventions.} We operate entirely in $ZFC$; thus everything is a set, including formulas. Whenever $T$ is a complete countable theory, we will suppose that $T$ comes equipped with an injection from the symbols of $T$ into $\omega$. In particular, if $V$ is a transitive model of $ZFC^-$ with $T \in V$, then $T$ is countable in $V$. The advantage of this is that our theorems become easier to state; without this convention we would just insert the hypothesis that $T$ is countable in $V$ everywhere. 

\begin{example}
	Suppose $M, N$ are $\mathcal{L}$-structures. The following are equivalent, by an alternating chains argument:
	\begin{itemize}
		\item $M \equiv N$.
		\item For some transitive $V \models ZFC^-$ with $M, N \in V$, there is some $\mathbf{j}: V \preceq \hat{V}$ such that $\mathbf{j}_{\std}(M) \cong \mathbf{j}_{\std}(N)$.
		\item For every transitive $V \models ZFC^-$ with $M, N \in V$, there is some $\mathbf{j}: V \preceq \hat{V}$ such that $\mathbf{j}_{\std}(M) \cong \mathbf{j}_{\std}(N)$.
	\end{itemize}
	
	Note this is a baby version of the Keisler-Shelah theorem \cite{KeislerShelah}, which says we can in fact arrange $\hat{V} = V^\lambda/\mathcal{U}$, for some $\lambda$ and some ultrafilter $\mathcal{U}$ on $\mathcal{P}(\lambda)$.
\end{example}

\vspace{2 mm}

We can phrase $\trianglelefteq^*_{\lambda \kappa}$ in terms of models of $ZFC^-$ as follows. Note that the assertion ``$\hat{V}$ is $\omega$-nonstandard" is redundant except when $\kappa = 1$, as otherwise it follows from $\kappa$-saturation.

\begin{lemma}\label{SatAndPseudo0}
	Suppose $\lambda$ is an infinite cardinal, $\kappa$ is an infinite cardinal or $1$, and $T_0, T_1$ are complete countable theories. Then the following are equivalent:
	\begin{itemize}
		\item[(A)] $T_0 \trianglelefteq^*_{\lambda \kappa} T_1$;
		\item[(B)] There is some countable transitive $V \models ZFC^-$ with $T_0, T_1 \in V$, and some $M_i \models T_i$ both in $V$, such that whenever $\mathbf{j}:V \preceq \hat{V}$, if $\hat{V}$ is $\kappa$-saturated and $\omega$-nonstandard and if $\mathbf{j}_{\std}(M_1)$ is $\lambda^+$-saturated, then $\mathbf{j}_{\std}(M_0)$ is $\lambda^+$-saturated.
	\end{itemize}
\end{lemma}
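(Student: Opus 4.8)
The plan is to prove the two implications separately. For (B)$\Rightarrow$(A) I would exhibit one concrete witness, essentially the elementary diagram of $V$ with a single extra constant pinning down a nonstandard natural number; for (A)$\Rightarrow$(B) I would compress an arbitrary $T_*$-witness into a countable transitive model of $ZFC^-$ and then observe that $\kappa$-saturation transfers through $\mathbf{j}_{\std}$.

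\emph{(B)$\Rightarrow$(A).} Fix $V, M_0, M_1$ as in (B). By compactness choose an $\omega$-nonstandard $\hat{V}_0 \succeq V$ together with a nonstandard natural number $d \in \hat{V}_0$, and let $T_*$ be the complete theory of $(\hat{V}_0, (v)_{v \in V}, d)$ in the language $\mathcal{L}_{\mathrm{set}} \cup \{c_v : v \in V\} \cup \{d\}$; this is a complete countable theory since $V$ is countable. Using the constant $c_{M_i}$ one reads off, definably and without further parameters, the domain and interpreted symbols of $M_i$, giving data $I_i$; since $M_i \models T_i$ and this is expressible over $V$, each $I_i$ is a genuine interpretation of $T_i$ in $T_*$. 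Now let $M_* \models T_*$ be $\kappa$-saturated. As $T_*$ extends the elementary diagram of $V$, the map $\mathbf{j}: v \mapsto c_v^{M_*}$ is an elementary embedding $V \preceq M_* =: \hat{V}$; as $T_*$ contains ``$d \in \omega$'' and ``$d \neq \bar{n}$'' for every $n$, $\hat{V}$ is $\omega$-nonstandard; and $\kappa$-saturation of $M_*$ is the same as $\kappa$-saturation of $\hat{V}$ as an $\mathcal{L}_{\mathrm{set}}$-structure (adjoining countably many names for elements changes nothing). Finally, unwinding the definition of $I_i$ gives $I_i(M_*) = \mathbf{j}_{\std}(M_i)$. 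So (B), applied to this $\mathbf{j}$, yields precisely that $I_1(M_*)$ being $\lambda^+$-saturated implies $I_0(M_*)$ is; as $M_*$ was an arbitrary $\kappa$-saturated model of $T_*$, this is $T_0 \trianglelefteq^*_{\lambda\kappa} T_1$.

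\emph{(A)$\Rightarrow$(B).} Fix a complete countable $T_*$ and interpretations $I_0, I_1$ witnessing $T_0 \trianglelefteq^*_{\lambda\kappa} T_1$; since $T_0, T_1, T_*, I_0, I_1$ are hereditarily countable, pick a countable transitive $V \models ZFC^-$ containing them. Inside $V$ take a countable $N_* \models T_*$ and set $M_i := I_i(N_*) \in V$; by absoluteness of satisfaction, $M_i \models T_i$. Now suppose $\mathbf{j}: V \preceq \hat{V}$ with $\hat{V}$ $\kappa$-saturated and $\omega$-nonstandard, and put $\hat{N}_* := \mathbf{j}_{\std}(N_*)$. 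Since every axiom of $T_*$ is a standard sentence true of $\mathbf{j}(N_*)$ in $\hat{V}$, we get $\hat{N}_* \models T_*$, and a routine induction on formulas shows that taking the $\mathbf{j}_{\std}$-reduct commutes with applying the (standard, finite) interpreting formulas, so $\mathbf{j}_{\std}(M_i) \cong I_i(\hat{N}_*)$. The crux is that $\hat{N}_*$ inherits $\kappa$-saturation from $\hat{V}$: given $A \subseteq \hat{N}_*$ with $|A| < \kappa$ and a type $p(x)$ over $A$ in $T_*$, the $\mathcal{L}_{\mathrm{set}}$-type over $A \cup \{\mathbf{j}(N_*)\}$ asserting ``$x \hat{\in} \mathbf{j}(N_*)$, and $\mathbf{j}(N_*) \models \phi(x,\bar{a})$ for each $\phi(x,\bar{a}) \in p$'' is finitely satisfiable in $\hat{V}$ (each finite fragment of $p$ is realized in $\hat{N}_*$), hence realized in $\hat{V}$ by $\kappa$-saturation, and any realization lies in $\hat{N}_*$ and realizes $p$. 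Applying (A) to the $\kappa$-saturated model $\hat{N}_*$ now gives: if $\mathbf{j}_{\std}(M_1) \cong I_1(\hat{N}_*)$ is $\lambda^+$-saturated then $I_0(\hat{N}_*) \cong \mathbf{j}_{\std}(M_0)$ is too, which is (B). I expect the only real content to lie in this saturation-transfer step together with the attendant bookkeeping; in (B)$\Rightarrow$(A) the sole nonroutine move is adjoining the constant $d$, included just to meet the $\omega$-nonstandardness hypothesis of (B) in the case $\kappa = 1$, since for $\kappa \geq \aleph_0$ that hypothesis is automatic from saturation, as noted before the lemma.
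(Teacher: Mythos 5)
Your proof is correct and follows the same overall strategy as the paper, with one genuine improvement worth flagging. In the paper's (B)$\Rightarrow$(A) direction, $T_*$ is taken to be just the elementary diagram of $V$, and the argument that this ``witnesses $T_0 \trianglelefteq^*_{\lambda\kappa} T_1$'' is left unelaborated. For $\kappa \geq \aleph_0$ this is unproblematic: any $\kappa$-saturated $M_* \models T_*$ realizes the $\emptyset$-type of a nonstandard natural number, so the resulting $\hat{V}$ is automatically $\omega$-nonstandard and (B) applies directly. But for $\kappa = 1$ every $M_* \models T_*$ must be considered, including ones whose $\mathcal{L}_{\mathrm{set}}$-reduct is $\omega$-standard, and hypothesis (B) is silent about such $\mathbf{j}$. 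Your move of adjoining the single constant $d$ (with the axioms ``$d \in \omega$'' and ``$d \neq c_n$'' for all $n < \omega$) forces $\omega$-nonstandardness of every $\hat{V}$ arising from a model of $T_*$, and closes this case cleanly while keeping $T_*$ complete and countable; the interpretations $I_i$ are unaffected since they do not mention $d$. Your (A)$\Rightarrow$(B) direction is essentially the paper's, just written out in more detail: in particular the saturation-transfer step (realizing a $T_*$-type over a small $A \subseteq \hat{N}_*$ by realizing the corresponding $\mathcal{L}_{\mathrm{set}}$-type over $A \cup \{\mathbf{j}(N_*)\}$ in $\hat{V}$) is exactly the classical analogue of the paper's Lemma~\ref{SatAndPseudo3First}, and the paper invokes it without proof.
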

\begin{proof}
	(A) implies (B): Suppose $T_*$ and interpretations $I_0, I_1$ witness that $T_0 \trianglelefteq^*_{\lambda \kappa} T_1$. Let $M_* \models T_*$. Choose a countable transitive $V \models ZFC^-$ with $T_*, M_*, T_0, T_1, I_0, I_1 \in V$. This works, because if $\mathbf{j}:V \preceq \hat{V}$ is such that $\hat{V}$ is $\kappa$-saturated, then $\mathbf{j}_{\std}(M_*)$ is $\kappa$-saturated, and $\mathbf{j}_{\std}(M_i) \cong I_i(\mathbf{j}_{\std}(M_*))$.
	
	(B) implies (A): Let $T_*$ be the elementary diagram of $V$. We get natural interpretations  $I_i$ of $T_i$ in $T_*$, using the constant symbols for $M_i$.  Then this witnesses $T_0 \trianglelefteq^*_{\lambda \kappa} T_1$.
\end{proof}

\begin{remark}
	In the definition of $\trianglelefteq^*_\kappa$, we don't know of any cases where the particular choice of $V$ matters. If $\kappa$ is infinite, we also don't know if it matters if we allow $V$ to be of arbitrary cardinality. If $\kappa = 1$, then this does matter, see Remark~\ref{LanguageCardRemark}.
\end{remark}

A serious annoyance in dealing with the interpretability orders is that the choices of $M_0, M_1 \in V$ in Lemma~\ref{SatAndPseudo0} are often delicate; this is in contrast with Keisler's order, where Keisler's fundamental theorem \cite{Keisler} says that the choice of $M$ does not matter. We recapture this phenomenon by restricting to pseudofinite partial types. Some notation:

\begin{definition}
	Suppose $V \models ZFC^-$ is transitive, and $\mathbf{j}: V \preceq \hat{V}$. Say that $X \subseteq \hat{V}$ is pseudofinite (with respect to $\hat{V}$) if there is some $\hat{X} \in \hat{V}$ finite in the sense of $\hat{V}$, with $X \subseteq \hat{X}$. So if $\hat{X} \in \hat{V}$, then $\hat{X}$ is pseudofinite if and only if it is finite in the sense of $\hat{V}$.
\end{definition}

The following is the fundamental consequence of regularity of ultrafilters that are needed for Keisler's order. For the converse, we don't know if the assumption that $|V| \geq \lambda$ is necessary.

\begin{theorem}\label{RegularUltsThm1}
	Suppose $\mathcal{U}$ is a $\lambda$-regular ultrafilter on $\mathcal{P}(\lambda)$, $V \models ZFC^-$ is transitive. Then every subset of $V^\lambda/\mathcal{U}$ of size at most $\lambda$ is pseudofinite. 
	
	Conversely, if $|V| \geq \lambda$ and every subset of $V^\lambda/\mathcal{U}$ of size at most $\lambda$ is pseudofinite, then $\mathcal{U}$ is $\lambda$-regular.
\end{theorem}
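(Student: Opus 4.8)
The plan is to prove the two directions separately, both going through the characterization of $\lambda$-regularity in terms of the existence of a suitable family $\mathcal{X} \subseteq \mathcal{U}$.

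\textbf{First direction.} Suppose $\mathcal{U}$ is $\lambda$-regular, witnessed by $\mathcal{X} = \{X_\alpha : \alpha < \lambda\} \subseteq \mathcal{U}$ with every infinite subintersection empty; equivalently, for each $t \in \lambda$ the set $u(t) = \{\alpha < \lambda : t \in X_\alpha\}$ is finite. Let $\{[f_\beta]_{\mathcal{U}} : \beta < \lambda\}$ be an arbitrary subset of $V^\lambda/\mathcal{U}$ of size at most $\lambda$, where each $f_\beta : \lambda \to V$. Working in $V$, for each $t \in \lambda$ define $\hat{X}(t)$ to be the finite set $\{f_\beta(t) : \beta \in u(t)\}$ — this is a legitimate element of $V$ since $u(t)$ is finite and $V \models ZFC^-$ can form finite sets (indexed by $u(t)$, which I can re-enumerate as an element of $HF$). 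Let $\hat{X} = [t \mapsto \hat{X}(t)]_{\mathcal{U}}$; then $\hat{X} \in V^\lambda/\mathcal{U}$ and, by {\L}o{\'s}, $\hat{V} \models$ ``$\hat{X}$ is finite'' since each $\hat{X}(t)$ is genuinely finite in $V$. It remains to check $[f_\beta]_{\mathcal{U}} \,\hat{\in}\, \hat{X}$ for every $\beta < \lambda$: the set $\{t : f_\beta(t) \in \hat{X}(t)\} \supseteq \{t : \beta \in u(t)\} = \{t : t \in X_\beta\} = X_\beta \in \mathcal{U}$, so by {\L}o{\'s} we are done. Hence every such subset is pseudofinite.

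\textbf{Converse.} Suppose $|V| \geq \lambda$ and every subset of $V^\lambda/\mathcal{U}$ of size at most $\lambda$ is pseudofinite. Fix an injection $\alpha \mapsto a_\alpha$ of $\lambda$ into $V$. For each $\alpha < \lambda$ let $c_\alpha \in V^\lambda/\mathcal{U}$ be the constant function with value $a_\alpha$; these are $\lambda$-many distinct elements, so by hypothesis there is $\hat{X} \in V^\lambda/\mathcal{U}$, finite in the sense of $\hat{V}$, with $c_\alpha \,\hat{\in}\, \hat{X}$ for all $\alpha < \lambda$. Represent $\hat{X}$ by $t \mapsto \hat{X}(t)$ with each $\hat{X}(t)$ a finite subset of $V$ (we may assume this on a $\mathcal{U}$-large set of $t$, hence everywhere after modifying on a null set). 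For each $\alpha < \lambda$ set $X_\alpha = \{t \in \lambda : a_\alpha \in \hat{X}(t)\}$; since $c_\alpha \,\hat{\in}\, \hat{X}$, {\L}o{\'s} gives $X_\alpha \in \mathcal{U}$. I claim $\mathcal{X} = \{X_\alpha : \alpha < \lambda\}$ witnesses $\lambda$-regularity: for any fixed $t$, the set $\{\alpha < \lambda : t \in X_\alpha\} = \{\alpha : a_\alpha \in \hat{X}(t)\}$ is finite because $\hat{X}(t)$ is finite and $\alpha \mapsto a_\alpha$ is injective; hence every infinite subfamily of $\mathcal{X}$ has empty intersection.

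\textbf{Expected obstacle.} The arguments are short, and the only thing to be careful about is the bookkeeping in the first direction: making sure the function $t \mapsto \hat{X}(t)$ really is (coded by) an element of $V$ — this uses that $u(t)$ is a finite subset of $\lambda$ and that $V \models ZFC^-$ has enough closure to form the corresponding finite set $\{f_\beta(t) : \beta \in u(t)\}$, which is fine since $ZFC^-$ proves the existence of finite sequences and their ranges. In the converse, the mild subtlety is passing from ``$\hat{X}$ is $\hat{V}$-finite'' to an honest representing function whose values are genuinely finite sets of $V$; this is immediate from {\L}o{\'s} applied to the formula ``$x$ is finite.'' Neither point is a real difficulty, so I expect the proof to be essentially a direct translation of $\lambda$-regularity through the {\L}o{\'s} map, exactly as the surrounding discussion suggests, with the role of $|V| \geq \lambda$ being precisely to supply $\lambda$ distinct constants for the converse.
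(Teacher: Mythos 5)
Your proof is correct and takes essentially the same route as the paper: in the forward direction both you and the paper build the finite-valued representing function by collecting $f_\beta(t)$ over the finite index set $\{\beta : t \in X_\beta\}$, and in the converse both pull back a $\hat{V}$-finite cover of $\lambda$-many constants to get the regular family.
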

\begin{proof}
	Write $\hat{V} := V^\lambda/\mathcal{U}$, and let $\mathbf{j}: V \preceq \hat{V} := V^\lambda/\mathcal{U}$ is the {\L}o{\'s} embedding.
	
	First, suppose $\mathcal{U}$ is $\lambda$-regular, and $X \subseteq \hat{V}$ has size at most $\lambda$; enumerate $X = \{\hat{x}_\alpha: \alpha< \lambda\}$ (with repetitions if necessary). Let $(A_\alpha: \alpha < \lambda)$ be a regular family from $\mathcal{U}$, i.e. the intersection of every infinite subset of $\overline{A}$ is empty.
	
	Write each $\hat{x}_\alpha = [f_\alpha/\mathcal{U}]$ where $f_\alpha: \lambda \to V$. We define $g: \lambda \to [V]^{<\aleph_0}$. Namely, let $\gamma< \lambda$ be given; let $I_\gamma$ be the finite set of all $\alpha < \lambda$ with $\gamma \in A_\alpha$. Let $g(\gamma) = \{f_\alpha(\gamma): \alpha \in I_\gamma\}$. Clearly this works.
	
	Conversely, suppose $|V| \geq \lambda$ and every subset of $\hat{V}$ of size at most $\lambda$ is pseudofinite. Choose $(a_\alpha: \alpha < \lambda)$ a sequence of distinct elements of $V$. Choose $\hat{X} \in \hat{V}$, finite in the sense of $\hat{V}$, such that each $\mathbf{j}(a_\alpha) \in \hat{X}$. Write $\hat{X} = (X_\beta: \beta < \lambda)/\mathcal{U}$, where each $X_\alpha \in [V]^{<\aleph_0}$. For each $\alpha < \lambda$, let $A_\alpha = \{\beta < \lambda: a_\alpha \in X_\beta\}$. Then each $A_\alpha \in \mathcal{U}$. Further, given $I \subseteq \lambda$ infinite, $\bigcap_{\alpha \in I} A_\alpha = \emptyset$, since if $\beta$ were in the intersection then we would have $\{a_\alpha: \alpha \in I\} \subseteq X_\beta$, contradicting that $X_\beta$ is finite.
\end{proof}

The following characterization of pseudofinite types will be used frequently. 

\begin{lemma}\label{PseudoTypeChar}
	Suppose $V \models ZFC^-$ is transitive, and $\mathbf{j}: V \preceq \hat{V}$, and $M$ is an $\mathcal{L}$-structure in $V$ for some language $\mathcal{L}$ which is countable in $V$. Suppose $\hat{V}$ is $\omega$-nonstandard, and $p(\overline{x})$ is a partial type over $\mathbf{j}_{\std}(M)$. Then $p(\overline{x})$ is pseudofinite if and only if there is some pseudofinite $X \subseteq \mathbf{j}_{\std}(M)$ such that $p(\overline{x})$ is over $X$.
\end{lemma}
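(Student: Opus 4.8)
The plan is to prove the two directions directly; the forward direction is nearly immediate, while the reverse is where the hypotheses (that $\hat{V}$ is $\omega$-nonstandard and that $\mathcal{L}$ is countable in $V$) are actually used. First I would fix, inside $V$, a definable coding of ``$\mathcal{L}$-formula with parameters'': since $\mathcal{L}$ is countable in $V$, the set $\mathrm{Fml}$ of all $\mathcal{L}$-formulas with free variables among the fixed tuple $\overline{x}$ of $p$ together with some finite tuple of parameter-variables is countable in $V$, so $V$ has a bijection $e: \mathrm{Fml} \to \omega$; a formula-with-parameters is coded as a pair $(\phi, \overline{a})$ with $\phi \in \mathrm{Fml}$ and $\overline{a}$ a tuple matching the parameter-arity of $\phi$. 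Applying $\mathbf{j}$ and passing to standard reducts, a partial type $p(\overline{x})$ over $\mathbf{j}_{\std}(M)$ -- which by definition consists of standard $\mathcal{L}$-formulas with parameters from $\mathbf{j}_{\std}(M) \subseteq \hat{V}$ -- is literally a subset of $\hat{V}$ under the image coding $\mathbf{j}(e)$, and this is the sense in which ``$p$ is pseudofinite'' is read.

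For the direction ($\Rightarrow$): suppose $p(\overline{x}) \subseteq \hat{Q}$ for some $\hat{Q} \in \hat{V}$ finite in the sense of $\hat{V}$. Working inside $\hat{V}$, let $\hat{Y}$ be the union of the (internally finite) supports of those $\hat{q} \mathrel{\hat{\in}} \hat{Q}$ that are codes of formulas-with-parameters; then $\hat{Y} \in \hat{V}$ is internally finite, being an internally finite union of internally finite sets. Let $X$ be the set of all parameters appearing in members of $p$. Every such parameter lies in the support of some $\hat{q} \in p \subseteq \hat{Q}$, hence in $\hat{Y}$, so $X \subseteq \hat{Y}$ is pseudofinite; moreover $X \subseteq \mathbf{j}_{\std}(M)$ since $p$ is a type over $\mathbf{j}_{\std}(M)$, and $p$ is visibly over $X$. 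This direction uses neither $\omega$-nonstandardness nor countability of $\mathcal{L}$.

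For the direction ($\Leftarrow$): suppose $p(\overline{x})$ is over a pseudofinite $X \subseteq \mathbf{j}_{\std}(M)$, say $X \subseteq \hat{X}$ with $\hat{X} \in \hat{V}$ internally finite. The key point is that the set of standard $\mathcal{L}$-formulas is pseudofinite: since $\hat{V}$ is $\omega$-nonstandard, pick a nonstandard $\hat{n}_0 \mathrel{\hat{\in}} \hat{\omega}$; then every standard natural number is $\mathrel{\hat{<}} \hat{n}_0$, so $F := \{\phi : \mathbf{j}(e)(\phi) \mathrel{\hat{<}} \hat{n}_0\}$ is internally finite and contains every standard $\mathcal{L}$-formula, as each such has standard $e$-code. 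Now set $\hat{Q} := \{(\phi, \overline{a}) : \phi \mathrel{\hat{\in}} F \text{ and } \overline{a} \mathrel{\hat{\in}} \bigcup_{k \mathrel{\hat{<}} \hat{n}_0} \hat{X}^k\}$; this is internally finite, since $F$ is internally finite and $\bigcup_{k \mathrel{\hat{<}} \hat{n}_0} \hat{X}^k$ is an internally finite union of internally finite sets. Every member $\phi(\overline{x}, \overline{a})$ of $p$ has $\phi$ a standard formula, hence $\phi \mathrel{\hat{\in}} F$, and $|\overline{a}|$ equal to the parameter-arity of $\phi$, which is standard and so $\mathrel{\hat{<}} \hat{n}_0$, with $\overline{a} \in X^{|\overline{a}|} \subseteq \hat{X}^{|\overline{a}|}$; thus the code of $\phi(\overline{x}, \overline{a})$ lies in $\hat{Q}$. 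Hence $p \subseteq \hat{Q}$ is pseudofinite.

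I expect the only real obstacle to be getting the coding bookkeeping right -- in particular, checking that the external set $p$, the external notion of ``code of $\phi(\overline{x}, \overline{a})$,'' and the internal coding $\mathbf{j}(e)$ all cohere -- and cleanly isolating the sublemma that the standard $\mathcal{L}$-formulas form a pseudofinite set, which is precisely where $\omega$-nonstandardness of $\hat{V}$ and countability of $\mathcal{L}$ in $V$ enter. Everything else is routine manipulation of internally finite sets.
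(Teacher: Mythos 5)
Your proof is correct and takes essentially the same route as the paper: the forward direction extracts the internally finite set of parameters from the internally finite set of (internal) formulas, and the reverse direction uses $\omega$-nonstandardness together with the countability of $\mathcal{L}$ in $V$ to cut off a nonstandard initial segment of an internal enumeration of formulas, then pairs it with an internally finite bound $\hat{X}$ on $X$. In fact your reverse direction is slightly more careful than the printed one, which writes $\overline{a} \in \mathbf{j}_{\std}(M)^{\hat m}$ where (to make $\hat{\Delta}$ pseudofinite) it should restrict $\overline{a}$ to $\hat X^{\hat m}$ as you do.
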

\begin{proof}
	Suppose $p(\overline{x})$ is pseudofinite; let $X$ be the set of all parameters used in $p(\overline{x})$. We wish to show $X$ is pseudofinite. By hypothesis we can find some set $\hat{\Delta} \in \hat{V}$ finite in the sense of $\hat{V}$, with $p(\overline{x}) \subseteq \hat{\Delta}$; we can suppose each element of $\hat{V}$ is a formula over $\mathbf{j}(M)$ in the sense of $\hat{V}$. Then define $\hat{X} \in \hat{V}$ to be the set of all elements of $\mathbf{j}(M)$ which occur as a parameter in a formula in $\hat{\Delta}$. Since $\hat{\Delta}$ is finite in $\hat{V}$, so is $\hat{X}$.
	
	Conversely, suppose $p(\overline{x})$ is a partial type over $X$ with $X$ pseudofinite. Let $\overline{z} = (z_i: i < \omega)$ be variables and let $(\psi_n(\overline{x}, \overline{z}): n < \omega)$ enumerate all $\mathcal{L}$-formulas in these variables; we can choose the enumeration. After rearranging, we can suppose each $\psi_n(\overline{x}, \overline{z})$ only uses the variables $(\overline{x}, \overline{z}_n)$, where $\overline{z}_n =(z_i: i < n)$. Moreover, we can arrange that the sequence $(\psi_n(\overline{x}, \overline{z}): n < \omega)$ is in $V$. Let $(\hat{\psi}_{\hat{n}}(\overline{x}, \hat{\overline{z}}_{\hat{n}}): \hat{n}< \omega) = \mathbf{j}(\psi_n(\overline{x}, \overline{z}_n): n < \omega)$.  Choose $\hat{n} \in \hat{\omega}$ nonstandard, and let $\hat{\Delta}= \{\hat{\psi}_{\hat{m}}(\overline{x}, {\overline{a}}): \hat{m} < \hat{n}, \overline{a} \in \mathbf{j}_{\std}(M)^{\hat{m}}\}$. So $\hat{\Delta} \in \hat{V}$ is pseudofinite, and $p(\overline{x}) \subseteq \hat{\Delta}$.
\end{proof}

We now make the key definition of the paper, motivated by Theorem~\ref{RegularUltsThm1} and also (retrospectively) by Theorem~\ref{ModKeislerOrder}.

\begin{definition}
	Suppose $V \models ZFC^-$ is transitive, and $\mathbf{j}: V \preceq \hat{V}$ and $\kappa$ is an infinite cardinal.  If $M$ is a structure in $V$, then say that $\mathbf{j}_{\std}(M)$ is $\kappa$-pseudosaturated if for every pseudofinite $A \subseteq \mathbf{j}_{\std}(M)$ with $|A| < \kappa$, and for every $n < \omega$, every type $p(\overline{x}) \in S^n(A)$ is realized in $\mathbf{j}_{\std}(M)$. 
\end{definition}

Note that in the definition of $\kappa$-pseudosaturation, it is enough to consider types $p(x)$ of arity $1$. Also, by Lemma \ref{PseudoTypeChar}, when $\kappa > \aleph_0$ it is equivalent to quantify over pseudofinite types $p(\overline{x})$ of cardinality less than $\kappa$ (when $\kappa = \aleph_0$, $\kappa$-pseudosaturation is the same as $\aleph_0$-pseudosaturation, since every finite set is pseudofinite).

\begin{example}
	Suppose $V \models ZFC^-$ is transitive, and $\mathbf{j}: V \preceq \hat{V}$ is $\omega$-standard. Then pseudofinite subsets of $\hat{V}$ are the same as finite subsets of $\hat{V}$, so whenever $M \in V$ is $\aleph_0$-saturated, $\mathbf{j}_{\std}(M)$ is $\lambda^+$-pseudosaturated for all $\lambda$. But we will always exclude the case where $\hat{V}$ is $\omega$-standard.
\end{example}

The following theorem, combined with Theorem~\ref{RegularUltsThm1}, gives a new proof of Keisler's fundamental Theorem~\ref{KeislerOrigSecond}. We will eventually prove the corresponding statement for $\kappa = \aleph_0$ as well (Corollary~\ref{ModKeislerOrder}).

\begin{theorem}\label{ModKeislerOrder}
	Suppose $V \models ZFC^-$ is transitive and $\mathbf{j}: V \preceq \hat{V}$ is $\omega$-nonstandard, and $T \in V$ is a complete countable theory, and $M_0, M_1 \in V$ are two models of $T$. Suppose $\kappa$ is an uncountable cardinal. Then $\mathbf{j}_{\std}(M_0)$ is $\kappa$-pseudosaturated if and only if $\mathbf{j}_{\std}(M_1)$ is.
\end{theorem}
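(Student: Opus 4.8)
The plan is to isolate a purely ``internal'' criterion for a pseudofinite type over $\mathbf{j}_{\std}(M)$ to be realized — one depending only on $T$ and on the pair $(\hat V,\mathbf{j})$, not on $M$ — and then to move a type from $\mathbf{j}_{\std}(M_0)$ to $\mathbf{j}_{\std}(M_1)$ using nothing more than $M_0\equiv M_1$. By symmetry it is enough to show that if $\mathbf{j}_{\std}(M_1)$ is $\kappa$-pseudosaturated then so is $\mathbf{j}_{\std}(M_0)$, and (realizing one coordinate at a time) that amounts to realizing in $\mathbf{j}_{\std}(M_0)$ an arbitrary $p(x)\in S^1(A)$ with $A\subseteq\mathbf{j}_{\std}(M_0)$ pseudofinite and $|A|<\kappa$; since $\kappa$ is uncountable such a $p$ is itself a pseudofinite partial type of cardinality $<\kappa$.

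The first step is an \emph{internal realization lemma}: a pseudofinite partial type $p(x)$ over $\mathbf{j}_{\std}(M)$ is realized in $\mathbf{j}_{\std}(M)$ iff there is some $\hat q\in\hat V$, finite in the sense of $\hat V$, with $p\subseteq\hat q$ and $\hat V\models$ ``$\exists x\in\mathbf{j}(M)\ \bigwedge\hat q$''. The $(\Leftarrow)$ direction just pulls the internal witness down, using that every formula of $p$ is a standard $\mathcal L$-formula and so is absolute between $\mathbf{j}(M)$ and $\mathbf{j}_{\std}(M)$. For $(\Rightarrow)$, if $b$ realizes $p$ and $\hat p\in\hat V$ is an internally finite set of (internal) formulas in $x$ with $p\subseteq\hat p$ — available by Lemma~\ref{PseudoTypeChar} — then $\hat q:=\{\psi\in\hat p:\hat V\models\mathbf{j}(M)\models\psi(b)\}$ is internal and internally finite, contains $p$ (again by absoluteness of standard formulas), and is witnessed consistent by $b$.

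Now fix $p(x)\in S^1(A)$ over $\mathbf{j}_{\std}(M_0)$. By Lemma~\ref{PseudoTypeChar} and intersecting with $\mathbf{j}(M_0)$, choose an internally finite $\hat X\subseteq\mathbf{j}(M_0)$ with $A\subseteq\hat X$ and an internally finite set $\hat p$ of internal formulas in $x$ with parameters in $\hat X$ and $p\subseteq\hat p$; fix internal enumerations $\hat X=\langle c_j:j<\hat n\rangle$ and $\hat p=\langle\theta_i(x,\bar c):i<\hat k\rangle$, where $\bar c=\langle c_j:j<\hat n\rangle$ and $\Theta=\langle\theta_i(x,\bar y):i<\hat k\rangle$ is an internal sequence of internal $\mathbf{j}(\mathcal L)$-formulas, together with a transversal $S\subseteq\hat k$ with $p=\{\theta_i(x,\bar c):i\in S\}$ and $|S|=|p|<\kappa$; for $i\in S$ the formula $\theta_i$ is a genuine $\mathcal L$-formula of finite arity. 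For internal $R\subseteq\hat k$ put $\chi_R(\bar y):=\exists x\bigwedge_{i\in R}\theta_i(x,\bar y)$ and let $\Sigma:=\{\chi_R:R\in(\mathcal P(\hat k))^{\hat V}\}$. The crucial observation is that $\Sigma$ is \emph{internally finite}: since $\hat k\in\hat\omega$, also $|(\mathcal P(\hat k))^{\hat V}|=2^{\hat k}\in\hat\omega$. Hence the $\Sigma$-type of $\bar c$ in $\mathbf{j}(M_0)$ is, inside $\hat V$, equivalent to a single internal formula $\gamma(\bar y)$ with $\mathbf{j}(M_0)\models\exists\bar y\,\gamma$; as $M_0\equiv M_1$ forces $\mathbf{j}(M_0)\equiv\mathbf{j}(M_1)$ inside $\hat V$, we get $\mathbf{j}(M_1)\models\exists\bar y\,\gamma$, i.e.\ there is $\bar c'=\langle c'_j:j<\hat n\rangle\in\mathbf{j}(M_1)^{\hat n}$ with $\mathbf{j}(M_1)\models\chi_R(\bar c')\iff\mathbf{j}(M_0)\models\chi_R(\bar c)$ for every internal $R\subseteq\hat k$; call this $(\star)$. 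Finally, transport $p$ to $p'(x):=\{\theta_i(x,\bar c'):i\in S\}$, a partial type over the pseudofinite set $A'\subseteq\mathbf{j}_{\std}(M_1)$ consisting of the fewer-than-$\kappa$ coordinates of $\bar c'$ that actually occur, with $|p'|,|A'|<\kappa$. Applying $(\star)$ to finite standard $R\subseteq S$ (where $\chi_R$ is a standard $\mathcal L$-formula, hence absolute) shows $p'$ is finitely satisfiable in $\mathbf{j}_{\std}(M_1)$, since $p$ is over $\mathbf{j}_{\std}(M_0)$. By $\kappa$-pseudosaturation of $\mathbf{j}_{\std}(M_1)$ and the internal realization lemma there is internal $R^*\supseteq S$ with $\mathbf{j}(M_1)\models\chi_{R^*}(\bar c')$; by $(\star)$, $\mathbf{j}(M_0)\models\chi_{R^*}(\bar c)$, and the easy direction of the internal realization lemma then yields a realization of $\{\theta_i(x,\bar c):i\in R^*\}\supseteq p$, hence of $p$, inside $\mathbf{j}_{\std}(M_0)$.

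The main obstacle — the step that makes the theorem more than a formality — is relocating the parameters of $p$ from $\mathbf{j}(M_0)$ to $\mathbf{j}(M_1)$ in order to use the saturation of the \emph{other} model, without destroying the type or its consistency. A naive attempt would have to realize a type of length $|\Sigma|$ in $\mathbf{j}(M_1)$, which would demand an internal saturation of these models that we do not have; the resolution, which is the genuinely new ingredient, is that the ``consistency pattern'' $\Sigma$ is internally finite — because the pseudofinite set $\hat p$ has internally finite (if nonstandard) size $\hat k$, so $2^{\hat k}$ is still internally finite — hence $\Sigma$ is captured by one first-order formula and transfers across by plain elementary equivalence. I expect the remaining work — keeping standard $\mathcal L$-formulas separate from nonstandard $\mathbf{j}(\mathcal L)$-formulas, verifying that $\hat X$, $\hat p$, $\Sigma$, $R^*$ and the various restrictions are genuinely internal and internally finite — to be routine but the part requiring the most care in the write-up.
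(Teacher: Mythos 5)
Your proof is correct and follows essentially the same route as the paper: cover $p$ by an internally finite set of formulas, use $\hat V\models\mathbf j(M_0)\equiv\mathbf j(M_1)$ to transport the consistency pattern (the paper does this via a bijection $\hat f$ with properties (I)--(II), you via a parameter tuple $\bar c'$ realizing the internally finite $\Sigma$-type of $\bar c$), realize the transported type in $\mathbf j_{\std}(M_1)$, and pull back through the internal consistency pattern. Your ``internal realization lemma'' and the observation that $2^{\hat k}$ is internally finite nicely make explicit the justification the paper treats as routine when it asserts that $\hat f$ exists.
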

\begin{proof}
	Per our convention, note that $T$ is countable in $V$. Suppose $M_1$ is $\lambda^+$-pseudosaturated; we show that $M_0$ is also. As remarked above, it suffices to consider types of arity $1$ (the only effect of this is to increase readability). 
	
	Let $p(x) = \{\phi_\alpha(x, \overline{a}_\alpha): \alpha < \lambda)\}$ be a pseudofinite type over $M_0$ of cardinality $\lambda < \kappa$; we show $p(x)$ is realized in $M_0$. Choose some pseudofinite $\hat{\Delta} \in \hat{V}$, such that $p(x) \subseteq \hat{\Delta}$. By separation in $\hat{V}$, we can suppose $\hat{\Delta} = \hat{\Delta}(x)$ is a set of $\mathbf{j}(\mathcal{L})$-formulas over $\mathbf{j}(M_0)$ in the free variable $x$. 
	
	Since $\hat{V}$ believes $\mathbf{j}(M_0) \equiv \mathbf{j}(M_1)$ (by elementarity of $\mathbf{j}: V \preceq \hat{V}$), we can find a set $\hat{\Gamma}(x)$ of $\mathbf{j}(\mathcal{L})$-formulas over $\mathbf{j}(M_1)$ in the variable $x$ and a bijection $\hat{f}: \hat{\Delta}(x) \to \hat{\Gamma}(x)$, such that the following are true in $\hat{V}$: 
	
	\begin{itemize}
		\item[(I)] For every $\hat{\phi}(x, \hat{\overline{a}}) \in \hat{\Delta}(x)$, $\hat{f}(\hat{\phi}(x, \hat{\overline{a}})) = \hat{\phi}(x, \hat{\overline{b}})$ for some $\hat{\overline{b}} \in \mathbf{j}(M_1)^{|\hat{\overline{a}}|}$;
		\item[(II)] For every $\hat{\Delta}_0(x) \subseteq \hat{\Delta}(x)$, $\mathbf{j}(M_0) \models \exists x \bigwedge \hat{\Delta}_0(x)$ if and only if $\mathbf{j}(M_1) \models \exists x \bigwedge \hat{f}[\hat{\Delta}_0(x)]$.
	\end{itemize}

	Let $q(x)$ be the image of $p(x)$ under $\hat{f}$. By (I), $q(x)$ is a set of $\mathcal{L}$-formulas over $\mathbf{j}_{\std}(M_1)$ in the free variable $x$. By (II), $q(x)$ is consistent. Visibly $q(x) \subseteq \hat{\Gamma}(x)$ is pseudofinite and has cardinality less than $\kappa$.
	
	Thus $q(x)$ has a realization $b \in \mathbf{j}_{\std}(M)$. Let $\hat{\Gamma}_0(x)$ be defined in $\hat{V}$ as the set of all $\hat{\phi}(x) \in \hat{\Gamma}$ such that $\mathbf{j}(M_1) \models \hat{\phi}(b)$. Let $\hat{\Delta}_0(x) = \hat{f}^{-1}[\hat{\Gamma}_0(x)]$. By (II), in $\hat{V}$, $\mathbf{j}(M_0) \models \exists x \bigwedge \hat{\Delta}_0(x)$, so we can find $a \in \mathbf{j}_{\std}(M_0)$ such that in $\hat{V}$, $\mathbf{j}(M) \models \hat{\Delta}_0(a)$. Since $p(x) \subseteq \hat{\Delta}_0(x)$ we conclude that $a$ realizes $p(x)$.
\end{proof}

For this reason, the following interpretability orders are more convenient to work with than $\trianglelefteq^*_\kappa$ when proving positive reductions. 

\begin{definition}
	
	Suppose $V \models ZFC^-$ is transitive, $\mathbf{j}: V \preceq \hat{V}$ is $\omega$-nonstandard, and suppose $T$ is a complete countable theory with $T \in V$. Suppose $\kappa$ is an uncountable cardinal (typically $\kappa = \lambda^+$ for some cardinal $\lambda$). Then say that $\hat{V}$ $\kappa$-pseudosaturates $T$ if for some or every $M \models T$ with $M \in V$, $\mathbf{j}_{\std}(M)$ is $\kappa$-pseudosaturated. (This also depends on the embedding $\mathbf{j}: V \preceq \hat{V}$; if there is ambiguity we will write $(\mathbf{j}, \hat{V})$ $\kappa$-pseudosaturates $T$.) 
	
	Suppose $\lambda$ is infinite and $\kappa$ is infinite or $1$. Then say that $T_0 \trianglelefteq^\times_{\lambda \kappa} T_1$ if there is some countable transitive $V \models ZFC^-$ containing $T_0, T_1$ such that whenever $\mathbf{j}: V \preceq \hat{V}$, if $\hat{V}$ is $\kappa$-saturated and $\omega$-nonstandard, and if $\hat{V}$ $\lambda^+$-pseudosaturates $T_1$, then also it $\lambda^+$-pseudosaturates $T_0$. Say that $T_0 \trianglelefteq^\times_\kappa T_1$ if $T_0 \trianglelefteq^\times_{\lambda \kappa} T_1$ for all infinite $\lambda$.
\end{definition}

As for $\trianglelefteq^*_\kappa$, the two main cases of interest are $\kappa \in \{1, \aleph_1\}$. Note that for $\kappa < \kappa'$, $\trianglelefteq^\times_\kappa \subseteq \trianglelefteq^\times_{\kappa'}$. Corollary~\ref{SatAndPseudoCor} states that $\trianglelefteq^\times_{\kappa} \subseteq \trianglelefteq^*_{\kappa}$, and we suspect equality holds. 

 Also, note that $\trianglelefteq^{\times}_{\lambda \kappa}$ is transitive: Suppose $T_0 \trianglelefteq^{\times}_{\lambda \kappa} T_1 \trianglelefteq^\times_{\lambda \kappa} T_2$, and $T_0, T_1 \in V_0$, $T_1, T_2 \in V_1$ are countable transitive models of $ZFC^-$ witnessing this. Then choose a countable transitive model $V$ of $ZFC^-$ with $V_0, V_1 \in V$; easily, this witnesses $T_0 \trianglelefteq^\times_\kappa T_2$.

%
%\begin{remark}
%In the definition of $\trianglelefteq^\times_{\kappa}$, it is sufficient to restrict to $V$ of the form $H(\aleph)$ for a regular (uncountable) cardinal $\aleph$. This is because if $V \subseteq V'$ are transitive models of $ZFC^-$, and $M \in V$, and $\mathbf{j}': V' \preceq \hat{V}'$, then $\mathbf{j}'_{\std}(M)$ is $\lambda^+$-pseudosaturated if and only if $(\mathbf{j}' \restriction_{V})_{\std}(M)$ is $\lambda^+$-pseudosaturated. We will henceforward use this implicitly. Note that we can suppose that every complete countable theory we consider is in $H(\aleph_1)$.
%\end{remark}

We now note that $\trianglelefteq^\times_{\aleph_1} \subseteq \trianglelefteq$. This will eventually follow from $\trianglelefteq^\times_{\aleph_1} \subseteq \trianglelefteq^*_{\aleph_1}$, seeing as $\trianglelefteq^*_{\aleph_1} \subseteq \trianglelefteq$, but there is no reason to wait:

\begin{corollary}\label{KeislerAndInterpUlt1}
	$\trianglelefteq^\times_{\aleph_1} \subseteq \trianglelefteq$.
\end{corollary}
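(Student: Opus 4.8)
The plan is to unwind both orders and realize the $\hat V$ that appears in the definition of $\trianglelefteq^\times$ as an honest ultrapower, so that Theorem~\ref{RegularUltsThm1} lets us pass freely between pseudosaturation and genuine saturation. Suppose $T_0 \trianglelefteq^\times_{\aleph_1} T_1$ and fix an infinite cardinal $\lambda$; the goal is $T_0 \trianglelefteq_\lambda T_1$. First I would apply the definition of $\trianglelefteq^\times_{\aleph_1}$ at this particular $\lambda$ to obtain a countable transitive $V \models ZFC^-$ with $T_0, T_1 \in V$ witnessing $T_0 \trianglelefteq^\times_{\lambda \aleph_1} T_1$; since (by our conventions) $T_0$ and $T_1$ are countable in $V$, I can also fix $M_i \in V$ with $M_i \models T_i$.

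Next, let $\mathcal U$ be an arbitrary $\lambda$-regular ultrafilter on $\mathcal P(\lambda)$ and assume $\mathcal U$ $\lambda^+$-saturates $T_1$; we must show $\mathcal U$ $\lambda^+$-saturates $T_0$. Set $\hat V = V^\lambda/\mathcal U$ and let $\mathbf j : V \preceq \hat V$ be the {\L}o{\'s} embedding, so $\hat V \models ZFC^-$. The three preliminary observations I would record are: (i) a $\lambda$-regular ultrafilter is countably incomplete (any $\omega$-indexed subfamily of a regular family is infinite, hence has empty intersection, while lying in $\mathcal U$), so by standard facts about ultrapowers $\hat V$ is $\omega$-nonstandard and, since the language of set theory is finite, $\aleph_1$-saturated --- exactly the hypotheses on $\hat V$ needed to invoke $V$; (ii) by the Example identifying $\mathbf j_{\std}$ with ultrapowers, $\mathbf j_{\std}(M_i) \cong M_i^\lambda/\mathcal U$; and (iii) by Theorem~\ref{RegularUltsThm1}, every subset of $\hat V$ of size $\leq \lambda$ --- in particular every subset of $\mathbf j_{\std}(M_i)$ of size $\leq \lambda$ --- is pseudofinite.

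Given these, the argument closes immediately. Since $\mathcal U$ $\lambda^+$-saturates $T_1$, the structure $\mathbf j_{\std}(M_1) \cong M_1^\lambda/\mathcal U$ is $\lambda^+$-saturated, hence a fortiori $\lambda^+$-pseudosaturated, i.e. $\hat V$ $\lambda^+$-pseudosaturates $T_1$. By the choice of $V$, then, $\hat V$ $\lambda^+$-pseudosaturates $T_0$, i.e. $\mathbf j_{\std}(M_0)$ is $\lambda^+$-pseudosaturated; but by (iii) every subset of $\mathbf j_{\std}(M_0)$ of size $< \lambda^+$ is pseudofinite, so $\lambda^+$-pseudosaturation of $\mathbf j_{\std}(M_0)$ is the same as $\lambda^+$-saturation. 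Hence $M_0^\lambda/\mathcal U \cong \mathbf j_{\std}(M_0)$ is $\lambda^+$-saturated, i.e. $\mathcal U$ $\lambda^+$-saturates $T_0$, which is what we needed. I do not expect a genuine obstacle here; the only points requiring care are verifying that the ultrapower $\hat V$ really meets the ``$\aleph_1$-saturated and $\omega$-nonstandard'' hypotheses (through countable incompleteness of regular ultrafilters) and cleanly matching ``$\lambda^+$-pseudosaturated'' with ``$\lambda^+$-saturated'' on these particular models via Theorem~\ref{RegularUltsThm1}.
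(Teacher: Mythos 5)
Your proposal is correct and follows the same route as the paper: pass to $\hat V = V^\lambda/\mathcal U$ via the {\L}o{\'s} embedding, use countable incompleteness of $\mathcal U$ to get $\aleph_1$-saturation of $\hat V$, and invoke Theorem~\ref{RegularUltsThm1} to identify $\lambda^+$-pseudosaturation with $\lambda^+$-saturation. Your write-up just makes explicit the bookkeeping that the paper compresses into ``We conclude by Theorem~\ref{RegularUltsThm1}.''
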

\begin{proof}
	Suppose $T_0 \trianglelefteq^\times_{\lambda \aleph_1} T_1$, as witnessed by $V \models ZFC^-$ countable and transitive with $T_0, T_1 \in V$. Let $\mathcal{U}$ be any $\lambda$-regular ultrafilter on $\mathcal{P}(\lambda)$; write $\hat{V} = V^\lambda/\mathcal{U}$ and let $\mathbf{j}: V \preceq \hat{V}$ be the {\L}o{\'s} map. Then $\hat{V}$ is $\aleph_1$-saturated, since $\mathcal{U}$ is $\aleph_1$-good (since it is $\aleph_1$-incomplete). Hence, if $\hat{V}$ $\lambda^+$-pseudosaturates $T_1$, then $\hat{V}$ $\lambda^+$-pseudosaturates $T_0$. We conclude by Theorem~\ref{RegularUltsThm1}.
\end{proof}

\section{Full Boolean-Valued Models}\label{FullBVModelsSec}
We recall the setup of \cite{BVModelsUlrich}.

As a convention, if $X$ is a set and $\mathcal{L}$ is a language, then $\mathcal{L}(X)$ is the set of formulas of $\mathcal{L}$ with parameters taken from $X$.

Suppose $\mathcal{B}$ is a complete Boolean algebra. A $\mathcal{B}$-valued structure is a pair $(\mathbf{M}, \| \cdot \|_{\mathbf{M}})$ where:

\begin{enumerate}
	\item $\mathbf{M}$ is a set;
	\item  $\phi \mapsto \|\phi\|_{\mathbf{M}}$ is a map from $\mathcal{L}( \mathbf{M})$ to $\mathcal{B}$;
	\item If $\phi$ is a logically valid sentence then $\|\phi\|_{\mathbf{M}} = 1$;
	\item For every formula $\phi \in \mathcal{L}( \mathbf{M})$, we have that $\|\lnot \phi\|_{\mathbf{M}}= \lnot\|\phi\|_{\mathbf{M}}$;
	\item For all $\phi, \psi$, we have that $\|\phi \land \psi\|_{\mathbf{M}} = \|\phi\|_{\mathbf{M}} \land \|\psi\|_{\mathbf{M}}$;
	\item For every formula $\phi(x)$ with parameters from $\mathbf{M}$, $\|\exists x \phi(x)\|_{\mathbf{M}} = \bigvee_{a \in \mathbf{M}} \|\phi(a)\|_{\mathbf{M}}$;
	\item For all $a, b \in \mathbf{M}$ distinct, $\|a = b\|_{\mathbf{M}} < 1$.
\end{enumerate}

We are only interested in the case when $\mathbf{M}$ is full, i.e. when in fact  $\|\exists x \phi (x, \overline{a}) \|_{\mathbf{M}} = \mbox{max}_{a \in \mathbf{M}} \|\phi(a, \overline{a})\|_{\mathbf{M}}$. If $T$ is a theory, then we say that $\mathbf{M}$ is a full $\mathcal{B}$-valued model of $T$, if $\|\phi\|_{\mathbf{M}} = 1$ for all $\phi\in T$.

For example, (ordinary) $\mathcal{L}$-structures are the same as full $\{0, 1\}$-valued $\mathcal{L}$-structures, which can thus be viewed as full $\mathcal{B}$-valued structures for any $\mathcal{B}$. Also, if $M$ is an $\mathcal{L}$-structure and $\lambda$ is a cardinal, then $M^\lambda$ is a $\mathcal{P}(\lambda)$-valued $\mathcal{L}$-structure; moreover, we have the canonical elementary embedding $\mathbf{i}: M \preceq M^\lambda$, given by the diagonal map. We call this the pre-{\L}o{\'s} embedding. More generally, for any complete Boolean algebra $\mathcal{B}$ we can define the $\mathcal{B}$-valued structure $M^{\mathcal{B}}$.

If $\mathbf{M}$ is a full $\mathcal{B}$-valued model of $T$ and $\mathcal{U}$ is an ultrafilter on $\mathcal{B}$, then we can form the specialization $\mathbf{M}/\mathcal{U} \models T$, which comes equipped with a canonical surjection $[\cdot]_{\mathcal{U}}: \mathbf{M}\to \mathbf{M}/\mathcal{U}$, satisfying that for all $\phi(\overline{a}) \in \mathcal{L}(\mathbf{M})$, $\mathbf{M}/\mathcal{U}\models \phi([\overline{a}]_{\mathcal{U}})$ if and only if $\|\phi(\overline{a})\|_{\mathbf{M}} \in \mathcal{U}$. This generalizes the ultrapower construction $M^\lambda/\mathcal{U}$; note that the {\L}o{\'s} embedding of $M$ into $M^\lambda/\mathcal{U}$ is the composition of the pre-{\L}o{\'s} embedding with $[\cdot]_{\mathcal{U}}$.

In \cite{BVModelsUlrich}, we prove the following compactness theorem for  full Boolean-valued models:

\begin{theorem}\label{Compactness}
Suppose $\mathcal{B}$ is a complete Boolean algebra, $X$ is a set, $\Gamma \subseteq \mathcal{L}( X)$, and $F_0, F_1: \Gamma \to  \mathcal{B}$ with $F_0(\phi(\overline{a})) \leq F_1(\phi(\overline{a}))$ for all $\phi(\overline{a}) \in \Gamma$. Then the following are equivalent:

\begin{itemize}
	\item[(A)] There is some full $\mathcal{B}$-valued structure $\mathbf{M}$ and some map $\tau: X \to \mathbf{M}$, such that for all $\phi(\overline{a}) \in \Gamma$, $F_0(\phi(\overline{a})) \leq \|\phi(\tau(\overline{a}))\|_{\mathbf{M}} \leq F_1(\phi(\overline{a}))$;

	\item[(B)] For every finite $\Gamma_0 \subseteq \Gamma$ and for every $\mathbf{c} \in \mathcal{B}_+$, there is some $\{0, 1\}$-valued $\mathcal{L}$-structure $M$ and some map $\tau: X \to M$, such that for every $\phi(\overline{a}) \in \Gamma$, if $\mathbf{c} \leq F_0(\phi(\overline{a}))$ then $M \models \phi(\tau(\overline{a}))$, and if $\mathbf{c} \leq \lnot F_1(\phi(\overline{a}))$ then $M \models \lnot \phi(\tau(\overline{a}))$. 
\end{itemize}
\end{theorem}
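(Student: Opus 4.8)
The plan is to prove the equivalence \textup{(A)} $\Leftrightarrow$ \textup{(B)}. The direction \textup{(A)} $\Rightarrow$ \textup{(B)} is quick: given a full $\mathcal{B}$-valued $\mathbf{M}$ and $\tau : X \to \mathbf{M}$ as in \textup{(A)}, a finite $\Gamma_0 \subseteq \Gamma$, and $\mathbf{c} \in \mathcal{B}_+$, choose any ultrafilter $\mathcal{U}$ on $\mathcal{B}$ with $\mathbf{c} \in \mathcal{U}$ and take $M := \mathbf{M}/\mathcal{U}$ with $\tau' := [\,\cdot\,]_{\mathcal{U}} \circ \tau$; if $\mathbf{c} \leq F_0(\phi(\overline{a}))$ then $\|\phi(\tau(\overline{a}))\|_{\mathbf{M}} \geq \mathbf{c} \in \mathcal{U}$, so $M \models \phi(\tau'(\overline{a}))$, and if $\mathbf{c} \leq \lnot F_1(\phi(\overline{a}))$ then $\|\phi(\tau(\overline{a}))\|_{\mathbf{M}} \leq \lnot\mathbf{c} \notin \mathcal{U}$, so $M \models \lnot\phi(\tau'(\overline{a}))$, using the defining property of specialization.

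The substance is \textup{(B)} $\Rightarrow$ \textup{(A)}, which I would establish by a Boolean-valued Henkin term-model construction. Fix an injection $\iota : X \to C$, where $C$ is obtained from $\iota[X]$ by the usual tower adjoining, at each stage, a fresh Henkin witness $c_{\exists x\phi}$ for every existential sentence $\exists x\phi(x)$ over the current language; let $H$ be the set of Henkin axioms $\exists x\phi(x) \to \phi(c_{\exists x\phi})$ and let $\mathrm{Sent}$ be the $\mathcal{L}(C)$-sentences. The goal is a map $v : \mathrm{Sent} \to \mathcal{B}$ which \emph{(i)} is a Boolean homomorphism ($v(\lnot\phi) = \lnot v(\phi)$, $v(\phi\land\psi) = v(\phi)\land v(\psi)$), \emph{(ii)} sends every logically valid sentence and every member of $H$ to $1$, and \emph{(iii)} satisfies $F_0(\phi(\overline{a})) \leq v(\phi(\iota(\overline{a}))) \leq F_1(\phi(\overline{a}))$ for all $\phi(\overline{a}) \in \Gamma$. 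Given such a $v$, one checks routinely that the term model $\mathbf{M} := C/{\sim}$, where $c \sim d$ iff $v(c=d) = 1$, equipped with $\|\phi(\overline{c})\|_{\mathbf{M}} := v(\phi(\overline{c}))$ (well defined on classes) and $\tau(a) := [\iota(a)]$, is a full $\mathcal{B}$-valued structure: axioms (1)--(5) and (7) of the definition are immediate from the construction and \emph{(i)}--\emph{(ii)}, while the Henkin axioms force $v(\exists x\phi(x)) = v(\phi(c_{\exists x\phi}))$, yielding fullness and axiom (6); and \emph{(iii)} is exactly conclusion \textup{(A)}.

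To build $v$ I would apply Zorn's lemma to the poset of partial maps $v : D \to \mathcal{B}$, $D \subseteq \mathrm{Sent}$, ordered by extension, under the invariant: \emph{for every $\mathbf{c} \in \mathcal{B}_+$, the fiber theory $\Sigma_{\mathbf{c}}(v)$ --- consisting of $H$, the sentences $\phi(\iota(\overline{a}))$ for $\phi(\overline{a}) \in \Gamma$ with $\mathbf{c} \leq F_0(\phi(\overline{a}))$, the sentences $\lnot\phi(\iota(\overline{a}))$ for $\phi(\overline{a}) \in \Gamma$ with $\mathbf{c} \leq \lnot F_1(\phi(\overline{a}))$, and $\psi$ resp.\ $\lnot\psi$ for $\psi \in D$ with $\mathbf{c} \leq v(\psi)$ resp.\ $\mathbf{c} \leq \lnot v(\psi)$ --- has an ordinary model.} The empty map satisfies this: any finite subset of $\Sigma_{\mathbf{c}}(\emptyset)$ mentions only finitely many formulas of $\Gamma$ (a finite $\Gamma_0$) and finitely many Henkin axioms, so it has a model by hypothesis \textup{(B)} together with the standard fact that Henkin axioms may be added to a consistent theory conservatively; hence $\Sigma_{\mathbf{c}}(\emptyset)$ is consistent by ordinary compactness. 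Unions of chains preserve the invariant, again by compactness, so a maximal $v$ exists; I claim its domain is all of $\mathrm{Sent}$. Given $\sigma \notin D$, set $v(\sigma) := \bigvee\{\mathbf{c} \in \mathcal{B}_+ : \Sigma_{\mathbf{c}}(v) \vdash \sigma\}$ (the sup exists by completeness of $\mathcal{B}$). This preserves the invariant: if $\mathbf{c} \leq v(\sigma)$ and $\Sigma_{\mathbf{c}}(v) \cup \{\sigma\}$ were inconsistent, then $\Sigma_{\mathbf{c}}(v) \vdash \lnot\sigma$, while $\mathbf{c} \neq 0$ lies below the above supremum, so it has nonzero meet with some $\mathbf{d}$ for which $\Sigma_{\mathbf{d}}(v) \vdash \sigma$; then $\Sigma_{\mathbf{c}\land\mathbf{d}}(v)$ proves $\sigma\land\lnot\sigma$, contradicting the invariant at $\mathbf{c}\land\mathbf{d}$; the cases $\mathbf{c} \leq \lnot v(\sigma)$ and $\mathbf{c}\not\leq v(\sigma),\,\mathbf{c}\not\leq\lnot v(\sigma)$ are trivial. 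This contradicts maximality, so $v$ is total. Finally each of \emph{(i)}--\emph{(iii)} for the total $v$ follows from the invariant by the same scheme: if e.g.\ $v(\phi\land\psi) \neq v(\phi)\land v(\psi)$, pick $\mathbf{c} \in \mathcal{B}_+$ below an appropriate difference; then $\Sigma_{\mathbf{c}}(v)$ contains a manifestly inconsistent finite set such as $\{\phi\land\psi,\lnot\phi\}$ or $\{\phi,\psi,\lnot(\phi\land\psi)\}$ --- contradiction --- and likewise for negation, for ``valid $\mapsto 1$'', for ``$H \mapsto 1$'', and for the $F_0,F_1$ inequalities.

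I expect the main obstacle to be the extension step in the Zorn argument: verifying that the prescribed value $v(\sigma) = \bigvee\{\mathbf{c} : \Sigma_{\mathbf{c}}(v) \vdash \sigma\}$ preserves the invariant is the one point where completeness of $\mathcal{B}$ is genuinely used, via the lemma that a nonzero element below a supremum has nonzero meet with one of the terms. A secondary point needing care is the base case: one must see that hypothesis \textup{(B)} --- with its quantifier over \emph{finite} $\Gamma_0$ and over \emph{all} $\mathbf{c} \in \mathcal{B}_+$ --- is precisely the assertion that each fiber theory $\Sigma_{\mathbf{c}}(\emptyset)$ is finitely satisfiable, after which compactness and the routine Henkin-conservativity lemma finish it; the tower construction of $C$ is needed only to guarantee a sufficient supply of Henkin witnesses.
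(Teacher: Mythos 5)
The paper does not prove Theorem~\ref{Compactness} itself; it is quoted from \cite{BVModelsUlrich}, so there is no in-text proof to compare against. On its own merits, your proof is correct and it is the natural argument: a Boolean-valued Henkin term-model construction, with the valuation $v$ produced by Zorn's lemma under the ``every nonzero fiber theory $\Sigma_{\mathbf{c}}(v)$ is consistent'' invariant.

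A few remarks. \emph{(i)} The direction (A) $\Rightarrow$ (B) is handled exactly right (and, as you implicitly notice, the finite $\Gamma_0$ in the statement of (B) is inert in that direction; it matters only in the converse, where you correctly read the quantifier as over $\phi(\overline{a})\in\Gamma_0$). \emph{(ii)} The crux of the extension step is the observation that $\Sigma_{\mathbf{c}\wedge\mathbf{d}}(v)\supseteq\Sigma_{\mathbf{c}}(v)\cup\Sigma_{\mathbf{d}}(v)$ together with the lemma that a nonzero $\mathbf{c}\leq\bigvee S$ meets some member of $S$ nonzero; both are right and both genuinely use completeness of $\mathcal{B}$, as you flag. \emph{(iii)} The same ``pick a witnessing $\mathbf{c}$ below the offending Boolean difference'' scheme does indeed verify that the total $v$ is a Boolean homomorphism, sends validities and Henkin axioms to $1$, and respects $F_0,F_1$; the only cases needing a small extra step are those like $v(\phi\wedge\psi)\neq v(\phi)\wedge v(\psi)$, where one first splits $\neg\bigl(v(\phi)\wedge v(\psi)\bigr)=\neg v(\phi)\vee\neg v(\psi)$ and picks the nonzero piece. \emph{(iv)} In the term model, well-definedness of $\|\phi(\overline{c})\|$ on $\sim$-classes and fullness via the Henkin constants both follow from the fact that $v$ is a Boolean homomorphism sending valid sentences to $1$, and the stratified tower supplies $c_{\exists x\,\phi(x,\overline{c})}$ for parameters $\overline{c}\in C$. \emph{(v)} One could replace Zorn's lemma by a transfinite enumeration of $\mathrm{Sent}$ with the same one-step extension rule; the two are interchangeable. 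In short, the proposal is a complete and correct proof of the theorem as stated.
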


Given $\mathcal{B}$-valued models $\mathbf{M}\subseteq \mathbf{N}$, say that $\mathbf{M} \preceq \mathbf{N}$ if $\|\cdot\|_{\mathbf{M}} \subseteq \| \cdot \|_{\mathbf{N}}$. Say that $\mathbf{N}$ is $\lambda^+$-saturated if for every $\mathbf{M}_0 \preceq \mathbf{N}$ with $|\mathbf{M}_0| \leq \lambda$ and for every $\mathbf{M}_1 \succeq \mathbf{M}_0$ with $|\mathbf{M}_1| \leq \lambda$, there is some elementary embedding $f: \mathbf{M}_1 \preceq \mathbf{N}$ extending the inclusion from $\mathbf{M}_0$ into $\mathbf{N}$. As a first application of the compactness theorem, we prove in \cite{BVModelsUlrich} that for every $\mathcal{B}$-valued structure $\mathbf{M}$ and for every $\lambda$, there is an elementary extension $\mathbf{N} \succeq \mathbf{M}$ such that $\mathbf{N}$ is full and moreover $\lambda^+$-saturated.

Suppose $T$ is a complete countable theory, and $\mathcal{U}$ is an ultrafilter on the complete Boolean algebra $\mathcal{B}$. We observe in \cite{BVModelsUlrich} that if there is some $\lambda^+$-saturated $\mathbf{M} \models^{\mathcal{B}} T$ with $\mathbf{M}/\mathcal{U}$ $\lambda^+$-saturated, then for every $\lambda^+$-saturated $\mathbf{M} \models^{\mathcal{B}} T$, $\mathbf{M}/\mathcal{U}$ is $\lambda^+$-saturated. We define that $\mathcal{U}$ $\lambda^+$-saturates $T$ in this case. 

\begin{example}
Suppose $\lambda$ is an infinite cardinal, $T$ is a complete countable theory, and $M \models T$ is $\lambda^+$-saturated. Then $M^\lambda$ is a $\lambda^+$-saturated $\mathcal{P}(\lambda)$-valued model of $T$. Thus, if $\mathcal{U}$ is a $\lambda$-regular ultrafilter on $\mathcal{P}(\lambda)$, then the two definitions we have of $\mathcal{U}$ $\lambda^+$-saturating $T$ (the standard definition of Keisler \cite{Keisler}, and the new definition above) are both equivalent to: $M^\lambda/\mathcal{U}$ is $\lambda^+$-saturated, for some or every $\lambda^+$-saturated $M \models T$. Thus we have not created any conflicts with this definition.

\end{example}

More generally, we show in \cite{BVModelsUlrich} that whenever $\lambda$ is an infinite cardinal and whenever $\mathcal{U}$ is an ultrafilter on the complete Boolean algebra $\mathcal{B}$, then $\mathcal{U}$ $\lambda^+$-saturates $T$ if and only if $\mathcal{U}$ is $(\lambda, \mathcal{B}, T)$-moral, in the sense of Malliaris and Shelah.

Finally, in \cite{BVModelsUlrich} we give the following convenient characterization of Keisler's order:

\begin{theorem}\label{KeislerChar}
	Suppose $T_0, T_1$ are theories. Then $T_0 \trianglelefteq T_1$ if and only if for every $\lambda$, for every complete Boolean algebra $\mathcal{B}$ with the $\lambda^+$-c.c., and for every ultrafilter $\mathcal{U}$ on $\mathcal{B}$, if $\mathcal{U}$ $\lambda^+$-saturates $T_1$, then $\mathcal{U}$ $\lambda^+$-saturates $T_0$.
\end{theorem}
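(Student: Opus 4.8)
The plan is to prove both directions by relating the Boolean-valued framework to the classical ultrapower framework underlying Keisler's order, using the compactness theorem (Theorem~\ref{Compactness}) and the saturation-existence result for full $\mathcal{B}$-valued models quoted just above. For the forward direction, suppose $T_0 \trianglelefteq T_1$, fix $\lambda$, a complete Boolean algebra $\mathcal{B}$ with the $\lambda^+$-c.c., and an ultrafilter $\mathcal{U}$ on $\mathcal{B}$ that $\lambda^+$-saturates $T_1$; we must show $\mathcal{U}$ $\lambda^+$-saturates $T_0$. First I would take a $\lambda^+$-saturated full $\mathcal{B}$-valued model $\mathbf{M}_1 \models^{\mathcal{B}} T_1$ and consider its specialization $N_1 = \mathbf{M}_1/\mathcal{U} \models T_1$, which is $\lambda^+$-saturated by hypothesis. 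The key point is to manufacture, from the data $(\mathcal{B}, \mathcal{U})$, an honest $\lambda$-regular ultrafilter witnessing a comparison in Keisler's order — or rather, to run the argument directly at the level of moral/good ultrafilters. Concretely: by the result quoted above, $\mathcal{U}$ $\lambda^+$-saturating $T_1$ is the same as $\mathcal{U}$ being $(\lambda, \mathcal{B}, T_1)$-moral in the sense of Malliaris and Shelah; and the statement $T_0 \trianglelefteq T_1$ should be reformulated (this is the crux) so that it transfers morality from $T_1$ to $T_0$ over the \emph{same} $(\lambda, \mathcal{B}, \mathcal{U})$.

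The cleanest route to that transfer is to observe that $T_0 \trianglelefteq T_1$ can be witnessed uniformly: there is a scheme assigning to each consistent pattern of formulas for $T_0$ a corresponding pattern for $T_1$ (a ``pattern transformation''), such that a multiplicative/monotone map realizing the $T_1$-patterns yields one realizing the $T_0$-patterns. This is exactly the content of the known characterization of Keisler's order via $\lambda$-regular ultrafilters on $\mathcal{P}(\lambda)$ together with the fact that $(\lambda, \mathcal{B}, T)$-morality is characterized by the same combinatorial data (possibility functions) regardless of the ambient Boolean algebra. So I would: (i) unwind $T_0 \trianglelefteq T_1$ using the classical characterization (via regular ultrafilters on $\mathcal{P}(\lambda)$, as in Keisler \cite{Keisler}) into the statement that every $(\lambda, T_1)$-possibility function factors through a $(\lambda, T_0)$-possibility function; (ii) note this combinatorial statement is algebra-independent; (iii) apply it with $\mathcal{B}$ in place of $\mathcal{P}(\lambda)$ to conclude $\mathcal{U}$ is $(\lambda, \mathcal{B}, T_0)$-moral, hence $\lambda^+$-saturates $T_0$. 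Step (i)–(ii) is where I expect the real work: one must check that the reduction proof of $T_0 \trianglelefteq T_1$ (which a priori only talks about ultrapowers indexed by sets) is ``abstract'' enough to apply to an arbitrary complete Boolean algebra — this is precisely the philosophy the interpretability orders are built on, and the compactness theorem for full $\mathcal{B}$-valued models is the tool that lets one carry it out.

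For the converse, suppose the right-hand condition holds; I must show $T_0 \trianglelefteq_\lambda T_1$ for every $\lambda$. Given a $\lambda$-regular ultrafilter $\mathcal{U}$ on $\mathcal{P}(\lambda)$ that $\lambda^+$-saturates $T_1$, note that $\mathcal{P}(\lambda)$ is a complete Boolean algebra and, since $\mathcal{U}$ is $\lambda$-regular, for a $\lambda^+$-saturated $M_1 \models T_1$ the $\mathcal{P}(\lambda)$-valued model $M_1^\lambda$ is $\lambda^+$-saturated with $M_1^\lambda/\mathcal{U} \cong M_1^\lambda/\mathcal{U}$ $\lambda^+$-saturated; so $\mathcal{U}$ $\lambda^+$-saturates $T_1$ in the new sense. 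The one subtlety is the $\lambda^+$-c.c.\ hypothesis: $\mathcal{P}(\lambda)$ need not have the $\lambda^+$-c.c. The fix is to pass to the reduced completion relevant to $\mathcal{U}$, or more simply to use that $T_0 \trianglelefteq T_1$ need only be tested against \emph{some} cofinal class of algebras; since the forward direction already gives the nontrivial inclusion and the reverse inclusion $\{\,\mathcal{U}\text{ on }\mathcal{P}(\lambda)\text{ regular}\,\} \hookrightarrow \{\,\mathcal{U}\text{ on }\mathcal{B}\text{ with }\mathcal{B}\ \lambda^+\text{-c.c.}\,\}$ can be arranged by replacing $\mathcal{P}(\lambda)$ with its quotient by a suitable ideal (which does have the $\lambda^+$-c.c.\ and over which $\mathcal{U}$ still lives as $\mathcal{U}$ is $\lambda^+$-incomplete), we get that the new condition implies $\mathcal{U}$ $\lambda^+$-saturates $T_0$, hence $M_0^\lambda/\mathcal{U}$ is $\lambda^+$-saturated for a $\lambda^+$-saturated (equivalently, by Theorem~\ref{KeislerOrigSecond}, any) $M_0 \models T_0$. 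Thus $\mathcal{U}$ $\lambda^+$-saturates $T_0$ in Keisler's sense, so $T_0 \trianglelefteq_\lambda T_1$; as $\lambda$ was arbitrary, $T_0 \trianglelefteq T_1$. The main obstacle throughout is bookkeeping the passage between ``indexed-product'' witnesses and ``abstract Boolean algebra'' witnesses while keeping the chain-condition and incompleteness hypotheses intact; all of this is handled by the compactness theorem and the morality characterization cited from \cite{BVModelsUlrich}.
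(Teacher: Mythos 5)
The paper does not prove this theorem; it is cited verbatim from \cite{BVModelsUlrich}, so there is no in-paper proof to compare against. Assessing your sketch on its own terms, the forward direction contains a genuine gap. You propose to ``unwind $T_0 \trianglelefteq T_1$ into the statement that every $(\lambda, T_1)$-possibility function factors through a $(\lambda, T_0)$-possibility function,'' and then transfer this combinatorial statement to an arbitrary $\mathcal{B}$. But Keisler's order is a blanket $\Pi_1$ statement quantifying over $\lambda$-regular ultrafilters on $\mathcal{P}(\lambda)$; there is no general way to extract from a particular inequality $T_0 \trianglelefteq T_1$ a uniform algebra-independent pattern-transformation (for instance, $T_0 \trianglelefteq T_1$ can hold vacuously if no $\lambda$-regular ultrafilter $\lambda^+$-saturates $T_1$, and then there is no combinatorial content to unwind). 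What the forward direction actually requires is an existence theorem: from a given ultrafilter $\mathcal{U}$ on a $\lambda^+$-c.c.\ complete Boolean algebra $\mathcal{B}$, construct a $\lambda$-regular ultrafilter $\mathcal{U}^*$ on $\mathcal{P}(\lambda)$ with the same morality profile for every countable theory, so that the hypothesis $T_0 \trianglelefteq T_1$ can be applied to $\mathcal{U}^*$. This is the Malliaris--Shelah separation-of-variables/Existence Theorem, and it is a genuine ultrafilter construction (independent families of functions, or the Boolean-valued analogue), not a consequence of the compactness theorem alone. Your step (i)--(ii) does not supply it.

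The converse direction is closer to the mark but imprecise. The move of replacing $\mathcal{P}(\lambda)$ by a $\lambda^+$-c.c.\ quotient is correct, but the quotient must be chosen carefully: take a $\lambda$-regular, $\lambda^+$-good filter $\mathcal{D}\subseteq\mathcal{U}$; it is the $\lambda^+$-goodness of $\mathcal{D}$ (not the $\lambda^+$-incompleteness of $\mathcal{U}$, which you cite) that gives $\mathcal{P}(\lambda)/\mathcal{D}$ the $\lambda^+$-c.c., and separation of variables is then what transfers $\lambda^+$-saturation of $T$ between $\mathcal{U}$ (in Keisler's sense) and $\mathcal{U}/\mathcal{D}$ (in the Boolean-valued sense). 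As written, the argument hand-waves past exactly the point where the chain condition and the morality-transfer must be secured.
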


We now adapt this to our context.

\noindent \textbf{Convention.} $\mathbf{V}$ will denote a full $\mathcal{B}$-valued model of $ZFC^-$ for some complete Boolean algebra $\mathcal{B}$, often associated with an elementary embedding $\mathbf{i}: V \preceq \mathbf{V}$ for some transitive $V$ (for example, if $\mathbf{V} = V^{\mathcal{B}}$ is the Boolean ultrapower, then $\mathbf{i}$ would be the pre-{\L}o{\'s} embedding).

\begin{definition}
	Suppose $\mathcal{B}$ is a complete Boolean algebra, $V \models ZFC^-$ is transitive, and $\mathbf{i}: V \preceq \mathbf{V} \models^{\mathcal{B}} ZFC^-$. Given $X \subseteq V$, let $\mathbf{i}_{\std}(X) = \{\mathbf{a}\in V: \|\mathbf{a} \in \mathbf{i}(X)\|_{\mathcal{B}}\} = 1$. Suppose $M \in V$ is a structure in the countable language $\mathcal{L}$, with domain $\mbox{dom}(M)$. Then let $\mathbf{i}_{\std}(M)$ be the full $\mathcal{B}$-valued $\mathcal{L}$-structure defined as follows. Its domain is $\mathbf{i}_{\std}(\mbox{dom}(M))$. Given a formula $\phi(\mathbf{a}_i: i < n) \in \mathcal{L}(\mathbf{i}_{\std}(\mbox{dom}(M)))$, let $\|\phi(\mathbf{a}_i: i < n)\|_{\mathbf{i}_{\std}(M)} = \|\mathbf{i}(M) \models \phi(\mathbf{a}_i: i < n)\|_{\mathbf{V}}$. (Note that in practice, we usually denote $M$ and $\mbox{dom}(M)$ by the same symbol $M$.)
\end{definition}

\begin{example}
	Suppose $V \models ZFC^-$ is transitive and $\lambda$ is a cardinal; write $\mathbf{V} = V^\lambda$ and let $\mathbf{i}: V \preceq \mathbf{V}$ be the pre-{\L}o{\'s} embedding and let $\mathbf{j}: V \preceq \mathbf{V}/\mathcal{U}$ be the composition $[\cdot]_{\mathcal{U}} \circ \mathbf{i}$, i.e. the {\L}o{\'s} embedding. Suppose $M \in V$. Then $\mathbf{i}_{\std}(M) = M^\lambda$. If $\mathcal{U}$ is an ultrafilter on $\mathcal{P}(\lambda)$, then $\mathbf{j}_{\std}(M) \cong M^\lambda/\mathcal{U} = \mathbf{i}_{\std}(M)/\mathcal{U}$.
\end{example}

\begin{example}
	More generally, suppose $\mathcal{U}$ is an ultrafilter on $\mathcal{B}$, and $V \models ZFC^-$ is transitive, and $\mathbf{i}: V \preceq \mathbf{V}$ where $\mathbf{V} \models^{\mathcal{B}} ZFC^-$. Let $\mathbf{j}: V \preceq \mathbf{V}/\mathcal{U}$ be the composition $[\cdot]_{\mathcal{U}} \circ \mathbf{i}$. Then given $M \models T$ with $M \in V$, $\mathbf{j}_{\std}(M) \cong \mathbf{i}_{\std}(M)/\mathcal{U}$. This is because if $\mathbf{a} \in \mathbf{V}$ and $\| \mathbf{a} \in \mathbf{i}(M) \|_{\mathbf{V}} \in \mathcal{U}$, then we can find $\mathbf{b} \in \mathbf{i}_{\std}(\mathbf{M})$ such that $\| \mathbf{a} = \mathbf{b} \|_{\mathbf{V}} \in \mathcal{U}$. 
\end{example}

We aim for a characterization of Keisler's order along these lines. The following two lemmas are typical applications of Theorem~\ref{Compactness}.

\begin{lemma}\label{SatAndPseudo3First}
	Suppose $\mathcal{B}$ is a complete Boolean algebra, $V \models ZFC^-$ and $\mathbf{i}: V \preceq \mathbf{V} \models^{\mathcal{B}} ZFC^-$. Suppose $M \in V$ is a structure. If $\mathbf{V}$ is $\lambda^+$-saturated, then so is $\mathbf{i}_{\std}(M)$. 
\end{lemma}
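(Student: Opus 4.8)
The plan is to derive the $\lambda^+$-saturation of $\mathbf{i}_{\std}(M)$ from that of $\mathbf{V}$, by coding the relevant type-realization problem into the ambient full $\mathcal{B}$-valued model of $ZFC^-$ and handing it to the compactness theorem (Theorem~\ref{Compactness}); this is the sort of ``typical application'' referred to above. We may assume all $\mathcal{B}$-valued structures in sight are full (replace $\mathbf{M}_0$ by a full hull inside $\mathbf{i}_{\std}(M)$ of size $\leq\lambda$, obtained by closing under witnesses using fullness of $\mathbf{i}_{\std}(M)$, and $\mathbf{M}_1$ by an amalgam over $\mathbf{M}_0$ built via Theorem~\ref{Compactness}; this only strengthens the requirement). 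So fix full $\mathbf{M}_0 \preceq \mathbf{i}_{\std}(M)$ with $|\mathbf{M}_0| \leq \lambda$ and full $\mathbf{M}_1 \succeq \mathbf{M}_0$ with $|\mathbf{M}_1| \leq \lambda$, and enumerate $\mathbf{M}_1 \setminus \mathbf{M}_0 = \{\mathbf{b}_j : j < \mu\}$ with $\mu \leq \lambda$. We must produce an elementary embedding $f : \mathbf{M}_1 \preceq \mathbf{i}_{\std}(M)$ extending the inclusion $\mathbf{M}_0 \hookrightarrow \mathbf{i}_{\std}(M)$.

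First I would use a Skolem-hull argument, exploiting fullness of $\mathbf{V}$, to obtain $\mathbf{N}_0 \preceq \mathbf{V}$ with $|\mathbf{N}_0| \leq \lambda$ and $\mathbf{M}_0 \cup \{\mathbf{i}(M)\} \subseteq \mathbf{N}_0$. Then I would apply Theorem~\ref{Compactness} (with the language of set theory in place of $\mathcal{L}$) over the parameter set $X = \mathbf{N}_0 \cup \{c_j : j < \mu\}$, with the $c_j$ new constants, to build a full $\mathcal{B}$-valued structure $\mathbf{N}_1 \supseteq \mathbf{N}_0$ of size $\leq \lambda$ with $\mathbf{N}_0 \preceq \mathbf{N}_1$. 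I take $F_0 = F_1$, pinning: (i) every set-theoretic formula with parameters from $\mathbf{N}_0$ to its value in $\mathbf{N}_0$ (this forces $\mathbf{N}_0 \preceq \mathbf{N}_1$, hence $\mathbf{N}_1 \models^{\mathcal{B}} ZFC^-$); (ii) for each $\mathcal{L}$-formula $\psi$ and each tuple $\overline{e}'$ from $\mathbf{M}_0 \cup \{c_j\}$, the set-theoretic formula asserting ``$\mathbf{i}(M) \models \psi(\overline{e}')$'' to $\|\psi(\overline{e})\|_{\mathbf{M}_1}$, where $\overline{e}$ is $\overline{e}'$ with each $c_j$ replaced by $\mathbf{b}_j$; (iii) each ``$c_j \in \mathbf{i}(M)$'' to $1$; and I leave all other formulas free, $F_0 = 0$, $F_1 = 1$. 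The specifications (i) and (ii) overlap exactly when $\overline{e}'$ lies in $\mathbf{M}_0$, and there they agree because
\[
\|\psi(\overline{e}')\|_{\mathbf{M}_1} = \|\psi(\overline{e}')\|_{\mathbf{M}_0} = \|\psi(\overline{e}')\|_{\mathbf{i}_{\std}(M)} = \|\mathbf{i}(M) \models \psi(\overline{e}')\|_{\mathbf{V}} = \|\mathbf{i}(M) \models \psi(\overline{e}')\|_{\mathbf{N}_0},
\]
using in order $\mathbf{M}_0 \preceq \mathbf{M}_1$, $\mathbf{M}_0 \preceq \mathbf{i}_{\std}(M)$, the definition of $\mathbf{i}_{\std}$, and $\mathbf{N}_0 \preceq \mathbf{V}$; so $F_0, F_1$ are well defined.

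The main obstacle is verifying hypothesis (B) of Theorem~\ref{Compactness}: given a finite set $\Gamma_0$ of these formulas and $\mathbf{c} \in \mathcal{B}_+$, I must produce a classical structure $N$ and $\tau : X \to N$ respecting the constraints that hold below $\mathbf{c}$. I would fix an ultrafilter $\mathcal{U} \ni \mathbf{c}$ on $\mathcal{B}$ and take $N = \mathbf{V}/\mathcal{U} \models ZFC^-$ with $\tau(\mathbf{a}) = [\mathbf{a}]_{\mathcal{U}}$ for $\mathbf{a} \in \mathbf{N}_0$; since $\mathbf{N}_0 \preceq \mathbf{V}$ and $\mathbf{c} \in \mathcal{U}$, this meets the clause-(i) constraints. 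For the finitely many clause-(ii) and clause-(iii) constraints, note that specializing $\mathbf{M}_0 \preceq \mathbf{i}_{\std}(M)$ and $\mathbf{M}_0 \preceq \mathbf{M}_1$ at $\mathcal{U}$ yields, using fullness of $\mathbf{M}_0$, classical elementary embeddings $\mathbf{M}_0/\mathcal{U} \preceq \mathbf{i}_{\std}(M)/\mathcal{U}$ and $\mathbf{M}_0/\mathcal{U} \preceq \mathbf{M}_1/\mathcal{U}$, and that (by the Example identifying $\mathbf{j}_{\std}(M)$ with $\mathbf{i}_{\std}(M)/\mathcal{U}$) $\mathbf{i}_{\std}(M)/\mathcal{U}$ is exactly the structure $[\mathbf{i}(M)]_{\mathcal{U}}$ as computed in $N$. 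Only finitely many $\mathcal{L}$-formulas and finitely many parameters from $\mathbf{M}_0/\mathcal{U}$ are involved, so the corresponding fragment of the type of $([\mathbf{b}_j]_{\mathcal{U}})_j$ over $\mathbf{M}_0/\mathcal{U}$ in $\mathbf{M}_1/\mathcal{U}$ is a finite partial type, hence already realized in $\mathbf{M}_0/\mathcal{U}$ by some tuple $\overline{b}^* \subseteq \mathbf{M}_0/\mathcal{U} \subseteq \mathbf{i}_{\std}(M)/\mathcal{U} = [\mathbf{i}(M)]_{\mathcal{U}}$. Setting $\tau(c_j) = b^*_j$ then meets the clause-(iii) constraints (as $b^*_j \in [\mathbf{i}(M)]_{\mathcal{U}}$) and the clause-(ii) ones: if $\mathbf{c} \leq \|\psi(\overline{e})\|_{\mathbf{M}_1}$ then $\|\psi(\overline{e})\|_{\mathbf{M}_1} \in \mathcal{U}$, so $\mathbf{M}_1/\mathcal{U} \models \psi([\overline{e}])$, so $\mathbf{i}_{\std}(M)/\mathcal{U} \models \psi$ at $\overline{b}^*$, i.e.\ $N$ satisfies the corresponding formula, and dually for negations. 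Thus (B) holds and we get $\mathbf{N}_1$.

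Finally, since $\mathbf{N}_0 \preceq \mathbf{V}$ with $|\mathbf{N}_0| \leq \lambda$ and $\mathbf{N}_0 \preceq \mathbf{N}_1$ with $|\mathbf{N}_1| \leq \lambda$, $\lambda^+$-saturation of $\mathbf{V}$ gives an elementary $g : \mathbf{N}_1 \preceq \mathbf{V}$ fixing $\mathbf{N}_0$ pointwise. I then set $f \mr{\mathbf{M}_0} = \mathrm{id}$ and $f(\mathbf{b}_j) = g(c_j)$. From $\|c_j \in \mathbf{i}(M)\|_{\mathbf{N}_1} = 1$ and $g(\mathbf{i}(M)) = \mathbf{i}(M)$ we get $\|g(c_j) \in \mathbf{i}(M)\|_{\mathbf{V}} = 1$, so $g(c_j)$ lies in the domain of $\mathbf{i}_{\std}(M)$; and for any $\mathcal{L}$-formula $\psi$ and tuple $\overline{e}$ from $\mathbf{M}_1$, writing $\overline{e}'$ for the corresponding tuple from $\mathbf{N}_0 \cup \{c_j\}$ (so $f\overline{e} = g(\overline{e}')$ since $g$ fixes $\mathbf{i}(M)$ and the parameters from $\mathbf{M}_0$),
\[
\|\psi(f\overline{e})\|_{\mathbf{i}_{\std}(M)} = \|\mathbf{i}(M) \models \psi(f\overline{e})\|_{\mathbf{V}} = \|\mathbf{i}(M) \models \psi(\overline{e}')\|_{\mathbf{N}_1} = \|\psi(\overline{e})\|_{\mathbf{M}_1},
\]
by the definition of $\mathbf{i}_{\std}$, elementarity of $g$, and the clause-(ii) pinning. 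Hence $f$ is an elementary embedding extending $\mathbf{M}_0 \hookrightarrow \mathbf{i}_{\std}(M)$, so $\mathbf{i}_{\std}(M)$ is $\lambda^+$-saturated. All the real content is in the verification of (B); the rest is bookkeeping with the compactness theorem and the definitions.
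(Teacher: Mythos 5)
Your proof is correct and follows essentially the same route as the paper: encode the type-realization problem as pinned set-theoretic formulas of the form ``$\mathbf{i}(M)\models\psi(\cdot)$'' over parameters, verify hypothesis (B) of Theorem~\ref{Compactness} by specializing at an ultrafilter containing the given $\mathbf{c}$ and pulling the finite existential back through $\mathbf{i}_{\std}(M)/\mathcal{U}$, and then invoke $\lambda^+$-saturation of $\mathbf{V}$ to land the witnesses inside $\mathbf{V}$. The only real difference is organizational: you first shrink $\mathbf{V}$ to a small $\mathbf{N}_0$ and produce a small $\mathbf{N}_1$ before applying saturation, whereas the paper uses $X=\mathbf{V}\cup\mathbf{M}_1$ and leaves the Löwenheim–Skolem step implicit; also, your preliminary reduction to full $\mathbf{M}_0,\mathbf{M}_1$ is unnecessary (the paper handles the general case directly, since fullness of $\mathbf{i}_{\std}(M)$ alone suffices to produce the finite witness in $\mathbf{i}_{\std}(M)/\mathcal{U}$), though it is not incorrect.
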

\begin{proof}
Write $\mathbf{M} = \mathbf{i}_{\std}(M)$. Let $\mathbf{M}_0 \preceq \mathbf{M}$ have $|\mathbf{M}_0| \leq \lambda$, and let $\mathbf{M}_1 \succeq \mathbf{M}_0$ have $|\mathbf{M}_1| \leq \lambda$; we suppose that $\mathbf{M}_1 \cap \mathbf{V} = \mathbf{M}_0$. In the notation of Theorem~\ref{Compactness}, let $X = \mathbf{V} \cup \mathbf{M}_1$. Let $\Gamma = \mathcal{L}(\mathbf{V}) \cup \{``a \in \mathbf{i}(M)": a \in \mathbf{M}_1\} \cup \{``\mathbf{i}(M) \models \phi(\overline{a})": \phi(\overline{a}) \in \mathcal{L}(\mathbf{M}_1)\}$. Define $F: \Gamma \to \mathcal{B}$ via $F \restriction_{\mathcal{L}(\mathbf{V})} = \|\cdot\|_{\mathbf{V}}$, and $F(``a \in \mathbf{i}(M)") = 1$, and $F(``\mathbf{i}(M) \models \phi(\overline{a})") = \|\phi(\overline{a})\|_{\mathbf{M}_1}$. 

We wish to apply Theorem~\ref{Compactness} with $F_0 = F_1 = F$. We show that (B) there holds. Indeed, let $\mathbf{c} \in \mathcal{B}_+$ and let $\Gamma_0 \subseteq \Gamma$ be given; let $\mathcal{U}$ be an ultrafilter on $\mathcal{B}$ with $\mathbf{c} \in \mathcal{U}$. Let $\phi_0(\overline{a}_0, \overline{b}_0), \ldots, \phi_{n-1}(\overline{a}_{n-1}, \overline{b}_{n-1})$ list all of the $\mathcal{L}(\mathbf{M}_1)$-formulas $\phi(\overline{a}, \overline{b})$ such that $``\mathbf{i}(M) \models \phi(\overline{a}, \overline{b})" \in \Gamma_0$, with the variables partitioned so that $\overline{a} \subseteq \mathbf{M}_0$ and $\overline{b} \subseteq \mathbf{M}_1 \backslash \mathbf{M}_0$. Let $\overline{b}$ be an enumeration of $(\overline{b}_i: i < i_*)$ without repetitions, and write each $\phi_i$ as $\phi_i(\overline{a}_i, \overline{b})$. We can suppose $i_* \leq n$ is such that $\mathbf{M}_1/\mathcal{U} \models \phi_i([\overline{a}_i]_{\mathcal{U}}, [\overline{b}]_{\mathcal{U}})$ if and only if $i < i_*$. 

Since $\mathbf{M}/\mathcal{U} \preceq \mathbf{M}_1/\mathcal{U}$, we can find $\overline{c} \in \mathbf{M}$ such that $\mathbf{M}/\mathcal{U} \models \bigwedge_{i < i_*} \phi_i([\overline{a}_i]_{\mathcal{U}}, [\overline{c}]_{\mathcal{U}})$. 

Write $W = \mathbf{V}/\mathcal{U}$, and choose $\tau: X \to W$ so that $\tau \restriction_{\mathbf{V}} = [\cdot]_{\mathcal{U}}$ is the canonical surjection, and $\tau(\overline{b}) = (\overline{c})$, and $\tau[\mathbf{M}_1] \subseteq \mathbf{M}$. Then $\tau$ witnesses Theorem~\ref{Compactness}(B) holds for $\Gamma, F_0, F_1$.

Thus Theorem~\ref{Compactness}(A) holds for $\Gamma, F_0, F_1$. By $\lambda^+$-saturation of $\mathbf{V}$, this means we can find $\tau: \mathbf{M}_1 \to \mathbf{M}$ which is the identity on $\mathbf{M}_0$ such that for all $\phi(\overline{a}) \in \mathcal{L}(\mathbf{M}_1)$, $\|\phi(\overline{a})\|_{\mathbf{M}_1} = \|\mathbf{i}(M) \models \phi(\overline{a})\|_{\mathbf{V}}$. This means exactly that $\tau: \mathbf{M}_1 \preceq \mathbf{M}$, as desired.
\end{proof}

\begin{lemma}\label{SurveyInterpBVLarge}
	Suppose $\mathcal{B}$ is a complete Boolean algebra, $V \models ZFC^-$ is transitive and $\mathbf{i}: V \preceq \mathbf{V}$. Suppose $\mathcal{U}$ is an ultrafilter on $\mathcal{B}$; write $\hat{V} = \mathbf{V}/\mathcal{U}$. If $\mathbf{V}$ is $\lambda^+$-saturated, then every subset of $\hat{V}$ of size at most $\lambda$ is pseudofinite.
\end{lemma}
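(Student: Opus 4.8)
The plan is to realize, over a small Boolean-valued submodel of $\mathbf{V}$, an element which $\mathbf{V}$ believes to be finite and which contains all of a prescribed family of parameters; specializing this element then witnesses pseudofiniteness in $\hat V$. In detail (we may assume $\lambda$ is infinite): let $Y\subseteq\hat V=\mathbf{V}/\mathcal{U}$ have size $\le\lambda$, and write $Y=\{[\mathbf{a}_\alpha]_{\mathcal{U}}:\alpha<\lambda\}$ with each $\mathbf{a}_\alpha\in\mathbf{V}$. It suffices to find $\mathbf{X}\in\mathbf{V}$ with $\|\mathbf{X}\text{ is finite}\|_{\mathbf{V}}=1$ and $\|\mathbf{a}_\alpha\in\mathbf{X}\|_{\mathbf{V}}=1$ for every $\alpha$: then $[\mathbf{X}]_{\mathcal{U}}\in\hat V$ is finite in the sense of $\hat V$ and contains each $[\mathbf{a}_\alpha]_{\mathcal{U}}$, so $Y$ is pseudofinite. (Necessarily $\mathbf{X}$ has nonstandard finite size, so the construction really does use that $\hat V$ is $\omega$-nonstandard, which $\lambda^+$-saturation of $\mathbf{V}$ forces.)

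To build such an $\mathbf{X}$ I would unwind the definition of $\lambda^+$-saturation. First, by a downward L\"owenheim--Skolem argument for full $\mathcal{B}$-valued structures (as in \cite{BVModelsUlrich}), choose $\mathbf{M}_0\preceq\mathbf{V}$ of size $\le\lambda$ with $\{\mathbf{a}_\alpha:\alpha<\lambda\}\subseteq\mathbf{M}_0$; note $\mathbf{M}_0 \models^{\mathcal{B}} ZFC^-$. Then, using the compactness Theorem~\ref{Compactness}, I would produce a full $\mathcal{B}$-valued model $\mathbf{N}$ of $ZFC^-$ together with a map $\tau$ of $\mathbf{M}_0\cup\{c\}$ (with $c$ fresh) into $\mathbf{N}$ such that $\tau\restriction_{\mathbf{M}_0}:\mathbf{M}_0\preceq\mathbf{N}$, $\|\tau(c)\text{ is finite}\|_{\mathbf{N}}=1$, and $\|\tau(\mathbf{a}_\alpha)\in\tau(c)\|_{\mathbf{N}}=1$ for all $\alpha$. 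Here one takes $\Gamma=\mathcal{L}(\mathbf{M}_0)$ together with the formulas ``$c$ is finite'' and ``$\mathbf{a}_\alpha\in c$'' ($\alpha<\lambda$), and $F_0=F_1=F$ where $F=\|\cdot\|_{\mathbf{M}_0}$ on $\mathcal{L}(\mathbf{M}_0)$ and $F$ is identically $1$ on the remaining formulas; hypothesis (B) of Theorem~\ref{Compactness} holds because, given finite $\Gamma_0\subseteq\Gamma$ and $\mathbf{d}\in\mathcal{B}_+$, one picks an ultrafilter $\mathcal{W}\ni\mathbf{d}$, lets $M=\mathbf{V}/\mathcal{W}$ with $\tau\restriction_{\mathbf{M}_0}=[\cdot]_{\mathcal{W}}$, and interprets $c$ as the element of $M$ representing the finite set of those $[\mathbf{a}_\alpha]_{\mathcal{W}}$ for which ``$\mathbf{a}_\alpha\in c$'' lies in $\Gamma_0$ (the $\mathcal{L}(\mathbf{M}_0)$-clauses hold since $\mathbf{M}_0\preceq\mathbf{V}$ and $\mathbf{d}\in\mathcal{W}$). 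A second L\"owenheim--Skolem pass shrinks $\mathbf{N}$ to a full $\mathbf{M}_1\preceq\mathbf{N}$ of size $\le\lambda$ with $\tau[\mathbf{M}_0\cup\{c\}]\subseteq\mathbf{M}_1$; identifying $\mathbf{M}_0$ with $\tau[\mathbf{M}_0]$ gives $\mathbf{M}_0\preceq\mathbf{M}_1$ with $|\mathbf{M}_1|\le\lambda$.

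Finally, $\lambda^+$-saturation of $\mathbf{V}$ applied to $\mathbf{M}_0\preceq\mathbf{M}_1$ supplies an elementary embedding $f:\mathbf{M}_1\preceq\mathbf{V}$ extending the inclusion $\mathbf{M}_0\hookrightarrow\mathbf{V}$, and I would set $\mathbf{X}=f(\tau(c))$. Since $f$ preserves Boolean values and fixes each $\mathbf{a}_\alpha$, we get $\|\mathbf{X}\text{ is finite}\|_{\mathbf{V}}=\|\tau(c)\text{ is finite}\|_{\mathbf{M}_1}=1$ and $\|\mathbf{a}_\alpha\in\mathbf{X}\|_{\mathbf{V}}=\|\mathbf{a}_\alpha\in\tau(c)\|_{\mathbf{M}_1}=1$ for all $\alpha$, as wanted. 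I expect the main obstacle to be the construction of $\mathbf{M}_1$: since $\lambda^+$-saturation is phrased in terms of full Boolean-valued elementary extensions, one cannot simply adjoin the new element $c$ to $\mathbf{M}_0$ by hand (the result need not be full), so the argument must route through the compactness theorem and then trim back to size $\le\lambda$; there are also routine points to verify, namely that $\tau\restriction_{\mathbf{M}_0}$ is injective and that $\mathbf{N} \models^{\mathcal{B}} ZFC^-$ (so that, e.g., $\tau(c)\ne\tau(\mathbf{a}_\alpha)$), both of which follow from elementarity of $\tau\restriction_{\mathbf{M}_0}$ over $\mathbf{M}_0 \models^{\mathcal{B}} ZFC^-$.
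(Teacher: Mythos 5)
Your proof is correct and follows essentially the same route as the paper: shrink to a full $\mathbf{M}_0 \preceq \mathbf{V}$ of size $\leq\lambda$, use Theorem~\ref{Compactness} to adjoin a ``finite'' element containing the parameters, then appeal to $\lambda^+$-saturation of $\mathbf{V}$ and specialize through $\mathcal{U}$. You are somewhat more careful than the paper about the second L\"owenheim--Skolem pass (trimming the compactness-produced extension back to size $\leq\lambda$ before invoking saturation) and about spelling out the finite-satisfiability check, but these are details the paper leaves implicit rather than a different argument.
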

\begin{proof}
	Suppose $X \in [\hat{V}]^{\leq \lambda}$. Write $X = \{[\mathbf{a}]_{\mathcal{U}}: \mathbf{a} \in \mathbf{X}\}$ for some $\mathbf{X} \in [\mathbf{V}]^{\leq \lambda}$. Choose some full $\mathbf{V}_0 \preceq \mathbf{V}$ with $\mathbf{X} \subseteq \mathbf{V}_0$ and $|\mathbf{V}_0| \leq \lambda$ (this is possible since the axioms for full Boolean-valued models are finitary, see \cite{BVModelsUlrich}).
	
	We claim that Theorem~\ref{Compactness} implies there is $\mathbf{V}_1 \succeq \mathbf{V}_0$ and $\mathbf{Y} \in \mathbf{V}_1$ such that $\|\mathbf{a} \in \mathbf{Y}\|_{\mathbf{V}_1} = 1$ for each $\mathbf{a} \in \mathbf{X}$, and $\|\mathbf{Y} \mbox{ is finite}\|_{\mathbf{V}_1} = 1$. Indeed, let $X = \mathbf{V}_0 \cup \{\mathbf{Y}\}$, where $\mathbf{Y}$ is a new symbol, and let $\Gamma = \mathcal{L}(\mathbf{V}_0) \cup \{``\mathbf{a} \in \mathbf{Y}": \mathbf{a} \in \mathbf{X}\} \cup \{``\mathbf{Y} \mbox{ is finite}"\}$. We define $F_0 = F_1 = F: \Gamma \to \mathcal{B}$ via $F \restriction_{\mathcal{L}(\mathbf{V}_0)} = \|\cdot\|_{\mathbf{V}_0}$, and $F(``\mathbf{a} \in \mathbf{Y}") = F(``\mathbf{Y} \mbox{ is finite}") = 1$. Then clearly Theorem~\ref{Compactness} applies and gives $\mathbf{V}_1$ as desired.
	
	Since $\mathbf{V}$ is $\lambda^+$-saturated, we can in fact choose such $\mathbf{Y}\in \mathbf{V}$. Let $\hat{Y} = [\mathbf{Y}]_{\mathcal{U}}$. Then $\hat{Y}$ is pseudofinite and $X \subseteq \hat{Y}$.
\end{proof}

We immediately get the following corollary.

\begin{corollary}\label{KeislerAndInterp2}
	Suppose $\mathcal{B}$ is a complete Boolean algebra, $\mathcal{U}$ is an ultrafilter on $\mathcal{B}$, $\lambda$ is a cardinal, and $T$ is a complete countable theory. Then the following are equivalent:
	
	\begin{itemize}
		\item[(A)] $\mathcal{U}$ $\lambda^+$-saturates $T$.
		\item[(B)] For some or every transitive $V \models ZFC^-$ with $T \in V$, and for some or every $\mathbf{i}: V \preceq \mathbf{V}$ with $\mathbf{V}$ $\lambda^+$-saturated, and for some or every $M \models T$ with $M \in V$, $\mathbf{i}_{\std}(M)/\mathcal{U}$ is $\lambda^+$-saturated.
		\item[(C)] For some or every transitive $V \models ZFC^-$, and for some or every $\mathbf{i}: V \preceq \mathbf{V}$ with $\mathbf{V}$ $\lambda^+$-saturated, $\mathbf{V}/\mathcal{U}$ $\lambda^+$-pseudosaturates $T$.
	\end{itemize}
\end{corollary}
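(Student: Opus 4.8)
The plan is to prove Corollary~\ref{KeislerAndInterp2} by chaining together the lemmas just established, showing the three statements are equivalent via (A) $\Leftrightarrow$ (C) and (C) $\Leftrightarrow$ (B). I would first unpack the definitions: $\mathcal{U}$ $\lambda^+$-saturates $T$ means that for some (equivalently every) $\lambda^+$-saturated full $\mathcal{B}$-valued model $\mathbf{M} \models^{\mathcal{B}} T$, the specialization $\mathbf{M}/\mathcal{U}$ is $\lambda^+$-saturated; and $\mathbf{V}/\mathcal{U}$ $\lambda^+$-pseudosaturates $T$ means that for some (equivalently every) $M \models T$ with $M \in V$, the structure $\mathbf{j}_{\std}(M) = \mathbf{i}_{\std}(M)/\mathcal{U}$ is $\lambda^+$-pseudosaturated, where $\mathbf{j} = [\cdot]_\mathcal{U} \circ \mathbf{i}$. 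The key ingredients are: Lemma~\ref{SatAndPseudo3First} (if $\mathbf{V}$ is $\lambda^+$-saturated then so is $\mathbf{i}_{\std}(M)$), Lemma~\ref{SurveyInterpBVLarge} (if $\mathbf{V}$ is $\lambda^+$-saturated then every subset of $\hat{V} = \mathbf{V}/\mathcal{U}$ of size $\leq \lambda$ is pseudofinite), Theorem~\ref{ModKeislerOrder} (the choice of $M_i \models T$ in $V$ does not affect whether $\mathbf{j}_{\std}(M_i)$ is $\lambda^+$-pseudosaturated), and the earlier observation that $\mathbf{j}_{\std}(M) \cong \mathbf{i}_{\std}(M)/\mathcal{U}$.

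For (A) $\Rightarrow$ (C): Fix any transitive $V \models ZFC^-$ with $T \in V$ (or one already given), any $\mathbf{i}: V \preceq \mathbf{V}$ with $\mathbf{V}$ $\lambda^+$-saturated, and any $M \models T$ with $M \in V$. By Lemma~\ref{SatAndPseudo3First}, $\mathbf{M} := \mathbf{i}_{\std}(M)$ is a $\lambda^+$-saturated full $\mathcal{B}$-valued model of $T$, so since $\mathcal{U}$ $\lambda^+$-saturates $T$, $\mathbf{M}/\mathcal{U} \cong \mathbf{j}_{\std}(M)$ is $\lambda^+$-saturated, hence a fortiori $\lambda^+$-pseudosaturated. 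Since $M$ was arbitrary this gives the ``every'' form of (C). For (C) $\Rightarrow$ (A): pick any transitive $V \models ZFC^-$ with $T \in V$ and any $\mathbf{i}: V \preceq \mathbf{V}$ with $\mathbf{V}$ $\lambda^+$-saturated as furnished by (C), and let $M \models T$, $M \in V$, be such that $\mathbf{j}_{\std}(M)$ is $\lambda^+$-pseudosaturated. By Lemma~\ref{SurveyInterpBVLarge}, every subset of $\hat{V} = \mathbf{V}/\mathcal{U}$ of size $\leq \lambda$ is pseudofinite; therefore $\lambda^+$-pseudosaturation of $\mathbf{j}_{\std}(M)$ is just $\lambda^+$-saturation of $\mathbf{j}_{\std}(M) \cong \mathbf{i}_{\std}(M)/\mathcal{U}$. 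Again invoking Lemma~\ref{SatAndPseudo3First}, $\mathbf{i}_{\std}(M)$ is a $\lambda^+$-saturated full $\mathcal{B}$-valued model of $T$ whose specialization by $\mathcal{U}$ is $\lambda^+$-saturated, which by definition means $\mathcal{U}$ $\lambda^+$-saturates $T$.

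The equivalence of the ``some'' and ``every'' quantifier forms in (B) and (C), and the equivalence (B) $\Leftrightarrow$ (C), then follows from the independence-of-choices results: Theorem~\ref{ModKeislerOrder} (applied inside whatever $\hat{V} = \mathbf{V}/\mathcal{U}$ we have, which is $\omega$-nonstandard because it is $\lambda^+$-saturated hence in particular $\aleph_1$-saturated — or more directly because $\mathcal{U}$ yields an $\omega$-nonstandard model) tells us that whether $\mathbf{j}_{\std}(M)$ is $\lambda^+$-pseudosaturated is independent of the choice of $M \models T$ in $V$, which is exactly what makes ``$\hat{V}$ $\lambda^+$-pseudosaturates $T$'' well-defined and gives (B) $\Leftrightarrow$ (C); and the ``some or every'' over $V$ and over $\mathbf{i}$ is handled by the fact that (A) makes no reference to $V$ or $\mathbf{i}$ at all, so (A) $\Rightarrow$ (C)-every and (C)-some $\Rightarrow$ (A) together collapse all the quantifier variants. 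I expect the main obstacle to be bookkeeping rather than mathematics: one must be careful that the full $\mathcal{B}$-valued model $\mathbf{i}_{\std}(M)$ produced by Lemma~\ref{SatAndPseudo3First} is genuinely a model of $T$ (it is, since $\|\phi\|_{\mathbf{i}_{\std}(M)} = \|\mathbf{i}(M) \models \phi\|_{\mathbf{V}} = 1$ for $\phi \in T$ by elementarity of $\mathbf{i}$), and that the isomorphism $\mathbf{j}_{\std}(M) \cong \mathbf{i}_{\std}(M)/\mathcal{U}$ is applied in the right direction; there is no serious difficulty, as every hard step has been isolated into a prior lemma.
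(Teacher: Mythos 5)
Your proof is correct and uses precisely the same two key lemmas as the paper --- Lemma~\ref{SatAndPseudo3First} to pass $\lambda^+$-saturation from $\mathbf{V}$ to $\mathbf{i}_{\std}(M)$, and Lemma~\ref{SurveyInterpBVLarge} to equate $\lambda^+$-pseudosaturation with $\lambda^+$-saturation of $\mathbf{j}_{\std}(M)$ --- merely reorganized so that (C) rather than (B) is the hub. One small attribution slip: in your final paragraph you credit the equivalence (B)~$\Leftrightarrow$~(C) to Theorem~\ref{ModKeislerOrder}, but that theorem only supplies the well-definedness of ``$\hat{V}$ $\lambda^+$-pseudosaturates $T$''; the nontrivial direction (C)~$\Rightarrow$~(B) really rests on Lemma~\ref{SurveyInterpBVLarge} (or, equivalently, on the chain (C)~$\Rightarrow$~(A)~$\Rightarrow$~(B) that you had already established), so the proof as a whole is sound even though the cited reason for that step is off.
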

\begin{proof}
	(A) if and only if (B) is Lemma~\ref{SatAndPseudo3First}. (B) implies (C) is trivial. (C) implies (B) follows from Lemma~\ref{SurveyInterpBVLarge}.
\end{proof}

%
%The following straightforward theorem connects $\mathfrak{b}_{\hat{V}}$ with regularity. We won't have use for it, but the reader may find it illuminating. A similar statement is true for strongly $\lambda$-regular ultrafilters on $\mathcal{B}$.
%
%\begin{theorem}\label{bAndRegular}
%Suppose $\mathcal{U}$ is a $\lambda$-regular ultrafilter on $\mathcal{P}(\lambda)$. Write $\hat{V} = H(\aleph)^\lambda/\mathcal{U}$. Then $\mathbf{b}_{\hat{V}} > \lambda$.
%\end{theorem}
%\begin{proof}
%Let $(X_\alpha: \alpha< \lambda)$ be a $\lambda$-regular family from $\mathcal{U}$ (i.e. infinite intersections are empty). Let $(f_\alpha: \alpha < \lambda)$ be a sequence from $\omega^\lambda$ so each $[f_\alpha]_{\mathcal{U}} = \hat{n}_\alpha < \hat{\omega}$. We show that $(\hat{n}_\alpha: \alpha < \lambda)$ is bounded in $\hat{\omega}$, which suffices.
%
%For each $\gamma < \lambda$, let $I_\gamma$ be the finite set of $\alpha < \lambda$ with $\gamma \in X_\alpha$. Choose $f(\gamma) \in \omega$ with $f(\gamma) > f_\alpha(\gamma)$ for all $\alpha \in I_\gamma$. Write $\hat{n} = [f]_{\mathcal{U}}$. Since for all $\gamma \in X_\alpha$, $f(\gamma) > f_\alpha(\gamma)$, $\hat{n}$ works.
%\end{proof}

\section{Maximality of $SOP_2$}\label{SurveyMaxSOP2Sec}

In \cite{Keisler}, Keisler proved that $\mbox{Th}([\omega]^{<\aleph_0}, \subseteq)$ is $\trianglelefteq$-maximal. In \cite{ShelahIso}, Shelah showed that in fact every $SOP$ theory is maximal, in particular $\mbox{Th}(\mathbb{Q}, <)$ is maximal. Later in \cite{SH500}, Shelah improved this to show that every $SOP_3$ theory is maximal.

Then, in \cite{pEqualsTref}, Malliaris and Shelah proved that in fact every $SOP_2$ theory is maximal in Keisler's order; this is substantially harder than for $SOP_3$. In \cite{pEqualsT2}, they prove that every $SOP_2$ theory is maximal in $\trianglelefteq^*_1$, with some minor complications to the argument. 

In this section, we sketch some of the main concepts from \cite{pEqualsTref}, and prove that $SOP_2$ theories are maximal in $\trianglelefteq^\times_1$.

One large difference in our treatment versus \cite{pEqualsTref} is that there, Malliaris and Shelah use cofinality spectrum problems as their base set theory. This is a weak fragment of arithmetic, not even strong enough for exponentiation. We stick to $ZFC^-$, thus avoiding many difficulties.

We begin with some definitions.

\begin{definition}
	
	If $(L, <)$ is a linear order, and $\kappa, \theta$ are infinite regular cardinals, then a $(\kappa, \theta)$-pre-cut in $L$ is a pair of sequences $(\overline{a}, \overline{b}) = (a_\alpha: \alpha < \kappa)$, $(b_\beta: \beta < \theta)$ from $L$, such that for all $\alpha < \alpha'$, $\beta < \beta'$, we have $a_\alpha < a_{\alpha'} < b_{\beta'} < b_{\beta}$. $(\overline{a}, \overline{b})$ is a cut if there is no $c \in L$ with $a_\alpha < c < b_\beta$ for all $\alpha, \beta$. Let the cut spectrum of $(L, <)$ be $\mathcal{C}(L, <) := \{(\kappa, \theta): L \mbox{ admits a } (\kappa, \theta) \mbox{ cut}\}$. Define $\mbox{cut}(L, <) = \mbox{min}\{\kappa + \theta:  (\kappa, \theta) \in \mathcal{C}(L, <)\}$.

	Suppose $\hat{V}$ is an $\omega$-nonstandard model of $ZFC^-$. Then define $\mathcal{C}_{\hat{V}}= \mathcal{C}(\hat{\omega}, \hat{<})$, and define  $\mathfrak{p}_{\hat{V}} = \mbox{cut}(\hat{\omega}, \hat{<})$. Also, let $\mathfrak{t}_{\hat{V}}$ be the least $\kappa$ such that there is some $\hat{n}< \hat{\omega}$ and some increasing sequence $(\hat{s}_\alpha: \alpha < \kappa)$ from $\hat{n}^{<\hat{n}}$, with no upper bound in $\hat{n}^{<\hat{n}}$. 
\end{definition}

The following theorem corresponds to Theorem 4.1 of \cite{pEqualsTref}, although there the authors must also assume $\lambda < \mathfrak{t}_{\hat{V}}$, since in the context of cofinality spectrum problems it may be the case that $\mathfrak{p}_{\hat{V}} > \mathfrak{t}_{\hat{V}}$. Note that since models of $ZFC^-$ admit pairing functions, there is no loss in only considering types in a single variable, in which each formula has only a singleton parameter. For a simplified proof in the context of models of $ZFC^-$, see \cite{pEqualsTUlrich}.

\begin{theorem}\label{localSaturation}
	Suppose $\hat{V} \models ZFC^-$ is $\omega$-nonstandard.
	Suppose  $p(x)= (\phi_\alpha(x, \hat{a}_\alpha): \alpha < \lambda)$ is a partial type over $\hat{V}$ of cardinality $\lambda < \mathfrak{p}_{\hat{V}}$. Suppose $\hat{X} \in \hat{V}$ is pseudofinite, and $\phi_0(x)$ is $``x \in \hat{X}$." Then $p(x)$ is realized in $\hat{V}$.
\end{theorem}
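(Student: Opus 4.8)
The plan is to reduce the statement to pure internal combinatorics on the pseudofinite set $\hat{X}$ and then to invoke the ``treetops'' technology of Malliaris and Shelah \cite{pEqualsTref}, whose point in the present setting is that having a model of $ZFC^-$ available makes the hypothesis $\lambda < \mathfrak{p}_{\hat{V}}$ sufficient (in a general cofinality spectrum problem one would instead need $\lambda < \mathfrak{t}_{\hat{V}}$). First I would set up the dictionary: for $\alpha < \lambda$ put $\hat{Y}_\alpha = \{\hat{y} \hat{\in} \hat{X} : \hat{V} \models \phi_\alpha(\hat{y}, \hat{a}_\alpha)\}$, which by separation in $\hat{V}$ is an internal subset of $\hat{X}$, and which $\hat{V}$ regards as a finite set since $\hat{X}$ is pseudofinite (note $\hat{Y}_0 = \hat{X}$). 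Then consistency of every finite subset of $p(x)$ says exactly that $\{\hat{Y}_\alpha : \alpha < \lambda\}$ has the finite intersection property, and realizing $p(x)$ in $\hat{V}$ means exhibiting an element of $\bigcap_{\alpha < \lambda} \hat{Y}_\alpha$. So the theorem becomes: \emph{a finite-intersection-property family of internal subsets of a pseudofinite set, of size less than $\mathfrak{p}_{\hat{V}}$, has nonempty intersection.}

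To organize the argument I would build, by recursion on $\alpha \le \lambda$, a $\subseteq$-decreasing sequence of nonempty internal sets $\hat{Z}_\alpha \subseteq \hat{X}$ with $\hat{Z}_{\alpha+1} \subseteq \hat{Y}_\alpha$, maintaining the invariant that $\hat{Z}_\alpha$ together with all the $\hat{Y}_\gamma$ still has the finite intersection property. The successor step is immediate (take $\hat{Z}_{\alpha+1} = \hat{Z}_\alpha \cap \hat{Y}_\alpha$, which is internal, is nonempty by the invariant, and inherits the invariant), and $\hat{Z}_\lambda$, once obtained, is a nonempty subset of $\bigcap_{\alpha < \lambda} \hat{Y}_\alpha$, so any element of it realizes $p(x)$. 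All the content is therefore in the limit steps $\delta \le \lambda$: given a $\subseteq$-decreasing sequence $(\hat{Z}_\beta : \beta < \delta)$ of nonempty internal sets with the invariant, and $\delta \le \lambda < \mathfrak{p}_{\hat{V}}$, produce a nonempty internal pseudo-intersection $\hat{Z}_\delta$ retaining the invariant. Singular $\lambda$ is harmless: $\mathfrak{p}_{\hat{V}}$ is regular (it is the minimum of a nonempty set of regular cardinals $\kappa + \theta$), and at a limit stage one first passes to a cofinal strictly $\subseteq$-decreasing subsequence of length $\mathrm{cf}(\delta) < \mathfrak{p}_{\hat{V}}$.

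The limit step is the crux, and this is where the internal trees $\hat{n}^{<\hat{n}}$ enter. The argument has two ingredients. First, in a model of $ZFC^-$, where pairing and internal exponentiation are available, one has $\mathfrak{p}_{\hat{V}} \le \mathfrak{t}_{\hat{V}}$ (this is exactly the inequality that can fail in a general cofinality spectrum problem, which is why \cite{pEqualsTref} must assume $\lambda < \mathfrak{t}_{\hat{V}}$ outright); hence $\delta < \mathfrak{p}_{\hat{V}}$ gives $\delta < \mathfrak{t}_{\hat{V}}$, so by the defining property of $\mathfrak{t}_{\hat{V}}$ every $\subseteq$-chain of length $\delta$ in an internal tree $\hat{n}^{<\hat{n}}$ has an upper bound. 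Second, using internal enumerations of the $\hat{Z}_\beta$ and an $\hat{n}$ obtained by internal exponentiation from $|\hat{X}|$, I would code the decreasing sequence $(\hat{Z}_\beta : \beta < \delta)$ as a $\subseteq$-increasing chain $(\hat{s}_\beta : \beta < \delta)$ inside $\hat{n}^{<\hat{n}}$ in such a way that an upper bound of the chain decodes to a nonempty internal subset of $\bigcap_{\beta<\delta} \hat{Z}_\beta$ with the required finite-intersection behaviour with respect to the $\hat{Y}_\gamma$; treetops then supplies the bound.

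The main obstacle is precisely this last coding step, together with the inequality $\mathfrak{p}_{\hat{V}} \le \mathfrak{t}_{\hat{V}}$: both amount to translating faithfully among ``a type omitted over a pseudofinite set'', ``a chain with no upper bound in an internal tree'', and ``a cut in $\hat{\omega}$'', and it is the presence of pairing and exponentiation inside $\hat{V}$ that makes these translations clean --- this is the sense in which working over $ZFC^-$ rather than a weak fragment of arithmetic simplifies the Malliaris--Shelah argument. A detailed treatment of exactly this combinatorics in the $ZFC^-$ setting is carried out in \cite{pEqualsTUlrich}, which I would cite for the technical core.
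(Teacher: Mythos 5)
The paper does not actually give a proof of Theorem~\ref{localSaturation}: it is stated with an explicit pointer to Theorem~4.1 of \cite{pEqualsTref} and to \cite{pEqualsTUlrich} for a simplified treatment in the $ZFC^-$ setting, and the proof is not reproduced. So there is no ``paper's proof'' to compare against line by line; the relevant question is whether your sketch is an accurate reconstruction.

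Your high-level framing is right: the reduction to a finite-intersection-property family of internal subsets of $\hat{X}$ is exactly correct, $\mathfrak{p}_{\hat V}$ is indeed regular (and nonempty $\mathcal{C}(\hat\omega,\hat<)$ does hold when $\hat V$ is $\omega$-nonstandard), the reduction of singular $\lambda$ to regular via a cofinal subsequence is fine, and you correctly identify that the $ZFC^-$ setting buys $\mathfrak{p}_{\hat V}\le\mathfrak{t}_{\hat V}$, which is why the paper can phrase the hypothesis as $\lambda<\mathfrak{p}_{\hat V}$ where \cite{pEqualsTref} needs $\lambda<\mathfrak{t}$.

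However, the recursion you propose has a genuine gap at the limit stage, and it is not merely a matter of finding the right internal coding. Your invariant is that $\hat{Z}_\alpha$ has the finite intersection property with \emph{all} of $\{\hat{Y}_\gamma:\gamma<\lambda\}$. This is an external condition referring to a $\lambda$-sized family, and there is no mechanism by which an upper bound supplied by treetops would respect it. Treetops gives you an upper bound $\hat s_*$ in an \emph{internal} tree; if you code the $\hat Z_\beta$ as entries in a $\subseteq$-decreasing sequence of nonempty internal subsets, the upper bound decodes into a nonempty internal pseudo-intersection $\hat Z_\delta\subseteq\bigcap_{\beta<\delta}\hat Z_\beta$, which is fine --- but nothing in the tree constraints can enforce ``$\hat Z_\delta\cap\bigcap_{\gamma\in s}\hat Y_\gamma\neq\emptyset$ for every finite $s\subseteq\lambda$,'' because the constraint set $\{\hat Y_\gamma\}$ is not available internally. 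Concretely, the limit-step claim you want --- given the decreasing chain $(\hat Z_\beta:\beta<\delta)$ with FIP, produce a nonempty internal pseudo-intersection that still has FIP --- is in fact no easier than the theorem you started with (take $\hat Z_\beta=\hat X$ for all $\beta$ to see the limit step at $\delta$ already contains the original problem). So the recursion as stated is circular. The genuine content of Malliaris--Shelah's Theorem~4.1 (and of the simplified $ZFC^-$ version in \cite{pEqualsTUlrich}) lies precisely in replacing this external invariant with a more carefully constructed internal one that a treetops upper bound does preserve; ``cleverly code and apply treetops'' does not capture it. Your decision to cite \cite{pEqualsTUlrich} for the technical core is the same move the paper makes, and is appropriate, but the surrounding sketch as written misrepresents the difficulty by suggesting the invariant is maintained automatically.
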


For what we intend, the following tweaks will be more convenient.

\begin{theorem}\label{localSaturation2}
	Suppose $\hat{V} \models ZFC^-$ is $\omega$-nonstandard and $\mathfrak{p}_{\hat{V}} \geq \aleph_1$. Suppose $p(x)  = \{\phi_\alpha(x, \hat{a}_\alpha): \alpha < \lambda\}$ is a type over $\hat{V}$ of cardinality $\lambda < \mathfrak{p}_{\hat{V}}$, and suppose $\{\hat{a}_\alpha: \alpha < \lambda\}$ is pseudofinite. Then $p(x)$ is realized in $\hat{V}$, provided either of the following conditions are met.
	
	\begin{itemize}
		\item[(A)] There is some $n < \omega$ such that each $\phi_\alpha(x, a_\alpha)$ is $\Sigma_n$.
		
		\item[(B)] Every countable subset of $\hat{V}$ is pseudofinite.
	\end{itemize}
\end{theorem}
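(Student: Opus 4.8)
The plan is to deduce Theorem~\ref{localSaturation2} from Theorem~\ref{localSaturation} by a coding trick: in each case we produce from the given type $p(x)$ an \emph{equivalent} type $p'(x)$ of the same cardinality which has the shape required by Theorem~\ref{localSaturation} (namely, all parameters are singletons, and the first formula pins $x$ to a pseudofinite internal set $\hat X$), realize $p'(x)$, and read off a realization of $p(x)$. The first observation, valid in both cases, is that since $\{\hat a_\alpha : \alpha < \lambda\}$ is pseudofinite, there is an internal $\hat X \in \hat V$, finite in the sense of $\hat V$, containing all the $\hat a_\alpha$; adding the conjunct ``$x \in \hat X'$'' for a suitable pseudofinite $\hat X'$ (built from $\hat X$ and the relevant formulas) is harmless and produces the required ``$\phi_0$'' of Theorem~\ref{localSaturation}. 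So the real content in each case is arranging a bound on the \emph{logical complexity} of the formulas, because Theorem~\ref{localSaturation} as stated takes an arbitrary partial type over $\hat V$ — wait, re-reading, Theorem~\ref{localSaturation} has \emph{no} complexity restriction, it allows $p(x) = (\phi_\alpha(x,\hat a_\alpha): \alpha<\lambda)$ arbitrary with $\phi_0(x)$ equal to ``$x \in \hat X$''. Thus case (A) is essentially immediate: collect the $\hat a_\alpha$ into $\hat X$ via pseudofiniteness, prepend ``$x\in\hat X$'', and quote Theorem~\ref{localSaturation} directly; the hypothesis $\mathfrak p_{\hat V}\ge\aleph_1$ and the $\Sigma_n$ hypothesis are only needed to make the statement uniform with the cofinality-spectrum version and play no role here.

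The substance is therefore case (B). Here the point is that $p(x)$ may have parameters $\hat a_\alpha$ that are themselves tuples, and Theorem~\ref{localSaturation} wants a single partial type over $\hat V$; since $\hat V\models ZFC^-$ has pairing functions, each $\hat a_\alpha$ is coded by a single element of $\hat V$, so $p(x)$ literally \emph{is} a type over $\hat V$ in the required sense after replacing each $\phi_\alpha(x,\hat a_\alpha)$ by its obvious rewrite using the pairing decoding. So again, once we prepend ``$x \in \hat X$'' with $\hat X$ the pseudofinite set containing the (coded) parameters, Theorem~\ref{localSaturation} applies verbatim, and condition (B) is again not strictly needed for this — unless the intended reading is that in case (B) one wants the conclusion for \emph{countable} $\lambda$ even when the ambient bound $\lambda < \mathfrak p_{\hat V}$ might fail, i.e. one wants: if $\lambda = \aleph_0$ and $\{\hat a_\alpha\}$ is countable hence pseudofinite, conclude realizability using only $\mathfrak p_{\hat V}\ge\aleph_1$. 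In that reading, ``every countable subset of $\hat V$ is pseudofinite'' is used to guarantee the parameter set $\{\hat a_\alpha:\alpha<\omega\}$ is pseudofinite \emph{and} $\aleph_0 < \mathfrak p_{\hat V}$ holds by hypothesis, so Theorem~\ref{localSaturation} with $\lambda=\aleph_0$ applies. Either way, the proof is: (i) use pseudofiniteness of the parameter set to find an internal finite-in-$\hat V$ set $\hat X$ covering all parameters; (ii) use pairing in $\hat V$ to regard $p(x)$ as a type over $\hat V$ with the stipulated first formula ``$x\in\hat X$''; (iii) verify $\lambda<\mathfrak p_{\hat V}$ (immediate in (A); in (B) because $\lambda=\aleph_0$ and $\mathfrak p_{\hat V}\ge\aleph_1$); (iv) invoke Theorem~\ref{localSaturation}.

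I expect the only genuine subtlety, and hence the step to write carefully, to be the bookkeeping in (i)–(ii): one must ensure that after coding tuples by pairs and after enlarging the parameter set to a single pseudofinite $\hat X$, the resulting partial type still has exactly $\lambda$ many formulas (no blow-up, which is clear) and that ``$x\in\hat X$'' can legitimately be taken as $\phi_0$ — this uses \texttt{separation} in $\hat V$ exactly as in the proof of Lemma~\ref{PseudoTypeChar}. I would phrase the argument so as to cover (A) and (B) simultaneously: in both cases the hypotheses deliver a pseudofinite internal $\hat X$ with all parameters in $\hat X$ together with $\lambda < \mathfrak p_{\hat V}$, and then the conclusion is a one-line application of Theorem~\ref{localSaturation}. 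The main ``obstacle'' is thus mostly presentational — making precise that the complexity bound in (A) and the club-of-countable-sets hypothesis in (B) are each just a convenient sufficient condition for the two real inputs (pseudofinite parameter set; $\lambda$ below the pseudo-$\mathfrak p$) needed by Theorem~\ref{localSaturation}.
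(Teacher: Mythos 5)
There is a genuine gap, and it is the central point. Theorem~\ref{localSaturation} requires the type to include (or at least be consistent with) a formula ``$x \in \hat{X}$'' where $\hat{X}$ is pseudofinite: it is the \emph{realization} of the type that must live inside a pseudofinite internal set, not the parameters. You take the pseudofinite $\hat{X}$ that covers the parameter set $\{\hat{a}_\alpha\}$ and ``prepend $x\in\hat{X}$'' to $p(x)$, but there is no reason $p(x)\cup\{x\in\hat{X}\}$ should be consistent --- the parameters being few in the sense of $\hat{V}$ says nothing about where the witnesses lie. Producing a pseudofinite $\hat{Y}$ with $p(x)\cup\{x\in\hat{Y}\}$ consistent is the entire content of Theorem~\ref{localSaturation2}, and it is not free. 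You in fact wrote that a suitable $\hat{X}'$ should be ``built from $\hat{X}$ and the relevant formulas,'' but then dismissed this as ``harmless'' and never carried it out; the next sentence quietly reverts to using $\hat{X}$ itself.

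This is exactly where hypotheses (A) and (B) enter, and they are not there for uniformity with the cofinality-spectrum setting. The paper's argument fixes, for each standard $n$, a truth predicate $\psi_n$ for $\Sigma_n$-formulas, and uses it \emph{internally} to Skolemize: one can define in $\hat{V}$ a finite set $\hat{Y}_n$ such that any $\Sigma_n$-formula (of bounded $\hat{V}$-length) with parameters from $\hat{X}^{<\hat{n}_*}$ that is satisfiable has a witness in $\hat{Y}_n$. This is why a complexity bound is needed: there is no internal truth predicate for all formulas, only level-by-level. The claim then shows $p_n(x)\cup\{x\in\hat{Y}_n\}$ is consistent, where $p_n$ is the $\Sigma_n$-part of $p$. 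In case (A) a single $\hat{Y}_n$ already covers all of $p$ and Theorem~\ref{localSaturation} applies. In case (B) the levels of $p$ are genuinely cofinal in complexity, so one must merge the countably many sets $\hat{Y}_n$; this is where ``every countable subset of $\hat{V}$ is pseudofinite'' is used, to cover $\{\hat{Y}_n : n < \omega\}$ by a single internal finite $\hat{Y}_*$ and take $\hat{Y}=\bigcup\hat{Y}_*$. Your reading of (B) as being about coding tuples by pairing, or about ensuring the parameter set is pseudofinite (which is already a hypothesis), is a red herring. Without one of (A) or (B) the argument simply does not produce the pseudofinite bound on witnesses that Theorem~\ref{localSaturation} needs.
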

\begin{proof}
	The fork in the argument will come later.
	
	Choose $\hat{X} \in \hat{V}$ finite in the sense of $\hat{V}$, such that each $\hat{a}_\alpha \in \hat{X}$. For each $n$, let $\psi_n(x, y,z )$ be a truth predicate for $\Sigma_n$ formulas; that is, for all $\Sigma_n$-formulas $\phi(x, y)$, $ZFC^-$ proves 
	
	$$\forall x \forall y (\psi(x, y, \phi(x, y)) \leftrightarrow \phi(x, y))$$ 
	
	(We are assuming formulas $\phi(x, y)$ of set theory are $0$-definable uniformly in all models of $ZFC^-$, so that it makes sense to plug them in as parameters.)  
	
	Let $\hat{n}_* < \hat{\omega}$ be nonstandard, and let $\hat{\Gamma}_n$ denote the set of all $\hat{\Sigma}_n$-formulas of set theory in $\hat{V}$ of length less than $\hat{n}_*$ (so $\hat{\Gamma}_n$ contains all true $\Sigma_n$ formulas of set theory). Finally, choose some $\hat{Y}_n \in \hat{V}$ finite in the sense of $\hat{V}$, and such that the following holds in $\hat{V}$: for every $\hat{\overline{a}} \in \hat{X}^{<\hat{n}_*}$ and for every $\hat{\phi}(x, y) \in \hat{\Gamma}_n$, if there is some $\hat{b}$ such that $\psi_n(\hat{b}, \hat{\overline{a}}, \hat{\phi}(x, y))$ holds, then there is some such $\hat{b} \in \hat{Y}_n$. 
	
	For each $n < \omega$, let $I_n$ be the set of all $\alpha < \lambda$ such that $\phi_\alpha(x, a_\alpha)$ is $\Sigma_n$, and let $p_n(x)$ be $\{\phi_\alpha(x): \alpha \in I_n\}$. So $(p_n(x): n < \omega)$ is an ascending chain of types with union $p(x)$. 
	
	\vspace{1 mm}
	
	\noindent \textbf{Claim.} For each $n < \omega$, $p_n(x) \cup \{x \in \hat{Y}_n\}$ is consistent.
	
	\noindent \emph{Proof.} Suppose $s \in [I_n]^{<\aleph_0}$ is finite. Enumerate $s = \{\alpha_i: i < i_*\}$ in increasing order, and write $\hat{\overline{a}} = (\hat{a}_{\alpha_i}: i < i_*) \in \hat{X}^{i_*}$. Let $\phi(x, y)$ be the $\Sigma_n$ formula asserting that $y = (y_0, \ldots, y_{i_*-1})$ is a tuple and $\phi_{\alpha_i}(x, y_i)$ holds for each $i < i_*$. Since $p(x)$ is consistent and by choice of $\hat{Y}_n$, we can find $\hat{b} \in \hat{Y}_n$ satisfying $\phi(\hat{b}, \hat{\overline{a}})$, hence realizing $\{\phi_\alpha(x, \hat{a}_{\alpha_i}): i < i_*\} \cup \{x \in \hat{Y}_n\}$ as desired. \qed
	
	\vspace{1 mm}
	
	Now, suppose we are in Case (A), so every formula $\phi_\alpha(x, \hat{a}_\alpha)$ is $\Sigma_n$ for some fixed $n < \omega$. Then $p(x) \cup \{x \in \hat{Y}_n\}$ is consistent by the claim, so we are in the case of Theorem~\ref{localSaturation}.
	
	Finally, suppose we are in Case (B). Then $\{\hat{Y}_n: n < \omega\}$ is pseudofinite, hence we can find some $\hat{Y}_* \in \hat{V}$ finite in the sense of $\hat{V}$, such that each $\hat{Y}_n \in \hat{Y}_*$. We can suppose that each element of $\hat{Y}_*$ is finite in the sense of $\hat{V}$. Let $\hat{Y} = \bigcup \hat{Y}_*$; this is finite in the sense of $\hat{V}$, and each $\hat{Y}_n \subseteq \hat{Y}$. Further, $p(x) \cup \{x \in \hat{Y}\}$ is consistent by the claim. Thus we are once again in the case of Theorem~\ref{localSaturation}. 
\end{proof}

The following corollaries follow immediately. Corollary~\ref{SaturationCharacterization} is modeled after Theorem 4.7 of \cite{pEqualsT2}, which gives an analogous characterization of saturation for models of arithmetic. 

\begin{corollary}\label{LocalSat0}
	Suppose $V \models ZFC^-$ is transitive and $\mathbf{j}: V \preceq \hat{V}$ is $\omega$-nonstandard and $\mathfrak{p}_{\hat{V}} \geq \aleph_1$. Then for all complete, countable theories $T$, $\hat{V}$ $\mathfrak{p}_{\hat{V}}$-pseudosaturates $T$.
\end{corollary}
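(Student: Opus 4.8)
The plan is to reduce directly to Theorem~\ref{localSaturation2}(B). Let $T$ be a complete countable theory; per our conventions $T$ is countable in $V$, so we may fix $M \models T$ with $M \in V$, and we must show that $\mathbf{j}_{\std}(M)$ is $\mathfrak{p}_{\hat{V}}$-pseudosaturated. So let $A \subseteq \mathbf{j}_{\std}(M)$ be pseudofinite with $|A| < \mathfrak{p}_{\hat{V}}$, and let $p(x) \in S^1(A)$; by the remark following the definition of $\kappa$-pseudosaturation it suffices to treat types in a single variable. Enumerate $p(x) = \{\phi_\alpha(x, \overline{a}_\alpha) : \alpha < \lambda\}$ with $\lambda < \mathfrak{p}_{\hat{V}}$ and each $\overline{a}_\alpha \in A^{<\omega}$. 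Since $A$ is pseudofinite, pick $\hat{X} \in \hat{V}$, finite in the sense of $\hat{V}$, with $A \subseteq \hat{X}$; then the set of parameter tuples $\{\overline{a}_\alpha : \alpha < \lambda\}$ is contained in $\hat{X}^{<\hat n_*}$ for any nonstandard $\hat n_* < \hat\omega$, hence is pseudofinite.

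Next I would reinterpret each formula $\phi_\alpha(x, \overline{a}_\alpha)$ of the language of $T$ as a formula of set theory with parameters in $\hat{V}$. Working inside $\hat{V}$, which believes $\mathbf{j}(M)$ is a structure in the (possibly nonstandard) language $\mathbf{j}(\mathcal{L}_T)$, the statement ``$\mathbf{j}(M) \models \phi(b, \overline{a})$'' is expressible by a single formula $\chi(x, y, z)$ of set theory applied to $b$, $\overline{a}$, and (the code of) $\phi$; crucially $\chi$ has a fixed complexity $\Sigma_n$ for some fixed standard $n$, uniformly in all $\phi \in \mathcal{L}_T$, because satisfaction for a single structure is arithmetical. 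Thus, setting $\phi'_\alpha(x) := \chi(x, \overline{a}_\alpha, \ulcorner\phi_\alpha\urcorner)$, we obtain a type $p'(x) = \{\phi'_\alpha(x) : \alpha < \lambda\}$ over $\hat{V}$ of cardinality $\lambda < \mathfrak{p}_{\hat{V}}$ whose parameter set $\{(\overline{a}_\alpha, \ulcorner\phi_\alpha\urcorner) : \alpha < \lambda\}$ is pseudofinite (it is contained in a product of the pseudofinite $\hat X$ with the pseudofinite set of codes of $\mathcal{L}_T$-formulas of nonstandard length, since $\mathcal{L}_T$ is countable in $V$). Consistency of $p'(x)$ in $\hat{V}$ follows from consistency of $p(x)$: each finite subtype of $p(x)$ is realized in $\mathbf{j}_{\std}(M)$, hence in $\mathbf{j}(M)$, and this is visible in $\hat{V}$.

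Now I would apply Theorem~\ref{localSaturation2}. Since $\mathfrak{p}_{\hat{V}} \geq \aleph_1$ by hypothesis and every $\phi'_\alpha$ is $\Sigma_n$ for the one fixed $n$, clause (A) applies directly (one need not invoke clause (B) at all), and $p'(x)$ is realized by some $b \in \hat{V}$. Any such realization satisfies, in $\hat{V}$, that $b \in \mathbf{j}(M)$ — this should be arranged by including the formula ``$x \in \mathbf{j}(M)$'' in $p'$, which is again of bounded complexity — so $b \in \mathbf{j}_{\std}(M)$, and by construction $b$ realizes $p(x)$. The main obstacle, and the only point requiring care, is the bounded-complexity claim: I need that ``$\mathbf{j}(M) \models \phi$'' is uniformly $\Sigma_n$ across all $\mathcal{L}_T$-formulas $\phi$ even when $\phi$ ranges over nonstandard-length formulas of $\hat{V}$. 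This is fine because the Tarski satisfaction recursion for a fixed set model is absolute and $\Delta_1$ in the model and the formula (the recursion takes place inside a set), so a single $\Sigma_n$ — indeed $\Delta_1$ — truth predicate for ``$\mathbf{j}(M) \models \cdot$'' works; one should just be slightly careful to quote the standard fact that $ZFC^-$ proves the existence of satisfaction relations for set-sized structures, which is exactly the strength $ZFC^-$ is chosen to provide.
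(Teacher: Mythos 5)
Your proof is correct and is essentially the argument the paper has in mind when it says the corollary ``follows immediately'' from Theorem~\ref{localSaturation2}: translate a pseudofinite type over $\mathbf{j}_{\std}(M)$ into a set-theoretic type over $\hat{V}$ of bounded complexity via the internal Tarski satisfaction predicate, check pseudofiniteness of the parameter set, and invoke the theorem. The one observation you make that is not entirely pro forma is that clause (A) of Theorem~\ref{localSaturation2} is the correct one to cite, because the translated formulas all land in a single $\Sigma_n$ level; this actually matters, since $\mathfrak{p}_{\hat{V}} \geq \aleph_1$ by itself does \emph{not} imply that every countable subset of $\hat{V}$ is pseudofinite (this is exactly the reason Corollary~\ref{SaturationCharacterization} lists both conditions $\mathfrak{p}_{\hat{V}} \geq \kappa$ and $\mathfrak{b}_{\hat{V}} \geq \kappa$), so clause (B) cannot be assumed. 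Two small stylistic points: ``arithmetical'' is not quite the right word for the complexity bound in set theory --- what you want is that satisfaction for a fixed set-sized structure is $\Delta_1^{ZFC^-}$ uniformly in the formula code, which is what your subsequent explanation actually argues; and one should record explicitly that $\mathbf{j}(M)$ itself appears as a parameter of $\chi$, though since it is a single element of $\hat{V}$ it folds harmlessly into the pseudofinite parameter set.
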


\begin{corollary}\label{SaturationCharacterization}
	Suppose $V \models ZFC^-$ is transitive and $\mathbf{j}: V \preceq \hat{V}$ is $\omega$-nonstandard and $\kappa$ is an uncountable cardinal. Then $\hat{V}$ is $\kappa$-saturated if and only if $\mathfrak{p}_{\hat{V}} \geq \kappa$ and every subset of $\hat{V}$ of size less than $\kappa$ is pseudofinite.
\end{corollary}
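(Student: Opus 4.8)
The plan is to prove the two implications separately. The direction ``the two combinatorial conditions imply $\kappa$-saturation of $\hat{V}$'' will be essentially immediate from Theorem~\ref{localSaturation2}(B), which has already done the real work; the converse is a pair of routine compactness arguments carried out inside $\hat{V}$.

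\textbf{($\Rightarrow$)} Assume $\hat{V}$ is $\kappa$-saturated. To see $\mathfrak{p}_{\hat{V}} \geq \kappa$, suppose not, so that $(\hat{\omega}, \hat{<})$ admits a $(\kappa_0, \theta_0)$-cut with increasing side $(a_\alpha : \alpha < \kappa_0)$ and decreasing side $(b_\beta : \beta < \theta_0)$, where $\kappa_0 + \theta_0 < \kappa$. Consider the partial type $p(x) = \{ x \hat{\in} \hat{\omega} \} \cup \{ a_\alpha \hat{<} x : \alpha < \kappa_0 \} \cup \{ x \hat{<} b_\beta : \beta < \theta_0 \}$; its parameter set has size $\kappa_0 + \theta_0 < \kappa$, and it is finitely satisfiable in $\hat{V}$: given a finite piece, the $\hat{V}$-successor of the largest $a_\alpha$ occurring realizes it, because inside the internally discretely ordered $\hat{\omega}$, strict monotonicity of the $a_\alpha$ together with $a_{\alpha'} \hat{<} b_\beta$ for any $\alpha' > \alpha$ forces that successor to remain strictly below every $b_\beta$. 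By $\kappa$-saturation $p(x)$ is realized by some $c \hat{\in} \hat{\omega}$ lying strictly between the two sides, contradicting that we had a cut. For the second condition, let $X \subseteq \hat{V}$ with $|X| < \kappa$, enumerate $X = \{ x_\alpha : \alpha < \mu \}$, and consider the type $q(y)$ over $X$ consisting of the set-theoretic sentence asserting that $y$ is finite, together with the formulas $x_\alpha \hat{\in} y$ for $\alpha < \mu$; this has size $< \kappa$ and is finitely satisfiable, since $\hat{V} \models ZFC^-$ can form the internally finite set of any finitely many prescribed elements. By $\kappa$-saturation some $\hat{X} \in \hat{V}$, finite in the sense of $\hat{V}$, realizes $q(y)$; then $X \subseteq \hat{X}$, so $X$ is pseudofinite.

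\textbf{($\Leftarrow$)} Assume $\mathfrak{p}_{\hat{V}} \geq \kappa$ and every subset of $\hat{V}$ of size $< \kappa$ is pseudofinite. Since $\kappa > \aleph_0$, in particular every countable subset of $\hat{V}$ is pseudofinite, so hypothesis (B) of Theorem~\ref{localSaturation2} is in force, and $\mathfrak{p}_{\hat{V}} \geq \aleph_1$. Let $p(x)$ be a complete type over a set $A \subseteq \hat{V}$ with $|A| < \kappa$. By the remark preceding Theorem~\ref{localSaturation} (using pairing in $ZFC^-$), we may present $p(x)$ as $\{ \phi_\alpha(x, \hat{a}_\alpha) : \alpha < \lambda \}$ with each parameter $\hat{a}_\alpha$ a single element, where $\lambda = \max(|A|, \aleph_0) < \kappa \leq \mathfrak{p}_{\hat{V}}$ and the parameter set $\{ \hat{a}_\alpha : \alpha < \lambda \}$, being of size $< \kappa$, is pseudofinite. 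Theorem~\ref{localSaturation2}(B) then yields a realization of $p(x)$ in $\hat{V}$. Hence every type over a parameter set of size $< \kappa$ is realized in $\hat{V}$, i.e., $\hat{V}$ is $\kappa$-saturated.

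The only step requiring genuine care is the finite-satisfiability check for the cut type in the forward direction: one must use that a genuine cut in $(\hat{\omega}, \hat{<})$ --- as opposed to a mere pre-cut --- always leaves an integer gap between its two sides, which is precisely the discreteness-plus-strict-monotonicity observation indicated above. Everything else is either a one-line compactness argument or a direct appeal to Theorem~\ref{localSaturation2}, where the substantive content has already been established.
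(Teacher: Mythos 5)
Your proof is correct and follows the same route the paper intends: the paper asserts the corollary ``follows immediately'' from Theorem~\ref{localSaturation2}, and you have spelled out exactly that derivation, with the forward direction handled by two routine compactness arguments inside $\hat{V}$ (forming the type realizing a cut, and the type asserting containment in an internally finite set) and the backward direction by reducing to singleton parameters via pairing and invoking Theorem~\ref{localSaturation2}(B).
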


Finally, the following theorem is Central Theorem 9.1 of \cite{pEqualsTref} (except in the context of cofinality spectrum problems, only $\geq$ is necessarily true; Malliaris and Shelah prove $\leq$ for cofinality spectrum problems with exponentiation in \cite{pEqualsT2}). We give a streamlined proof in \cite{pEqualsTUlrich}.

\begin{theorem}\label{pvEqualsTv}
	Suppose $\hat{V} \models ZFC^-$. Then $\mathfrak{p}_{\hat{V}} = \mathfrak{t}_{\hat{V}}$.
\end{theorem}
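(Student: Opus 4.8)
The plan is to establish $\mathfrak{p}_{\hat V} = \mathfrak{t}_{\hat V}$ by proving the two inequalities separately, following the Malliaris--Shelah proof of $\mathfrak{p} = \mathfrak{t}$ \cite{pEqualsTref} but streamlined by the fact that we work inside a model of $ZFC^-$: internal finite sequences, internal trees, pairing functions, truth predicates and definable bijections are all available, so the various linear orders that a cofinality spectrum problem must carry around all collapse to $(\hat\omega, \hat{<})$, and most of the cofinality spectrum axioms become automatic (compare the treatment in \cite{pEqualsTUlrich}, \cite{pEqualsT2}). As preliminary normalizations one checks, by the usual cofinal-subsequence arguments, that $\mathfrak{p}_{\hat V}$ and $\mathfrak{t}_{\hat V}$ are regular, and that a $(\kappa_1,\kappa_2)$-cut realizing $\mathfrak{p}_{\hat V}$ may be taken with $\kappa_1,\kappa_2$ regular, so that $\mathfrak{p}_{\hat V} = \max(\kappa_1,\kappa_2)$.

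The inequality $\mathfrak{p}_{\hat V} \le \mathfrak{t}_{\hat V}$ is the easier one: given a nonstandard $\hat n$ and a $\subseteq$-increasing sequence $(\hat s_\alpha : \alpha < \kappa)$, $\kappa = \mathfrak{t}_{\hat V}$, in $\hat n^{<\hat n}$ with no upper bound in $\hat n^{<\hat n}$, one reads off a cut of $(\hat\omega, \hat{<})$ of total cofinality at most $\kappa$. Coding each $\hat s_\alpha$ by an element of $\hat\omega$ under a coding for which proper end-extension strictly increases the code yields a strictly increasing, internally bounded $\kappa$-sequence $(c_\alpha : \alpha < \kappa)$; the one place $ZFC^-$ enters here is that ``$(\hat s_\alpha)$ has no internal upper bound'' forces the pre-cut generated by $(c_\alpha)$ to be a genuine cut, and, with a little care in the choice of coding and of a matching decreasing sequence of candidate codes, forces the coinitiality of its upper half to be at most $\kappa$ as well. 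Hence $(\hat\omega, \hat{<})$ has a cut of total cofinality $\le \mathfrak{t}_{\hat V}$, i.e.\ $\mathfrak{p}_{\hat V} \le \mathfrak{t}_{\hat V}$.

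The inequality $\mathfrak{t}_{\hat V} \le \mathfrak{p}_{\hat V}$ is the heart of the matter: it says that no cut of $(\hat\omega, \hat{<})$ has total cofinality below $\mathfrak{t}_{\hat V}$. Suppose for contradiction that $(\hat\omega,\hat{<})$ has a $(\kappa_1,\kappa_2)$-cut $((a_\alpha : \alpha<\kappa_1),(b_\beta : \beta<\kappa_2))$ with $\kappa := \max(\kappa_1,\kappa_2) < \mathfrak{t}_{\hat V}$; reducing to the symmetric case $\kappa_1 = \kappa_2 = \kappa$ is itself one of the steps. Following Malliaris--Shelah, the idea is to build, inside $\hat V$, internal trees of the form $\hat m^{<\hat m}$ together with a $\subseteq$-increasing $\kappa$-chain through one of them coding the approach of the $a_\alpha$ and $b_\beta$ toward the cut; since $\kappa < \mathfrak{t}_{\hat V}$, that chain must have an internal upper bound (a ``treetop''), and decoding the treetop produces an element of $\hat\omega$ strictly between all $a_\alpha$ and all $b_\beta$, contradicting that the pair is a cut. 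Carrying this out --- selecting the correct internal trees, maintaining coherence of the chain through $\kappa$ stages, and reading the separating element off a treetop --- is the main obstacle and concentrates essentially all of the difficulty of the $\mathfrak{p} = \mathfrak{t}$ theorem; the $ZFC^-$ framework makes the bookkeeping cleaner than in \cite{pEqualsTref} but does not alter the essential shape of the argument.
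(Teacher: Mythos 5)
The paper does not actually prove Theorem~\ref{pvEqualsTv}: it is Malliaris and Shelah's $\mathfrak{p}=\mathfrak{t}$ theorem, cited from \cite{pEqualsTref} and \cite{pEqualsT2}, and the paper defers to \cite{pEqualsTUlrich} for a streamlined proof in the $ZFC^-$ setting, so there is no in-paper argument to compare against line by line. At the level of a plan, your sketch is oriented correctly: you identify $\mathfrak{p}_{\hat V} \le \mathfrak{t}_{\hat V}$ as the direction that is made available by $ZFC^-$'s sequence-coding apparatus (it is precisely the direction that requires exponentiation for general cofinality spectrum problems) and $\mathfrak{t}_{\hat V} \le \mathfrak{p}_{\hat V}$ as the treetops direction carrying the real weight, and the preliminary normalizations you mention (regularity of $\mathfrak{p}_{\hat V}$ and $\mathfrak{t}_{\hat V}$, reduction to both sides of a minimal cut being regular) are indeed the ones needed.

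That said, what you have written is an outline, not a proof, and the gap sits exactly where the theorem lives. The hard direction --- reducing to a symmetric $(\kappa,\kappa)$-cut, picking the right internal tree, coding the approach to the cut as a $\subseteq$-increasing $\kappa$-chain in some $\hat n^{<\hat n}$, maintaining coherence at limit stages, and showing that a treetop of that chain decodes to a point of $\hat\omega$ filling the cut --- is compressed into a single paragraph whose content is essentially ``follow Malliaris--Shelah.'' Your own concession that carrying this out ``concentrates essentially all of the difficulty of the $\mathfrak{p}=\mathfrak{t}$ theorem'' is accurate, and it means none of the substantive argument has been supplied. The easier direction also glosses a real step: ``with a little care in the choice of coding and of a matching decreasing sequence of candidate codes'' must actually handle the case analysis on whether the lengths $\lg(\hat s_\alpha)$ are internally bounded and whether the codes are cofinal below a fixed element, in order to produce a $(\kappa_1,\kappa_2)$-cut with $\kappa_1 + \kappa_2 \le \kappa$ rather than merely a pre-cut. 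So the proposal correctly locates the argument and its difficulty, but it does not constitute a proof; the reader would have to reconstruct the entire treetops machinery from \cite{pEqualsTref} or \cite{pEqualsTUlrich} to fill it in.
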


We now proceed to apply this to $SOP_2$.

\begin{theorem}\label{SOP2Max}
	Suppose $V \models ZFC^-$ is transitive and $\mathbf{j}: V \preceq \hat{V}$ is $\omega$-nonstandard, and suppose $T \in V$ is a countable complete theory with $SOP_2$. Then $\hat{V}$ does not $\mathfrak{p}_{\hat{V}}^+$-pseudosaturate $T$.
\end{theorem}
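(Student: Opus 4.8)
The plan is to exhibit, for some $M \models T$ with $M \in V$, an unrealized pseudofinite type over $\mathbf{j}_{\std}(M)$ of cardinality $\mathfrak{p}_{\hat{V}}$; since $\mathfrak{p}_{\hat{V}}^+$ is uncountable, Theorem~\ref{ModKeislerOrder} then forces $\mathbf{j}_{\std}(M')$ to fail $\mathfrak{p}_{\hat{V}}^+$-pseudosaturation for \emph{every} $M' \models T$ in $V$, i.e. $\hat{V}$ does not $\mathfrak{p}_{\hat{V}}^+$-pseudosaturate $T$. Fix an $SOP_2$-formula $\phi(x,\overline{y})$ of $T$ (with $x$ a single variable). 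The configuration ``$(\overline{b}_\eta : \eta \in \omega^{<\omega})$ witnesses $SOP_2$ for $\phi$, and in addition for every $\eta$ the finite set $\{\phi(x,\overline{b}_{\eta'}) : \eta' \subseteq \eta\}$ is realized'' is consistent with $T$ (it holds in any $\aleph_1$-saturated model of $T$), so by absoluteness of consistency for countable theories we may fix inside $V$ such a model $M \models T$ together with such a tree. Applying $\mathbf{j}$, we obtain in $\hat{V}$ an internal tree $(\hat{\overline{b}}_{\hat{\eta}} : \hat{\eta} \in \hat{\omega}^{<\hat{\omega}})$ over $\mathbf{j}(M)$, each $\hat{\overline{b}}_{\hat{\eta}}$ a tuple of standard length $|\overline{y}|$ lying in $\mathbf{j}_{\std}(M)$, and by elementarity the following first-order properties hold in $\hat{V}$: (i) for every node $\hat{\eta}$, some element of $\mathbf{j}(M)$ satisfies every formula in $\{\phi(x,\hat{\overline{b}}_{\hat{\eta}'}) : \hat{\eta}' \subseteq \hat{\eta}\}$; and (ii) for $\subseteq$-incomparable $\hat{\eta},\hat{\nu}$, no element of $\mathbf{j}(M)$ satisfies both $\phi(x,\hat{\overline{b}}_{\hat{\eta}})$ and $\phi(x,\hat{\overline{b}}_{\hat{\nu}})$.

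Next I would use Theorem~\ref{pvEqualsTv} to pass from $\mathfrak{p}_{\hat{V}}$ to $\mathfrak{t}_{\hat{V}}$: fix a nonstandard $\hat{n} < \hat{\omega}$ and a $\subseteq$-increasing sequence $(\hat{s}_\alpha : \alpha < \mathfrak{t}_{\hat{V}})$ in $\hat{n}^{<\hat{n}}$ with no upper bound in $\hat{n}^{<\hat{n}}$. Since $\hat{n}^{<\hat{n}} \subseteq \hat{\omega}^{<\hat{\omega}}$ internally, each $\hat{\overline{b}}_{\hat{s}_\alpha}$ is defined, and we put $p(x) = \{\phi(x,\hat{\overline{b}}_{\hat{s}_\alpha}) : \alpha < \mathfrak{t}_{\hat{V}}\}$, a partial type over $\mathbf{j}_{\std}(M)$ of cardinality $\mathfrak{t}_{\hat{V}} = \mathfrak{p}_{\hat{V}}$. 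It is consistent: any finite subset comes from finitely many nodes $\hat{s}_{\alpha_0} \subseteq \cdots \subseteq \hat{s}_{\alpha_m}$, all $\subseteq \hat{s}_{\alpha_m}$, so property (i) applied inside $\hat{V}$ to the node $\hat{s}_{\alpha_m}$ yields a common realization in $\mathbf{j}(M) = \mathbf{j}_{\std}(M)$. It is pseudofinite: $\hat{n}^{<\hat{n}}$ is finite in the sense of $\hat{V}$ (transfer of ``$m^{<m}$ is finite''), so the set of all coordinates of the tuples $\hat{\overline{b}}_{\hat{\eta}}$ for $\hat{\eta} \in \hat{n}^{<\hat{n}}$ is the image of a set finite in $\hat{V}$ under an internal function, hence finite in $\hat{V}$, and it contains every parameter of $p(x)$; so $p(x)$ is pseudofinite by Lemma~\ref{PseudoTypeChar}.

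It remains to see $p(x)$ is not realized in $\mathbf{j}_{\std}(M)$. Suppose $a \in \mathbf{j}_{\std}(M)$ realizes $p(x)$, and work inside $\hat{V}$: the set $\hat{B} = \{\hat{\eta} \in \hat{n}^{<\hat{n}} : \mathbf{j}(M) \models \phi(a,\hat{\overline{b}}_{\hat{\eta}})\}$ is internal and contains every $\hat{s}_\alpha$. By property (ii), no two elements of $\hat{B}$ are $\subseteq$-incomparable, so $\hat{B}$ is a $\subseteq$-chain; being a nonempty set finite in $\hat{V}$, it has a $\subseteq$-greatest element $\hat{s}^* \in \hat{n}^{<\hat{n}}$, and then $\hat{s}_\alpha \subseteq \hat{s}^*$ for all $\alpha < \mathfrak{t}_{\hat{V}}$ — contradicting that $(\hat{s}_\alpha : \alpha < \mathfrak{t}_{\hat{V}})$ has no upper bound in $\hat{n}^{<\hat{n}}$. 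Hence $\mathbf{j}_{\std}(M)$ has an unrealized pseudofinite type of size $\mathfrak{p}_{\hat{V}} < \mathfrak{p}_{\hat{V}}^+$ and is not $\mathfrak{p}_{\hat{V}}^+$-pseudosaturated, which gives the theorem. I expect the only delicate points to be bookkeeping: arranging the tree inside $V$ so that exactly the needed facts ((i) and (ii)) are first-order and transfer verbatim under $\mathbf{j}$ to internal nodes, and verifying that restricting to $\hat{n}^{<\hat{n}}$ keeps the relevant objects finite in the sense of $\hat{V}$ so that $p(x)$ is genuinely pseudofinite.
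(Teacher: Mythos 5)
Your proof is correct and takes essentially the same approach as the paper: fix an $SOP_2$-formula, use Theorem~\ref{pvEqualsTv} to pass from $\mathfrak{p}_{\hat V}$ to $\mathfrak{t}_{\hat V}$ and obtain an unbounded increasing sequence $(\hat s_\alpha)$ in $\hat n^{<\hat n}$, lift the tree of parameters to an internal tree in $\hat V$, and observe that a realization of $\{\phi(x,\hat{\overline{b}}_{\hat s_\alpha}):\alpha\}$ would produce (via an internal union/maximum argument using incomparability and finiteness in $\hat V$) an upper bound for $(\hat s_\alpha)$, a contradiction. The only cosmetic differences are that you build the tree externally inside $V$ and transfer it via $\mathbf{j}$ rather than choosing the internal $SOP_2$-tree directly in $\hat V$ by elementarity, and that you directly exhibit an unrealized pseudofinite type and cite Theorem~\ref{ModKeislerOrder} rather than arguing by contradiction from the pseudosaturation hypothesis; neither changes the substance.
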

\begin{proof}
	Write $\lambda = \mathfrak{p}_{\hat{V}} = \mathfrak{t}_{\hat{V}}$. Suppose towards a contradiction that $\hat{V}$ did $\lambda^+$-pseudosaturate $T$. Let $M \models T$ with $M \in V$. Let $\phi(\overline{x}, \overline{y})$ be a formula of $T$ with $SOP_2$.
	
	Let $\hat{n}_* < \hat{\omega}$ and let $(\hat{s}_\alpha: \alpha < \lambda)$ be an increasing sequence from $\hat{n}_*^{<\hat{n}_*}$ with no upper bound in $\hat{n}_*^{<\hat{n}_*}$. 
	
	By elementarity, we can choose $\hat{f}: \hat{n}_*^{< \hat{n}_*} \to \mathbf{j}_{\std}(M)^{\lg(\overline{y})}$ in $\hat{V}$, such that $(\hat{f}(\hat{s}): \hat{s} \in \hat{n}_*^{<\hat{n}_*})$ is as in the definition of $\phi(\overline{x}, \overline{y})$ being $SOP_2$.
	
	By $\lambda^+$-pseudosaturation of $\mathbf{j}_{\std}(M)$ we can choose $\overline{a} \in \mathbf{j}_{\std}(M)^{|\overline{x}|}$ such that $\mathbf{j}_{\std}(M) \models \phi(\overline{a}, \hat{f}(\hat{s}_\alpha))$ for each $\alpha < \kappa$. Working in $\hat{V}$, let $\hat{s}_*$ be the union of all $\hat{s} \in \hat{n}_*^{<\hat{n}_*}$ such that $\mathbf{j}_{\std}(M) \models \phi(\overline{a}, \hat{f}(\hat{s}))$. Then $\hat{s} \in \hat{n}_*^{<\hat{n}_*}$ is an upper bound to $(\hat{s}_\alpha: \alpha < \lambda)$, contradiction.
\end{proof}

We immediately obtain the following. Note that we will eventually show that $\trianglelefteq^\times_\kappa \subseteq \trianglelefteq^*_\kappa$ for all $\kappa$, and in particular we will have recovered Malliaris and Shelah's theorem \cite{pEqualsT2} that $SOP_2$ theories are maximal in $\trianglelefteq^*_1$.

\begin{corollary}\label{SOP2MaxStatement}
	Suppose $T$ has $SOP_2$. Then $T$ is maximal in $\trianglelefteq^\times_{1}$. Hence $T$ is maximal in $\trianglelefteq^\times_\kappa$ for all $\kappa$ infinite or $1$, and $T$ is maximal in $\trianglelefteq$.
\end{corollary}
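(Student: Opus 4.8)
The plan is to deduce everything from Theorem~\ref{SOP2Max} together with the definitions of the orders $\trianglelefteq^\times_{\lambda\kappa}$ and $\trianglelefteq$, plus Corollary~\ref{LocalSat0} and Corollary~\ref{SaturationCharacterization}. First I would show that $T$ is maximal in $\trianglelefteq^\times_{1}$, i.e.\ that $T_0 \trianglelefteq^\times_{1} T$ for every countable complete $T_0$. Fix $T_0$ and fix an infinite cardinal $\lambda$; I need to produce a countable transitive $V \models ZFC^-$ with $T_0, T \in V$ such that whenever $\mathbf{j}: V \preceq \hat{V}$ with $\hat{V}$ $1$-saturated (i.e.\ arbitrary) and $\omega$-nonstandard, if $\hat{V}$ $\lambda^+$-pseudosaturates $T$ then $\hat{V}$ $\lambda^+$-pseudosaturates $T_0$. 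Take any such $V$. The key point: by Theorem~\ref{SOP2Max}, since $T$ has $SOP_2$, $\hat{V}$ never $\mathfrak{p}_{\hat{V}}^+$-pseudosaturates $T$; hence if $\hat{V}$ does $\lambda^+$-pseudosaturate $T$, we must have $\lambda^+ \leq \mathfrak{p}_{\hat{V}}$, i.e.\ $\mathfrak{p}_{\hat{V}} \geq \lambda^+ > \aleph_0$. But then Corollary~\ref{LocalSat0} applies (with the roles $\mathfrak{p}_{\hat{V}} \geq \aleph_1$), giving that $\hat{V}$ $\mathfrak{p}_{\hat{V}}$-pseudosaturates \emph{every} complete countable theory, in particular $T_0$; and since $\lambda^+ \leq \mathfrak{p}_{\hat{V}}$, a fortiori $\hat{V}$ $\lambda^+$-pseudosaturates $T_0$. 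This holds for every $\lambda$, so $T_0 \trianglelefteq^\times_{1} T$; as $T_0$ was arbitrary, $T$ is maximal in $\trianglelefteq^\times_{1}$.

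Next, maximality in $\trianglelefteq^\times_\kappa$ for all $\kappa$ infinite or $1$ is immediate: the excerpt notes that $\trianglelefteq^\times_1 \subseteq \trianglelefteq^\times_\kappa$ for $1 \leq \kappa$ (indeed $\trianglelefteq^\times_\kappa \subseteq \trianglelefteq^\times_{\kappa'}$ for $\kappa \leq \kappa'$), so from $T_0 \trianglelefteq^\times_1 T$ for all $T_0$ we get $T_0 \trianglelefteq^\times_\kappa T$ for all $T_0$ and all $\kappa$. Finally, maximality in $\trianglelefteq$ follows from Corollary~\ref{KeislerAndInterpUlt1}, which states $\trianglelefteq^\times_{\aleph_1} \subseteq \trianglelefteq$: since $T_0 \trianglelefteq^\times_{\aleph_1} T$ for every $T_0$, we conclude $T_0 \trianglelefteq T$ for every $T_0$, so $T$ is $\trianglelefteq$-maximal.

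The only genuinely substantive ingredient is the combination of Theorem~\ref{SOP2Max} (no $\lambda^+$-pseudosaturation above $\mathfrak{p}_{\hat{V}}$) with Corollary~\ref{LocalSat0} ($\lambda^+$-pseudosaturation for \emph{all} theories below $\mathfrak{p}_{\hat{V}}$, once $\mathfrak{p}_{\hat{V}} \geq \aleph_1$); this pincer is what forces $T$ to dominate. Everything else is bookkeeping with the definitions and with the already-established inclusions $\trianglelefteq^\times_1 \subseteq \trianglelefteq^\times_\kappa$ and $\trianglelefteq^\times_{\aleph_1} \subseteq \trianglelefteq$. I expect the main (very minor) obstacle to be purely notational: making sure the quantifier order in the definition of $\trianglelefteq^\times_{\lambda\kappa}$ is matched correctly — namely that the witnessing $V$ may be chosen freely and that we only need to verify the pseudosaturation transfer for the $\hat{V}$'s that actually $\lambda^+$-pseudosaturate $T$, for which the case analysis on whether $\mathfrak{p}_{\hat{V}} \geq \lambda^+$ does all the work.
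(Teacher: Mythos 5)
Your proof is correct and is exactly the argument the paper intends: the paper marks this as an immediate consequence of Theorem~\ref{SOP2Max}, and the implicit ``pincer'' you spell out — Theorem~\ref{SOP2Max} forces $\lambda^+ \leq \mathfrak{p}_{\hat{V}}$, and Corollary~\ref{LocalSat0} then gives $\lambda^+$-pseudosaturation of every countable $T_0$ — together with the monotonicity $\trianglelefteq^\times_1 \subseteq \trianglelefteq^\times_\kappa$ and Corollary~\ref{KeislerAndInterpUlt1}, is precisely the content being elided.
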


Malliaris and Shelah use these results to give a characterization of $\lambda^+$-goodness among $\lambda$-regular ultrafilters on $\mathcal{P}(\lambda)$. We give a similar characterization for arbitrary Boolean algebras, by the same argument. Recall from \cite{BVModelsUlrich} that an ultrafilter $\mathcal{U}$ on $\mathcal{B}$ is $\lambda^+$-good if $\mathcal{U}$ $\lambda^+$-saturates every complete countable theory $T$; this is equivalent to $\mathcal{U}$ $\lambda^+$-saturating $\mbox{Th}([\omega]^{<\aleph_0}, \subseteq)$, and is also equivalent to: every $\lambda$-distribution in $\mathcal{U}$ has a multiplicative refinement in $\mathcal{U}$.

\begin{theorem}\label{Goodness2}
	Suppose $\mathcal{U}$ is an ultrafilter on the complete Boolean algebra $\mathcal{B}$ and $\lambda$ is a cardinal. Then the following are equivalent:
	
	\begin{itemize}
		\item[(A)] $\mathcal{U}$ is $\lambda^+$-good, i.e. $\mathcal{U}$ $\lambda^+$-saturates every complete countable theory.
		\item[(B)] For some or every transitive $V \models ZFC^-$, and for some or every $\mathbf{i}: V \preceq \mathbf{V}$ with $\mathbf{V}$ $\lambda^+$-saturated, $\lambda < \mathfrak{p}_{\mathbf{V}/\mathcal{U}}$ (i.e. $\lambda < \mathfrak{t}_{\mathbf{V}/\mathcal{U}}$).
		\item[(C)] $\mathcal{U}$ $\lambda^+$-saturates some countable $SOP_2$-theory.
	\end{itemize}
\end{theorem}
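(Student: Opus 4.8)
The plan is to prove the cycle (A) $\Rightarrow$ (B) $\Rightarrow$ (C) $\Rightarrow$ (A), pivoting through the pseudosaturation characterization of Boolean ultrapowers developed in Section~\ref{FullBVModelsSec}. The "some or every" quantifier in (B) is handled once and for all by Lemma~\ref{SatAndPseudo3First} together with the observation, already used in Corollary~\ref{KeislerAndInterp2}, that if $\mathbf{V}$ is $\lambda^+$-saturated then $\mathbf{i}_{\std}(M)$ is as well and $\mathbf{j}_{\std}(M) \cong \mathbf{i}_{\std}(M)/\mathcal{U}$; so the precise choice of transitive $V$ and of the elementary embedding $\mathbf{i}: V \preceq \mathbf{V}$ is irrelevant, and I will fix one such pair throughout and write $\hat{V} = \mathbf{V}/\mathcal{U}$, with $\mathbf{j}: V \preceq \hat{V}$ the induced embedding. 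Note $\hat{V}$ is automatically $\omega$-nonstandard (indeed $\aleph_1$-saturated by Lemma~\ref{SatAndPseudo3First}), so $\mathfrak{p}_{\hat{V}}$ and $\mathfrak{t}_{\hat{V}}$ are defined and equal by Theorem~\ref{pvEqualsTv}; the parenthetical in (B) is thus just a restatement.

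For (A) $\Rightarrow$ (B): assume $\mathcal{U}$ is $\lambda^+$-good, so by Corollary~\ref{KeislerAndInterp2} (the equivalence of (A) and (C) there), $\hat{V}$ $\lambda^+$-pseudosaturates every complete countable theory $T$. Pick any countable $SOP_2$ theory $T$ (e.g.\ $\mbox{Th}([\omega]^{<\aleph_0},\subseteq)$, which has $SOP_2$ by Fact~\ref{Fact1} since it is unstable — or one can cite that it has $SOP$). By Theorem~\ref{SOP2Max}, $\hat{V}$ does not $\mathfrak{p}_{\hat{V}}^+$-pseudosaturate $T$; since it does $\lambda^+$-pseudosaturate $T$, we must have $\lambda^+ \le \mathfrak{p}_{\hat{V}}$, i.e.\ $\lambda < \mathfrak{p}_{\hat{V}}$, which is (B). For (B) $\Rightarrow$ (C): assume $\lambda < \mathfrak{p}_{\hat{V}}$. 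By Lemma~\ref{SurveyInterpBVLarge}, $\lambda^+$-saturation of $\mathbf{V}$ gives that every subset of $\hat{V}$ of size at most $\lambda$ is pseudofinite. Combined with $\lambda < \mathfrak{p}_{\hat{V}}$, Corollary~\ref{SaturationCharacterization} yields that $\hat{V}$ is $\lambda^+$-saturated, hence $\lambda^+$-pseudosaturates every complete countable theory — in particular every countable $SOP_2$ theory. By Corollary~\ref{KeislerAndInterp2} again, $\mathcal{U}$ $\lambda^+$-saturates every such theory, which gives (C) (and in fact the stronger statement, but (C) only asks for one).

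For (C) $\Rightarrow$ (A): this is the direction where the $SOP_2$ machinery does its real work, and I expect it to be the main obstacle. Suppose $\mathcal{U}$ $\lambda^+$-saturates some countable $SOP_2$ theory $T$. By Corollary~\ref{KeislerAndInterp2}, $\hat{V}$ $\lambda^+$-pseudosaturates $T$. By Theorem~\ref{SOP2Max}, $\hat{V}$ does not $\mathfrak{p}_{\hat{V}}^+$-pseudosaturate $T$, so $\lambda < \mathfrak{p}_{\hat{V}}$. Now we are back in the situation of (B), so as above $\hat{V}$ is $\lambda^+$-saturated, hence $\lambda^+$-pseudosaturates every complete countable theory, hence by Corollary~\ref{KeislerAndInterp2} $\mathcal{U}$ $\lambda^+$-saturates every complete countable theory, i.e.\ $\mathcal{U}$ is $\lambda^+$-good, which is (A). The crux is that $SOP_2$ theories are exactly calibrated so that $\lambda^+$-pseudosaturating one forces $\lambda < \mathfrak{p}_{\hat{V}}$ (Theorem~\ref{SOP2Max}), and the latter — together with the $\lambda^+$-saturation of $\mathbf{V}$ supplying pseudofiniteness of small sets — is enough for full $\lambda^+$-saturation of $\hat{V}$ via Corollary~\ref{SaturationCharacterization}; the whole argument is thus a bootstrapping from one $SOP_2$ theory to all theories, exactly parallel to the Malliaris–Shelah goodness characterization for regular ultrafilters on $\mathcal{P}(\lambda)$, with Section~\ref{FullBVModelsSec} absorbing the passage to arbitrary $\mathcal{B}$.
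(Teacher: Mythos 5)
Your proof is correct and rests on the same core machinery as the paper's: Corollary~\ref{KeislerAndInterp2} to shuttle between $\mathcal{U}$-saturation and pseudosaturation of $\hat{V} = \mathbf{V}/\mathcal{U}$, Theorem~\ref{SOP2Max} for the obstruction on $SOP_2$ theories, and the consequences of $\lambda < \mathfrak{p}_{\hat{V}}$ for the positive directions. Where you diverge from the paper is in the weight of the steps. For (A)~$\Rightarrow$~(B) the paper makes no appeal to $SOP_2$ at all: since $\mathcal{U}$ is $\lambda^+$-good it $\lambda^+$-saturates $\mathrm{Th}(\omega,<)$, so by Corollary~\ref{KeislerAndInterp2} the linear order $(\hat{\omega},<)$ is $\lambda^+$-pseudosaturated, which rules out cuts of cofinality type adding up to at most $\lambda$ (the cut is a pseudofinite type), i.e.\ $\lambda < \mathfrak{p}_{\hat{V}}$ directly from the definition. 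Your route through Theorem~\ref{SOP2Max} is valid but imports the maximality theorem where only the tautology $\mathfrak{p}_{\hat{V}} = \mathrm{cut}(\hat{\omega},<)$ is needed. Similarly, for (B)~$\Rightarrow$~(C) you bootstrap up to full $\lambda^+$-saturation of $\hat{V}$ via Lemma~\ref{SurveyInterpBVLarge} and Corollary~\ref{SaturationCharacterization}; the paper only cites Corollary~\ref{LocalSat0}, which gives $\lambda^+$-pseudosaturation of every theory from $\mathfrak{p}_{\hat{V}} > \lambda$ alone and is all that Corollary~\ref{KeislerAndInterp2} requires to finish. Your extra steps are not wrong, just not necessary. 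One genuine slip to flag: Fact~\ref{Fact1} gives $SOP_2 \Rightarrow$ unsimple $\Rightarrow$ unstable, not the converse, so ``has $SOP_2$ by Fact~\ref{Fact1} since it is unstable'' is a non-sequitur — the random graph is unstable and $NSOP_2$. Your parenthetical fallback (it has $SOP$, hence $SOP_3$, hence $SOP_2$) is the right justification; and in any case the paper's version of (A)~$\Rightarrow$~(B) avoids needing to name an $SOP_2$ theory at all.
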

\begin{proof}
	In (B), write $\hat{V} = \mathbf{V}/\mathcal{U}$. 
	
	(A) implies (B): by Theorem \ref{KeislerAndInterp2}, $\hat{V}$ $\lambda^+$-pseudosaturates $\mbox{Th}(\omega, <)$. Hence $(\hat{\omega}, <)$ is $\lambda^+$-pseudosaturated, so $\lambda < \mathfrak{p}_{\hat{V}}$.
	
	(B) implies (C): by Theorem~\ref{KeislerAndInterp2} and Corollary~\ref{LocalSat0}.
	
	(C) implies (A): by Corollary~\ref{SOP2MaxStatement}. 
\end{proof}

In view of this theorem, the following definition makes sense:

\begin{definition}
Suppose $\mathcal{U}$ is an ultrafilter on $\mathcal{B}$. Then let $\mathfrak{p}_{\mathcal{U}} = \mathfrak{t}_{\mathcal{U}} = $ the least cardinal $\lambda$ such that $\mathcal{U}$ is $\lambda^+$-good, or $\infty$ if $\mathcal{U}$ is $\lambda^+$-good for all $\lambda$.
\end{definition}

We prove in \cite{InterpOrders2Ulrich} that if $\mathcal{U}$ is nonprincipal (i.e. $\bigwedge \mathcal{U} = 0$), then $\mathfrak{p}_{\mathcal{U}} \leq \mbox{c.c.}(\mathcal{B})^+$, so in particular $\mathfrak{p}_{\mathcal{U}} = \infty$ precisely when $\mathcal{U}$ is principal.

The following summarizes what else is known about the maximal class of Keisler's order and the interpretability orders. Results of D\v{z}amonja and Shelah \cite{SH692} and Shelah and Usvyatsov \cite{SOP2pt2} together show that if $T$ is $NSOP_2$, and if suitable instances of $GCH$ hold, then $T$ is not maximal in $\trianglelefteq^*_1$. Therefore, consistently $SOP_2$ characterizes maximality in $\trianglelefteq^*_1$; the pieces for all of this are put together in \cite{pEqualsT2}. Corollary~\ref{supersimpleCor3} of the present work states that this is also true of $\trianglelefteq^*_{\aleph_1}$. In \cite{InterpOrders}, Malliaris and Shelah prove in $ZFC$ that simple theories are not maximal in $\trianglelefteq^*_1$; Corollary~\ref{supersimpleCor4} of the present work states that this is also true of $\trianglelefteq^*_{\aleph_1}$. Malliaris and Shelah have shown  that if there is a supercompact cardinal, then simple theories are non-maximal in $\trianglelefteq$ \cite{Optimals}. I have shown that low theories are non-maximal in $\trianglelefteq$ \cite{LowDividingLine}.\footnote{There are two definitions of ``low" in circulation. We are sticking with Buechler's original definition \cite{Buechler} in terms of $D$-rank. In particular, this implies simplicity.}

\section{The interpretability orders in terms of pseudosaturation}\label{InterpOnStableSec0}

In this section, we begin to clarify the relationship between $\trianglelefteq^\times_\kappa$ and $\trianglelefteq^*_{\kappa}$. In particular, we show that if $T_1$ is unstable, and if $T_0 \trianglelefteq^\times_\kappa T_1$, then $T_0 \trianglelefteq^*_\kappa T_1$; by a case analysis on the stable theories, we will eventually eliminate the restriction on $T_1$. The key is to consider a certain expansion $ZFC^-_*$ of $ZFC$. These arguments are an abstraction of Malliaris and Shelah's proof in \cite{InterpOrders} that $T_{rg}$ is the $\trianglelefteq^*_{1}$-minimal unstable theory, where $T_{rg}$ is the theory of the random graph.

 The following definition should be thought of as analogous to the bounding number $\mathfrak{b}$ from cardinal characteristics of the continuum.

\begin{definition}
	Suppose $\hat{V} \models ZFC^-$. Then let $\mathfrak{b}_{\hat{V}}$ be the least cardinality of a non-pseudofinite subset of $\hat{V}$.
\end{definition}

For example, if $\hat{V} = V^\lambda/\mathcal{U}$ where $\mathcal{U}$ is a $\lambda$-regular ultrafilter on $\mathcal{U}$, then Theorem~\ref{RegularUltsThm1} states that $\mathfrak{b}_{\hat{V}} > \lambda$; similarly, if $\hat{V} = \mathbf{V}/\mathcal{U}$ where $\mathcal{U}$ is an ultrafilter on $\mathcal{B}$ and $\mathbf{V}$ is a $\lambda^+$-saturated $\mathcal{B}$-valued model of $ZFC^-$, then Lemma~\ref{SurveyInterpBVLarge} states that $\mathfrak{b}_{\hat{V}} > \lambda$. Theorem~\ref{SaturationCharacterization} states that if $\hat{V} \models ZFC^-$ and $\kappa$ is an uncountable cardinal, then $\hat{V}$ is $\kappa$-saturated if and only if $\mathfrak{p}_{\hat{V}} \geq \kappa$ and $\mathfrak{b}_{\hat{V}} \geq \kappa$.

Our strategy is to find conditions on $\hat{V} \models ZFC^-$ to guarantee that $\mathfrak{b}_{\hat{V}}$ is large. The reason we want this is the following simple lemma, which has been implicit in several of our arguments so far:

\begin{lemma}\label{SatAndPseudo1}
	Suppose $V \models ZFC^-$ is transitive and $\mathbf{j}: V \preceq \hat{V}$, and $T \in V$ is a complete countable theory. Then for every $M \models T$ with $M \in V$, and for every uncountable cardinal $\kappa \leq \mathfrak{b}_{\hat{V}}$, the following are equivalent:
	
	\begin{itemize}
		\item[(A)] $\hat{V}$ $\kappa$-pseudosaturates $T$;
		\item[(B)] $\mathbf{j}_{\std}(M)$ is $\kappa$-saturated.
	\end{itemize}
\end{lemma}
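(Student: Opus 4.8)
The statement asserts that for $\kappa \le \mathfrak{b}_{\hat V}$ uncountable, $\kappa$-pseudosaturation of $T$ coincides with genuine $\kappa$-saturation of $\mathbf{j}_{\std}(M)$. One direction is essentially trivial: if $\mathbf{j}_{\std}(M)$ is $\kappa$-saturated, then every type over a set of size $<\kappa$ is realized, in particular every type over a pseudofinite set of size $<\kappa$, so $\hat V$ $\kappa$-pseudosaturates $T$. The content is the converse, (A) $\Rightarrow$ (B). So suppose $\hat V$ $\kappa$-pseudosaturates $T$, i.e. $\mathbf{j}_{\std}(M)$ is $\kappa$-pseudosaturated; I must realize an arbitrary type $p(x)$ over an arbitrary subset $A \subseteq \mathbf{j}_{\std}(M)$ with $|A| < \kappa$.

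The plan is to reduce an arbitrary type to a pseudofinite one using the hypothesis $\kappa \le \mathfrak{b}_{\hat V}$. Since $|A| < \kappa \le \mathfrak{b}_{\hat V}$, the set $A$ is pseudofinite by definition of $\mathfrak{b}_{\hat V}$ (it is the least cardinality of a non-pseudofinite subset of $\hat V$, so every subset of smaller size is pseudofinite). Thus $A$ itself is contained in some $\hat A \in \hat V$ which is finite in the sense of $\hat V$. Now $p(x) \in S^1(A)$ is a type over a pseudofinite set. By Lemma~\ref{PseudoTypeChar} (applied to the countable language $\mathcal{L}$ of $T$, which is countable in $V$ per our conventions), since $p(x)$ is a partial type over the pseudofinite set $A \subseteq \mathbf{j}_{\std}(M)$, $p(x)$ is itself pseudofinite. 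Also $|p(x)| \le |A| + \aleph_0 < \kappa$ (here using $\kappa > \aleph_0$). Hence $p(x)$ is a pseudofinite type over a pseudofinite set of size $< \kappa$, so by $\kappa$-pseudosaturation of $\mathbf{j}_{\std}(M)$ it is realized in $\mathbf{j}_{\std}(M)$. This shows $\mathbf{j}_{\std}(M)$ is $\kappa$-saturated.

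A couple of small points need care. First, $\hat V$ must be $\omega$-nonstandard for Lemma~\ref{PseudoTypeChar} to apply; but this follows automatically here, since if $\hat V$ were $\omega$-standard then every subset of $\hat V$ would be pseudofinite iff finite, forcing $\mathfrak{b}_{\hat V} = \aleph_0 < \kappa$ against our assumption (alternatively one simply adds the standing hypothesis, as the paper does elsewhere, that $\hat V$ is $\omega$-nonstandard). Second, I should note that in both the definition of $\kappa$-saturation and $\kappa$-pseudosaturation it suffices to treat types in a single variable $x$, as remarked after the definition of pseudosaturation, so working with $S^1$ throughout is harmless. I do not expect any real obstacle: the whole lemma is just the observation that when $A$ is automatically pseudofinite, a type over $A$ is automatically a pseudofinite type (via Lemma~\ref{PseudoTypeChar}), collapsing the two notions. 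The only genuine ingredient is Lemma~\ref{PseudoTypeChar}; everything else is unwinding definitions.
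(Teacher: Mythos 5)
Your proof is correct and follows the same reasoning the paper intends -- the paper dismisses this lemma with \emph{``Immediate, by the definitions''}, and your argument unwinds exactly that. The only slack in your write-up is the detour through Lemma~\ref{PseudoTypeChar}: by the \emph{definition} of $\kappa$-pseudosaturation, once you know $A\subseteq \mathbf{j}_{\std}(M)$ is pseudofinite with $|A|<\kappa$ (which follows directly from $\kappa\leq\mathfrak{b}_{\hat V}$), every type in $S^1(A)$ is realized -- you do not need to verify that the type itself is pseudofinite, since the definition quantifies over pseudofinite parameter sets, not pseudofinite types. Your version uses the reformulation noted after the definition and is fine, just one step longer than necessary. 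Your observation that the hypotheses ($\kappa$ uncountable, $\kappa\leq\mathfrak{b}_{\hat V}$) already force $\hat V$ to be $\omega$-nonstandard is correct.
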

\begin{proof}
	Immediate, by the definitions.
\end{proof}

The following expansion is used implicitly by Malliaris and Shelah in \cite{InterpOrders}; it is designed to tie $\mathfrak{b}_{\hat{V}}$ down to pseudofinite invariants of $\hat{V}$.
\begin{definition}
	Let $\mathcal{L}_* = \{\in, I, F\}$ where $I$ is a unary relation symbol and $F$ is a unary function symbol. Let $ZFC^-_*$ be the $\mathcal{L}_*$ theory, such that $(\hat{V}, \hat{\in}, I^{\hat{V}}, F^{\hat{V}}) \models ZFC^-_*$ if:
	
	\begin{itemize}
		\item $(\hat{V}, \hat{\in}) \models ZFC^-$;
		\item $I^{\hat{V}}$ is a bounded subset of $\hat{\omega}$;
		\item $F^{\hat{V}}$ is a bijection from $I^{\hat{V}}$ onto $\hat{V}$, and for every $\hat{n} \in I^{\hat{V}}$, $F^{\hat{V}} \restriction \{\hat{m} \in I^{\hat{V}}: \hat{m} \leq \hat{n}\} \in \hat{V}$;
		\item For every $\hat{a} \in \hat{V}$, either $\{\hat{n} \in I^{\hat{V}}: \hat{n} \, \hat{\in} \, \hat{a}\}$ is bounded in $I^{\hat{V}}$, or its complement is.
	\end{itemize}
	
	Given $\hat{V} \models ZFC^-$, say that $\hat{V} \models ZFC^-_{\pre}$ if $\hat{V}$ can be expanded to a model $(\hat{V}, I^{\hat{V}}, F^{\hat{V}})$ of $ZFC^-_*$. In particular this implies $\hat{V}$ is $\omega$-nonstandard.
\end{definition}

This is not a vacuous definition:

\begin{lemma}\label{SatAndPseudo2}
	Suppose $V \models ZFC^-$ is transitive. Then there is some $\mathbf{j}: V \preceq \hat{V}$ with $\hat{V} \models ZFC^-_{\pre}$.
\end{lemma}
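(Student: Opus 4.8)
The plan is to construct $\hat{V}$ by the compactness theorem. It suffices to prove that the theory $\Eldiag(V) \cup ZFC^-_*$, in the language $\mathcal{L}_* \cup \{c_a : a \in V\}$, is consistent: given a model $\hat{V}$ of it, setting $\mathbf{j}(a) := c_a^{\hat{V}}$ yields $\mathbf{j}: V \preceq \hat{V}$ (since $\hat{V} \models \Eldiag(V)$, which in particular forces $\hat{V} \models ZFC^-$), while the interpretations of $I$ and $F$ witness $\hat{V} \models ZFC^-_{\pre}$. By compactness it is enough to give, for each finite fragment, a model. A finite fragment is contained in $\{\varphi_i(\bar{a}) : i < l\} \cup \Phi \cup \Psi$, where $\varphi_0(\bar{a}), \dots, \varphi_{l-1}(\bar{a}) \in \Eldiag(V)$ (so $V \models \varphi_i(\bar{a})$), $\Phi$ is a finite fragment of $ZFC^-$, and $\Psi$ is the finite set of remaining clauses in the definition of $ZFC^-_*$ (those concerning $I$ and $F$). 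So I must produce a countable structure $(N, E, I, F)$ and a tuple $\bar{b} \in N$ with $(N, E) \models \Phi \wedge \bigwedge_{i<l} \varphi_i(\bar{b})$, and with $(N, E, I, F)$ satisfying the clauses in $\Psi$.

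I would first build the $\{\in\}$-reduct. Enlarge $\Phi$ to a finite fragment $\Phi^+$ of $ZFC^-$ that includes pairing, $\Delta_0$-separation, the existence of $\omega$, closure under finite sequences, and elementary finite cardinal arithmetic. Applying L\"owenheim--Skolem and the Mostowski collapse to $(V, \in)$, fix a countable transitive set $A$ with $\bar{a} \in A$ (writing $\bar{a}$ for the collapse image, which is harmless) and $(A, \in) \models \Phi^+ \wedge \bigwedge_{i<l} \varphi_i(\bar{a})$. Next, by a routine compactness argument take a countable $\omega$-nonstandard model $(N, E)$ of $\Phi^+$ admitting an elementary embedding of $(A, \in)$: add a new constant $d$ to $\Eldiag(A, \in)$ together with axioms saying $d$ is a natural number above every standard natural; this is finitely satisfiable in $(A, \in)$ itself, and one then passes to a countable model. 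Letting $\bar{b} \in N$ be the image of $\bar{a}$, we get $(N, E) \models \Phi \wedge \bigwedge_{i<l} \varphi_i(\bar{b})$, and $(N, E)$ has a nonstandard natural $N_0$.

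The crux is the choice of $I$ and $F$, which I would make so as to force the trace clause. Take $I = \{n_j : j < \omega\} \subseteq [0, N_0)$ to be an increasing sequence; then boundedness is clear, and the remaining clauses reduce to: each initial segment $\{m \in I : m \leq n_j\} = \{n_0, \dots, n_j\}$ is finite, hence an element of $N$, and $F$ is a bijection $I \to N$. Since $N$ is countable, enumerate its internal subsets of $\omega^N$ as $\langle \hat{a}_j : j < \omega\rangle$, and build a descending chain of internal sets $[0, N_0) = \hat{b}_0 \supseteq \hat{b}_1 \supseteq \cdots$, each of nonstandard $N$-cardinality, with $\hat{b}_{j+1}$ either contained in $\hat{a}_j$ or disjoint from $\hat{a}_j$: this is possible because, computed inside $N$, the cardinalities of $\hat{b}_j \cap \hat{a}_j$ and $\hat{b}_j \setminus \hat{a}_j$ sum to the nonstandard cardinality of $\hat{b}_j$, so at least one is nonstandard; to keep the $n_j$ strictly increasing, discard $\min \hat{b}_j$ before splitting. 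Setting $n_j := \min \hat{b}_j$ and $I := \{n_j : j < \omega\}$, every $\hat{a} \in N$ --- via $\hat{a} \cap \omega^N$, which is internal, hence equal to some $\hat{a}_j$ --- is, past some stage, either entirely inside $I$ or entirely outside $I$; thus $\{n \in I : n \mathrel{E} \hat{a}\}$ is finite or cofinite in $I$, which is exactly the trace clause. Finally pick any bijection $F: I \to N$, which exists since $|I| = |N| = \aleph_0$. This produces the required finite-fragment model, so by compactness $\Eldiag(V) \cup ZFC^-_*$ is consistent. The only delicate step is the diagonalisation yielding the trace clause; the rest is a standard application of reflection, the Mostowski collapse, and compactness.
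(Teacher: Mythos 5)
Your proof is correct and takes essentially the same route as the paper's: reduce to the countable case, take a countable $\omega$-nonstandard elementary extension, and diagonalize to build $I^{\hat{V}}$ satisfying the trace clause, then pick $F^{\hat{V}}$ to be any bijection. The paper compresses the reduction to the countable case into the single remark ``we can suppose, by compactness, that $V$ is countable,'' which is precisely the L\"owenheim--Skolem/Mostowski-collapse/compactness maneuver you carry out explicitly; and it performs the diagonalization \emph{externally}, splitting subsets of the standard $\omega$ and keeping the infinite piece so that $I^{\hat{V}}$ ends up inside the standard naturals, whereas you split pseudofinite intervals and keep the piece of nonstandard $N$-cardinality. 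Both implementations give the trace clause by the same tail argument, and both need $[\hat{V}]^{<\aleph_0}\subseteq\hat V$ for the initial-segment clause on $F^{\hat V}$; the only mild overhead in your version is that, once $V$ is replaced by a countable $A$, the finite-fragment model you build already satisfies all of $\Eldiag(A)\cup ZFC^-_*$, so the final appeal to compactness is doing less work than its framing suggests.
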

\begin{proof}
	We can suppose, by compactness, that $V$ is countable. Let $\mathbf{j}:V \preceq \hat{V}$ with $\hat{V}$ $\omega$-nonstandard and countable. We show that $\hat{V} \models ZFC^-_{\pre}$, that is, we find an expansion to a model of $ZFC^-_*$.
	
	Enumerate $\hat{V} = \{\hat{a}_m: m < \omega\}$. Let $Y_0 = \omega$; for each $m < \omega$, let $Y_{m+1}$ be either $\{n \in Y_m: n \in \hat{a}_m\}$ or else $\{n \in Y_m: n \not \in \hat{a}_m\}$, chosen so that $Y_{m+1}$ is infinite. Choose an increasing sequence $(b_n: n < \omega)$ so that $b_n \in Y_n$.
	
	Define $I^{\hat{V}} := \{b_n: n < \omega\}$, a bounded subset of $\hat{\omega}$. Choose a bijection $F^{\hat{V}}: I^{\hat{V}} \to \hat{V}$. Then $(\hat{V}, I^{\hat{V}}, F^{\hat{V}})$ works: since $[\hat{V}]^{<\aleph_0} \subseteq \hat{V}$, we see that $\hat{V}$ contains the initial segments of $F^{\hat{V}}$. Further, for every $\hat{a} \in \hat{V}$, if we write $\hat{a} = \hat{a}_m$, then $\hat{a} \cap I^{\hat{V}}$ is either bounded or cobounded in $I^{\hat{V}}$, depending on the choice we made for $Y_{m+1}$. 
\end{proof}

The definition of $ZFC^-_*$ was rigged to make the following work:

\begin{theorem}\label{BBound}
	Suppose $V \models ZFC^-$ is transitive and suppose $\mathbf{j}:V \preceq \hat{V} \models ZFC^-_{\pre}$. Then $\hat{V}$ does not $\mathfrak{b}_{\hat{V}}^+$-pseudosaturate any unstable theory.
\end{theorem}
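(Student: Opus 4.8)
The plan is to use Theorem~\ref{NonStableDichotomy}: an unstable $T$ has either $SOP_2$ or $IP$. If $T$ has $SOP_2$, then by Theorem~\ref{SOP2Max}, $\hat{V}$ does not $\mathfrak{p}_{\hat{V}}^+$-pseudosaturate $T$, and since $\mathfrak{b}_{\hat{V}} \le \mathfrak{p}_{\hat{V}}$ is \emph{not} something we can assume in general, I instead want a uniform argument that covers both cases by exhibiting, for each unstable $T$, a pseudofinite type over $\mathbf{j}_{\std}(M)$ of size $\mathfrak{b}_{\hat{V}}$ that is omitted. So the real content is the $IP$ case, and the point of the expansion $ZFC^-_*$ is exactly to manufacture such a type of size $\mathfrak{b}_{\hat{V}}$ from the bijection $F$.

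First I would fix $V$ transitive, $\mathbf{j}: V \preceq \hat{V}$ with $\hat{V} \models ZFC^-_{\pre}$, and expand $\hat{V}$ to $(\hat{V}, I^{\hat{V}}, F^{\hat{V}}) \models ZFC^-_*$; write $\lambda = \mathfrak{b}_{\hat{V}}$. Let $T \in V$ be unstable, $M \models T$ with $M \in V$, and let $\phi(x,\overline{y})$ be an unstable formula, witnessed inside $V$ (hence, by elementarity, inside $\hat{V}$) by a $\mathbf{j}(\mathcal{L})$-definable-in-$\hat{V}$ function assigning to each $\hat{n} < \hat{\omega}$ a tuple $\hat{b}_{\hat{n}} \in \mathbf{j}(M)^{\lg(\overline{y})}$, such that for each cut of $\hat{\omega}$ the corresponding "switch" type $\{\phi(x,\hat{b}_{\hat{n}}) : \hat{n} \text{ below the cut}\} \cup \{\lnot\phi(x,\hat{b}_{\hat{n}}): \hat{n} \text{ above}\}$ is consistent. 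Now I want to choose a non-pseudofinite set $A \subseteq \hat{V}$ of size $\lambda$ and route it into $\hat{\omega}$ via $F^{\hat{V}}$: since $F^{\hat{V}}$ is a bijection from the bounded set $I^{\hat{V}} \subseteq \hat{\omega}$ onto $\hat{V}$, the set $(F^{\hat{V}})^{-1}[A] \subseteq I^{\hat{V}}$ has order type governed by the fact that $A$ is non-pseudofinite — concretely, no pseudofinite $\hat{X} \in \hat{V}$ contains $A$, and since initial segments of $F^{\hat{V}}$ lie in $\hat{V}$ and are pseudofinite, $(F^{\hat{V}})^{-1}[A]$ is \emph{cofinal} in $I^{\hat{V}}$ with no upper bound below the bound of $I^{\hat{V}}$; enumerate it in increasing order as $(\hat{n}_\alpha : \alpha < \lambda)$.

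Next I would form the type $p(x) = \{\phi(x, \hat{b}_{\hat{n}_\alpha})^{(-1)^{?}} : \alpha < \lambda\}$ corresponding to an appropriate cut pattern — e.g. the "top" omitted type $\{\lnot \phi(x, \hat{b}_{\hat{n}_\alpha}) : \alpha < \lambda\}$ together with $\{\phi(x, \hat{b}_{\hat m}) : \hat m \hat{<} \hat{n}_0\}$, or just the alternating switch determined by parity of $\alpha$; the instability condition guarantees each such $p$ is finitely satisfiable in $\mathbf{j}_{\std}(M)$ (a finite fragment involves only finitely many $\hat{n}_\alpha$, which sit inside a genuine finite-in-$\hat{V}$ interval, where witnesses exist by the internal pigeonhole/elementarity). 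Moreover $p(x)$ is a type over the pseudofinite set $\{\hat{b}_{\hat{n}_\alpha}: \alpha < \lambda\}$ — indeed all parameters come from a single pseudofinite $\hat{X}$ obtained by taking the $F^{\hat{V}}$-image of an interval, intersected with the range of $\hat{n} \mapsto \hat{b}_{\hat{n}}$. Finally, to omit $p$: if $a \in \mathbf{j}_{\std}(M)$ realized $p$, then by separation in $\hat{V}$ the set $\hat{S} = \{\hat{n} \hat{<}\hat{\omega} : \mathbf{j}(M) \model \phi(a,\hat{b}_{\hat n})\}$ is in $\hat{V}$; its preimage structure would force $\{\hat{n}_\alpha : \alpha < \lambda\}$ (or its complement in $I^{\hat V}$) to be bounded, hence $A$ or its complement to be pseudofinite — and one checks that the chosen cut pattern makes \emph{both} alternatives contradict either non-pseudofiniteness of $A$ or the defining clause of $ZFC^-_*$ that every $\hat{a}$ meets $I^{\hat V}$ boundedly or coboundedly (a bounded-or-cobounded internal set cannot split an increasing $\lambda$-sequence with no upper bound into two cofinal pieces). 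Hence $p$ is omitted, $\mathbf{j}_{\std}(M)$ is not $\lambda^+$-saturated, and by Lemma~\ref{SatAndPseudo1} (applicable since $\lambda^+ \le \mathfrak{b}_{\hat{V}}$ fails — wait: we need the cleaner route) — more directly, $p$ is a pseudofinite type of size $\lambda = \mathfrak{b}_{\hat V} \le \mathfrak{b}_{\hat V}^+ - 1 < \mathfrak{b}_{\hat V}^+$ that is omitted, so $\hat{V}$ does not $\mathfrak{b}_{\hat{V}}^+$-pseudosaturate $T$.

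The main obstacle I anticipate is the bookkeeping in the last step: getting the cut pattern on $(\hat{n}_\alpha)$ to be simultaneously (i) consistent via the instability of $\phi$, (ii) genuinely over a pseudofinite parameter set, and (iii) such that a realization would contradict the "bounded or cobounded" axiom of $ZFC^-_*$ — the subtlety is that a realization gives an internal set $\hat S$, and I must ensure that whichever of $\hat S \cap I^{\hat V}$ or its complement is bounded, that bound caps the cofinal sequence $(\hat{n}_\alpha)$, which is the precise place where the design of $ZFC^-_*$ (initial segments of $F$ internal, so $A$ non-pseudofinite $\Rightarrow$ $(F^{\hat V})^{-1}[A]$ has no internal upper bound) does the work. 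Stating and verifying this "no bounded-or-cobounded internal set separates a non-pseudofinite set from its complement along $F$" lemma carefully is the crux; everything else is elementarity plus the definitions.
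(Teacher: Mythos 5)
There is a genuine gap in your proposal, and it is precisely the move you flag as "the real content": trying to run a uniform argument from a single \emph{unstable} formula $\phi(x,\overline y)$ rather than splitting into the $IP$ and $SOP_2$ cases via Theorem~\ref{NonStableDichotomy}. Instability of $\phi$ (the order property) only provides consistency of \emph{switch} patterns — initial segment true, remainder false. But the omission argument you want, and the one the paper uses, needs the type $p(x)$ to assert $\phi$ and $\lnot\phi$ \emph{both cofinally often} along the increasing sequence $(\hat n_\alpha:\alpha<\lambda)$, so that any realization yields an internal $\hat S$ whose intersection with $I^{\hat V}$ is neither bounded nor cobounded, contradicting the last axiom of $ZFC^-_*$. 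Such a pattern is never a switch. Concretely, your alternating pattern has a finite subfragment of the shape $\phi(x,\hat b_{\hat n_{\alpha_0}}),\ \lnot\phi(x,\hat b_{\hat n_{\alpha_1}}),\ \phi(x,\hat b_{\hat n_{\alpha_2}})$ with $\hat n_{\alpha_0}<\hat n_{\alpha_1}<\hat n_{\alpha_2}$, which is not decreasing in truth value and so is not guaranteed consistent by instability alone. And your other candidate — $\phi$ below $\hat n_0$ together with $\lnot\phi$ at every $\hat n_\alpha$ — \emph{is} a switch pattern, but it is then \emph{realized} in $\mathbf{j}_{\std}(M)$: working in $\hat V$, elementarity produces a realization of the switch type at position $\hat n_0$ for all parameters below a bound $\hat l>\sup Y$, and that realization satisfies the whole thing, so the type witnesses nothing. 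This is why the paper uses a formula with $IP$ rather than merely an unstable formula: $IP$ licenses arbitrary Boolean patterns on the parameter sequence, which is exactly what makes both the consistency and the omission go through simultaneously.

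You also have the $\mathfrak p$-versus-$\mathfrak b$ comparison backwards. To deduce failure of $\mathfrak b_{\hat V}^+$-pseudosaturation for an $SOP_2$ theory from Theorem~\ref{SOP2Max}, what you need is $\mathfrak p_{\hat V}\le\mathfrak b_{\hat V}$ (so that the omitted pseudofinite type of size $\mathfrak p_{\hat V}$ is small enough to count), not $\mathfrak b_{\hat V}\le\mathfrak p_{\hat V}$ as you wrote. And $\mathfrak p_{\hat V}\le\mathfrak b_{\hat V}$ \emph{is} available here: the paper deduces it by reading the omitted type from the $IP$ argument as a pseudofinite, bounded-complexity type over $\hat V$ of size $\mathfrak b_{\hat V}$ and then applying Theorem~\ref{localSaturation2}(A). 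So the dichotomy route is not a detour you can avoid — it is what supplies both the $IP$ construction and (via that same construction) the inequality $\mathfrak p_{\hat V}\le\mathfrak b_{\hat V}$ needed for the $SOP_2$ half.

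The rest of your set-up — routing a non-pseudofinite set of size $\mathfrak b_{\hat V}$ through $F^{\hat V}$, noting the preimage is cofinal in $I^{\hat V}$ because initial segments of $F^{\hat V}$ are internal, and invoking the bounded-or-cobounded axiom against the internal truth set $\hat S$ — is exactly right and matches the paper. The fix is to replace "unstable formula $\phi$" by "formula $\phi$ with the independence property", use the $IP$ pattern to get consistency of the split-into-two-cofinal-pieces type, and then dispatch the $SOP_2$ case via $\mathfrak p_{\hat V}\le\mathfrak b_{\hat V}$ and Theorem~\ref{SOP2Max}.
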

\begin{proof}
	Let $(\hat{V},I^{\hat{V}}, F^{\hat{V}})$ be an expansion of $\hat{V}$ to a model of $ZFC^-_{*}$. Suppose $T \in V$ is unstable. By Theorem~\ref{NonStableDichotomy}, either $T$ has $SOP_2$ or else $T$ has the independence property.
	
	Suppose first $T$ has the independence property, say via $\phi(\overline{x}, \overline{y})$. Choose $M \models T$ and $(\overline{a}_n: n < \omega)$ a sequence from $M$ in $V$, such that for all disjoint $u, v \in [\omega]^{<\aleph_0}$, $\{\phi(\overline{x}, \overline{a}_n): n \in u\} \cup \{\lnot \phi(\overline{x}, \overline{a}_m): m \in v\}$ is consistent. Let $(\overline{a}_{\hat{n}}: \hat{n}< \hat{\omega}) = \mathbf{j}(\overline{a}_n: n < \omega)$.
	
	Choose $X \subseteq \hat{V}$ of cardinality $\mathfrak{b}_{\hat{V}}$, such that $X$ is not pseudofinite. Define $Y := (F^{\hat{V}})^{-1}(X)$; since $X$ is not pseudofinite, and since $\hat{V}$ contains the initial segments of $\hat{F}$, $Y$ must be unbounded in $I^{\hat{V}}$.
	
	Let $I_0, I_1$ be two unbounded, disjoint subsets of $\lambda$ each of size $\lambda$; for instance, we could let $I_0 = \{2\alpha: \alpha < \lambda\}$ and $I_1 = \{2\alpha+1: \alpha < \lambda\}$. Write $Y_i = \{\hat{n}_{\alpha}: \alpha \in I_i\}$. Let $p(x) = \{\phi(\overline{x}, \overline{a}_{\hat{n}}): \hat{n} \in Y_0\} \cup \{\lnot \phi(\overline{x}, \overline{a}_{\hat{n}}): \hat{n} \in Y_1\}$. $p(x)$ is a consistent pseudofinite partial type over $\mathbf{j}_{\std}(M)$ of cardinality $\mathfrak{b}_{\hat{V}}$. Note that whenever $\hat{Y}_0 \in \hat{V}$ contains $Y_0$, we must have that $\hat{Y}_0$ contains an end segment of $I^{\hat{V}}$; in particular $\hat{Y}_0$ contains elements of $Y_1$.  Hence $p(x)$ cannot be realized in $\mathbf{j}_{\std}(M)$, as if $\overline{b}$ were a realization, then we could set $\hat{Y}_0 = \{\overline{a}: \mathbf{j}(M) \models \phi(\overline{b}, \overline{a})\}$. 
	
	If $T$ has $SOP_2$, then by Corollary~\ref{LocalSat0} and Theorem~\ref{SOP2Max}, $\hat{V}$ $\lambda^+$-pseudosaturates $T$ if and only if $\lambda < \mathfrak{p}_{\hat{V}}$. By the preceding argument and Theorem~\ref{localSaturation2}, we get that $\mathfrak{p}_{\hat{V}} \leq \mathfrak{b}_{\hat{V}}$, and so we conclude.
\end{proof}

\begin{corollary}\label{SatAndPseudoCor0}
Suppose $V \models ZFC^-$ is transitive and suppose $\mathbf{j}: V \preceq \hat{V} \models ZFC^-_{\pre}$, and suppose $T$ is a complete countable unstable theory in $V$, and suppose $M \models T$ with $M \in V$. Then for all $\kappa > \aleph_0$, $\mathbf{j}_{\std}(M)$ is $\kappa$-saturated if and only if $\hat{V}$ $\kappa$-pseudosaturates $T$.
\end{corollary}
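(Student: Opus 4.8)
The plan is to split on whether $\kappa \le \mathfrak{b}_{\hat{V}}$ or $\kappa > \mathfrak{b}_{\hat{V}}$; since $\mathfrak{b}_{\hat{V}}$ is a cardinal, these two cases exhaust all $\kappa > \aleph_0$. First I would record the setup: $\hat{V} \models ZFC^-_{\pre}$ forces $\hat{V}$ to be $\omega$-nonstandard, and $\kappa > \aleph_0$ makes $\kappa$ uncountable, so Lemma~\ref{SatAndPseudo1}, Theorem~\ref{BBound}, and the well-definedness of ``$\hat{V}$ $\kappa$-pseudosaturates $T$'' (via Theorem~\ref{ModKeislerOrder}) are all available; in particular ``$\hat{V}$ $\kappa$-pseudosaturates $T$'' is equivalent to ``$\mathbf{j}_{\std}(M)$ is $\kappa$-pseudosaturated'' for our fixed $M$.

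If $\kappa \le \mathfrak{b}_{\hat{V}}$, the desired equivalence is exactly Lemma~\ref{SatAndPseudo1} applied to this $\kappa$, $M$, and $T$, and there is nothing more to do.

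If $\kappa > \mathfrak{b}_{\hat{V}}$, equivalently $\kappa \ge \mathfrak{b}_{\hat{V}}^+$, the plan is to show that both sides of the biconditional fail, so that it holds vacuously. Since $T$ is unstable, Theorem~\ref{BBound} gives that $\hat{V}$ does not $\mathfrak{b}_{\hat{V}}^+$-pseudosaturate $T$. Because $\kappa$-pseudosaturation concerns types over parameter sets of size $<\kappa$, it implies $\mathfrak{b}_{\hat{V}}^+$-pseudosaturation whenever $\kappa \ge \mathfrak{b}_{\hat{V}}^+$; hence $\hat{V}$ does not $\kappa$-pseudosaturate $T$, i.e. $\mathbf{j}_{\std}(M)$ is not $\kappa$-pseudosaturated. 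But $\kappa$-saturation of $\mathbf{j}_{\std}(M)$ trivially entails $\kappa$-pseudosaturation---a pseudofinite subset of size $<\kappa$ is in particular a subset of size $<\kappa$---so $\mathbf{j}_{\std}(M)$ is not $\kappa$-saturated. Thus conditions (A) and (B) are both false and the equivalence holds.

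I expect no genuine obstacle here: the argument is a short two-case reduction to Lemma~\ref{SatAndPseudo1} and Theorem~\ref{BBound}. The only points needing care are verifying that the case split really covers every $\kappa > \aleph_0$ (it does, even in the degenerate situation $\mathfrak{b}_{\hat{V}} = \aleph_0$, where only the second case ever arises) and invoking the correct monotonicity direction of $\kappa$-pseudosaturation in $\kappa$.
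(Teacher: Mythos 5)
Your proof is correct and takes exactly the approach the paper intends: the paper's own proof is just the one line ``Immediate by Lemma~\ref{SatAndPseudo1} and Theorem~\ref{BBound},'' and your two-case split on $\kappa \le \mathfrak{b}_{\hat{V}}$ versus $\kappa > \mathfrak{b}_{\hat{V}}$ is precisely the intended unpacking of that ``immediate.''
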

\begin{proof}
Immediate by Lemma~\ref{SatAndPseudo1} and Theorem~\ref{BBound}.
\end{proof}

Next, we easily get the following characterization of $\trianglelefteq^*_\kappa$; it only uses that $ZFC^-_{*}$ satisfies Lemma~\ref{SatAndPseudo2}.
\begin{theorem}\label{SatAndPseudoThm1}
	Suppose $\kappa$ is an infinite cardinal or $1$, and $T_0, T_1$ are complete countable theories. Then the following are equivalent:
	\begin{itemize}
		\item[(A)] $T_0 \trianglelefteq^*_{\lambda \kappa} T_1$;
		\item[(B)] There is some countable transitive $V \models ZFC^-$ with $T_0, T_1 \in V$, and some $M_i \models T_i$ both in $V$, such that for all $\mathbf{j}:V \preceq \hat{V} \models ZFC^-_{\pre}$, if $\hat{V}$ is $\kappa$-saturated and $\mathbf{j}_{\std}(M_1)$ is $\lambda^+$-saturated, then $\mathbf{j}_{\std}(M_0)$ is $\lambda^+$-saturated.
	\end{itemize}
\end{theorem}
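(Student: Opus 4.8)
By Lemma~\ref{SatAndPseudo0}, (A) is equivalent to the statement obtained from (B) by replacing ``$\hat V \models ZFC^-_{\pre}$'' by ``$\hat V$ is $\omega$-nonstandard''. Since every model of $ZFC^-_{\pre}$ is $\omega$-nonstandard, this condition quantifies over a strictly larger class of $\hat V$ than (B) does, so any witnessing triple $(V, M_0, M_1)$ for the Lemma~\ref{SatAndPseudo0} condition witnesses (B) unchanged. (Note also that one may freely assume $\kappa \le \lambda$: if $\kappa \ge \lambda^+$ then Corollary~\ref{LocalSat0} shows every $\mathbf{j}_{\std}(M)$ arising here is $\lambda^+$-saturated, and both (A) and (B) hold trivially.)

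\textbf{The direction (B) $\Rightarrow$ (A).} I would show that the same triple $(V,M_0,M_1)$ furnished by (B) also witnesses the condition of Lemma~\ref{SatAndPseudo0}. So fix $\mathbf{j}: V \preceq \hat V$ with $\hat V$ $\kappa$-saturated and $\omega$-nonstandard and $\mathbf{j}_{\std}(M_1)$ $\lambda^+$-saturated, and fix a partial type $p(x)$ over a set $A \subseteq \mathbf{j}_{\std}(M_1)$ of size $\le \lambda$ wait --- over a set $A \subseteq \mathbf{j}_{\std}(M_0)$ of size $\le\lambda$; we must realize $p(x)$ in $\mathbf{j}_{\std}(M_0)$. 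The plan is to pass from $\hat V$ to a model $\hat W$ of $ZFC^-_{\pre}$ to which (B) applies. Concretely, one builds $\hat W$ as the union of a continuous $\preceq$-increasing chain $((\hat W_\xi, I_\xi, F_\xi): \xi < \mu)$ of models of $ZFC^-_*$ whose $\{\in\}$-reducts form an elementary chain of submodels of $\hat V$: start, via the proof of Lemma~\ref{SatAndPseudo2}, from a countable $\omega$-nonstandard $\hat W_0 \preceq_{\{\in\}} \hat V$ containing $\mathbf{j}[V] \cup A$ and expand it to a model of $ZFC^-_*$; at successor stages extend the $\mathcal{L}_*$-structure by a Robinson-style joint-consistency argument, keeping the $\{\in\}$-reduct elementary in $\hat V$ and absorbing prescribed new elements of $\hat V$; take unions at limits, with bookkeeping so that $\hat W := \bigcup_\xi \hat W_\xi$ is $\kappa$-saturated and retains $\mathbf{j}[V] \cup A$. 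Since $ZFC^-_*$ is first order, $\hat W \models ZFC^-_{\pre}$, and $\mathbf{j}\!\restriction\! V : V \preceq \hat W$. Provided $\mathbf{j}_{\std}(M_1)$ computed in $\hat W$ is again $\lambda^+$-saturated, (B) gives that $\mathbf{j}_{\std}(M_0)$ computed in $\hat W$ is $\lambda^+$-saturated; this is an elementary submodel of $\mathbf{j}_{\std}(M_0)$ computed in $\hat V$ containing $A$, so $p(x)$ is realized there, hence in $\mathbf{j}_{\std}(M_0)$, as required.

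\textbf{The main obstacle.} The delicate point is the proviso: keeping $\mathbf{j}_{\std}(M_1)$ $\lambda^+$-saturated through the reduction. Passing to $\hat W$ perturbs the interpreted model $M_1$ --- in any proper $\omega$-nonstandard elementary extension or submodel of a model of $ZFC^-$, the interpreted copy of an infinite $M_1$ acquires new elements --- so the hypothesis cannot simply be transported. What one wants is for $\hat W$ actually to contain all of $\mathbf{j}_{\std}(M_1)$ (so the two copies coincide), but $\hat W \models ZFC^-_{\pre}$ forces $|\hat W|$ to equal the cardinality of some set that is pseudofinite in $\hat W$, so this is only possible when $\hat V$ carries pseudofinite sets large enough to host a bijection onto $\hat W$. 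I anticipate a short case analysis here: $\mathbf{j}_{\std}(M_1)$ being $\lambda^+$-saturated pins down $|\hat\omega^{\hat V}|$ (it equals $|\mathbf{j}_{\std}(M_1)|$, as $\mathbf{j}(M_1)$ is internally countable), and in the complementary case, where $\hat V$ lacks a pseudofinite set that large, $\lambda^+$-saturation of the relevant models degenerates into $\lambda^+$-pseudosaturation, which by Theorem~\ref{ModKeislerOrder} is model-independent and therefore survives passage to $\hat W$ for free. Carrying out this dichotomy, together with verifying that the amalgamation step at successor stages genuinely keeps the $\{\in\}$-reducts inside $\hat V$, is the technical heart; the rest is soft manipulation of Lemmas~\ref{SatAndPseudo0} and~\ref{SatAndPseudo2}.
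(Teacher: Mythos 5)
Your $(A)\Rightarrow(B)$ direction is correct and matches the paper: a witness for Lemma~\ref{SatAndPseudo0}(B) works verbatim since $ZFC^-_{\pre}$ models are in particular $\omega$-nonstandard.

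For $(B)\Rightarrow(A)$, however, you have chosen a much harder route than the paper, and there is a genuine gap. You try to show that the \emph{same} triple $(V,M_0,M_1)$ from (B) also witnesses Lemma~\ref{SatAndPseudo0}(B), and then reduce arbitrary $\hat V$ to some $\hat W\models ZFC^-_{\pre}$. Three problems. First, this stronger claim---that the (B)-witness already witnesses the Lemma~\ref{SatAndPseudo0} condition---is not something you establish, and there is no reason to expect it: for $\hat V$ not modeling $ZFC^-_{\pre}$, $\lambda^+$-saturation of $\mathbf{j}_{\std}(M)$ can be model-dependent (Theorem~\ref{ModKeislerOrderSatFirst} is stated only for $\hat V\models ZFC^-_{\pre}$), so there is no reason a hypothesis on $\mathbf{j}_{\std}(M_1)$ in a ``bad'' $\hat V$ should constrain $\mathbf{j}_{\std}(M_0)$ at all. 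Second, your chain construction of $\mathcal{L}_*$-structures whose $\{\in\}$-reducts stay elementary inside $\hat V$ is not a routine Robinson amalgamation: $\hat V$ carries no $(I,F)$-expansion, and the constraint in $ZFC^-_*$ that every $\hat a$ meets $I$ in a bounded or cobounded set is a genericity condition that there is no reason can be maintained when the new elements are forced to come from inside $\hat V$. Third, your dichotomy is off: whether $\hat V$ has a pseudofinite set of size $|\hat\omega^{\hat V}|$ is not what controls whether $\lambda^+$-saturation reduces to $\lambda^+$-pseudosaturation; that is governed by $\mathfrak{b}_{\hat V}$ versus $\lambda$, and moreover $\mathfrak{b}_{\hat W}$ for the shrunken $\hat W$ need not agree with $\mathfrak{b}_{\hat V}$, so ``pseudosaturation survives for free'' does not go through.

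The paper avoids all of this with a short direct argument that produces a \emph{different} witness for (A). Given the $(V,M_0,M_1)$ from (B), first pick any $\mathbf{j}_0:V\preceq \hat V_0\models ZFC^-_{\pre}$ (Lemma~\ref{SatAndPseudo2}), expand $\hat V_0$ to a model $(\hat V_0,I,F)\models ZFC^-_*$, and let $T_*$ be its elementary diagram, with interpretations $I_i$ of $T_i$ using the constants for $\mathbf{j}_0(M_i)$. Then every $\kappa$-saturated $M_*\models T_*$ is automatically an $\mathcal{L}_*$-elementary extension of $(\hat V_0,I,F)$, so its $\{\in\}$-reduct models $ZFC^-_{\pre}$ and receives $V$ elementarily, and $I_i(M_*)$ is exactly the corresponding $\mathbf{j}_{\std}(M_i)$. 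The hypothesis (B) now applies directly to $M_*$, giving the saturation transfer. No reduction of an arbitrary $\hat V$, and hence no amalgamation and no dichotomy, is needed.
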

\begin{proof}
	(A) implies (B): this follows from Lemma~\ref{SatAndPseudo0}(A) implies (B).
	
	(B) implies (A): Given $V$, choose some $\mathbf{j}_0: V \preceq \hat{V}_0 \models ZFC^-_{\pre}$. Let $(\hat{V}_0, \hat{I}^{\hat{V}_0}, \hat{F}^{\hat{V}_0})$ be an expansion of $\hat{V}_0$ to a model of $ZFC^-_{\pre}$. Let $T_*$ be the elementary diagram of $(\hat{V}_0, \hat{I}^{\hat{V}_0}, \hat{F}^{\hat{V}_0})$. We get natural interpretations  $I_i$ of $T_i$ in $T_*$, using the constant symbols for $\mathbf{j}(M_i)$.  Then this witnesses $T_0 \trianglelefteq^*_{\lambda \kappa} T_1$.
\end{proof}

We will want the following straightforward application of Theorem~\ref{Compactness}; it is proved similarly to Lemma~\ref{SatAndPseudo3First}.

\begin{lemma}\label{SatAndPseudo3}
	Suppose $T \subseteq T_*$ are complete theories in languages $\mathcal{L} \subseteq \mathcal{L}_*$. Suppose $\mathcal{B}$ is a complete Boolean algebra and $\mathbf{M}_*$ is a full $\mathcal{B}$-valued model of $T_*$; write $\mathbf{M} = \mathbf{M}_* \restriction_{\mathcal{L}}$, so $\mathbf{M}$ is a full $\mathcal{B}$-valued model of $T$. If $\mathbf{M}_*$ is $\lambda^+$-saturated, then so is $\mathbf{M}$.
\end{lemma}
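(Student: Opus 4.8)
The plan is to verify the definition of $\lambda^+$-saturation for $\mathbf{M} = \mathbf{M}_* \restriction_{\mathcal{L}}$ directly, following closely the template of the proof of Lemma~\ref{SatAndPseudo3First}. So fix $\mathcal{B}$-valued $\mathcal{L}$-structures $\mathbf{M}_0 \preceq \mathbf{M}$ with $|\mathbf{M}_0| \leq \lambda$ and $\mathbf{M}_1 \succeq \mathbf{M}_0$ with $|\mathbf{M}_1| \leq \lambda$; we must produce an elementary embedding $f \colon \mathbf{M}_1 \preceq \mathbf{M}$ extending the inclusion $\mathbf{M}_0 \subseteq \mathbf{M}$. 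The idea is to lift this configuration to $\mathcal{L}_*$, invoke the saturation of $\mathbf{M}_*$ there, and then restrict back to $\mathcal{L}$.

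First I would expand the base. By the downward L\"owenheim--Skolem theorem for full Boolean-valued models (the axioms being finitary, as used in the proof of Lemma~\ref{SurveyInterpBVLarge}; see \cite{BVModelsUlrich}), choose a full $\mathcal{B}$-valued $\mathcal{L}_*$-elementary submodel $\mathbf{M}_0^* \preceq \mathbf{M}_*$ with $\mathbf{M}_0 \subseteq \mathbf{M}_0^*$ and $|\mathbf{M}_0^*| \leq \lambda$ (using that $\mathcal{L}_*$ is countable in all cases of interest). Since $\mathbf{M}_0 \preceq \mathbf{M} = \mathbf{M}_* \restriction_{\mathcal{L}}$ and $\mathbf{M}_0^* \preceq \mathbf{M}_*$, the valuations $\|\cdot\|_{\mathbf{M}_0}$ and $\|\cdot\|_{\mathbf{M}_0^*}$ agree on $\mathcal{L}$-formulas over $\mathbf{M}_0$; in particular $\mathbf{M}_0 \preceq \mathbf{M}_0^* \restriction_{\mathcal{L}}$, and $\|\cdot\|_{\mathbf{M}_0^*}$ and $\|\cdot\|_{\mathbf{M}_1}$ agree on $\mathcal{L}$-formulas over $\mathbf{M}_0$ as well.

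Next, the crux: amalgamate $\mathbf{M}_0^*$ and $\mathbf{M}_1$ over $\mathbf{M}_0$ using Theorem~\ref{Compactness}. Take $X = \mathbf{M}_0^* \cup \mathbf{M}_1$ glued along $\mathbf{M}_0$, take $\Gamma = \mathcal{L}_*(\mathbf{M}_0^*) \cup \mathcal{L}(\mathbf{M}_1)$, and take $F_0 = F_1 = F$ equal to $\|\cdot\|_{\mathbf{M}_0^*}$ on the first part and $\|\cdot\|_{\mathbf{M}_1}$ on the second (well-defined on the overlap $\mathcal{L}(\mathbf{M}_0)$ by the previous paragraph). To verify hypothesis (B), given a finite $\Gamma_0 \subseteq \Gamma$ and $\mathbf{c} \in \mathcal{B}_+$, fix an ultrafilter $\mathcal{U} \ni \mathbf{c}$ on $\mathcal{B}$ and amalgamate the ordinary structures $\mathbf{M}_0^*/\mathcal{U}$ and $\mathbf{M}_1/\mathcal{U}$ along the canonical map between the images of $\mathbf{M}_0$ (this map is partial elementary precisely because the two valuations agree on $\mathcal{L}(\mathbf{M}_0)$), obtaining an $\mathcal{L}_*$-structure $N$ into which $\mathbf{M}_0^*/\mathcal{U}$ embeds $\mathcal{L}_*$-elementarily and $\mathbf{M}_1/\mathcal{U}$ embeds $\mathcal{L}$-elementarily, compatibly over $\mathbf{M}_0$; composing the specialization maps with these embeddings gives the required $\tau \colon X \to N$. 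Theorem~\ref{Compactness}(A) then produces a full $\mathcal{B}$-valued $\mathcal{L}_*$-structure, which a second L\"owenheim--Skolem pass shrinks to a full $\mathbf{M}_1^*$ of size $\leq \lambda$; after the usual renaming we may take $\mathbf{M}_0^* \preceq \mathbf{M}_1^*$ as $\mathcal{L}_*$-structures and obtain an $\mathcal{L}$-elementary embedding $g_1 \colon \mathbf{M}_1 \preceq \mathbf{M}_1^* \restriction_{\mathcal{L}}$ that restricts to the inclusion $\mathbf{M}_0 \subseteq \mathbf{M}_0^*$.

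Finally, since $\mathbf{M}_*$ is $\lambda^+$-saturated and $\mathbf{M}_0^* \preceq \mathbf{M}_1^*$ with both of size $\leq \lambda$, there is an $\mathcal{L}_*$-elementary embedding $g \colon \mathbf{M}_1^* \preceq \mathbf{M}_*$ extending the inclusion $\mathbf{M}_0^* \subseteq \mathbf{M}_*$. Then $f := (g \restriction_{\mathcal{L}}) \circ g_1 \colon \mathbf{M}_1 \to \mathbf{M}_* \restriction_{\mathcal{L}} = \mathbf{M}$ is $\mathcal{L}$-elementary, and for $a \in \mathbf{M}_0$ we get $f(a) = g(g_1(a)) = g(a) = a$, so $f$ extends the inclusion of $\mathbf{M}_0$ into $\mathbf{M}$, as required. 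The only genuinely substantive step is the amalgamation in the previous paragraph, and within it the verification of Theorem~\ref{Compactness}(B): this rests on the elementary fact that two elementary extensions (in overlapping languages) of a common structure admit a joint elementary embedding, together with the bookkeeping needed to make everything respect the identification over $\mathbf{M}_0$. The two L\"owenheim--Skolem reductions and the final composition are routine, exactly as in the proof of Lemma~\ref{SatAndPseudo3First}.
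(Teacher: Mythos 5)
Your proposal is correct and takes essentially the same approach the paper intends, which merely says the lemma ``is proved similarly to Lemma~\ref{SatAndPseudo3First}'': amalgamate the base with the small elementary extension via Theorem~\ref{Compactness} (verifying hypothesis (B) by passing to ultrafilters and using ordinary elementary amalgamation), shrink by downward L\"owenheim--Skolem, and then invoke the $\lambda^+$-saturation of $\mathbf{M}_*$. The only difference is presentational: you carry out explicitly the two L\"owenheim--Skolem reductions that the proof of Lemma~\ref{SatAndPseudo3First} leaves implicit.
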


We obtain the following. We will eventually remove the hypothesis that $T_1$ is unstable.
\begin{corollary}\label{SatAndPseudoCor1}
	Suppose $T_0, T_1$ are complete countable theories, and suppose $\kappa$ is infinite or $1$. If $T_1$ is unstable, then the following are equivalent:
	
	\begin{itemize}
		\item[(A)] $T_0 \trianglelefteq^*_{\kappa} T_1$;
		\item[(B)] For all cardinals $\lambda$, there is some countable transitive $V \models ZFC^-$ with $T_0, T_1 \in V$, such that for all $\mathbf{j}: V \preceq \hat{V} \models ZFC^-_{\pre}$ with $\hat{V}$ $\kappa$-saturated, if $\hat{V}$ $\lambda^+$-pseudosaturates  $T_1$, then it $\lambda^+$-pseudosaturates $T_0$.
	\end{itemize}
\end{corollary}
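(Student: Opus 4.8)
The plan is to prove the equivalence by showing each direction separately, using the characterization of $\trianglelefteq^*_\kappa$ from Theorem~\ref{SatAndPseudoThm1} together with the special behavior of unstable theories encoded in Corollary~\ref{SatAndPseudoCor0}. The point of the unstability hypothesis on $T_1$ is precisely that, over any $\mathbf{j}: V \preceq \hat V \models ZFC^-_{\pre}$ with $\hat V$ $\kappa$-saturated and any $M_1 \models T_1$ with $M_1 \in V$, the notions ``$\mathbf{j}_{\std}(M_1)$ is $\lambda^+$-saturated'' and ``$\hat V$ $\lambda^+$-pseudosaturates $T_1$'' coincide (for $\kappa \geq \aleph_0$ this is Corollary~\ref{SatAndPseudoCor0} directly; note $\kappa$-saturation of $\hat V$ gives $\hat V$ $\omega$-nonstandard, and when $\kappa = 1$ we are already assuming $\hat V \models ZFC^-_{\pre}$, which forces $\omega$-nonstandardness, so Corollary~\ref{SatAndPseudoCor0} still applies with the ambient cardinal $\lambda^+ > \aleph_0$).

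For (A) implies (B): assume $T_0 \trianglelefteq^*_\kappa T_1$, so $T_0 \trianglelefteq^*_{\lambda\kappa} T_1$ for every infinite $\lambda$. Fix a cardinal $\lambda$. By Theorem~\ref{SatAndPseudoThm1} there is a countable transitive $V \models ZFC^-$ with $T_0, T_1 \in V$ and models $M_i \models T_i$ in $V$ such that for all $\mathbf{j}: V \preceq \hat V \models ZFC^-_{\pre}$ with $\hat V$ $\kappa$-saturated, if $\mathbf{j}_{\std}(M_1)$ is $\lambda^+$-saturated then $\mathbf{j}_{\std}(M_0)$ is $\lambda^+$-saturated. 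Now suppose such a $\hat V$ has the property that $\hat V$ $\lambda^+$-pseudosaturates $T_1$. Since $T_1$ is unstable and $\mathbf{j}: V \preceq \hat V \models ZFC^-_{\pre}$, Corollary~\ref{SatAndPseudoCor0} gives that $\mathbf{j}_{\std}(M_1)$ is $\lambda^+$-saturated; hence $\mathbf{j}_{\std}(M_0)$ is $\lambda^+$-saturated, i.e. $\hat V$ $\lambda^+$-pseudosaturates $T_0$. This is exactly (B).

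For (B) implies (A): fix an infinite cardinal $\lambda$; we must show $T_0 \trianglelefteq^*_{\lambda\kappa} T_1$, and for this we verify condition (B) of Theorem~\ref{SatAndPseudoThm1}. By hypothesis (B) there is a countable transitive $V \models ZFC^-$ with $T_0, T_1 \in V$ witnessing the pseudosaturation transfer for this $\lambda$. Pick any $M_i \models T_i$ with $M_i \in V$ (using that every countable consistent theory in $V$ has a model in $V$). Let $\mathbf{j}: V \preceq \hat V \models ZFC^-_{\pre}$ with $\hat V$ $\kappa$-saturated, and suppose $\mathbf{j}_{\std}(M_1)$ is $\lambda^+$-saturated; equivalently (again by Corollary~\ref{SatAndPseudoCor0}, since $T_1$ is unstable), $\hat V$ $\lambda^+$-pseudosaturates $T_1$. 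By the choice of $V$, $\hat V$ $\lambda^+$-pseudosaturates $T_0$, so $\mathbf{j}_{\std}(M_0)$ is $\lambda^+$-saturated. Thus $V, M_0, M_1$ witness Theorem~\ref{SatAndPseudoThm1}(B), giving $T_0 \trianglelefteq^*_{\lambda\kappa} T_1$; as $\lambda$ was arbitrary, $T_0 \trianglelefteq^*_\kappa T_1$.

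The only subtlety — and the step I would be most careful about — is the uniformity of the cardinal $V$: in going from (A) to (B) we are handed, via Theorem~\ref{SatAndPseudoThm1}, a $V$ that may depend on $\lambda$, which is exactly what (B) permits, so there is no obstruction there; conversely in (B) to (A) we feed the $\lambda$-dependent $V$ from (B) straight into Theorem~\ref{SatAndPseudoThm1}(B) for that same $\lambda$. The other point to state carefully is that Corollary~\ref{SatAndPseudoCor0} is applied with ambient cardinal $\lambda^+$, which is uncountable, so its hypothesis $\kappa > \aleph_0$ is met regardless of whether the ``outer'' parameter $\kappa$ is $1$, $\aleph_0$, or larger; the role of the outer $\kappa$ is only to restrict which $\hat V$ we quantify over. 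Given those observations the argument is essentially bookkeeping on top of Theorem~\ref{SatAndPseudoThm1} and Corollary~\ref{SatAndPseudoCor0}.
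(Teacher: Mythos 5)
Your (A) $\Rightarrow$ (B) direction is correct and matches the paper. The (B) $\Rightarrow$ (A) direction has a gap: from ``$\hat V$ $\lambda^+$-pseudosaturates $T_0$'' you immediately conclude ``$\mathbf{j}_{\std}(M_0)$ is $\lambda^+$-saturated,'' but $T_0$ is only assumed to be a complete countable theory and may be \emph{stable}, so Corollary~\ref{SatAndPseudoCor0} does not apply to it. Pseudosaturation of $\mathbf{j}_{\std}(M_0)$ at level $\lambda^+$ is a priori weaker than saturation; the missing step is to observe that $\lambda^+ \leq \mathfrak{b}_{\hat V}$. This is where the unstability of $T_1$ is actually used a second time: since $T_1$ is unstable and $\hat V$ $\lambda^+$-pseudosaturates $T_1$, Theorem~\ref{BBound} forces $\lambda < \mathfrak{b}_{\hat V}$, and then Lemma~\ref{SatAndPseudo1} (which applies to \emph{any} complete countable theory, including stable $T_0$) upgrades ``$\mathbf{j}_{\std}(M_0)$ is $\lambda^+$-pseudosaturated'' to ``$\mathbf{j}_{\std}(M_0)$ is $\lambda^+$-saturated.'' Without invoking the bound on $\mathfrak{b}_{\hat V}$ via $T_1$, the final step of your (B) $\Rightarrow$ (A) does not go through.

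Put differently: you correctly identified that unstability of $T_1$ lets you pass between saturation and pseudosaturation \emph{for $T_1$}, but the second, equally essential, role of that hypothesis is to control $\mathfrak{b}_{\hat V}$ so that the same passage works \emph{for $T_0$} as well. Once you insert ``since $T_1$ is unstable and $\hat V$ $\lambda^+$-pseudosaturates $T_1$, Theorem~\ref{BBound} gives $\lambda < \mathfrak{b}_{\hat V}$, hence Lemma~\ref{SatAndPseudo1} gives that $\mathbf{j}_{\std}(M_0)$ is $\lambda^+$-saturated,'' your argument coincides with the paper's.
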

\begin{proof}
%	
%	First, suppose $T_1$ is stable; then $T_0$ must be unstable. Write $\lambda = 2^{\aleph_0}$. By Theorem~\ref{KeislerOnStableUlt}, we can find a complete Boolean algebra $\mathcal{B}$ and an  ultrafilter $\mathcal{U}$ on $\mathcal{B}$, such that $\mbox{lcf}_{\mathcal{U}}(\omega) = \aleph_1 \leq \lambda$ and $\mu_{\mathcal{U}} = \lambda^+$. By Corollary~\ref{KeislerStableLemma5}, $\mathcal{U}$ does not $\lambda^+$-saturate any unstable theory, and $\mathcal{U}$ $\lambda^+$-saturates every stable theory. Hence $T_0 \not \trianglelefteq_\lambda T_1$, so in particular (A) fails. We need to show that (B) fails. So suppose $V_0 \models ZFC^-$ is countable transitive with $T_0, T_1 \in V_0$. Choose $\mathbf{j}_0: V_0 \preceq \hat{V}_1 \models ZFC^-_{\pre}$. Let $\hat{V}_*$ be an expansion of $\hat{V}_1$ to a model of $ZFC^-_*$, and let $\mathbf{j}_1: \hat{V}_* \preceq \mathbf{V}_*$ be an elementary extension to a $\lambda^+$-saturated $\mathcal{B}$-valued model of $ZFC^-_*$. Let $\mathbf{V}$ be the reduct of $\mathbf{V}_*$ to the language of set theory; then $\mathbf{V}$ is $\lambda^+$-saturated by Lemma~\ref{SatAndPseudo3}. Let $\hat{V} = \mathbf{V}/\mathcal{U} \models ZFC^-_{\pre}$ and let $\mathbf{j}: V_0 \preceq \hat{V}$ be the composition $[\cdot]_{\mathcal{U}} \circ \mathbf{j}_1 \circ \mathbf{j}_0$. By choice of $\mathcal{U}$, $\hat{V}$ $\lambda^+$-pseudosaturates $T_1$ but not $T_0$. So (B) fails.
%	
%	So we can suppose $T_1$ is unstable.

	(B) implies (A): Choose $M_i \models T_i$ with $M_i \in V$. We claim that $(V, M_0, V_1)$ witnesses Theorem~\ref{SatAndPseudoThm1} holds, and hence that $T_0 \trianglelefteq^*_{\lambda \kappa} T_1$. Indeed, suppose $\mathbf{j}: V \preceq \hat{V} \models ZFC^-_{\pre}$ with $\hat{V}$ $\kappa$-saturated. Suppose $\mathbf{j}_{\std}(M_1)$ is $\lambda^+$-saturated. Then in particular $\mathbf{j}_{\std}(M_1)$ is $\lambda^+$-pseudosaturated, hence $\mathbf{j}_{\std}(M_0)$ is $\lambda^+$-pseudosaturated. Since $T_1$ is unstable, we ge that $\lambda < \mathfrak{b}_{\hat{V}}$ by Theorem~\ref{BBound}. Thus $\mathbf{j}_{\std}(M_0)$ is $\lambda^+$-saturated.
	
	(A) implies (B): Choose some transitive countable $V \models ZFC^-$ and $M_i \models T_i$ with $M_i \in V$, as in Theorem~\ref{SatAndPseudoThm1}(B). Now suppose $\mathbf{j}: V \preceq \hat{V} \models ZFC^-_{\pre}$ satisfies that $\hat{V}$ is $\kappa$-saturated, and $\mathbf{j}_{\std}(M_1)$ is $\lambda^+$-pseudosaturated. Then by Lemma~\ref{SatAndPseudo1} and Theorem~\ref{BBound},  $\mathbf{j}_{\std}(M_1)$ is $\lambda^+$-saturated, so $\mathbf{j}_{\std}(M_0)$ is $\lambda^+$-saturated, in particular it is $\lambda^+$-pseudosaturated.
\end{proof}

\section{Baseline Saturation}\label{InterpOnStableSec1}

In this section, we prove some theorems of the following form: if $\mathbf{j}: V \preceq \hat{V}$ and $\hat{V}$ has some minimal amount of (pseudo)saturation, then for $M \in V$ satisfying various hypotheses, $\mathbf{j}_{\std}(M)$ has some minimal amount of (pseudo)saturation, and conversely. These theorems are adaptions of arguments of Malliaris and Shelah \cite{InterpNew}; many of these results also have antecedents in \cite{TransferringSaturation}. The key novelty of our approach is using models of $ZFC^-$ instead of interpretations. 

To begin, we consider what is required for unsupersimple theories to be $\aleph_1$-pseuodosaturated.
\begin{theorem}\label{BaselineSupersimple}
	Suppose $V$ is a transitive model of $ZFC^-$ and $\mathbf{j}:V \preceq \hat{V}$ is not $\omega$-standard. Then the following are equivalent:
	
	\begin{itemize}
		\item[(A)] There is some countable unsupersimple $T \in V$ such that $\hat{V}$ $\aleph_1$-pseudosaturates $T$;
		\item[(B)] $\mathfrak{p}_{\hat{V}} \geq \aleph_1$, i.e. $\hat{V}$ $\aleph_1$-pseudosaturates every countable theory $T \in V$.
	\end{itemize}
	
\end{theorem}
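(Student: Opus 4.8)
The direction (B) $\Rightarrow$ (A) is immediate, since any countable unsupersimple theory exists (e.g.\ a theory axiomatizing an infinitely branching tree), and one may choose $V$ large enough to contain it; then (B) says $\hat V$ $\aleph_1$-pseudosaturates \emph{every} countable theory in $V$, in particular that one. So the content is in (A) $\Rightarrow$ (B). The plan is to show the contrapositive: if $\mathfrak{p}_{\hat V} = \aleph_0$, then $\hat V$ fails to $\aleph_1$-pseudosaturate any countable unsupersimple $T \in V$.

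So suppose $\mathfrak{p}_{\hat V} = \aleph_0$; equivalently (Theorem~\ref{pvEqualsTv}) $\mathfrak{t}_{\hat V} = \aleph_0$, so there is $\hat n_* < \hat\omega$ and a countable increasing sequence $(\hat s_k : k < \omega)$ from $\hat n_*^{<\hat n_*}$ with no upper bound in $\hat n_*^{<\hat n_*}$. (Alternatively one may directly use the cut characterization of $\mathfrak{p}_{\hat V}$: there is a countable $(\kappa,\theta)$-cut in $(\hat\omega, \hat<)$ with $\kappa + \theta = \aleph_0$, hence a strictly increasing $\hat\omega$-sequence $(\hat a_k)$ and a strictly decreasing $\hat\omega$-sequence $(\hat b_k)$ with $\hat a_k \hat< \hat b_\ell$ for all $k,\ell$ and nothing strictly between all of them.) Now let $T \in V$ be countable and unsupersimple, with $M \models T$, $M \in V$, and witnessing data $(\phi_n(\overline x, \overline y_n) : n < \omega)$ and $(\overline a_s : s \in \omega^{<\omega})$ as in Lemma~\ref{SupersimpleCharacterization}, all in $V$ (here I use the convention that $T$ is countable in $V$). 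Apply $\mathbf{j}$: in $\hat V$ we get $(\hat\phi_{\hat n} : \hat n < \hat\omega)$ and $(\hat a_{\hat s} : \hat s \in \hat\omega^{<\hat\omega})$ with the same internal properties. Using the tree $\hat n_*^{<\hat n_*}$ together with the cut/unbounded-chain data, I would internally define, inside $\hat V$, an assignment from the nodes along a cofinal-but-bounded-above family of branches into $\mathbf{j}_{\std}(M)$, and consider the pseudofinite type
\[
p(\overline x) = \{\hat\phi_{k+1}(\overline x, \hat a_{\hat s_k \,^\frown (\hat s_{k+1})}) : k < \omega\} \cup \{ \text{internal membership conditions tying the parameters into one pseudofinite set} \},
\]
or more precisely: choose internally a function $\hat f$ from an initial segment of $\hat n_*^{<\hat n_*}$ into $\mathbf{j}_{\std}(M)^{|\overline y|}$ realizing the tree configuration (consistency along branches, inconsistency across incomparable immediate successors), and let $p(\overline x) = \{\hat\phi_{k+1}(\overline x, \hat f(\hat s_k)) : k < \omega\}$. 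The countably many parameters $\{\hat f(\hat s_k) : k<\omega\}$ are contained in the internal (hence pseudofinite) image of $\hat f$, so $p$ is a pseudofinite partial type of cardinality $\aleph_0 < \aleph_1$ over $\mathbf{j}_{\std}(M)$, and it is consistent because it is a sub-branch of a consistent branch in the $\hat V$-internal tree configuration. If $\overline b \in \mathbf{j}_{\std}(M)$ realized $p$, then working in $\hat V$ I would extract a node $\hat s_* \in \hat n_*^{<\hat n_*}$ — the union (or, in the tree picture, the $\subseteq$-supremum) of all $\hat s$ in the domain of $\hat f$ with $\mathbf{j}_{\std}(M) \models \hat\phi_{|\hat s|+1}(\overline b, \hat f(\hat s))$ — and the incomparability/inconsistency clause forces this set of $\hat s$ to be linearly ordered by $\subseteq$, hence its union is a node of $\hat n_*^{<\hat n_*}$ bounding every $\hat s_k$, contradicting unboundedness of $(\hat s_k)$. (This is exactly the shape of the argument in Theorem~\ref{SOP2Max}, adapted to the weaker branching-with-shrinking-consistency pattern of unsupersimplicity instead of $SOP_2$.) Hence $\mathbf{j}_{\std}(M)$ is not $\aleph_1$-pseudosaturated, so $\hat V$ does not $\aleph_1$-pseudosaturate $T$, proving the contrapositive of (A) $\Rightarrow$ (B).

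\textbf{Main obstacle.} The delicate point is the bookkeeping in $\hat V$: I need the tree $\hat n_*^{<\hat n_*}$ (whose unbounded countable chains encode $\mathfrak{p}_{\hat V} = \aleph_0$) to be matched against the \emph{external} countable branching data of the unsupersimple configuration, which a priori lives in $\omega^{<\omega}$, not in $\hat n_*^{<\hat n_*}$. The fix is that the unsupersimple witnesses are coded by an internal object after applying $\mathbf{j}$, so $\hat V$ believes it has a full $\hat\omega$-branching internal tree of parameters; I then only need the external subtree indexed by the $(\hat s_k)$, which does sit inside $\hat n_*^{<\hat n_*}$ after composing with an order-embedding $\omega^{<\omega}$-into-the-relevant-internal-tree — and crucially the finitely-many-levels-at-a-time structure of Lemma~\ref{SupersimpleCharacterization} (the formula used at level $s$ depends only on $|s|$) is what lets this composition respect consistency along branches and inconsistency across incomparable successors simultaneously. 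Verifying that this transfer is legitimate — i.e.\ that one genuinely gets a \emph{pseudofinite} consistent type whose non-realization is forced by unboundedness of the chain — is where the real work lies; everything else is routine once the configuration is set up. I would also note the converse-flavored remark that this shows $\mathfrak{p}_{\hat V} = \aleph_1$ is exactly the threshold, matching Corollary~\ref{LocalSat0}.
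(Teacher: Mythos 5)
Your overall strategy — proving the contrapositive by exhibiting a countable pseudofinite type over $\mathbf{j}_{\std}(M)$ that is omitted — is logically the same as the paper's direct argument (which assumes $\aleph_1$-pseudosaturation and \emph{realizes} the analogous type in order to fill a pre-cut), so the shape is right. But there is a concrete gap in the encoding that your "main obstacle" paragraph correctly flags without actually resolving.

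The issue is that Lemma~\ref{SupersimpleCharacterization} ties everything to the \emph{level} of a node: branch-consistency uses $\phi_{|s|}$ at node $s$, and sibling-inconsistency is only asserted at level $|s|+1$ for siblings of $s$. Your type $p(\overline{x}) = \{\hat{\phi}_{k+1}(\overline{x}, \hat{f}(\hat{s}_k)) : k < \omega\}$ uses the $k$-th \emph{standard} formula at the node $\hat{s}_k$, but the nodes $\hat{s}_k$ coming from a witness to $\mathfrak{t}_{\hat{V}} = \aleph_0$ will in general have nonstandard lengths, so $\hat{\phi}_{|\hat{s}_k|}$ is a nonstandard formula, and $\hat{\phi}_{k+1}(\cdot, \hat{f}(\hat{s}_k))$ is simply not what the internal tree configuration asserts at $\hat{s}_k$. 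Even the consistency of $p$ does not follow from the branch-consistency clause under this index mismatch. Relatedly, your claim that sibling-inconsistency "forces the set of realized $\hat{s}$ to be linearly ordered" is precisely where the unsupersimple pattern differs from $SOP_2$: for $SOP_2$ any two incomparable nodes are jointly inconsistent, so the realized set is automatically a chain, but for the unsupersimple pattern only same-level siblings clash, and comparability of two realized nodes $\hat{s}, \hat{t}$ only follows if one knows $\overline{b}$ realizes the formula at \emph{all predecessors} of $\hat{s}$ and $\hat{t}$. Realizing $p$ as you wrote it only controls what happens at the $\hat{s}_k$ themselves, not at their (possibly nonstandardly many) intermediate predecessors.

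The paper's encoding is designed to sidestep exactly these two problems. Instead of directly using a $\mathfrak{t}_{\hat{V}}$-chain in $\hat{n}_*^{<\hat{n}_*}$, take an $(\aleph_0,\aleph_0)$ pre-cut $(\hat{n}_i),(\hat{m}_i)$ in $\hat{\omega}$ (this is the $\mathfrak{p}_{\hat{V}}$ side of Theorem~\ref{pvEqualsTv}), set $\hat{k}_i = \langle \hat{n}_i, \hat{m}_i \rangle$, and let $\hat{s}_n = (\hat{k}_i : i < n)$, a node of \emph{standard} length $n$. Then $p(\overline{x}) = \{\phi_n(\overline{x}, \hat{a}_{\hat{s}_n}) : n < \omega\}$ uses the correct formula at each level, and the predecessors of $\hat{s}_n$ are exactly $\hat{s}_0, \ldots, \hat{s}_{n-1}$, so a realization of $p$ automatically realizes the formula at every predecessor of every $\hat{s}_n$. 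This is what makes the comparability argument work, and then the long branch from overflow decodes (via the pairing) to a filler of the pre-cut. So the way to repair your proposal is not an abstract "$\omega^{<\omega}$ order-embedding" but this explicit recoding forcing standard lengths, which is what aligns the formula index with the tree level and supplies the predecessor-realization needed for comparability.
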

\begin{proof}
	(A) implies (B): suppose $T \in V$ is unsupersimple, and $\hat{V}$ $\aleph_1$-pseudosaturates $T$. It suffices to show that $(\aleph_0, \aleph_0) \not \in \mathcal{C}(\hat{V})$.

	By Lemma~\ref{SupersimpleCharacterization} (and using that $\Pi^1_1$-relations are absolute to models of $ZFC^-$) we can find some $M \models T$ in $V$, and a sequence $(\phi_n(\overline{x}, \overline{y}_n)) \in V$ of formulas of $T$, and a tree $(\overline{a}_s: s \in \omega^{<\omega}) \in V$ of tuples in $M$, as there.

	Now suppose that $(\hat{n}_i: i < \omega)$, $(\hat{m}_i: i < \omega)$ is a pre-cut in $\hat{\omega}$, i.e. $\hat{n}_i < \hat{n}_{j} < \hat{m}_{j} < \hat{m}_i$ for all $i < j < \omega$. It suffices to find $\hat{n} \in \hat{\omega}$ with $\hat{n}_i < \hat{n} < \hat{m}_i$ for all $i < \omega$. Let $\langle, \rangle: \hat{\omega} \times \hat{\omega} \to \hat{\omega}$ be a bijection in $\hat{V}$. For each $i < \omega$, let $\hat{k}_i = \langle \hat{n}_i, \hat{m}_i \rangle$. For each $n < \omega$ let $\hat{s}_{n} = (\hat{k}_i: i < n)$.

	Let $p(\overline{x}) = \{\phi_n(\overline{x}, \overline{a}_{\hat{s}_n}): n < \omega\}$, a countable partial type over $\mathbf{j}_{\std}(M)$. $p(\overline{x})$ is pseudofinite, since the $\hat{k}_i$'s are bounded (since $\langle \hat{n}, \hat{m} \rangle: \hat{n}, \hat{m} \leq \hat{m}_0 \rangle$ is bounded). Thus $p(\overline{x})$ is realized by some $\hat{\overline{b}} \in \hat{V}$. 
	
	Let $(\hat{\overline{a}}_{\hat{s}}: \hat{s} \in \hat{\omega}^{<\hat{\omega}}) = \mathbf{j}(\overline{a}_s: s \in \omega^{<\omega})$ and similarly define $(\hat{\phi}(\overline{x}, \overline{y}_{\hat{n}}): \hat{n}< \hat{\omega})$. By overflow, we can find some $\hat{s} \in \hat{\omega}^{<\hat{\omega}}$ of nonstandard length such that $\mathbf{j}(M) \models \hat{\phi}_{|\hat{t}|}(\hat{\overline{b}}, \hat{\overline{a}}_{\hat{t}})$ for all $\hat{t} \subseteq \hat{s}$. Now necessarily $\hat{s}_n \subseteq \hat{s}$ for all $n < \omega$. Write $\hat{n}_* = |\overline{\hat{s}}|$. For each $\hat{m} < \hat{n}_*$, write $\hat{s}(\hat{m}) = \langle \hat{s}_0(\hat{m}), \hat{s}_1(\hat{m}) \rangle$. By passing to an initial segment of $\hat{s}$, we can suppose that for all $\hat{n} < \hat{m} < \hat{n}_*$, $\hat{s}_0(\hat{n}) < \hat{s}_0(\hat{m}) < \hat{s}_1(\hat{m}) < \hat{s}_0(\hat{m})$. Then $\hat{s}_0(\hat{n}_*-1)$ realizes our given precut, as desired.
	
	(B) implies (A): trivial.
\end{proof}

\begin{corollary}\label{supersimpleCor1}
	Suppose $V \models ZFC^-$ and $\mathbf{j}:V \preceq \hat{V} \models ZFC^-_{\pre}$. Then $\hat{V}$ is $\aleph_1$-saturated if and only if $\hat{V}$ $\aleph_1$-pseudosaturates some unsupersimple theory.
\end{corollary}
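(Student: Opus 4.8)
The plan is to prove the two implications separately, with Theorem~\ref{BaselineSupersimple} and Corollary~\ref{SaturationCharacterization} as the main tools, together with the inequality $\mathfrak{p}_{\hat V}\le\mathfrak{b}_{\hat V}$ for models of $ZFC^-_{\pre}$ which is established inside the proof of Theorem~\ref{BBound} (its $SOP_2$ case).

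\emph{Forward direction.} Suppose $\hat V$ is $\aleph_1$-saturated. First observe that $V$ contains a countable unsupersimple theory: $ZFC^-$ proves that the theory of a refining $\omega$-chain of equivalence relations $(E_n:n<\omega)$, with $E_0$ trivial and each $E_n$-class split into infinitely many $E_{n+1}$-classes, is complete, countable, stable and not superstable, hence unsupersimple by the definitions in Section~\ref{Prelim}; being hereditarily countable and definable, it lies in the transitive model $V$. Call it $T$, fix $M\models T$ with $M\in V$, and check that $\hat V$ $\aleph_1$-pseudosaturates $T$, i.e. that $\mathbf{j}_{\std}(M)$ is $\aleph_1$-pseudosaturated. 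Given a countable pseudofinite $A\subseteq\mathbf{j}_{\std}(M)$ and $p(x)\in S^1(A)$, lift $p$ to the set of $\hat\in$-formulas $\hat p(x)=\{``x\in\mathbf{j}(M)"\}\cup\{``\mathbf{j}(M)\models\phi(x,\overline a)":\phi(x,\overline a)\in p\}$; this is a countable partial type over $\hat V$, since finite satisfiability of $\hat p$ in $\hat V$ is exactly finite satisfiability of $p$ in $\mathbf{j}_{\std}(M)$. By $\aleph_1$-saturation of $\hat V$ it has a realization $\hat b\in\hat V$, which then lies in $\mathbf{j}_{\std}(M)$ and realizes $p$.

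\emph{Backward direction.} Suppose $\hat V$ $\aleph_1$-pseudosaturates some countable unsupersimple $T\in V$. Since $\hat V\models ZFC^-_{\pre}$ it is $\omega$-nonstandard, so Theorem~\ref{BaselineSupersimple}(A)$\Rightarrow$(B) applies and gives $\mathfrak{p}_{\hat V}\ge\aleph_1$. Combined with $\mathfrak{p}_{\hat V}\le\mathfrak{b}_{\hat V}$ this yields $\mathfrak{b}_{\hat V}\ge\aleph_1$, i.e. every countable subset of $\hat V$ is pseudofinite; so Corollary~\ref{SaturationCharacterization} (with $\kappa=\aleph_1$) gives that $\hat V$ is $\aleph_1$-saturated.

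The one step needing real work is the inequality $\mathfrak{p}_{\hat V}\le\mathfrak{b}_{\hat V}$. From a countable non-pseudofinite $X\subseteq\hat V$ one passes, via the expansion $(\hat V,I^{\hat V},F^{\hat V})\models ZFC^-_*$, to $Y=(F^{\hat V})^{-1}[X]$, which is countable and unbounded in $I^{\hat V}$ (otherwise $X$ would sit in the $\hat V$-finite range of an initial segment of $F^{\hat V}$, making it pseudofinite). The last clause of $ZFC^-_*$ forces $I^{\hat V}$ to be the lower half of a genuine Dedekind cut of $\hat\omega$, and one checks that this cut has countable type on both sides, so that $(\aleph_0,\aleph_0)\in\mathcal{C}_{\hat V}$, contradicting $\mathfrak{p}_{\hat V}\ge\aleph_1$; concretely one realizes, inside $(\hat\omega,\hat<)=\mathbf{j}_{\std}(\omega,<)$ using Corollary~\ref{LocalSat0}, a pseudofinite type placing $x$ above every element of $Y$ and below every element of a suitable small coinitial subset of the upper half. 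As this is exactly the argument carried out in the $SOP_2$ case of the proof of Theorem~\ref{BBound}, it can be invoked rather than repeated; I expect verifying its applicability in the present setting to be the main (minor) obstacle, the rest being a routine chaining of the cited results.
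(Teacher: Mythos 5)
Your overall structure is the same as the paper's: the forward direction is the easy one (with a small detour to exhibit an unsupersimple theory inside $V$, which the paper leaves implicit), and the backward direction chains Theorem~\ref{BaselineSupersimple} (to get $\mathfrak{p}_{\hat V}\geq\aleph_1$), the inequality $\mathfrak{p}_{\hat V}\leq\mathfrak{b}_{\hat V}$ coming from Theorem~\ref{BBound}, and Corollary~\ref{SaturationCharacterization}. This matches the paper's proof, and since you invoke the paper's derivation of $\mathfrak{p}_{\hat V}\leq\mathfrak{b}_{\hat V}$ rather than relying on your own, the proof stands.

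However, your concrete reconstruction of why $\mathfrak{p}_{\hat V}\leq\mathfrak{b}_{\hat V}$ holds is not the argument in Theorem~\ref{BBound}, and it does not work as written. You propose to show that the Dedekind cut whose lower half is the convex hull of $I^{\hat V}$ has $(\aleph_0,\aleph_0)$ type, using $Y$ for the lower cofinality. But there is no reason for the upper side to have coinitiality $\aleph_0$; indeed, if $\mathfrak{p}_{\hat V}\geq\aleph_1$, then for any countable decreasing sequence above $I^{\hat V}$ together with $Y$, the resulting pre-cut is \emph{filled} (its realization must lie strictly above $I^{\hat V}$ since $Y$ is cofinal in it), so the upper coinitiality is forced to be $\geq\aleph_1$ rather than yielding a contradiction. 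The last clause of $ZFC^-_*$ (that every internal set meets $I^{\hat V}$ in a bounded or cobounded set) is exactly what the cut argument fails to exploit, and it is the essential ingredient. What the proof of Theorem~\ref{BBound} actually does in the $SOP_2$ case is: run the IP argument (which uses the split of $Y$ into two cofinal halves and the bounded/cobounded clause) to produce a pseudofinite, uniformly $\Sigma_n$ type of cardinality $\mathfrak{b}_{\hat V}$ over $\mathbf{j}_{\std}(M)$ for an IP theory such as $T_{rg}$ which is \emph{omitted}, and then observe via Theorem~\ref{localSaturation2}(A) --- not Corollary~\ref{LocalSat0} as you cite --- that if $\mathfrak{b}_{\hat V}<\mathfrak{p}_{\hat V}$ this type would have to be realized, a contradiction. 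So the missing piece is not a cut; it is the interplay of the IP type with Theorem~\ref{localSaturation2}(A). You flagged this as the ``one step needing real work,'' and that instinct was right; the sketch you supplied for it should be discarded in favor of the cited argument.
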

\begin{proof}
	Clearly, if $\hat{V}$ is $\aleph_1$-saturated then $\hat{V}$ $\aleph_1$-pseudosaturates every complete countable theory $T \in V$. Conversely, suppose $\hat{V}$ $\aleph_1$-pseudosaturates some unsupersimple theory $T$. Then $\mathfrak{b}_{\hat{V}} \geq \mathfrak{p}_{\hat{V}} \geq \aleph_1$ by Theorem \ref{BBound} and Theorem \ref{BaselineSupersimple}.  In particular, $\mathfrak{p}_{\hat{V}}, \mathfrak{b}_{\hat{V}} \geq \aleph_1$. Thus, by Theorem~\ref{SaturationCharacterization}, $\hat{V}$ is $\aleph_1$-saturated.
\end{proof}

Next, we consider $\aleph_{\epsilon}$-saturation and $\aleph_0$-saturation. Recall the $\epsilon$-notation from the Section~\ref{Prelim}. To given an example of how we are using it:

\begin{example}Suppose $V \models ZFC^-$ is transitive, $M \in V$, and $\mathbf{j}: V \preceq \hat{V}$ is $\omega$-nonstandard. Then every $\epsilon$-finite subset of $\mathbf{j}_{\std}(M)$ is pseudofinite. Note that we are evaluating $\mbox{acl}$ in the sense of $\mathbf{j}_{\std}(M)$, not $\mathbf{j}(M)$; if we wanted the latter, we could write $\hat{\epsilon}$-finite, but this won't be needed.
	
	Hence, if $\hat{V}$ $\aleph_1$-pseudosaturates $\mbox{Th}(M)$, then $\mathbf{j}_{\std}(M)$ is $\aleph_{\epsilon}$-saturated.
\end{example}

\begin{definition}
	Suppose $\hat{V} \models ZFC^-$. Then say that $\hat{V}$ is $2^\omega$-rich if for every $\eta \in 2^\omega$, there is some $\hat{\eta} \in 2^{\hat{\omega}}$ such that each $\hat{\eta}(n) = \eta(n)$.
\end{definition}

We obtain the following:

\begin{theorem}\label{BaselineEpsilonSmall}
	Suppose $V \models ZFC^-$ is transitive, and $\mathbf{j}: V \preceq \hat{V}$ is $\omega$-nonstandard. Then the following are equivalent:
	
	\begin{itemize}
		\item[(A1)] There is some non-small theory $T \in V$ and some $M \models T$ such that $\mathbf{j}_{\std}(M)$ is $\aleph_0$-saturated;
		\item[(A2)] There is some countable non-$\epsilon$-small theory $T \in V$ and some $M \models T$ such that $\mathbf{j}_{\std}(M)$ is $\aleph_{\epsilon}$-saturated;
		\item[(B)] For every countable complete $T \in V$ and for every $M \models T$ with $M \in V$, $\mathbf{j}_{\std}(M)$ is $\aleph_{\epsilon}$-saturated;
		\item[(C)] Whenever $a \in \hat{V}$ and whenever $p(x, a)$ is a $\Sigma_n$-type over $a$ for some $n < \omega$, then $p(x, a)$ is realized in $\hat{V}$;
		\item[(D)] $\hat{V}$ is $2^\omega$-rich.
	\end{itemize} 
\end{theorem}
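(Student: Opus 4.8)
The plan is to establish the five conditions equivalent by a cycle, using the easy equivalence $(C)\Leftrightarrow(D)$ together with $(B)\Rightarrow(A1)$, $(A1)\Rightarrow(D)$, $(D)\Rightarrow(B)$, and $(B)\Rightarrow(A2)$, $(A2)\Rightarrow(D)$ to bring in $(A2)$. The implications into $(A1)$ and $(A2)$ are immediate: $V$ contains a non-small (hence non-$\epsilon$-small) countable theory, for instance the theory of infinitely many independent unary predicates, and $\aleph_\epsilon$-saturation implies $\aleph_0$-saturation.

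For $(A1)\Rightarrow(D)$: working in $V$, unwind ``uncountably many $1$-types over a finite set'' into a family of $\mathcal L$-formulas $(\phi_s(x,\bar y))_{s\in 2^{<\omega}}$ which splits each node by a single formula — so $\phi_{s^\frown i}\vdash\phi_s$ and $\phi_{s^\frown 0}\wedge\phi_{s^\frown 1}$ is \emph{logically} inconsistent — together with a complete type $r(\bar y)$ of $T$ satisfying $r\vdash\exists x\,\phi_s(x,\bar y)$ for all $s$. Fixing $M\models T$ in $V$ with $\mathbf j_{\std}(M)$ $\aleph_0$-saturated, realize $r$ by some finite tuple $\bar c'\in\mathbf j_{\std}(M)$ and push the formula family through $\mathbf j$; the splitting relations, being logically valid, persist internally for every $\hat s\in 2^{<\hat\omega}$. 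Given $\eta\in 2^\omega$, the partial type $\{\phi_{\eta\restriction n}(x,\bar c'):n<\omega\}$ is finitely satisfiable in $\mathbf j_{\std}(M)$ and hence realized by some $b$; then the internal set $S=\{\hat s\in 2^{<\hat\omega}:\mathbf j(M)\models\hat\phi_{\hat s}(b,\bar c')\}$ contains each $\eta\restriction n$ but no standard node incomparable with $\eta$, so overflow applied to the internal set of lengths of members of $S$ yields $\hat s\in S$ of nonstandard length, necessarily extending $\eta$; padding $\hat s$ gives the required $\hat\eta\in 2^{\hat\omega}$. Thus $\hat V$ is $2^\omega$-rich.

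The equivalence $(C)\Leftrightarrow(D)$ is straightforward. For $(D)\Rightarrow(C)$, code a $\Sigma_n$-type $p(x,a)$ by the characteristic function $\eta\in 2^\omega$ of the set of $\Sigma_n$-formulas it contains, extend $\eta$ to $\hat\eta\in 2^{\hat\omega}$, and realize the internally coded type truncated at a nonstandard stage — its internal finite satisfiability following from that of $p$ by overflow, via a $\Sigma_n$-truth predicate exactly as in Theorem~\ref{localSaturation2}. For $(C)\Rightarrow(D)$, apply $(C)$ to the parameter-free type asserting ``$x$ is a function $\hat\omega\to 2$ with $x(\bar n)=\eta(n)$'' for each $n$ (parameter-free since each numeral $\bar n$ is definable). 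The same ``code, extend, truncate, realize'' template, applied to types over a finite tuple, also shows that $(D)$ implies $\mathbf j_{\std}(M)$ is $\aleph_0$-saturated for every $M\in V$.

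The substantive step is $(D)\Rightarrow(B)$; also $(A2)\Rightarrow(D)$ needs the same device, since there the non-$\epsilon$-smallness tree has parameters ranging over $\acl(\bar c)$ rather than being a single tuple. Given $M\in V$, a finite $\bar c'\in\mathbf j_{\std}(M)$, and $q\in S^1(\acl^{\mathbf j_{\std}(M)}(\bar c'))$, write this algebraic closure as an increasing union of finite sets $D^{(m)}=\{\bar y:\mathbf j(M)\models\Theta_m(\bar y,\bar c')\}$ where $(\Theta_m)_m$ enumerates the relevant algebraic $\mathcal L$-formulas; each $D^{(m)}$ is an element of $\hat V$ and is finite there, and because $m\mapsto\Theta_m$ lies in $V$, the $D^{(m)}$ together with $\hat V$-chosen enumerations $\bar e^{(m)}$ of them form the standard part of an internal sequence $(\bar e^{(\hat m)})_{\hat m<\hat\omega}$. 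Coding $q$ by $\eta\in 2^\omega$ (recording which $\psi(x,\bar e^{(m)}_{\bar\imath})$ lie in $q$), extending $\eta$ to $\hat\eta\in 2^{\hat\omega}$, decoding against $(\bar e^{(\hat m)})$ and the internal list of $\mathcal L$-formulas, truncating at a nonstandard stage, and realizing the truncation in $\mathbf j(M)$ then produces $b\in\mathbf j_{\std}(M)$ realizing $q$. The main obstacle is precisely that an individual algebraic element of $\acl(\bar c')$ is not definable from $\bar c'$, so it must never be named, only located by its index in $\hat V$'s uniform enumeration of the $D^{(m)}$'s; managing this bookkeeping, and verifying that the decoded internal type stays consistent, is the heart of the proof and is an adaptation of the Malliaris--Shelah argument referenced in the section.
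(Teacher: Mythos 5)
Your proof is correct and covers all five equivalences, but it takes a noticeably heavier route than the paper in the step from $2^\omega$-richness to $\aleph_\epsilon$-saturation. The paper does not prove $(D)\Rightarrow(B)$ directly; it goes $(D)\Rightarrow(C)\Rightarrow(B)$, and the step $(C)\Rightarrow(B)$ is disposed of by a one-line trick: Skolemize $T$ inside $V$. In a Skolemization $\acl=\dcl$ (iterating $f_{\theta\wedge x\neq f_\theta(\overline y)}$ names every conjugate), so an $\epsilon$-finite parameter set becomes a set of terms in a finite tuple, and $\aleph_\epsilon$-saturation reduces to $\aleph_0$-saturation, which follows from $(C)$ and pairing in $\hat V$. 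Your alternative — internally enumerating $\acl(\overline c')$ via the uniform family of algebraic formulas $\Theta_m$ and referencing its elements by index when coding $q$ into $\eta$ — is essentially doing the Skolemization's work by hand; it works (modulo the small fix of enumerating \emph{all} formulas and letting $D^{(m)}=\emptyset$ when $\Theta_m$ is not algebraic over $\overline c'$, so that $m\mapsto\Theta_m$ genuinely lives in $V$), but it carries a bookkeeping burden that the Skolemization trick eliminates. Your remark that the algebraic element ``must never be named'' is exactly the point the Skolemization is designed to circumvent: in $\mathcal L'$ it \emph{is} named.

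On the credit side, your treatment of $(A1)\Rightarrow(D)$ is more careful than the paper's as written: the paper chooses $\overline a\in M^{<\omega}$ witnessing non-smallness, but the $M$ supplied by $(A1)$ need not contain such a tuple; your fix — pull the splitting tree and the type $r(\overline y)$ from $V$ as data intrinsic to $T$, realize $r$ in $\mathbf j_{\std}(M)$ using the assumed $\aleph_0$-saturation, and proceed over $\overline c'$ — is the correct reading and closes that small gap. For $(A2)\Rightarrow(D)$ the same issue recurs with the parameters now ranging over $\acl$; you flag that it needs ``the same device'' but, like the paper's one-word ``similar,'' you do not spell it out. Both of you owe the reader the observation that one must either transfer the whole tree $(\overline b_s)_{s\in 2^{<\omega}}$ internally (e.g.\ pushing $\mathbf j$ through the sequence in $V$ and conjugating by a $\hat V$-internal partial elementary map from $\mathbf j(N)$ into $\mathbf j(M)$) or locate the $\overline b_s$ by index as in your $(D)\Rightarrow(B)$; an external stage-by-stage realization of the tree would not produce the internal chain needed for the overflow step. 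Finally, you prove $(C)\Rightarrow(D)$, which the paper skips because its cycle never needs it; your argument for it is fine but redundant once $(D)\Rightarrow(C)\Rightarrow(B)\Rightarrow(A1)\Rightarrow(D)$ is in place.
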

\begin{proof}
	(A1) implies (D): we can choose $M \models T$ with $M \in V$ and $\overline{a} \in M^{<\omega}$ with $S^1(\overline{a})$ uncountable, since $\Pi^1_1$-statements are absolute to transitive models. Suppose towards a contradiction that $\mathbf{j}_{\std}(M)$ is $\aleph_0$-saturated and yet $\hat{V}$ is not $2^\omega$-rich. Choose, in $V$, formulas $(\phi_s(x, \overline{a}):s  \in 2^{<\omega})$, where for every $\eta \in 2^\omega$, $\{\phi_s(x, \overline{a}): s \subseteq \eta\}$ is consistent, and for all $s, t \in 2^{<\omega}$ incompatible, $\phi_s(x, \overline{a}) \wedge \phi_t(x, \overline{a})$ is inconsistent. Write $(\hat{\phi}_{\hat{s}}(x, \overline{a}): \hat{s}\in 2^{<\hat{\omega}}) = \mathbf{j}(\phi_s(x, \overline{a}):s  \in 2^{<\omega})$, and choose $\eta \in 2^\omega$ witnessing that $\hat{V}$ is not $2^\omega$-rich. By hypothesis,  $\{\phi_s(x, \overline{a}): s \subseteq \eta\}$ is realized in $\mathbf{j}_{\std}(M)$, say by $a \in \mathbf{j}_{\std}(M)$.  Let $\hat{s} = \bigcup\{\hat{t} \in 2^{<\hat{\omega}}: \mathbf{j}(M) \models \hat{\phi}_{\hat{t}}(a, \overline{a})\}$, an element of $2^{\leq\hat{\omega}}$. Choose $\hat{\eta} \in 2^{\hat{\omega}}$ extending $\hat{s}$. Then $\hat{\eta}(n) = \eta(n)$ for each $n < \omega$, contradicting our choice of $\eta$.
	
	(A2) implies (D): similar, except our tree of formulas is $(\phi_s(x, \overline{b}_s): s \in 2^{<\omega})$, where each $\overline{b}_s \subseteq \mbox{acl}(\overline{a})$.
	
	(D) implies (C): Let $p(x, a)$ be given. Let $\psi(x, y, z)$ be a truth-predicate for $\Sigma_n$-formulas.
	
	Let $(\phi_n(x, y): n < \omega) \in V$ be an enumeration of the $\Sigma_n$-formulas of set theory in $2$ variables, and let $(\hat{\phi}_{\hat{n}}(x, y): \hat{n} < \hat{\omega}) = \mathbf{j}(\phi_n(x, y): n < \omega)$. Let $\eta \in 2^\omega$ be defined by $\eta(n) = 1$ if and only if $\phi_n(x, a) \in p_n(x,a)$. By hypothesis, we can find some $\hat{\eta} \in 2^{\hat{\omega}}$ such that each $\hat{\eta}(n) = \eta(n)$. By overspill, we can find some $\hat{n}_* < \hat{\omega}$ nonstandard such that $\{\psi(x, a, \hat{\phi}_{\hat{n}}(x, y)): \hat{n} < \hat{n}_*, \hat{\eta}(\hat{n}) = 1\}$ has a realization $b$. Clearly, $b$ realizes $p(x)$.
	
	(C) implies (B): Suppose (C) holds, and $T$ is given. We can suppose $T$ is Skolemized, and so we just need to show that for every $M \models T$ with $M \in V$, $\mathbf{j}_{\std}(M)$ is $\aleph_0$-saturated. But note that $\hat{V}$ admits pairing functions, so this follows from (C).

	(B) implies (A2), and (B) implies (A1): Trivial. 
\end{proof}

The following theorem completes the picture, showing that for small theories, $\aleph_0$-saturation is automatic:
\begin{theorem}\label{BaselineSmall}
	Suppose $V \models ZFC^-$ is transitive, and $\mathbf{j}: V \preceq \hat{V}$ is $\omega$-nonstandard. Suppose $T \in V$ is a countable small theory, and $M \models T$ with $M \in V$. Then $\mathbf{j}_{\std}(M)$ is $\aleph_0$-saturated.
\end{theorem}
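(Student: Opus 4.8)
The plan is to show that every finitary type over $\mathbf{j}_{\std}(M)$ is realized, using smallness of $T$ to reduce to a pseudofinite type and then invoking absoluteness. Let $p(x)$ be a type over a finite set $\overline{a} \in \mathbf{j}_{\std}(M)^{<\omega}$; I want to realize $p(x)$ in $\mathbf{j}_{\std}(M)$. First I would observe that, by the convention that $\mathcal{L}$ is countable in $V$, the set $S^1(\overline{b})$ is computed (and is countable) uniformly: more precisely, for any tuple $\hat{\overline{a}}$ from $\mathbf{j}(M)$, $\hat{V}$ believes $S^1(\hat{\overline{a}})$ is countable (this is a $\Pi^1_1$-type statement about $T$ and hence absolute from $V$ to $\hat{V}$, using that $\hat{V} \models ZFC^-$ and smallness of $T$). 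So inside $\hat{V}$ there is an enumeration $(\hat{q}_{\hat{n}}: \hat{n} < \hat{\omega})$ of all complete $1$-types over $\mathbf{j}(\overline{a})$, and $\hat{V}$ knows that for each $\hat{n}$ the partial type $\hat{q}_{\hat{n}}$ is realized in $\mathbf{j}(M)$.

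The main step is then the following. Since $T$ is small, $\hat{V}$ can also enumerate all $\mathcal{L}$-formulas $\varphi(x, \overline{a})$ over $\overline{a}$ as $(\hat{\varphi}_{\hat{k}}(x): \hat{k} < \hat{\omega})$, extending a genuine enumeration $(\varphi_k(x): k < \omega) \in V$ of the standard formulas over $\overline{a}$. Given our standard type $p(x)$, define $\eta \in 2^{\omega}$ by $\eta(k) = 1$ iff $\varphi_k(x) \in p(x)$. Now $p(x)$ determines, and is determined by, a single complete type over $\overline{a}$, so there is a unique $n_* \in \hat{\omega}$ — necessarily I should argue this $n_*$ is \emph{not} standard-dependent in a bad way — well, more carefully: the point is not to find $n_*$ but to directly realize $p(x)$. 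I would instead argue as follows. Consider the partial type $p^{\pre}(x) := \{\varphi_k(x) : k < \omega,\ \eta(k)=1\} \cup \{\lnot\varphi_k(x): k<\omega,\ \eta(k)=0\}$; this is just $p(x)$ together with the negations of the formulas it omits, so it is still a consistent partial type over the finite set $\overline{a}$, and it is complete in the sense that any two realizations have the same type over $\overline{a}$. Now here the smallness is used twice over: since $S^1(\overline{a})$ is countable in $V$, there is actually a single $\mathcal{L}$-formula $\psi_q(x,\overline{a})$ for each complete type $q$ over $\overline{a}$ isolating $q$ \emph{relative to the finitely many types} — no; rather, I use that $p(x)$ is the \emph{restriction} to $\overline{a}$ of the unique type it generates, and over $\hat V$ there are only "$\hat{}$-countably many" types, but I need to pin down $p$ by countably much standard data. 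The cleanest route: build inside $\hat V$ an increasing $\hat\omega$-sequence of finite sets $\hat Y_{\hat n} \in \hat V$ such that $\hat Y_{\hat n}$ contains a realization of $\{\varphi_k(x)^{\eta(k)} : k < \hat n\}$ for every consistent choice (using absoluteness of "realized in $\mathbf{j}(M)$"); these $\hat Y_{\hat n}$ exist by the standard argument (cf.\ the proof of Theorem~\ref{localSaturation2}: pick $\hat Y_{\hat n}$ witnessing realizability of each consistent length-$\hat n$ formula-pattern over $\overline{a}$), and each is finite in $\hat V$. Since the family $\{\hat Y_n : n<\omega\}$ is pseudofinite (it is the image under $\mathbf{j}$ of a standard-indexed family, hence bounded by some nonstandard $\hat Y_{\hat n_*}$), we get a single pseudofinite $\hat Y \in \hat V$ with every $\hat Y_n \subseteq \hat Y$. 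Then $p(x) \cup \{x \in \hat Y\}$ is consistent by construction, and it is a pseudofinite partial type over $\mathbf{j}_{\std}(M)$, so it is realized in $\hat V$ — and any such realization lies in $\mathbf{j}_{\std}(M)$ because it satisfies $x \in \mathbf{j}(M)$, which is one of the formulas of $p$. Wait: we must ensure $p(x)$ already contains ``$x \in \mathbf{j}(M)$''; since $M \models T$ we may assume $\mathcal{L}$ has a sort predicate or simply that $p$ is by definition a type of $\mathbf{j}_{\std}(M)$, so this is automatic.

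The one genuinely delicate point — and the step I expect to be the main obstacle — is realizing $p(x)\cup\{x\in\hat Y\}$ in $\hat V$: this requires the type to be pseudofinite (which we have arranged) but the earlier realization result Theorem~\ref{localSaturation} needs $\mathfrak{p}_{\hat V} > \aleph_0$, which we do \emph{not} have here. So I cannot invoke Theorem~\ref{localSaturation} directly. Instead I must use that $p(x)$ has only \emph{countably many} formulas and that each $\hat Y_{\hat n}$ already concentrates the finitary consistency: since $p(x)$ is a genuine standard-indexed countable increasing union $p = \bigcup_n p_n$ with $p_n$ the restriction to $\{\varphi_k : k<n\}$, and $\hat Y_n$ realizes $p_n$, and $\hat Y = \bigcup \hat Y_*$ for a nonstandard $\hat Y_*$ with $\hat Y_n \in \hat Y_*$, the set $\{y \in \hat Y : y \text{ realizes } p_n\}$ is a \emph{nonempty} element of $\hat V$ for each $n$, these form a decreasing standard-indexed sequence of nonempty finite-in-$\hat V$ sets, hence by $\omega$-nonstandardness (overspill) there is a nonstandard $\hat m$ with $\{y \in \hat Y : y \text{ realizes } p_{\hat m}\}$ still nonempty, and any element of it realizes all of $p$. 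This decreasing-sequence/overspill argument is exactly where the work is; everything else is bookkeeping and absoluteness. I would present the proof in that order: (1) set up the enumerations and $\eta$; (2) absoluteness giving the $\hat Y_{\hat n}$'s and their pseudofinite union $\hat Y$; (3) the overspill argument on the decreasing sequence of nonempty subsets of $\hat Y$ to extract a realization; (4) note the realization lies in $\mathbf{j}_{\std}(M)$.
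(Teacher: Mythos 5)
Your proposal identifies the right delicate point but does not get past it; the overspill step in stage (3) is where the argument breaks down, and it breaks down precisely at the point where smallness must enter (your argument as written never genuinely uses smallness).

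Concretely: you want to apply overspill to the assertion ``$\{y \in \hat Y : y \text{ realizes } p_{\hat m}\}$ is nonempty.'' For overspill to apply, this must be a single internal formula $\Phi(\hat m)$ with parameters in $\hat V$, true of all standard $m$. But $p_m$ is determined by the sign pattern $\eta \restriction_m$, and the real $\eta \in 2^\omega$ is an external object; there is no internal sequence $(p_{\hat m} : \hat m < \hat\omega) \in \hat V$ extending $(p_m : m < \omega)$ unless $\eta$ extends to some $\hat\eta \in 2^{\hat\omega} \cap \hat V$. That is exactly the condition that $\hat V$ be $2^\omega$-rich, which is \emph{not} assumed here — indeed, Theorem~\ref{BaselineEpsilonSmall} shows that for non-small $T$ the conclusion genuinely fails when $\hat V$ is not $2^\omega$-rich. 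Replacing the overspill by ``a decreasing $\omega$-chain of nonempty $\hat V$-finite sets has nonempty intersection'' does not help either: that principle is false in general. For example, if $\mbox{lcf}_{\hat V}(\omega) = \aleph_0$, pick a decreasing sequence $(\hat t_k)_{k<\omega}$ of nonstandard naturals cofinal above $\omega$ from above, and set $Z_k = [k, \hat t_k)$; each $Z_k$ is nonempty and $\hat V$-finite, the chain is decreasing, and $\bigcap_k Z_k = \emptyset$. So the bare combinatorics you are relying on is insufficient.

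The fix — and this is where the paper uses smallness in an essential way — is to get the type (or a sequence of isolating formulas approaching it) \emph{into $V$}, so that its $\mathbf{j}$-image is an internal object of $\hat V$ and transfer/elementarity can be used in place of overspill on an external real. Since $T$ is small, $S^n(\emptyset)$ is scattered with countable Cantor--Bendixson rank, and an induction on rank shows every $q \in S^n(\emptyset)$ actually belongs to $V$; moreover the isolated types are dense, so there is a sequence $(\phi_m)_{m<\omega} \in V$ of isolating formulas converging to $q$. Taking any nonstandard $\hat n$ and a realization of $\hat\phi_{\hat n}$ gives a realization of $q$ in $\mathbf{j}_{\std}(M)$, by elementarity (no overspill on $\eta$ needed). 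One should also deal with types over nonstandard finite tuples $\overline a$, for which the type need not come from $V$ at all; the paper handles this by first proving $\aleph_0$-homogeneity of $\mathbf{j}_{\std}(M)$ (an easy overflow argument on $\Delta$-types, not requiring smallness) and then combining homogeneity with realization of types over $\emptyset$ to obtain full $\aleph_0$-saturation. You should adopt that two-step structure rather than trying to realize types over $\overline a$ directly.
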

\begin{proof}
	We first show that $\mathbf{j}_{\std}(M)$ is $\aleph_0$-homogeneous (this does not use the hypothesis on $T$). So suppose $\overline{a}, \overline{b} \in \mathbf{j}_{\std}(M)^{<\omega}$ satisfy that $\mbox{tp}_{\mathbf{j}_{\std}(M)}(\overline{a})=\mbox{tp}_{\mathbf{j}_{\std}(M)}(\overline{b})$, and let $a \in \mathbf{j}_{\std}(M)$ be given. Let $(\Delta_n: n < \omega) \in V$ be a filtration of the formulas of $\mathcal{L}$ into finite sets, and let $(\hat{\Delta}_{\hat{n}}: \hat{n} < \hat{\omega}) = \mathbf{j}(\Delta_n: n < \omega)\in \hat{V}$. For each $n < \omega$, we can find $b \in \mathbf{j}_{\std}(M)$ such that $tp_{\Delta_n}(a, \overline{a}) = tp_{\Delta_n}(b, \overline{b})$, hence by overflow there is some nonstandard $\hat{n} < \hat{\omega}$ and some $b \in \mathbf{j}_{\std}(M)$ such that $tp_{\hat{\Delta}_{\hat{n}}}(a, \overline{a}) = tp_{\hat{\Delta}_{\hat{n}}}(b, \overline{b})$. Then in particular, $\mbox{tp}_{\mathbf{j}_{\std}(M)}(a,\overline{a}) = \mbox{tp}_{\mathbf{j}_{\std}(M)}(b, \overline{b})$ as desired.
	
	Next, we show that $\mathbf{j}_{\std}(M)$ realizes every type over $\emptyset$. So suppose $q(x_i: i < n) \in S^n(\emptyset)$. Since $T$ is small, $q(x_i: i < n) \in V$. To see this, note that $S^n(\emptyset)$ has countable Cantor-Bendixson rank; prove by induction that types of rank $\alpha$ are in $V$.
	
	Moreover, the isolated types are dense in $S^n(\emptyset)$, and so we can find a sequence $(\phi_m(x_i: i < n): m < \omega) \in V$ of formulas, such that each $\phi_m(x_i: i < n)$ generates a complete type, and for every $\phi(x_i: i < n) \in q(x_i: i < n)$, we have that $\phi_m(x_i: i < n) \rightarrow \phi(x_i: i < n) \in T$ for sufficiently large $m$. Let $(\hat{\phi}_{\hat{m}}(x_i: i < n): \hat{m} < \hat{\omega}) = \mathbf{j}(\phi_m(x_i: i < n): n < \omega)$. Let $\hat{n} < \hat{\omega}$ be nonstandard, and choose $(a_i: i < n)$ such that $\mathbf{j}(M) \models \hat{\phi}_{\hat{n}}(a_i: i < n)$. Then clearly $(a_i: i < n)$ realizes $q(x_i: i < n)$.
\end{proof}

We note that this allows us to extend Theorem~\ref{ModKeislerOrder} to the case $\lambda = \aleph_0$, as promised. Recall that $\aleph_0$-pseudosaturation is the same as $\aleph_0$-saturation.

\begin{corollary}\label{ModKeislerOrder2} Suppose $V \models ZFC^-$ is transitive and $\mathbf{j}: V \preceq \hat{V}$ is $\omega$-nonstandard, and $T \in V$ is a complete countable theory, and $M \in V$ is a model of $T$. Then $\mathbf{j}_{\std}(M)$ is $\aleph_0$-saturated if and only if either $T$ is small or else $\hat{V}$ is $2^\omega$-rich. In particular, this does not depend on the choice of $M$. 
\end{corollary}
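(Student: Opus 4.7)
The plan is to reduce the corollary directly to the two previous theorems, Theorem~\ref{BaselineEpsilonSmall} and Theorem~\ref{BaselineSmall}, by splitting on whether $T$ is small. The only thing to check beyond invoking these theorems is the trivial observation that $\aleph_\epsilon$-saturation implies $\aleph_0$-saturation, since every finite set is $\epsilon$-finite.

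For the direction ``$\mathbf{j}_{\std}(M)$ is $\aleph_0$-saturated $\Longrightarrow$ $T$ small or $\hat{V}$ is $2^\omega$-rich,'' I would argue by contrapositive on the second disjunct. Suppose $T$ is not small. Then $T$ and $M$ witness clause (A1) of Theorem~\ref{BaselineEpsilonSmall}, so by the implication (A1)$\Rightarrow$(D) of that theorem, $\hat{V}$ is $2^\omega$-rich.

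For the converse, if $T$ is small, then Theorem~\ref{BaselineSmall} immediately yields that $\mathbf{j}_{\std}(M)$ is $\aleph_0$-saturated, independently of whether $\hat{V}$ is $2^\omega$-rich. If instead $\hat{V}$ is $2^\omega$-rich, apply the implication (D)$\Rightarrow$(B) of Theorem~\ref{BaselineEpsilonSmall} to the given $T$ and $M$: this gives that $\mathbf{j}_{\std}(M)$ is $\aleph_\epsilon$-saturated, which in particular implies $\aleph_0$-saturation.

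The final sentence follows because neither of the conditions ``$T$ is small'' nor ``$\hat{V}$ is $2^\omega$-rich'' refers to $M$ at all, so the left-hand side $\aleph_0$-saturation of $\mathbf{j}_{\std}(M)$ is characterized by conditions depending only on $T$, $V$ and $\mathbf{j}$. There is no real obstacle here: all the hard combinatorial work was already done in Theorems~\ref{BaselineEpsilonSmall} and~\ref{BaselineSmall}, and this corollary is simply the bookkeeping that packages those two results into a clean statement about when $\aleph_0$-saturation transfers through $\mathbf{j}_{\std}$.
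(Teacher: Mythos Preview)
Your proposal is correct and matches the paper's intent: the corollary is stated without proof in the paper, being an immediate consequence of Theorems~\ref{BaselineEpsilonSmall} and~\ref{BaselineSmall}, and your case split (small vs.\ $2^\omega$-rich) together with the observation that $\aleph_\epsilon$-saturation implies $\aleph_0$-saturation is exactly the bookkeeping needed.
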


Thus we can extend the definition of pseudosaturation of theories:

\begin{definition}
	Suppose $V \models ZFC^-$ is transitive and $\mathbf{j}: V \preceq \hat{V}$ is $\omega$-nonstandard, and $T \in V$ is a complete countable theory. Then say that $\hat{V}$ $\aleph_0$-pseudosaturates $T$ if for some or any $M \models T$ with $M \in V$, $\mathbf{j}_{\std}(M)$ is $\aleph_0$-(pseudo)saturated. 
\end{definition}

\section{Levels of Saturation of Stable Theories}\label{InterpOnStableSec2}
In this section, we adapt arguments of Malliaris and Shelah \cite{InterpNew} to the context of models of $ZFC^-$, to perform the following computations: given some transitive model $V \models ZFC^-$, some stable theory $T \in V$, and some $\mathbf{j}: V \preceq \hat{V}$ with $\hat{V}$ $\omega$-nonstandard, we pin down precisely the set of cardinals $\lambda$ such that $\hat{V}$ $\lambda$-pseudosaturates $T$. Also, given a model $M \models T$ with $M \in V$, and given $\mathbf{j}: V \preceq \hat{V} \models ZFC^-_{\pre}$, we pin down precisely the set of cardinals $\lambda$ such that $\mathbf{j}_{\std}(M)$ is $\lambda$-saturated; this will not depend on the choice of $M$.

We now introduce a cardinal characteristic of models of $ZFC^-$, that will control how saturated models of theories with the finite cover property can be.

\begin{definition}
	Suppose $\hat{V} \models ZFC^-$ is nonstandard. Note that if $\hat{X} \in \hat{V}$, then $|\hat{X}|$ could refer to either the cardinality of $\hat{X}$ as computed in $\hat{V}$, or the cardinality of $\{a \in \hat{V}: a \in \hat{X}\}$ as computed in $\mathbb{V}$. To clarify this, we will write $|\hat{X}|_{\hat{V}}$ or $|\hat{X}|_{\mathbb{V}}$, respectively.
	
	If $(L, <)$ is a linear order with proper initial segment $\omega$, then let $\mu_{(L,<)}$ denote the least cardinality of an initial segment $L_0$ of $L$ which properly contains $\omega$. 
	
	Suppose $\hat{V} \models ZFC^-$ is $\omega$-nonstandard. Then let $\mu_{\hat{V}} = \mu_{(\hat{\omega}, <)}$. In other words, $\mu_{\hat{V}}$ is the minimum of $|\hat{n}|_{\mathbb{V}}$, for $\hat{n} < \hat{\omega}$ nonstandard.
\end{definition}

To state the main theorem of this section, it is convenient to introduce the following definitions. We consider $\infty$ to be larger than every cardinal. If $\mathbf{j}: V \preceq \hat{V}$ and $M \in V$, we say that $\mathbf{j}_{\std}(M)$ is $\infty$-pseudosaturated if every pseudofinite type over $\mathbf{j}_{\std}(M)$ is realized. Thus: for every structure $M$, there is a least infinite cardinal $\lambda$ such that $M$ is not $\lambda^+$-saturated, and if $T \in V$ and $\mathbf{j}: V \preceq \hat{V}$, then there is a least infinite cardinal $\lambda$ (possibly $\infty$) such that $\hat{V}$ does not $\lambda^+$-pseudosaturate $T$. (As a matter of pure notation, we define that $\hat{V}$ never $\infty^+$-pseudosaturates $T$.)

\begin{definition}\label{LongDef}
Suppose $V \models ZFC^-$ is transitive, and $T \in V$ is a countable stable theory. Suppose $\mathbf{j}: V \preceq \hat{V}$ is $\omega$-nonstandard.  Then define $\lambda_{\hat{V}}(T)$ to be the largest value consistent with the following clauses. Also, if $\hat{V} \models ZFC^-_{\pre}$, then define $\lambda'_{\hat{V}}$ similarly. $\lambda_{\hat{V}}(T)$ will measure the level of pseudosaturation, and $\lambda'_{\hat{V}}(T)$ will measure the level of saturation.
\begin{enumerate}
	\item If $T$ is not $\omega$-stable and $\hat{V}$ is not $2^\omega$-rich, then $\lambda_{\hat{V}}(T) =\lambda'_{\hat{V}}(T) = \aleph_0$.
	\item If $T$ is not superstable and $\mathfrak{p}_{\hat{V}} = \aleph_0$, then $\lambda_{\hat{V}}(T) = \aleph_0$. 
	\item If $T$ is not superstable and $\hat{V}$ is not $\aleph_1$-saturated, then $\lambda'_{\hat{V}}(T) = \aleph_0$.
	\item If $T$ has the finite cover property, then $\lambda_{\hat{V}}(T) \leq \mu_{\hat{V}}$ and $\lambda'_{\hat{V}}(T) \leq \mu_{\hat{V}}$.
	\item $\lambda_{\hat{V}}(T) \leq \infty$, and $\lambda'_{\hat{V}}(T) \leq |\hat{V}|$.
\end{enumerate}
\end{definition}

The rest of this section will be concerned with the proof of the following theorems.

\begin{theorem}\label{PseudosaturationLevelsThm}
	Suppose $V \models ZFC^-$ is transitive, and $T \in V$ is a countable stable theory. Suppose $\mathbf{j}:V \preceq \hat{V} \models ZFC^-$ is $\omega$-nonstandard. Then $\lambda_{\hat{V}}(T)$ is the least infinite cardinal such that $T$ does not $\lambda_{\hat{V}}(T)^+$-pseudosaturate $T$.
	
\end{theorem}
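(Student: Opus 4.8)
Write $\lambda_0 = \lambda_{\hat V}(T)$. Since $\lambda_0$ is, by definition, the largest value compatible with clauses (1), (2), (4), (5), it suffices to show two things: (i) that each of these clauses records a genuine failure, i.e.\ $\hat V$ does not $\lambda_0^+$-pseudosaturate $T$; and (ii) that whenever $\lambda < \lambda_0$, $\hat V$ \emph{does} $\lambda^+$-pseudosaturate $T$. For (ii) there is nothing to prove when $\lambda_0 = \aleph_0$, and when $\lambda < \mathfrak p_{\hat V}$ it is immediate from Corollary~\ref{LocalSat0}; so the content of the lower bound is concentrated in the case $\lambda_0 = \infty$, and (to a lesser degree) in the FCP case with $\mathfrak p_{\hat V} \geq \aleph_1$. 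The whole proof is a case analysis according to the position of $T$ in the stability hierarchy.

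\textbf{The upper bound.} I would dispatch the nontrivial clauses in turn. Clause (1): suppose $T$ is not $\omega$-stable and $\hat V$ is not $2^\omega$-rich, and split on whether $T$ is $\epsilon$-small. If $T$ is not $\epsilon$-small, Theorem~\ref{BaselineEpsilonSmall} produces a type over an $\epsilon$-finite (hence, by the remark preceding it, pseudofinite) subset of $\mathbf j_{\std}(M)$ that is not realized, so $\hat V$ does not $\aleph_1$-pseudosaturate $T$. If $T$ is $\epsilon$-small, then $T$ is not superstable (else $T$ would be $\omega$-stable), and $\hat V$ not being $2^\omega$-rich forces $\mathfrak p_{\hat V} = \aleph_0$ — realize the $2^\omega$-richness type inside some pseudofinite $2^{\hat n}$ via Theorem~\ref{localSaturation} — so clause (2) applies. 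Clause (2): if $T$ is not superstable it is, being stable, not supersimple, so $\mathfrak p_{\hat V} = \aleph_0$ together with Theorem~\ref{BaselineSupersimple} gives that $\hat V$ does not $\aleph_1$-pseudosaturate $T$. Clause (4) is the main new input: if $T$ has FCP, take the formula $\phi$ from Theorem~\ref{fcpEquiv}(B), replace it by the routine modification $\psi$ whose instances define equivalence relations attaining \emph{exactly} $k$ classes for each $k \geq 1$, and use elementarity to find in $\hat V$ a parameter $\hat{\bar c}$ with $\psi_{\hat{\bar c}}$ having exactly $\hat n_*+1$ classes for a nonstandard $\hat n_*$ with $|\hat n_*|_{\mathbb V} = \mu_{\hat V}$. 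Choosing representatives $\hat a_0, \dots, \hat a_{\hat n_*}$ by an internal choice function and setting $p(x) = \{\lnot\psi(x, \hat a_i, \hat{\bar c}) : i \leq \hat n_*\}$, one checks that $p(x)$ is pseudofinite of external size $\mu_{\hat V}$; that every $\mathbb V$-finite subset is consistent (omitting even one representative leaves a class uncovered), hence $p(x)$ is consistent; but $\hat V$ believes $p(x)$ has no solution in $\mathbf j(M)$, so $p(x)$ is not realized in $\mathbf j_{\std}(M)$. Thus $\hat V$ does not $\mu_{\hat V}^+$-pseudosaturate $T$. Clause (5) is trivial.

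\textbf{The lower bound.} The remaining cases are: $T$ NFCP (hence stable) with $T$ superstable or $\mathfrak p_{\hat V} \geq \aleph_1$, where $\lambda_0 = \infty$ and one must realize \emph{every} pseudofinite type over $\mathbf j_{\std}(M)$; and $T$ with FCP and $\mathfrak p_{\hat V} \geq \aleph_1$, where $\lambda_0 = \mu_{\hat V}$ and one must realize pseudofinite types of size $< \mu_{\hat V}$. I would handle these uniformly. Fix $M \in V$ as saturated as the hierarchy allows ($\aleph_1$-saturated of size $\aleph_1$ when $T$ is $\omega$-stable; when $T$ is non-small the standing hypotheses force $\hat V$ to be $2^\omega$-rich and Theorems~\ref{BaselineSmall}, \ref{BaselineEpsilonSmall} supply the baseline $\aleph_0$- or $\aleph_\epsilon$-saturation of $\mathbf j_{\std}(M)$ for free). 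Given a pseudofinite type $p(x)$ over $\mathbf j_{\std}(M)$, complete it and, using stability and Fact~\ref{Fact2}, pass to the nonforking extension $q(x)$ over $\mathbf j_{\std}(M)$ of a stationary restriction $p\!\restriction_B$ with $B$ finite, $\epsilon$-finite, or countable as appropriate. By Lemma~\ref{InterpKeislerLemma0} an independent set of realizations of $p\!\restriction_B$ inside $\mathbf j_{\std}(M)$ is indiscernible with average type $q$, so by Lemma~\ref{ShelahStableSatLemma2} the restriction $q\!\restriction_A \supseteq p$ (with $A$ the parameter set of $p$) is realized as soon as such an independent set of external cardinality exceeding $|A|$ exists in $\mathbf j_{\std}(M)$. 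For the FCP case with $\lambda < \mu_{\hat V}$, here $A$ is smaller than every nonstandard number and one combines this with the $\mathfrak p_{\hat V}$-machinery to navigate the $\hat\omega$-indexing skeleton of $p$. For the NFCP case, to build a large enough indiscernible set and to push past $\mathfrak p_{\hat V}$, the key point is that NFCP lets one upgrade "every $\mathbb V$-finite subconjunction of $p$ is consistent" to "$\hat V$ believes $p(x)$ is consistent over $\mathbf j(M)$", after which the internal saturation of $\mathbf j(M)$ produces the realization directly.

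\textbf{Main obstacle.} I expect the hard step to be precisely this last point: the NFCP lower bound yielding full ($\infty$-) pseudosaturation. The rest is either already in the excerpt or a short computation, but realizing pseudofinite types of unbounded external size — possibly as large as $|\mathbf j_{\std}(M)|$ itself — cannot go through indiscernible sets alone, and requires carefully exploiting the NFCP property inside $\hat V$ to convert local consistency into internal consistency and then invoking the internal saturation of $\mathbf j(M)$; this is where Malliaris and Shelah's ultrapower arguments genuinely have to be reworked in the $\mathbf j_{\std}$ setting. A secondary nuisance is the bookkeeping needed to confirm that the regimes (non-$\epsilon$-small; non-superstable; FCP; NFCP-superstable; NFCP with $\mathfrak p_{\hat V} \geq \aleph_1$) exhaust the possibilities and that the comparisons among $\aleph_0$, $\mathfrak p_{\hat V}$, $\mu_{\hat V}$, $\mathfrak b_{\hat V}$ match the clauses defining $\lambda_{\hat V}(T)$.
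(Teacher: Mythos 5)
Your outline of the upper-bound half (that $\hat V$ fails to $\lambda_{\hat V}(T)^+$-pseudosaturate $T$) matches the paper's Status Update~1 up to cosmetics: the paper handles Clause~1 uniformly by passing to $T^{eq}$, noting $T^{eq}$ is then not $\epsilon$-small, and applying Theorem~\ref{BaselineEpsilonSmall}, whereas you split on $\epsilon$-smallness and route the $\epsilon$-small subcase through Clause~2; both treat Clause~4 by the same pigeonhole on an internal equivalence relation, which is Theorem~\ref{InterpOnStable0}.

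The lower-bound half is where you have a genuine gap, in exactly the case you flag as hard. Your proposed NFCP argument --- ``upgrade local consistency of $p$ to internal consistency over $\mathbf{j}(M)$, then invoke the internal saturation of $\mathbf{j}(M)$'' --- does not work as stated: $M\in V$ is an arbitrary model of $T$ and $\mathbf{j}(M)$ is not internally saturated, so even if $\hat V$ believes some internally finite $\hat\Delta_0\supseteq p$ is consistent, nothing forces $\mathbf{j}(M)$ to realize it. You also misdiagnose why indiscernibles appear to fall short: the paper's NFCP lower bound \emph{does} go through indiscernible sets. In Lemma~\ref{KeislerOnStableKey}, NFCP (via the failure of Theorem~\ref{fcpEquiv}(C)) is used to show the describing type $\Gamma(y)$ of an internal, length-$\hat\omega$ indiscernible sequence with average $p$ is consistent, so such a sequence exists. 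The missing ingredient in your sketch is the step in Theorem~\ref{InterpOnStable3} that replaces the cardinality comparison of Lemma~\ref{ShelahStableSatLemma2}: by overflow there is a pseudofinite $\hat\Delta\in\hat V$ containing every standard formula, and Ramsey's theorem applied \emph{inside} $\hat V$ extracts an internally infinite subsequence that is $\hat\Delta$-indiscernible over the pseudofinite superset $\hat X$ of the parameters of $p_0$. Since $\hat\Delta$ contains all standard formulas, the subsequence is externally indiscernible over $\hat X$, and since its average type over $\mathbf{j}_{\std}(M)$ restricts to $p_0$ on $X$, every element of the subsequence realizes $p_0$ --- with no constraint at all on the external size of $p_0$. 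This internal-Ramsey step is what makes $\infty$-pseudosaturation go through, and it is absent from your plan.
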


Note: if $\lambda_{\hat{V}}(T) > \aleph_0$, then it follows $\hat{V}$ does $\lambda_{\hat{V}}(T)$-pseudosaturate $T$. If $\lambda_{\hat{V}}(T) = \aleph_0$ then this may fail, but we know exactly when this happens: $\hat{V}$ $\aleph_0$-pseudosaturates $T$ if and only if either $T$ is small or else $\hat{V}$ is $2^\omega$ rich. Trying to incorporate this information into the definition of $\lambda_{\hat{V}}(T)$ would cause notational difficulties down the road.

\begin{theorem}\label{SaturationLevelsThm}
Suppose $V \models ZFC^-$ is transitive, and $T \in V$ is a countable stable theory, and $M \models T$ with $M \in V$. Suppose $\mathbf{j}:V \preceq \hat{V} \models ZFC^-_{\pre}$. Then $\lambda'_{\hat{V}}(T)$ is the least infinite cardinal such that $\mathbf{j}_{\std}(M)$ is not $\lambda'_{\hat{V}}(T)^+$-saturated.

\end{theorem}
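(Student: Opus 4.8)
### Proof proposal for Theorem~\ref{SaturationLevelsThm}

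\begin{proof}[Proof proposal]
The plan is to prove the two inequalities separately: that $\mathbf{j}_{\std}(M)$ is $\lambda'_{\hat V}(T)$-saturated when $\lambda'_{\hat V}(T) > \aleph_0$ (and handle the $\aleph_0$ boundary case via Theorems~\ref{BaselineSmall} and~\ref{BaselineEpsilonSmall}), and that $\mathbf{j}_{\std}(M)$ fails to be $\lambda'_{\hat V}(T)^+$-saturated. The key structural point is that by Fact~\ref{Fact2} and the $\epsilon$-notation, the level of saturation of a stable model is governed by how well one can realize average types of independent sets of realizations of stationary types based on small sets; Lemmas~\ref{InterpKeislerLemma0} and~\ref{ShelahStableSatLemma2} reduce the realization of an arbitrary type over a set $A$ to finding a long independent set inside $\mathbf{j}_{\std}(M)$ of realizations of the relevant based type. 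So the whole theorem becomes a bookkeeping problem: each clause of Definition~\ref{LongDef} corresponds to an obstruction, and one must show (a) that obstruction is real (the upper bounds), and (b) below $\lambda'_{\hat V}(T)$ no obstruction occurs (the lower bound).

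For the upper bounds: clause (1) is exactly the contrapositive of (D) implies (B) in Theorem~\ref{BaselineEpsilonSmall} together with the observation that a non-$\omega$-stable stable theory has a formula with $2^\omega$ types over some $\epsilon$-finite set, so if $\hat V$ is not $2^\omega$-rich then $\mathbf{j}_{\std}(M)$ omits a type over an $\epsilon$-finite (hence pseudofinite) set, already failing $\aleph_0$-saturation in the relevant sense. Clause (3): if $T$ is not superstable then by Fact~\ref{Fact2}(C) (after passing to $T^{eq}$) there is a type not based on any $\epsilon$-finite set, witnessed by an infinite forking chain; if $\hat V$ is not $\aleph_1$-saturated, i.e. by Corollary~\ref{supersimpleCor1} and Corollary~\ref{SaturationCharacterization} either $\mathfrak p_{\hat V} = \aleph_0$ or $\mathfrak b_{\hat V} = \aleph_0$, one builds a countable non-pseudofinite or non-realizable configuration exactly as in the unsupersimple analysis of Theorem~\ref{BaselineSupersimple} and Theorem~\ref{BBound}. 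Clause (4) is the genuinely new ingredient: using Theorem~\ref{fcpEquiv}(C), $FCP$ gives finite $\Delta$ and arbitrarily long finite $(\Delta,n)$-indiscernible sets in $M$ not extendible to infinite ones; pushing this through $\mathbf j$ and choosing a nonstandard-length such set of internal size $|\hat n|_{\mathbb V}$ for $\hat n$ nonstandard minimal, one gets (via the average-type machinery) a type over a set of size $\mu_{\hat V}$ that cannot be realized, because realizing it would force the internal indiscernible set to extend. Clause (5): $\lambda'_{\hat V}(T) \le |\hat V|$ is trivial since $\mathbf{j}_{\std}(M) \subseteq \hat V$.

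For the lower bound, suppose $A \subseteq \mathbf{j}_{\std}(M)$ is pseudofinite — wait, here we need genuine $\lambda$-saturation, so $A$ is an arbitrary subset of size $< \lambda'_{\hat V}(T)$, and this is where $ZFC^-_{\pre}$ enters: since $\lambda'_{\hat V}(T) \le \mathfrak b_{\hat V}$ (this must be checked — it follows because each clause forcing $\lambda'_{\hat V}(T)$ down is either subsumed by a bound on $\mathfrak b_{\hat V}$ via Theorem~\ref{BBound}, or, in the $FCP$ case, because $\mu_{\hat V} \le \mathfrak b_{\hat V}$), Lemma~\ref{SatAndPseudo1} lets us replace $\lambda'$-saturation by $\lambda'$-pseudosaturation, reducing to Theorem~\ref{PseudosaturationLevelsThm} up to the discrepancy between $\lambda_{\hat V}(T)$ and $\lambda'_{\hat V}(T)$. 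The residual discrepancy is precisely clause (3) versus clause (2) and the $|\hat V|$ versus $\infty$ in clause (5): when $T$ is not superstable one must argue that $\aleph_1$-saturation of $\hat V$ (not merely $\mathfrak p_{\hat V} \ge \aleph_1$) is what is needed, and when $T$ is superstable one runs the positive argument — realize types by building independent sets of realizations of based types, of the required length, using that every subset of $\hat V$ below $\mathfrak b_{\hat V}$ is pseudofinite so we can apply Theorem~\ref{PseudosaturationLevelsThm} — and the cap $|\hat V|$ is the only obstruction past that point.

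I expect the main obstacle to be clause (4), the finite cover property bound and its matching lower bound: one has to carefully transfer the finite-but-non-extendible $(\Delta,n)$-indiscernible sets across $\mathbf j$, control the internal versus external cardinality ($|\hat X|_{\hat V}$ vs $|\hat X|_{\mathbb V}$), and match this against $\mu_{\hat V}$ in both directions, while simultaneously verifying that $NFCP$ stable theories have no such obstruction so that the bound is tight. The rest is careful assembly of results already in hand (Lemmas~\ref{InterpKeislerLemma0}, \ref{ShelahStableSatLemma2}, Theorems~\ref{BaselineSupersimple}, \ref{BaselineEpsilonSmall}, \ref{BaselineSmall}, \ref{PseudosaturationLevelsThm}, and the characterization Corollary~\ref{SaturationCharacterization}).
\end{proof}
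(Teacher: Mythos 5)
Your high-level decomposition — two inequalities, clause-by-clause bookkeeping against Definition~\ref{LongDef} — and your identification of the average-type machinery (Lemmas~\ref{InterpKeislerLemma0} and \ref{ShelahStableSatLemma2}) do match the paper's strategy, and the upper-bound (negative) half is essentially the paper's ``Status Update 2''. But the lower-bound half has a genuine gap. You propose to reduce $\lambda$-saturation to $\lambda$-pseudosaturation via Lemma~\ref{SatAndPseudo1}, which requires $\lambda'_{\hat V}(T) \le \mathfrak b_{\hat V}$, and your proposed verification of that bound only addresses Clauses 1, 3 and 4, not Clause 5. But Clause 5 is decisive: when $T$ is $\omega$-stable without the finite cover property, Clause 5 is the only active clause and $\lambda'_{\hat V}(T) = |\hat V| = |\hat\omega|_{\mathbb V}$, which can strictly exceed $\mathfrak b_{\hat V}$. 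So the reduction to pseudosaturation cannot carry you to the full answer, and the residual positive argument you gesture at (``run the positive argument, using that every subset below $\mathfrak b_{\hat V}$ is pseudofinite'') is still anchored to the wrong invariant. The correct argument --- the proof of Theorem~\ref{InterpOnStable2}, which is what ``Status Update 3'' invokes --- never requires the parameter set $A$ to be pseudofinite. Given $q(x) \in S(A)$, extend to a nonforking $p(x) \in S(\mathbf j_{\std}(M))$, which by Fact~\ref{Fact2} is based on a small set $X$: countable, $\epsilon$-finite, or finite according as $T$ is stable, superstable, or $\omega$-stable. It is $X$, not $A$, that must be pseudofinite, and under each of the three hypotheses of Lemma~\ref{KeislerOnStableKey} this is automatic (countable subsets are pseudofinite when $\hat V$ is $\aleph_1$-saturated; $\epsilon$-finite subsets always are; finite ones trivially are). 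Lemma~\ref{KeislerOnStableKey} then produces an internal indiscernible sequence of length $\hat\omega$ (or nonstandard $\hat n_*$ in the FCP case) with average type $p$, and Lemma~\ref{ShelahStableSatLemma2} realizes $q$ whenever $|A| < |\hat\omega|_{\mathbb V}$, with no pseudofiniteness assumption on $A$ at all. This is exactly why the cap is $|\hat\omega|_{\mathbb V}$ and not $\mathfrak b_{\hat V}$.

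A smaller issue: for the FCP upper bound you route through Theorem~\ref{fcpEquiv}(C) and $(\Delta,n)$-indiscernible sets, whereas the paper's Theorem~\ref{InterpOnStable0} uses \ref{fcpEquiv}(B): choose $\overline c$ so $E_{\overline c}$ has a nonstandard number $\hat n_*$ of classes with $|\hat n_*|_{\mathbb V} = \mu_{\hat V}$, internally pick one representative per class, and the type ``$x$ is inequivalent to every representative'' is an unrealizable pseudofinite type of size exactly $\mu_{\hat V}$. Your variant is less clearly correct: if you realize the ``extend the indiscernible set'' type with a single external element, you only obtain an internal set of $\hat V$-size $\hat n_*+1$, which is still $\hat V$-finite, so the hypothesis that the set cannot be extended to a $\hat V$-infinite $(\Delta,n)$-indiscernible set is not immediately contradicted; extra work would be needed to derive a contradiction.
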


Again, if $\lambda'_{\hat{V}}(T) > \aleph_0$, then it follows $\mathbf{j}_{\std}(M)$ is $\lambda'_{\hat{V}}(T)$-saturated, and we know exactly when $\mathbf{j}_{\std}(M)$ is $\aleph_0$-saturated. We note that $ZFC^-_{\pre}$ is overkill in many cases; see the following theorems for sharper results.

First off, we prove the following:

\begin{theorem}\label{InterpOnStable0}
	Suppose $V \models ZFC^-$ is transitive, and $\mathbf{j}: V \preceq \hat{V}$ is $\omega$-nonstandard. Suppose $T \in V$ is a countable stable theory with the finite cover property. Then $\hat{V}$ does not $\mu_{\hat{V}}^+$-pseudosaturate $T$.
\end{theorem}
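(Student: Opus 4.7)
The plan is to exploit the characterization of FCP in Theorem~\ref{fcpEquiv}(B): since $T$ has FCP, we can pick (in $V$) a formula $\phi(x, y, \overline{z})$ such that $\phi(x, y, \overline{c})$ always defines an equivalence relation $E_{\overline{c}}$, and for arbitrarily large $n$ there is $\overline{c}_n$ in some $M \models T$ (with $M \in V$) such that $E_{\overline{c}_n}$ has exactly $n$ classes. The key move is to shift this to a nonstandard level via $\mathbf{j}$, and then exhibit a pseudofinite type listing one representative per class of some nonstandardly-indexed equivalence relation.

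Concretely, I would first choose a nonstandard $\hat{n} < \hat{\omega}$ with $|\hat{n}|_{\mathbb{V}} = \mu_{\hat{V}}$. By elementarity of $\mathbf{j}$ applied to the statement ``for every $m$ there exists $\overline{c}$ such that $E_{\overline{c}}$ has exactly $m$ classes in $M$,'' we can pick $\hat{\overline{c}} \in \hat{V}$ such that $E_{\hat{\overline{c}}}$, computed in $\mathbf{j}(M)$, has exactly $\hat{n}$ classes. Still working inside $\hat{V}$, choose an internal sequence $(\hat{a}_{\hat{k}}: \hat{k} < \hat{n})$ of representatives for these $\hat{n}$ classes, and set
\[
p(x) := \{\lnot \phi(x, \hat{a}_{\hat{k}}, \hat{\overline{c}}) : \hat{k} < \hat{n}\}.
\]
This $p(x)$ is an internal, hyperfinite-in-$\hat{V}$ set of $\mathcal{L}$-formulas with parameters in $\mathbf{j}_{\std}(M)$, so it is pseudofinite; its external cardinality is $|\hat{n}|_{\mathbb{V}} = \mu_{\hat{V}}$.

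Next I would verify the two remaining properties. For consistency: any finite subset of $p(x)$ excludes only standardly many $E_{\hat{\overline{c}}}$-classes out of the $\hat{n}$ available, so inside $\hat{V}$ (equivalently inside $\mathbf{j}_{\std}(M)$) there is some element lying in none of those classes. For non-realization in $\mathbf{j}_{\std}(M)$: every element of $\mathbf{j}(M) = \mathbf{j}_{\std}(M)$ is $E_{\hat{\overline{c}}}$-equivalent to exactly one of the representatives $\hat{a}_{\hat{k}}$ (since the representatives exhaust all $\hat{n}$ classes in $\hat{V}$'s internal sense), so no element can simultaneously satisfy $\lnot\phi(x,\hat{a}_{\hat{k}},\hat{\overline{c}})$ for all $\hat{k} < \hat{n}$. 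Thus $p(x)$ is a consistent pseudofinite type over $\mathbf{j}_{\std}(M)$ of cardinality $\mu_{\hat{V}}$ that is omitted, which is exactly the failure of $\mu_{\hat{V}}^+$-pseudosaturation.

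There is essentially no serious obstacle here; the only mild subtlety is bookkeeping the distinction between internal finiteness and external cardinality, i.e.\ ensuring that although $p(x)$ is internally finite-in-$\hat{V}$ of size $\hat{n}$, its external size is precisely $|\hat{n}|_{\mathbb{V}} = \mu_{\hat{V}}$ and so fits the size bound for $\mu_{\hat{V}}^+$-pseudosaturation. The use of clause (B) of Theorem~\ref{fcpEquiv} (rather than (C)) is what makes the argument clean: we need a definable family of equivalence relations with exact class counts so that elementarity delivers exactly $\hat{n}$ classes, making the omission of $p(x)$ a sharp pigeonhole.
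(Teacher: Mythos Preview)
Your approach is essentially the paper's: both use Theorem~\ref{fcpEquiv}(B) to obtain a definable family of equivalence relations, pass to a nonstandard index, pick an internal system of representatives, and exhibit the omitted pseudofinite type $\{\lnot\phi(x,\hat a_{\hat k},\hat{\overline c}):\hat k<\hat n\}$.

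There is one small gap. Theorem~\ref{fcpEquiv}(B) only asserts that $E_{\overline c}$ has exactly $n$ classes for \emph{arbitrarily large} $n$, not for every $n$; so the sentence ``for every $m$ there exists $\overline c$ such that $E_{\overline c}$ has exactly $m$ classes'' to which you apply elementarity need not hold in $V$, and you cannot directly obtain $\hat{\overline c}$ with exactly $\hat n$ classes for an arbitrary nonstandard $\hat n$ of external size $\mu_{\hat V}$. The paper fixes this by first setting $X=\{n<\omega:\exists\,\overline c\ (E_{\overline c}^{M}\text{ has exactly }n\text{ classes})\}\in V$, observing that $X$ is unbounded, and then choosing the nonstandard index $\hat n_*$ from $\mathbf j(X)$ with $|\hat n_*|_{\mathbb V}=\mu_{\hat V}$; this is possible because the nonstandard part of $\mathbf j(X)$ is downward cofinal above $\omega$. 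With this correction your argument goes through verbatim, and your explicit check of finite consistency is a detail the paper leaves implicit.
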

\begin{proof}
	Suppose $M \models T$ with $M \in V$. We show that $\mathbf{j}_{\std}(M)$ is not $\mu_{\hat{V}}^+$-pseudosaturated. 
	
	Let $\phi(x, y, \overline{z})$ be a formula witnessing (B) of Theorem~\ref{fcpEquiv} holds. Let $X \subseteq \omega$ be the set of all $n < \omega$ such that for some $\overline{c} \in M^{|\overline{z}|}$, $E_{\overline{c}}^M$ has exactly $n$ classes. Then the nonstandard elements of $\mathbf{j}(X)$ are cofinal above $\omega$; thus we can choose $\hat{n}_* \in \mathbf{j}(X)$ nonstandard such that $|\hat{n}_*|_{\mathbb{V}} = \mu_{\hat{V}}$. Let $\overline{c} \in \hat{\omega}^{|\overline{z}|}$ be such that in $\hat{V}$, $E_{\overline{c}}^{\mathbf{j}(M)}$ has $\hat{n}_*$ classes.

	Let $\hat{f}: \hat{n}_* \to \mathbf{j}(M)$ choose a representative from each $E_{\overline{c}}$-class. Consider the partial type $p(x)$ over $\mathbf{j}_{\std}(M)$ which says $\lnot \phi(x, \hat{f}(\hat{n}), \overline{c})$ for all $\hat{n} < \hat{n}_*$. $p(x)$ is pseudofinite, and by choice of $\hat{n}_*$, $|p(x)| = \mu_{\hat{V}}$. But $p(x)$ is omitted in $\mathbf{j}_{\std}(M)$ by choice of $\hat{f}$.
\end{proof}

%\noindent \textbf{Status update 1.} (A1) and (B1) both hold.
%
%\begin{proof}
%	(B1) follows from Theorems~\ref{BaselineSupersimple}, ~\ref{BaselineEpsilonSmall}, and ~\ref{InterpOnStable0}. Hence, the final two cases of (A1) also hold (they are the same hypotheses with a weaker conclusion). It remains to check the first case of (A1), where  $T$ is unsuperstable and $\hat{V}$ is not $\aleph_1$-saturated. By Corollary~\ref{supersimpleCor1}, we must have that $\hat{V}$ does not $\aleph_1$-pseudosaturate $T$, and hence $\mathbf{j}_{\std}(M)$ is not $\aleph_1$-saturated.
%\end{proof}

\noindent \textbf{Status Update 1.} Suppose $V \models ZFC^-$ is transitive, and $T \in V$ is a countable stable theory. Suppose $\mathbf{j}:V \preceq \hat{V}$ is $\omega$-nonstandard. Then $\hat{V}$ does not $\lambda_{\hat{V}}(T)^+$-pseudosaturate $T$. 

\begin{proof}

If Clause 1 is activated, then $\hat{V}$ does not $\aleph_1$-pseudosaturate $T$ by Theorem~\ref{BaselineEpsilonSmall}. To see this, note that we can suppose $T$ eliminates imaginaries, since $\hat{V}$ $\lambda$-pseudosaturates $T$ if and only if $\hat{V}$ $\lambda$-pseudosaturates $T^{eq}$. Thus, $T$ is not $\epsilon$-small.  Thus, $\mathbf{j}_{\std}(M)$ is not $\aleph_{\epsilon}$-saturated, and hence not $\aleph_1$-pseudosaturated (since $\epsilon$-finite sets are pseudofinite).

If Clause 2 is activated, then $\hat{V}$ does not $\aleph_1$-pseudosaturate $T$ by Theorem~\ref{BaselineSupersimple}.

Clause 3 is not relevant to $\lambda_{\hat{V}}$. 

If Clause 4 is activated, then $\hat{V}$ does not $\mu_{\hat{V}}^+$-pseudosaturate $T$, by Theorem~\ref{InterpOnStable0}.

For Clause 5, there is nothing to check (since $\infty^+$-pseudosaturation always fails).
\end{proof}

\noindent \textbf{Status Update 2.} Suppose $V \models ZFC^-$ is transitive, and $T \in V$ is a countable stable theory, and $M \models T$ with $M \in V$. Suppose $\mathbf{j}:V \preceq \hat{V}$. Then $\mathbf{j}_{\std}(M)$ is not $\lambda'_{\hat{V}}(T)^+$-saturated.

\begin{proof}
Similar to the proof of Status Update 1.
%First we verify the nonsaturation claim:
%
%If Clause 1 in Definition~\ref{LongDef} is activated, then $\mathbf{j}_{\std}(M)$ is not $\aleph_0$-saturated by Corollary~\ref{ModKeislerOrder2}.
%
%If Clause 2 is activated, then $\mathbf{j}_{\std}(M)$ is not $\aleph_1$-saturated by Theorem~\ref{BaselineEpsilonSmall}. To see this, note that we can suppose $T$ eliminates imaginaries, since $\mathbf{j}_{\std}(M)$ is $\lambda$-saturated if and only if $\mathbf{j}_{\std}(M^{eq})$ $\lambda$-saturated. Thus, $T$ is not $\epsilon$-small. Thus, $\mathbf{j}_{\std}(M)$ is not $\aleph_{\epsilon}$-saturated, and hence not $\aleph_1$-saturated.
%
%
%
%Clause 3 is not relevant to $\lambda'_{\hat{V}}$. 
%
%If Clause 4 is activated, then $\mathbf{j}_{\std}(M)$ is not $\aleph_1$-saturated by Corollary~\ref{supersimpleCor1}.
%
%If Clause 5 is activated, then $\mathbf{j}_{\std}(M)$ is not $\mu_{\hat{V}}^+$-saturated, by Theorem~\ref{InterpOnStable0}.
%
%If Clause 6 is activated, there $\mathbf{j}_{\std}(M)$ is not $|\hat{V}|^+$-saturated, since a structure cannot be saturated above its own cardinality.
%
%So it suffices to check that if $\lambda'_{\hat{V}}(T) \leq \aleph_0$ then $\hat{V}$ does $\lambda'_{\hat{V}}(T)$-pseudosaturate $T$. If $\lambda'_{\hat{V}}(T) = 1$ there is nothing to check, so we can suppose Clause 1 is not activated. Hence $\mathbf{j}_{\std}(M)$ is $\aleph_0$-saturated by Corollary~\ref{ModKeislerOrder2}. 
\end{proof}

Thus, to finish we need to prove that we have found the only obstacles to saturation of stable theories. We aim to apply Lemma~\ref{ShelahStableSatLemma2}; towards this, we explain how to find indiscernible sets in $\mathbf{j}_{\std}(M)$.

\begin{lemma}\label{KeislerOnStableKey}
Suppose $V \models ZFC^-$ is transitive, and $\mathbf{j}: V \preceq \hat{V}$ is $\omega$-nonstandard, and $T \in V$ is a countable stable theory. Suppose $M \models T$ with $M \in V$. Suppose $p(x)$ is a type over $\mathbf{j}_{\std}(M)$. Suppose at least one of the following holds:

\begin{itemize}
	\item[(A)] $\mathbf{p}_{\hat{V}} \geq \aleph_1$ and $p(x)$ is based on a pseudofinite set;
	\item[(B)] $T$ is supersimple and $\hat{V}$ is $2^\omega$-rich;
	\item[(C)] $T$ is $\omega$-stable. 
\end{itemize}

Then we can find some nonstandard $\hat{n}_* < \hat{\omega}$ and some sequence $(\hat{a}_{\hat{n}}: \hat{n} < \hat{n}_*) \in \hat{V}$, such that in $\mathbb{V}$, $\{\hat{a}_{\hat{n}}: \hat{n}< \hat{n}_*\}$ is an indiscernible set $I \subseteq \mathbf{j}_{\std}(M)$ with $\mbox{Av}(I, \mathbf{j}_{\std}(M)) = p(x)$.

If additionally $T$ does not have the finite cover property, then we can find some sequence $(\hat{a}_{\hat{n}}: \hat{n} < \hat{\omega}) \in \hat{V}$, such that in $\mathbb{V}$, $\{\hat{a}_{\hat{n}}: \hat{n}< \hat{\omega}\}$ is an indiscernible set $I \subseteq \mathbf{j}_{\std}(M)$ with $\mbox{Av}(I, \mathbf{j}_{\std}(M)) = p(x)$.
\end{lemma}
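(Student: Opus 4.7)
The plan is to reduce each of (A)--(C) to a common scenario where $p(x)$ is based on a pseudofinite $A \subseteq \mathbf{j}_{\std}(M)$ with $q := p\restriction_A$ stationary, and then to produce the indiscernible set by internally realising $q$ many times independently. In case (A) the base $A$ is given. In case (B), $T$ is supersimple and stable, hence superstable, so Fact~\ref{Fact2}(C) applied in $T^{eq}$ yields a base $A$ that is $\epsilon$-finite, hence pseudofinite by the example following Theorem~\ref{BaselineEpsilonSmall}. In case (C), $\omega$-stability gives a finite base directly via Fact~\ref{Fact2}(D).

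Fix an enumeration $(\phi_n(x,\overline y_n): n<\omega) \in V$ of all formulas of $T$. By stability, $q$ has a defining schema $(d_n(\overline y_n): n<\omega)$ with parameters in $\acl(A)^{eq}$, characterising the unique non-forking extension of $q$ to $\mathbf{j}_{\std}(M)$, which by construction is $p$. The crux is to extend this external, countable schema to an internal pseudofinite schema $(\hat d_{\hat n}: \hat n<\hat n_*) \in \hat V$ of nonstandard length, agreeing with $d_n$ on standard $n$. In case (A), the schema is a countable pseudofinite partial type over $A$, so Theorem~\ref{localSaturation2} applies using $\mathfrak{p}_{\hat V}\geq\aleph_1$. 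In case (B), we code the schema as an element of $2^\omega$, lift it via $2^\omega$-richness to an internal element of $2^{\hat\omega}$, and decode by overspill. In case (C), $\omega$-stability implies smallness (Fact~\ref{Fact2}(D)), so the countably many possible defining schemas over a finite base are uniformly enumerable in $V$ in the parameter $A$, and we take the $\mathbf j$-image of the enumeration to recover the correct internal schema.

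Armed with the internal schema, we work inside $\hat V$: choose a pseudofinite $\hat A \in \hat V$ containing $A$ and inductively build a sequence $(\hat a_{\hat n}: \hat n<\hat n_*) \in \hat V$ in $\mathbf j(M)$ of independent realisations of the stretched schema over $\hat A$, using that $\hat V$ internally sees $\mathbf j(M)$ as a sufficiently saturated stable model and that independence is controlled by the schema. Externally, the standard-indexed initial segment then consists of independent realisations of $q$ over $A$ in $\mathbf{j}_{\std}(M)$, so by Lemma~\ref{InterpKeislerLemma0} it is an indiscernible set with $\mbox{Av}(I, \mathbf{j}_{\std}(M)) = p$. A final overspill argument, applied to the $\hat V$-definable statement that the sequence forms an indiscernible set of realisations of $\hat q$, upgrades the indiscernibility up to the full internal length $\hat n_*$.

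For the final clause, if $T$ additionally has NFCP, then by Theorem~\ref{fcpEquiv}(C) every pseudofinite internally $(\Delta,n)$-indiscernible set in $\mathbf j(M)$ extends internally to an infinite one, for every finite $\Delta$ and every $n$. Running the preceding construction and then extending indiscernibility internally (using NFCP formula-by-formula) yields a sequence of length $\hat\omega$ as required. The main hurdle is the internalisation of the external defining schema in Step~2, which is precisely where the three hypotheses (A)--(C) diverge and each requires a genuinely different device; once the internal schema is in hand, the remainder is the standard stable manoeuvre of building an indiscernible set as independent realisations of a stationary type.
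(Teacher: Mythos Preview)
Your overall strategy---find a small base $A$ on which $p$ is stationary, internalise the relevant data, then build an independent sequence and invoke Lemma~\ref{InterpKeislerLemma0}---matches the paper's. The choice of base in each of (A), (B), (C) is exactly right.

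The substantive divergence is in \emph{what} you internalise and \emph{how} you then produce the sequence. You pass through the defining schema of $q$ and then propose to ``inductively build a sequence $(\hat a_{\hat n}:\hat n<\hat n_*)\in\hat V$ \ldots\ using that $\hat V$ internally sees $\mathbf j(M)$ as a sufficiently saturated stable model.'' This last clause is the gap: $M\in V$ is an arbitrary model of $T$, so $\hat V$ has no reason to believe $\mathbf j(M)$ realises the type your stretched schema cuts out, let alone an $\hat n_*$-long independent sequence of realisations. Nothing in the hypotheses gives $\mathbf j(M)$ any internal saturation. Relatedly, once the schema is stretched to nonstandard length you cannot appeal to overspill on ``there is a $k$-sequence of independent realisations of $\hat q$,'' since the schema now imposes nonstandard conditions as well.

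The paper sidesteps this by never internalising a definition of $q$. Instead it writes down $q^\omega$, the external type of an infinite independent set of realisations of $q$, enumerates it as $(\phi_k:k<\omega)$, and forms a single partial type $\Gamma(y)$ over $\hat V$ saying ``$y$ is an $\hat V$-finite sequence from $\mathbf j(M)$ such that for every standard $k$, every increasing $k$-subtuple satisfies $\phi_k$.'' Finite consistency of $\Gamma(y)$ is immediate from finite satisfiability of $q^\omega$ in $\mathbf j_{\std}(M)$, and then one realises $\Gamma(y)$ \emph{in $\hat V$} (not in $\mathbf j(M)$) using Theorem~\ref{localSaturation2}(A) in case (A), Theorem~\ref{BaselineEpsilonSmall}(C)/(D) in case (B), and---since $q^\omega\in V$ when $T$ is $\omega$-stable---plain overflow in case (C). No saturation of $\mathbf j(M)$ is ever invoked; Lemma~\ref{InterpKeislerLemma0} does the rest. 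For NFCP the paper does not extend an already-built sequence; it strengthens $\Gamma(y)$ to demand length $\hat\omega$ and uses the negation of Theorem~\ref{fcpEquiv}(C) to see that the finite fragments are still satisfiable. Your proposed extension ``formula by formula'' would have to be uniform over all finite $\Delta,n$ simultaneously, which is not obviously available.
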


\begin{proof}
	Note that we are free to suppose that $T$ eliminates imaginaries (this will only be helpful in case (B)).
	
	Choose $X \subseteq \mathbf{j}_{\std}(M)$ countable such that $p(x)$ is based on $X$, and such that in case (A), $X$ is pseudofinite, and in case (B), $X$ is $\epsilon$-finite, and in case (C), $X$ is finite. 
	
	Let $q(x) = p(x) \restriction_X$, and for each $n$, let $q^\omega(x_i: i < \omega)$ be the type over $X$ of some or any infinite independent set of realizations of $q(x)$. Enumerate $X = \{b_j: j < j_*\}$, so $j_* \leq \omega$.
	
	We now split into cases.

	\noindent \textbf{Case (A)/(B).} Enumerate $q^\omega = \{\phi_n(x_i: i <n, b_j: j < j_*): n < \omega\}$.
	
	\noindent \textbf{FCP Subcase of (A)/(B).} Let $\Gamma(y)$ be the set of formulas over $\hat{V}$ asserting that for some $\hat{n}_* < \hat{\omega}$, $y =(a_{\hat{n}}: \hat{n} < \hat{n}_*)$ is a sequence from $\mathbf{j}(M)$, such that for all $k < \omega$ and for all $\hat{n}_0 < \ldots < \hat{n}_{k-1} < \hat{n}_*$, $\mathbf{j}(M) \models \phi_k(a_{\hat{n}_i}: i < k, b_j: j < j_*)$.  $\Gamma(y)$ is consistent, since $q^\omega(x_i: i < \omega, b_j: j < j_*)$ is consistent. By Lemma~\ref{InterpKeislerLemma0}, it suffices to check that $\Gamma(y)$ is realized. In case (A), use Theorem~\ref{localSaturation2}(A). In case (B), we aim towards applying Theorem~\ref{BaselineEpsilonSmall}(D). So let $M_*$ be a Skolemization of $M$ in $V$ (all we need is a linear order); also, let $\overline{\phi} \in V$ be an enumeration of the set of $\mathcal{L}$-formulas in order-type $\omega$, and let $\hat{\overline{\phi}} = \mathbf{j}(\overline{\phi})$. Note that $\mbox{acl}_{\mathbf{j}_{\std}(M)}(X) \subseteq \mbox{dcl}_{\mathbf{j}_{\std}(M_*)}(X)$, and so in $\hat{V}$, every element $a \in \mbox{acl}_{\mathbf{j}_{\std}(M)}(X)$ is definable over the finite tuple $(b_j: j < j_*)^\frown (\mathbf{j}(M_*), \hat{\overline{\phi}})$ via a $\Delta_0$-formula of set theory (the particular $\Delta_0$-formula needed will depend on $a$). Hence Theorem~\ref{BaselineEpsilonSmall}(D) applies.
	
	\noindent \textbf{NFCP Subcase of (A)/(B).} Let $\Gamma(y)$ be the set of formulas over $\hat{V}$ asserting that $y =(a_{\hat{n}}: \hat{n} < \hat{\omega})$ is a sequence from $\mathbf{j}(M)$, such that for all $k < \omega$ and for all $\hat{n}_0 < \ldots < \hat{n}_{k-1} < \hat{\omega}$, $\mathbf{j}(M) \models \phi_k(a_{\hat{n}_i}: i < k, b_j: j < j_*)$. We check that $\Gamma(y)$ is consistent. For this, it suffices to show that for every fixed $k < \omega$, there is some sequence $(a_{\hat{n}}: \hat{n} < \hat{\omega})$ such that for all $\hat{n}_0 < \ldots < \hat{n}_{k-1} < \hat{\omega}$, $\mathbf{j}(M) \models \phi_k(a_{\hat{n}_i}: i < k, b_j: j < j_*)$ (we are using that $(\phi_k: k < \omega)$ enumerates a complete type, and so each finite subset is implied by a sufficiently large $\phi_k$). This follows from the failure of Theorem~\ref{fcpEquiv}(C). Hence $\Gamma(y)$ is consistent; hence $\Gamma(y)$ is realized in $\hat{V}$, as in the FCP Subcase of (A)/(B).
	
	\noindent \textbf{Case (C).}  Note that $j_* < \aleph_0$, since $X$ is finite. Then $q(x, y_j: j < j_*) \in V$, since $T$ is small (use induction on Cantor-Bendixson rank). Hence $q^\omega(x_i: i < \omega, y_j: j < j_*) \in V$ as it can be computed from $q$. Let $(\phi_n(x_i: i < n, y_j: j < j_*): n < \omega) \in V$ be an enumeration of $q^\omega(x_i: i < \omega, y_j: j <j_*)$. Write $(\hat{\phi}_{\hat{n}}(x_i: i < \hat{n}, y_j: j < j_*): \hat{n} < \hat{\omega}) = \mathbf{j}(\phi_n(x_i: i < n, y_j: j < j_*): n < \omega)$.
	
	\noindent \textbf{FCP Subcase of (C).}  By overflow in $\hat{V}$, we can find some $\hat{n}_* < \hat{\omega}$, such that there are $(a_{\hat{n}}: \hat{n}< \hat{n}_*)$, such that for all $\hat{n} < \hat{n}_*$ and for all increasing sequences $\hat{s}$ of length $\hat{n}$ from $\hat{n}_*$, $\mathbf{j}(M) \models \hat{\phi}_{\hat{n}}(a_{\hat{s}(i)}: i < \hat{n}, b_j: j < j_*)$. Then $\{a_{\hat{n}}: \hat{n}< \hat{n}_*\}$ is the desired indiscernible set, by Lemma~\ref{InterpKeislerLemma0}.
	
	\noindent \textbf{NFCP Subcase of (C).} By the same argument as for the FCP Subcase of (C), it suffices to show that for every $k < \omega$, there is some sequence $(a_{\hat{n}}: \hat{n} < \hat{\omega})$ such that for all $\hat{n}_0 < \ldots < \hat{n}_{k-1} < \hat{\omega}$, $\mathbf{j}(M) \models \phi_k(a_{\hat{n}_i}: i < k, b_j: j < j_*)$; this follows from the failure of Theorem~\ref{fcpEquiv}(C).
\end{proof}

Hence:

\begin{theorem}\label{InterpOnStable2}
	Suppose $V \models ZFC^-$ is transitive, and $\mathbf{j}: V \preceq \hat{V}$, and $T \in V$ is a countable stable theory. Suppose $M \models T$ with $M \in V$. Suppose one of the following conditions is true:
	
	\begin{itemize}
		\item[(A)] $\hat{V}$ is $\aleph_1$-saturated;
		\item[(B)] $\hat{V}$ is $2^\omega$-rich, and $T$ is superstable;
		\item[(C)] $T$ is $\omega$-stable.
	\end{itemize}
	Then $\mathbf{j}_{\std}(M)$ is $\mu_{\hat{V}}$-saturated. If $T$ does not have the finite cover property, then $\mathbf{j}_{\std}(M)$ is $|\hat{\omega}|_{\mathbb{V}}$-saturated. 
\end{theorem}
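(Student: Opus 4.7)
The plan is to exploit the indiscernible-set machinery from Lemma~\ref{KeislerOnStableKey} and Lemma~\ref{ShelahStableSatLemma2}. Given $A \subseteq \mathbf{j}_{\std}(M)$ with $|A| < \mu_{\hat{V}}$ and a type $p(x) \in S(A)$, I will extend $p(x)$ to a complete type $\tilde{p}(x)$ over $\mathbf{j}_{\std}(M)$, apply Lemma~\ref{KeislerOnStableKey} to produce an indiscernible set $I \subseteq \mathbf{j}_{\std}(M)$ with $\mbox{Av}(I, \mathbf{j}_{\std}(M)) = \tilde{p}(x)$, and finally invoke Lemma~\ref{ShelahStableSatLemma2} to realize $p(x) = \tilde{p}(x) \restriction_A$. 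The verification of Lemma~\ref{KeislerOnStableKey}'s hypotheses is case-by-case: in (A), Theorem~\ref{SaturationCharacterization} applied to $\aleph_1$-saturation of $\hat{V}$ gives $\mathfrak{p}_{\hat{V}} \geq \aleph_1$ and makes every countable subset of $\hat{V}$ pseudofinite, so the countable base for $\tilde{p}(x)$ guaranteed by Fact~\ref{Fact2}(B) is pseudofinite, matching clause (A); in (B), superstability of $T$ directly provides supersimplicity, matching clause (B); in (C), $\omega$-stability is clause (C) verbatim.

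The resulting $I = \{\hat{a}_{\hat{n}}: \hat{n} < \hat{n}_*\}$ has cardinality $|\hat{n}_*|_{\mathbb{V}} \geq \mu_{\hat{V}}$. Provided $\mu_{\hat{V}} \geq \aleph_1$, one has $|A| + \aleph_0 < \mu_{\hat{V}} \leq |I|$ (since $\mu_{\hat{V}}$ is uncountable and $|A| < \mu_{\hat{V}}$), so Lemma~\ref{ShelahStableSatLemma2} produces the desired realization. For the NFCP strengthening, I will instead invoke the final clause of Lemma~\ref{KeislerOnStableKey} to obtain $I$ indexed by all of $\hat{\omega}$ with $|I| = |\hat{\omega}|_{\mathbb{V}}$, then run the identical cardinal arithmetic with $|\hat{\omega}|_{\mathbb{V}}$ in place of $\mu_{\hat{V}}$.

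The main obstacle is the edge case $\mu_{\hat{V}} = \aleph_0$: the inequality $|A| + \aleph_0 < |I|$ in Lemma~\ref{ShelahStableSatLemma2} is strict, so the average-type argument does not directly deliver $\aleph_0$-saturation. I will sidestep this by separately establishing $\aleph_0$-saturation of $\mathbf{j}_{\std}(M)$ in each case, appealing to the baseline results of Section~\ref{InterpOnStableSec1}: in (C), $\omega$-stable theories are small, so Theorem~\ref{BaselineSmall} applies; in (B), the $2^\omega$-richness hypothesis puts us in clause (D) of Theorem~\ref{BaselineEpsilonSmall}, yielding even $\aleph_\epsilon$-saturation; in (A), $\aleph_1$-saturation of $\hat{V}$ trivially supplies $2^\omega$-richness by realizing the countable type asserting $\hat{\eta}(n) = \eta(n)$ for each $n < \omega$, again reducing to Theorem~\ref{BaselineEpsilonSmall}. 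With $\aleph_0$-saturation secured, the main argument handles all $A$ of infinite size, and the NFCP refinement likewise reduces (when $|\hat{\omega}|_{\mathbb{V}} = \aleph_0$) to $\aleph_0$-saturation, completing the proof.
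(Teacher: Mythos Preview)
Your approach is essentially the same as the paper's: reduce to Lemma~\ref{ShelahStableSatLemma2} by producing, via Lemma~\ref{KeislerOnStableKey}, an indiscernible set of the right size with the right average type, and verify the three hypotheses case by case exactly as you do. The paper's proof is two short paragraphs that do precisely this.

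Your treatment is in fact slightly more careful than the paper's. The paper simply asserts that Lemma~\ref{ShelahStableSatLemma2} reduces $\mu_{\hat{V}}$-saturation to finding $I$ with $|I| \geq \mu_{\hat{V}}$, but as you correctly observe, the strict inequality $|A| + \aleph_0 < |I|$ in that lemma does not yield $\aleph_0$-saturation when $\mu_{\hat{V}} = \aleph_0$ (which can occur in cases (B) and (C), e.g.\ when $\hat{V}$ is countable). Your separate verification of $\aleph_0$-saturation via Theorems~\ref{BaselineSmall} and~\ref{BaselineEpsilonSmall} is correct and plugs this minor gap. (In case (A), incidentally, $\aleph_1$-saturation of $\hat{V}$ already forces $\mu_{\hat{V}} \geq \aleph_1$, so the edge case does not arise there; but your argument covers it anyway.)
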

\begin{proof}
%	
%	Note that in case (A), $\mathbf{j}_{\std}(M)$ is $\aleph_1$-saturated, trivially. Also, in case (B), $\mathbf{j}_{\std}(M)$ is $\aleph_{\epsilon}$-saturated by Theorem~\ref{BaselineEpsilonSmall}, and in case (C), $\mathbf{j}_{\std}(M)$ is $\aleph_0$-saturated by Theorem~\ref{BaselineSmall}.
%	
	We first show that $\mathbf{j}_{\std}(M)$ is $\mu_{\hat{V}}$-saturated. By Lemma~\ref{ShelahStableSatLemma2}, it suffices to show that if $p(x) \in S^1(\mathbf{j}_{\std}(M))$, then there is an indiscernible set $I \subseteq \mathbf{j}_{\std}(M)$ with $|I| \geq \mu_{\hat{V}}$, such that $\mbox{Av}(I, \mathbf{j}_{\std}(M)) = p(x)$. This follows from Lemma~\ref{KeislerOnStableKey} (note that if we are in case (A) here, then case (A) of Lemma~\ref{KeislerOnStableKey} holds, since $p(x)$ is based on a countable set and every countable subset of $\hat{V}$ is pseudofinite).
	
	Similarly, if $T$ does not have the finite cover property, then by Lemma~\ref{ShelahStableSatLemma2}, it suffices to show that if $p(x) \in S^1(M)$, then there is an indiscernible set $I \subseteq \mathbf{j}_{\std}(M)$ with $|I| \geq |\hat{\omega}|_{\mathbb{V}}$, such that $\mbox{Av}(I, \mathbf{j}_{\std}(M)) = p(x)$. Again, this follows from Lemma~\ref{KeislerOnStableKey}.
\end{proof}

\noindent \textbf{Status Update 3.} We have proven Theorem~\ref{SaturationLevelsThm}. 

\begin{proof}
In the setup of the theorem, we need to show that if $\lambda'_{\hat{V}}(T) \geq \aleph_1$ then $\mathbf{j}_{\std}(M)$ is $\lambda'_{\hat{V}}(T)$-saturated. Since $\lambda'_{\hat{V}}(T) \geq \aleph_1$, Definition~\ref{LongDef} implies one of the clauses (A), (B), (C) above must hold. Thus we are done, using that $|\hat{V}| = |\hat{\omega}|_{\mathbb{V}}$ since $\hat{V} \models ZFC^-_{\pre}$.
\end{proof}
%
%\noindent \textbf{Status update 2.} (A2) and (A3) both hold.
%\begin{proof}
%Note that the negation of the hypotheses of (A1) is exactly the hypothesis of Theorem~\ref{InterpOnStable2}. Thus we conclude by Theorem~\ref{InterpOnStable2} and Theorem~\ref{InterpOnStable0}, and the fact that if $\hat{V} \models ZFC^-_{\pre}$ then $|\hat{\omega}| = |\hat{V}|$.
%\end{proof}
%
%We need the following lemma to finish.
%
%\begin{lemma}
%	Suppose $\hat{V} \models ZFC^-$ satisfies that $\mathfrak{p}_{\hat{V}} \geq \aleph_1$. Then $\mu_{\hat{V}}^{\aleph_0} = \mu_{\hat{V}}$.
%\end{lemma}
%\begin{proof}
%	Choose $\hat{n}_* < \hat{\omega}$ nonstandard such that $|\hat{n}_*|_{\mathbb{V}} = \mu_{\hat{V}}$. Let $\hat{m}_* < \hat{n}_*$ be largest so that $|\hat{m}_*^{\hat{m}_*}|_{\hat{V}} \leq \hat{n}_*$. Note that $|\hat{m}_*| = \mu_{\hat{V}}$ also, so it suffices to find an injection from $\hat{m}_*^\omega$ to $\hat{m}_*^{\hat{m}_*}$. (By $\hat{m}_*^\omega$, we mean all functions $f$ with domain $\omega$, such that $f(n) < \hat{m}_*$ for all $n < \omega$; by usual abuse we write $f: \omega \to \hat{m}_*$.)
%	
%	Suppose $f: \omega \to \hat{m}_*$ is given. By Theorem~\ref{localSaturation2}(A), we can find some $\hat{f} \in \hat{m}_*^{\hat{m}_*}$ such that for all $n < \omega$, $\hat{f}(n) = f(n)$. Then $f \mapsto \hat{f}$ is injective.
%\end{proof}

\begin{theorem}\label{InterpOnStable3}
	Suppose $V \models ZFC^-$ is transitive, and $\mathbf{j}: V \preceq \hat{V}$, and $T \in V$ is a countable stable theory. Suppose one of the following conditions is true:
	
	\begin{itemize}
		\item[(A)] $\mathfrak{p}_{\hat{V}} \geq \aleph_1$;
		\item[(B)] $\hat{V}$ is $2^\omega$-rich, and $T$ is superstable;
		\item[(C)] $T$ is $\omega$-stable.
	\end{itemize}
	
	Then $\hat{V}$ $\mu_{\hat{V}}$-pseudosaturates $T$. If $T$ does not have the finite cover property, then $\hat{V}$ $\lambda^+$-pseudosaturates $T$ for all $\lambda$.
\end{theorem}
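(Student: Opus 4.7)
The plan is to adapt the proof of Theorem~\ref{InterpOnStable2} to the pseudosaturation setting, using Lemma~\ref{KeislerOnStableKey} to produce internal indiscernible sets and Lemma~\ref{ShelahStableSatLemma2} to realize types of bounded cardinality from such sets. Fix $M \models T$ with $M \in V$. I first dispose of the baseline case $\mu_{\hat{V}} = \aleph_0$, in which $\mu_{\hat{V}}$-pseudosaturation of $T$ is nothing but $\aleph_0$-pseudosaturation: this holds under hypothesis (A) by Corollary~\ref{LocalSat0}, under (B) by Theorem~\ref{BaselineEpsilonSmall} (superstability implies $\epsilon$-smallness), and under (C) by Theorem~\ref{BaselineSmall} ($\omega$-stability implies smallness). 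So henceforth I assume $\mu_{\hat{V}} > \aleph_0$.

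Let $A \subseteq \mathbf{j}_{\std}(M)$ be pseudofinite with $|A| < \mu_{\hat{V}}$, and let $p(x) \in S^1(A)$. By Fact~\ref{Fact2}(B), $p$ is based on some countable $A_0 \subseteq A$; since any subset of a pseudofinite set is again pseudofinite, $A_0$ is itself pseudofinite. Let $q \in S^1(\mathbf{j}_{\std}(M))$ be the unique nonforking extension of the stationary type $p \restriction_{A_0}$, so that $q \restriction_A = p$ and $q$ is based on $A_0$. Applying Lemma~\ref{KeislerOnStableKey} in the clause matching our hypothesis (clause (A) there uses that $q$ is based on the pseudofinite set $A_0$ together with $\mathfrak{p}_{\hat{V}} \geq \aleph_1$), I obtain an internal indiscernible set $I = \{\hat{a}_{\hat{n}} : \hat{n} < \hat{n}_*\} \in \hat{V}$ of nonstandard length $\hat{n}_*$ with $\mbox{Av}(I, \mathbf{j}_{\std}(M)) = q$. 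Then $|I| \geq \mu_{\hat{V}} > \aleph_0$, so $|A| + \aleph_0 < \mu_{\hat{V}} \leq |I|$, and Lemma~\ref{ShelahStableSatLemma2} yields a realization of $q \restriction_A = p$ in $\mathbf{j}_{\std}(M)$.

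For the NFCP claim, the same argument applies to any pseudofinite $A$ with $|A| < |\hat{\omega}|_{\mathbb{V}}$, using the indiscernible sequence of internal length $\hat{\omega}$ produced by the NFCP clause of Lemma~\ref{KeislerOnStableKey}. The main obstacle is the edge case $|A| = |\hat{\omega}|_{\mathbb{V}}$, where the strict inequality required by Lemma~\ref{ShelahStableSatLemma2} narrowly fails. I expect to handle this by strengthening the NFCP clause of Lemma~\ref{KeislerOnStableKey} to produce internal indiscernible sequences of internal length $\hat{\kappa}$ for any internal cardinal $\hat{\kappa} \in \hat{V}$ that is larger internally than the internally-finite set containing $A$. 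This strengthening is available because $\mathbf{j}(T)$ is NFCP inside $\hat{V}$, so the analogue of $\Gamma(y)$ asserting existence of an indiscernible sequence of length $\hat{\kappa}$ with the correct average type is consistent inside $\hat{V}$ and, after choosing $M$ to be sufficiently saturated in $V$ so that $\mathbf{j}(M)$ has enough internal saturation, is realized in $\hat{V}$. The resulting $I$ has $|I|$ strictly exceeding $|A|$, so Lemma~\ref{ShelahStableSatLemma2} applies and completes the proof.
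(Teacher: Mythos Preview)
Your treatment of the FCP part is correct and essentially matches the paper's argument (the paper takes a small shortcut in cases (B) and (C) by quoting Theorem~\ref{InterpOnStable2}, but your uniform application of Lemma~\ref{KeislerOnStableKey} is equally valid). Your handling of the baseline case $\mu_{\hat{V}}=\aleph_0$ is also fine.

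The genuine gap is in your NFCP edge case. Your plan is to enlarge the internal length of the indiscernible sequence to some internal cardinal $\hat\kappa$ that is \emph{internally} larger than the internally finite $\hat X\supseteq A$, and then conclude $|I|>|A|$ externally. But internal size comparison does not transfer to external cardinality: since $\hat X$ is finite in $\hat V$, every internal infinite $\hat\kappa$ (including $\hat\omega$ itself) is already internally larger than $\hat X$, yet it is perfectly possible that $|\hat\kappa|_{\mathbb V}=|\hat X|_{\mathbb V}=|\hat\omega|_{\mathbb V}$ (e.g.\ whenever $|\hat V|=|\hat\omega|_{\mathbb V}$, which is exactly the situation in $ZFC^-_{\pre}$). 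In that scenario no indiscernible set inside $\mathbf{j}_{\std}(M)$ can have external cardinality strictly greater than $|A|$, so the strict inequality required by Lemma~\ref{ShelahStableSatLemma2} is unattainable and your strategy collapses. Making $M$ more saturated in $V$ does not help, since the bound $|I|\le|\hat V|$ is absolute.

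The paper avoids cardinality comparison altogether. Starting from the $\hat\omega$-indexed sequence $(\hat a_{\hat n}:\hat n<\hat\omega)\in\hat V$ with average type $p$, it picks a pseudofinite $\hat\Delta$ containing all standard formulas and applies Ramsey's theorem \emph{inside} $\hat V$ to the internally finite colouring determined by $\hat\Delta$-types over the internally finite set $\hat X$. This yields an internally infinite $\hat J\subseteq\hat\omega$ such that $(\hat a_{\hat n}:\hat n\in\hat J)$ is $\hat\Delta$-indiscernible over $\hat X$, hence externally indiscernible over $A$. Every element of $J$ then has the same type over $A$, which must equal $\mbox{Av}(J,\mathbf{j}_{\std}(M))\restriction_A=p_0$, so any single element realizes $p_0$. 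No comparison of $|I|$ with $|A|$ is needed.
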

\begin{proof}
	
%	Note that in case (A), $\mathbf{j}_{\std}(M)$ is $\aleph_1$-pseudosaturated. Also, in case (B), $\mathbf{j}_{\std}(M)$ is $\aleph_{\epsilon}$-saturated by Theorem~\ref{BaselineEpsilonSmall}, and in case (C), $\mathbf{j}_{\std}(M)$ is $\aleph_0$-saturated by Theorem~\ref{BaselineSmall}.
%	
	First, suppose $T$ has the finite cover property. If we are in case (B) or (C), then we can conclude by Theorem~\ref{InterpOnStable2}, so suppose $\mathfrak{p}_{\hat{V}} \geq \aleph_1$. Choose $M \models T$ with $M \in V$. Suppose $p_0(x)$ is a complete type over $X$, a pseudofinite set of cardinality less than $\mu_{\hat{V}}$. Let $p(x)$ be a non-forking extension of $p_0(x)$ to $\mathbf{j}_{\std}(M)$. By Lemma~\ref{KeislerOnStableKey}, there is some nonstandard $\hat{n}_* < \hat{\omega}$ and some $(a_{\hat{n}}: \hat{n} < \hat{n}_*) \in \hat{V}$, such that $\hat{Y} := \{a_{\hat{n}}: \hat{n} < \hat{n}_*\}$ is an indiscernible subset of $\mathbf{j}_{\std}(M)$ with $\mbox{Av}(\hat{Y}, \mathbf{j}_{\std}(M)) = p(x)$. By Lemma~\ref{ShelahStableSatLemma2}, $p(x)$ is realized in $\mathbf{j}_{\std}(M)$.
	
	Next, suppose $T$ does not have the finite cover property. Choose $M \models T$ with $M \in V$. Suppose $p_0(x)$ is a complete type over $X \subseteq \mathbf{j}_{\std}(M)$, a pseudofinite set of arbitrary cardinality; choose $\hat{X} \in \hat{V}$ finite in $\hat{V}$ with $X \subseteq \hat{X} \subseteq \mathbf{j}_{\std}(M)$. Let $p(x)$ be a non-forking extension of $p_0(x)$ to $\mathbf{j}_{\std}(M)$. By Lemma~\ref{KeislerOnStableKey}, there is some $(a_{\hat{n}}: \hat{n} < \hat{\omega}) \in \hat{V}$, such that $I := \{a_{\hat{n}}: \hat{n} < \hat{\omega}\}$ is an indiscernible subset of $\mathbf{j}_{\std}(M)$ with $\mbox{Av}(I, \mathbf{j}_{\std}(M)) = p(x)$. 
	
	By overflow we can find a pseudofinite set $\hat{\Delta} \in \hat{V}$ containing all the true formulas of $T$; by Ramsey's theorem in $\hat{V}$, we can find some $\hat{J} \subseteq \hat{\omega}$ infinite in the sense of $\hat{V}$, such that $(a_{\hat{n}}: \hat{n} \in \hat{J})$ is $\hat{\Delta}$-indiscernible over $\hat{X}$. Then in $\mathbb{V}$, $J := \{a_{\hat{n}}: \hat{n} \in \hat{J}\}$ is indiscernible over $\hat{X}$, and thus any element of $J$ realizes $p_0(x)$.
\end{proof}

%\noindent \textbf{Status Update 3: done}.
%
%\begin{proof}
%	Note that the negation of the hypotheses of (B1) is exactly the hypothesis of Theorem~\ref{InterpOnStable3}. So we conclude that (B2) and (B3) hold by Theorems~\ref{InterpOnStable3} and ~\ref{InterpOnStable0}.
%\end{proof}

\noindent \textbf{Status Update 3.} We have proven Theorem~\ref{PseudosaturationLevelsThm}. 

\begin{proof}
	In the setup of the theorem, we need to show that if $\lambda_{\hat{V}}(T) \geq \aleph_1$, then $\hat{V}$ $\lambda_{\hat{V}}(T)$-pseudosaturates $T$. But since $\lambda_{\hat{V}}(T) \geq \aleph_1$, one of the clauses (A), (B), (C) above must hold, and so we are done.
\end{proof}

\section{The Interpretability Orders on Stable Theories}\label{KeislerOnStableSection}
In this section, we classify the interpretability orders on the stable theories. The classification of $\trianglelefteq$ on stable theories is due to Shelah \cite{ShelahIso}, specifically he proved Theorem~\ref{InterpOnKeisler1} in the case $\leq = \trianglelefteq$, although we fix a minor gap in his argument. The classification of $\trianglelefteq^*_{\aleph_1}$ on stable theories is due to Malliaris and Shelah \cite{InterpOrders}, specifically they proved Theorem~\ref{InterpOnKeisler1}(A), (B) and (D) for $\leq = \trianglelefteq^*_{\aleph_1}$. The classification of $\trianglelefteq^*_1$ on stable theories is due to the same authors \cite{InterpNew} (a work in preparation), specifically they prove Theorem~\ref{InterpOnKeisler3}(A) for $\leq = \trianglelefteq^*_{1}$. The authors privately communicated a sketch of the proof; our argument is an adaptation to models of $ZFC^-$, and we include it with their permission.

\begin{theorem}\label{InterpOnKeisler1}
	Suppose $\leq$ is one of $\trianglelefteq^*_{\kappa}, \trianglelefteq^{\times}_{\kappa}$ or $\trianglelefteq$, where $\kappa \geq \aleph_1$. 
	
	\begin{itemize}
		\item[(A)] Suppose $T_0, T_1$ are stable. Then $T_0 \leq T_1$ if and only if the following holds: if $T_0$ has the finite cover property, then so does $T_1$.
		\item[(B)] If $T_0$ is stable and $T_1$ is unstable, then $T_1 \not \leq T_0$.
		\item[(C)] If $T_0$ is stable and $T_1$ is unstable, then $T_0 \leq T_1$.
	\end{itemize}
\end{theorem}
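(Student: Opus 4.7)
The plan is to derive all three parts from the pseudosaturation computations of the previous section (Theorems~\ref{PseudosaturationLevelsThm} and~\ref{SaturationLevelsThm}) together with the instability obstruction of Theorem~\ref{BBound}. The crucial simplification is that for $\kappa\ge\aleph_1$, any $\kappa$-saturated $\hat V\models ZFC^-$ has $\mathfrak{p}_{\hat V}\ge\aleph_1$ and is $2^\omega$-rich (the type describing a prescribed $\eta\in 2^\omega$ has only countably many parameters), so Clauses~1--3 of Definition~\ref{LongDef} cannot activate. Consequently, for such $\hat V$ and stable $T$, $\hat V$ $\lambda^+$-pseudosaturates $T$ iff either $T$ has NFCP or $\lambda<\mu_{\hat V}$. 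We will also use the inequality $\mathfrak{p}_{\hat V}\le\mu_{\hat V}$, witnessed by the $(\omega,\mu_{\hat V})$-cut in $\hat\omega$ formed by $\omega$ ascending and a minimal-cardinality nonstandard initial segment descending.

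For the positive directions, (A) forward is immediate: if $T_0$ has NFCP then pseudosaturation of $T_0$ is free, and if both have FCP they share the bound $\lambda<\mu_{\hat V}$. Part (C) uses Theorem~\ref{NonStableDichotomy}: $T_1$ has $SOP_2$ or IP; the interesting subcase is $T_0$ FCP, where we must show $\lambda<\mu_{\hat V}$. $SOP_2$ is handled by Theorem~\ref{SOP2Max} yielding $\lambda<\mathfrak{p}_{\hat V}\le\mu_{\hat V}$. The IP case proceeds by contradiction: assuming $\mu_{\hat V}\le\lambda$, let $\hat n_*$ be a minimal-cardinality nonstandard element of $\hat\omega$ and pick an external subset $Y\subseteq\{\hat n<\hat n_*\}$ with $Y\notin\hat V$; the IP witness, transported into $\hat V$ along $(\hat a_{\hat n}:\hat n<\hat n_*)$, then produces a pseudofinite type of cardinality $\mu_{\hat V}\le\lambda$ whose realization would force $Y$ to be definable in $\hat V$, contradiction. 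Existence of such $Y$ is automatic in $\hat V\models ZFC^-_{\pre}$, giving the result for $\trianglelefteq^*_\kappa$ by Theorem~\ref{SatAndPseudoThm1} and for $\trianglelefteq$ by Corollary~\ref{KeislerAndInterp2}; for $\trianglelefteq^\times_\kappa$ one chooses $V$ so that all $\hat V\models ZFC^-$ over $V$ are of minimum size relative to $\mu_{\hat V}$, which guarantees the external $Y$.

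For the negative directions we produce, given any countable transitive $V$ and prescribed $\lambda\ge\kappa$, a separating $\hat V$. For (A) negative, with $T_0$ FCP and $T_1$ NFCP, we want $\mathbf{j}\colon V\preceq\hat V\models ZFC^-_{\pre}$ that is $\kappa$-saturated with $\mu_{\hat V}=\lambda^+$; Theorem~\ref{InterpOnStable3} then gives $\hat V$ $\lambda^+$-pseudosaturates $T_1$ while Theorem~\ref{InterpOnStable0} forbids pseudosaturation of $T_0$ at that level. For (B), we build $\kappa$-saturated $\hat V\models ZFC^-_{\pre}$ with $\mathfrak{b}_{\hat V}$ smaller than the saturation capacity of $T_0$ (so $\mathfrak{b}_{\hat V}<\mu_{\hat V}$ when $T_0$ has FCP, and $\mathfrak{b}_{\hat V}\le\lambda$ with $\lambda$ arbitrary when $T_0$ has NFCP); Theorem~\ref{BBound} obstructs $T_1$ while Theorem~\ref{InterpOnStable3} pseudosaturates $T_0$. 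Translation to Keisler's order uses Corollary~\ref{KeislerAndInterp2} via regular ultrafilters on $\mathcal{P}(\lambda)$ with the desired saturation profile. The main technical obstacle is engineering these $\hat V$'s with prescribed values of $\mu_{\hat V}$ and $\mathfrak{b}_{\hat V}$ while maintaining both $\kappa$-saturation and $ZFC^-_{\pre}$; I would adapt Malliaris and Shelah's ultrafilter constructions from~\cite{InterpOrders} and~\cite{InterpNew}, realizing $\hat V$ as a suitably saturated Boolean ultrapower of $V$ as in Section~\ref{FullBVModelsSec} and expanding to $ZFC^-_*$ along the way via the argument of Lemma~\ref{SatAndPseudo2}.
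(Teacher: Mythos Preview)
Your skeleton is right, but the key invariant you reach for is not the one the paper uses, and this creates real problems.

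\textbf{Part (C) and the IP case.} The paper does \emph{not} split into $SOP_2$/IP. Instead it proves a single lemma (Theorem~\ref{KeislerStableLemma0}): for any unstable $T$, $\hat V$ does not $\mbox{lcf}_{\hat V}(\omega)^+$-pseudosaturate $T$, using only an unstable formula and an overflow argument. Since $\mbox{lcf}_{\hat V}(\omega)\le\mu_{\hat V}$, this immediately gives (C) for all three orders, with no appeal to $ZFC^-_{\pre}$ or to external subsets. Your IP argument, by contrast, needs an external $Y\subseteq\hat n_*$, and your workaround for $\trianglelefteq^\times_\kappa$ (``choose $V$ so that all $\hat V$ over $V$ are of minimum size relative to $\mu_{\hat V}$'') does not make sense: you must handle \emph{every} $\kappa$-saturated $\hat V$, and you cannot constrain $|\hat V|$ by choosing $V$. (Your argument can in fact be rescued by taking $Y=\omega$, which is always external, but you do not say this, and in any case the paper's $\mbox{lcf}$ route is cleaner.)

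\textbf{Negative directions.} You propose to separate via $\mathfrak b_{\hat V}$ and Theorem~\ref{BBound}; the paper instead separates via $\mbox{lcf}_{\mathcal U}(\omega)$ and Theorem~\ref{KeislerStableLemma0}. This matters: the paper's main technical ingredient is an explicit ultrafilter construction (Theorem~\ref{KeislerOnStableUlt}) on the completion of a finite-support product forcing, producing a $\kappa^+$-good, $\aleph_1$-incomplete $\mathcal U$ with $\mbox{lcf}_{\mathcal U}(\omega)=\lambda$ and $\mu_{\mathcal U}=\mu$ for any prescribed regular $\lambda$ and $\mu\ge\lambda$ with $\mu^\kappa=\mu$. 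Taking a $\mu^+$-saturated Boolean ultrapower then gives the separating $\hat V$ uniformly for $\trianglelefteq$, $\trianglelefteq^*_\kappa$, $\trianglelefteq^\times_\kappa$. In this $\hat V$ one has $\mathfrak b_{\hat V}>\mu_{\hat V}$, so your $\mathfrak b_{\hat V}$-based plan would \emph{not} be realized by the paper's construction; you would need a genuinely different (and harder to arrange) $\hat V$ with $\kappa\le\mathfrak b_{\hat V}<\mu_{\hat V}$ while $\kappa$-saturated and $\models ZFC^-_{\pre}$. You handwave this, and it is not clear how to do it.

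\textbf{Minor slip.} In your negative direction for (A), $\mu_{\hat V}=\lambda^+$ is the wrong target: with $\mu_{\hat V}=\lambda^+$, Theorem~\ref{InterpOnStable0} only blocks $(\lambda^+)^+$-pseudosaturation, and $T_0$ \emph{is} $\lambda^+$-pseudosaturated. You want $\mu_{\hat V}\le\lambda$.
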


\begin{theorem}\label{InterpOnKeisler2}
	Suppose $\leq$ is one of $\trianglelefteq^*_{\aleph_0}, \trianglelefteq^{\times}_{\aleph_0}$. 
	
		\begin{itemize}
		\item[(A)] Suppose $T_0, T_1$ are stable. Then $T_0 \leq T_1$ if and only if the following holds: if $T_0$ has the finite cover property, then so does $T_1$; and if $T_0$ is unsuperstable, then so is $T_1$.
		\item[(B)] If $T_0$ is stable and $T_1$ is unstable, then $T_1 \not \leq T_0$.
		\item[(C)] If $T_0$ is superstable and $T_1$ is unstable, then $T_0 \leq T_1$.
	\end{itemize}
	
\end{theorem}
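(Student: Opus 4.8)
The plan is to run the proof of Theorem~\ref{InterpOnKeisler1} at $\kappa=\aleph_0$, isolating the single place where $\aleph_0$-saturation is weaker than $\aleph_1$-saturation. Two parts come for free: (B) holds because $\trianglelefteq^*_{\aleph_0}\subseteq\trianglelefteq^*_{\aleph_1}$ and $\trianglelefteq^\times_{\aleph_0}\subseteq\trianglelefteq^\times_{\aleph_1}$, so the non-reduction $T_1\not\leq T_0$ is already Theorem~\ref{InterpOnKeisler1}(B); and the implication ``if $T_0$ has $FCP$ then so does $T_1$'' in the forward direction of (A) likewise follows from Theorem~\ref{InterpOnKeisler1}(A). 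The driving observation for the rest is that an $\aleph_0$-saturated $\hat V$ satisfies condition (C) of Theorem~\ref{BaselineEpsilonSmall} (a $\Sigma_n$-type over a single parameter is a type over a finite set), hence is $2^\omega$-rich; so over an $\aleph_0$-saturated $\hat V$ Clause 1 of Definition~\ref{LongDef} is never activated, for either $T_0$ or $T_1$. By contrast $\aleph_0$-saturation of $\hat V$ forces neither $\mathfrak p_{\hat V}\geq\aleph_1$ nor $\mathfrak b_{\hat V}\geq\aleph_1$, so Clauses 2 and 3 --- the ones keyed to ``$T$ not superstable'' --- stay in play; this is exactly what the extra hypotheses in the theorem compensate for.

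For the $\Leftarrow$ direction of (A) and the positive content of (C) I would reduce to the cardinal invariants. When $T_0,T_1$ are stable, Theorem~\ref{SatAndPseudoThm1} together with Theorem~\ref{SaturationLevelsThm} turns $T_0\trianglelefteq^*_{\aleph_0}T_1$ into the statement that $\lambda'_{\hat V}(T_0)\geq\lambda'_{\hat V}(T_1)$ for every $\aleph_0$-saturated $\hat V\models ZFC^-_{\pre}$, while Theorem~\ref{PseudosaturationLevelsThm} turns $T_0\trianglelefteq^\times_{\aleph_0}T_1$ into $\lambda_{\hat V}(T_0)\geq\lambda_{\hat V}(T_1)$ for every $\aleph_0$-saturated $\omega$-nonstandard $\hat V$. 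In both cases I would compare the two instances of Definition~\ref{LongDef} clause by clause: Clause 1 is dead for both theories; the hypothesis ``$FCP(T_0)\Rightarrow FCP(T_1)$'' guarantees Clause 4 is activated for $T_0$ only when it is activated, with the same bound $\mu_{\hat V}$, for $T_1$; the hypothesis ``$T_0$ unsuperstable $\Rightarrow T_1$ unsuperstable'' does the same for Clauses 2 and 3; and Clause 5 assigns the two theories the same bound. Hence every upper bound Definition~\ref{LongDef} places on $\lambda_{\hat V}(T_0)$ (resp.\ $\lambda'_{\hat V}(T_0)$) it also places on $\lambda_{\hat V}(T_1)$ (resp.\ $\lambda'_{\hat V}(T_1)$), giving the desired inequality.

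The one genuinely new ingredient, used for the positive direction of (C), is: for every $\omega$-nonstandard $\hat V$ and every unstable $T_1\in V$, $\hat V$ does not $\mu_{\hat V}^+$-pseudosaturate $T_1$. Fix a nonstandard $\hat n$ with $|\hat n|_{\mathbb V}=\mu_{\hat V}$. If $T_1$ has $SOP_2$, this is Theorem~\ref{SOP2Max} together with the inequality $\mathfrak p_{\hat V}\leq\mu_{\hat V}$ (the nonstandard integers below $\hat n$ are coinitial in all nonstandard integers, so they witness a cut of size at most $\mu_{\hat V}$). If $T_1$ has $IP$ via $\phi(x,\overline y)$, use overspill and elementarity to get an internal sequence $(\overline b_m:m<\hat n)$ in $\mathbf j(M)$ realizing every internal finite pattern of $\phi$, and consider the pseudofinite type over $\{\overline b_m:m<\hat n\}$ asserting $\phi(x,\overline b_m)$ for each nonstandard $m<\hat n$ and $\neg\phi(x,\overline b_m)$ for each standard $m<\hat n$: it is finitely satisfiable in $\mathbf j_{\std}(M)$, but a realization $b$ would make $\{m<\hat n:\mathbf j(M)\models\phi(b,\overline b_m)\}$ an internal subset of the set $\hat n$ (finite in the sense of $\hat V$) equal to the set of nonstandard $m<\hat n$ --- impossible, since a nonempty internal subset of $\hat n$ has a least element while the nonstandard integers below $\hat n$ do not. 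Now if $T_0$ is superstable then, over the $2^\omega$-rich $\hat V$, Definition~\ref{LongDef} and Theorem~\ref{InterpOnStable3} give $\lambda_{\hat V}(T_0)=\mu_{\hat V}$ when $T_0$ has $FCP$ and $\lambda_{\hat V}(T_0)=\infty$ otherwise; in either case $\hat V$ $\lambda^+$-pseudosaturating $T_1$ forces $\lambda<\mu_{\hat V}\leq\lambda_{\hat V}(T_0)$, hence $\hat V$ $\lambda^+$-pseudosaturates $T_0$. This yields $T_0\trianglelefteq^\times_{\aleph_0}T_1$ outright and $T_0\trianglelefteq^*_{\aleph_0}T_1$ via Corollary~\ref{SatAndPseudoCor1}.

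It remains to prove the ``unsuperstable'' half of the forward direction of (A) --- that $T_0$ unsuperstable and $T_1$ superstable imply $T_0\not\trianglelefteq^*_{\aleph_0}T_1$ --- and here the whole point is to exhibit a single separating model: some $\mathbf j:V\preceq\hat V\models ZFC^-_{\pre}$ with $\hat V$ $\aleph_0$-saturated but not $\aleph_1$-saturated and with $\mu_{\hat V}\geq\aleph_1$ (concretely, the nonstandard integers should have countable coinitiality while every nonstandard proper initial segment of $\hat\omega$ is uncountable, so that $\mathfrak p_{\hat V}=\aleph_0<\mu_{\hat V}$). For such $\hat V$, Clause 3 gives $\lambda'_{\hat V}(T_0)=\aleph_0$, so $\mathbf j_{\std}(M_0)$ is not $\aleph_1$-saturated, while for the superstable $T_1$ only Clauses 4 and 5 can fire (Clause 1 being dead), so $\lambda'_{\hat V}(T_1)\geq\mu_{\hat V}\geq\aleph_1$ and $\mathbf j_{\std}(M_1)$ is $\aleph_1$-saturated; by Theorem~\ref{SatAndPseudoThm1} this refutes $T_0\trianglelefteq^*_{\aleph_0}T_1$, hence also $T_0\trianglelefteq^\times_{\aleph_0}T_1$. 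I expect the construction of models of $ZFC^-_{\pre}$ with prescribed values of $\mathfrak p$, $\mathfrak b$, $\mu$ and a prescribed saturation level of the quotient --- by specializing a sufficiently saturated Boolean-valued model of $ZFC^-_*$ by a suitably chosen ultrafilter, in the spirit of Section~\ref{FullBVModelsSec}, or by a union-of-elementary-chains construction --- to be the main obstacle; everything else is bookkeeping against Definition~\ref{LongDef}.
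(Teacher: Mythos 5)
Your proposal is correct and tracks the paper's own proof closely: the paper also reduces everything to the clause-by-clause comparison of $\lambda_{\hat V}$, $\lambda'_{\hat V}$ via Theorems~\ref{PseudosaturationLevelsThm} and \ref{SaturationLevelsThm}, uses the observation (implicit in the paper's Status Updates) that $\aleph_0$-saturation of $\hat V$ yields $2^\omega$-richness so Clause~1 of Definition~\ref{LongDef} is dead, and for the unsuperstable/superstable negative direction constructs exactly the separating model you describe, by the union-of-elementary-chains route (Theorem~\ref{SuperStableNonsat}): countably many $\aleph_0$-saturated steps each dropping a nonstandard integer below all previous ones, so $\mbox{lcf}(\omega)=\aleph_0$ while $\mu$ is driven up.

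The one place you genuinely diverge from the paper is the positive content of (C). The paper proves Theorem~\ref{KeislerStableLemma0}, a single uniform argument from an unstable formula $\phi(x,\overline y)$ and the order-indexed witnesses $(\overline a^n_m:m<n<\omega)$, showing failure of $\mbox{lcf}_{\hat V}(\omega)^+$-pseudosaturation and hence, a fortiori, of $\mu_{\hat V}^+$-pseudosaturation. You instead prove only the weaker failure at $\mu_{\hat V}^+$, splitting into the $SOP_2$ case (Theorem~\ref{SOP2Max} plus $\mathfrak p_{\hat V}\leq\mu_{\hat V}$) and the $IP$ case, where you give a clean direct argument: an internal IP array indexed by a minimal-size $\hat n$, a pseudofinite type forcing the realization's ``indicator set'' to be an internal subset of $\hat n$ equal to the nonstandard initial segment, contradicted by the least-element principle. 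Both routes are valid; the weaker $\mu$-bound suffices here because superstability and $2^\omega$-richness push $\lambda_{\hat V}(T_0)$ up to at least $\mu_{\hat V}$. The paper's sharper $\mbox{lcf}$ bound is used elsewhere (in the proof of Theorem~\ref{InterpOnKeisler1}, where $\mbox{lcf}_{\mathcal U}(\omega)<\mu_{\mathcal U}$ is deliberately arranged), so one should note your lemma does not substitute for it there.

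One small caveat: in the forward (A) direction you write that refuting $T_0\trianglelefteq^*_{\aleph_0}T_1$ ``hence also'' refutes $T_0\trianglelefteq^\times_{\aleph_0}T_1$. The containment $\trianglelefteq^\times_\kappa\subseteq\trianglelefteq^*_\kappa$ is only established in Theorem~\ref{SatAndPseudoCor}, whose proof uses the present theorem, so invoking it here would be circular. But this is cosmetic: the separating $\hat V$ you construct has $\mathfrak p_{\hat V}=\aleph_0$, so Clause~2 of Definition~\ref{LongDef} and Theorem~\ref{PseudosaturationLevelsThm} give $\lambda_{\hat V}(T_0)=\aleph_0<\mu_{\hat V}\leq\lambda_{\hat V}(T_1)$ directly, refuting $T_0\trianglelefteq^\times_{\aleph_0}T_1$ without passing through $\trianglelefteq^*$.
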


\begin{theorem}\label{InterpOnKeisler3}
	Suppose $\leq$ is one of $\trianglelefteq^*_{1}, \trianglelefteq^{\times}_{1}$. 
	
	\begin{itemize}
		\item[(A)] Suppose $T_0, T_1$ are stable. Then $T_0 \leq T_1$ if and only if the following holds: if $T_0$ has the finite cover property, then so does $T_1$; and if $T_0$ is unsuperstable, then so is $T_1$; and if $T_0$ is not $\omega$-stable, then neither is $T_1$.
		\item[(B)] If $T_0$ is stable and $T_1$ is unstable, then $T_1 \not \leq T_0$.
		\item[(C)] If $T_0$ is superstable and $T_1$ is unstable, then $T_0 \leq T_1$.
	\end{itemize}
\end{theorem}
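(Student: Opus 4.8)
The plan is to reduce Theorem~\ref{InterpOnKeisler3} to a comparison of the cardinal invariants $\lambda_{\hat V}(T)$ and $\lambda'_{\hat V}(T)$ of Definition~\ref{LongDef}, and then to a purely combinatorial analysis of the clauses defining them. For part (A) with $\leq\,=\,\trianglelefteq^\times_1$: since $\kappa=1$ the $\kappa$-saturation hypothesis on $\hat V$ is vacuous, so by Theorem~\ref{PseudosaturationLevelsThm} the relation $T_0 \trianglelefteq^\times_{\lambda 1} T_1$ (both stable) says exactly that there is a countable transitive $V \ni T_0,T_1$ such that for every $\mathbf{j}:V\preceq\hat V$ with $\hat V$ $\omega$-nonstandard, $\lambda < \lambda_{\hat V}(T_1)$ implies $\lambda < \lambda_{\hat V}(T_0)$. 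Now $\lambda_{\hat V}(T)$ does not depend on $V$, and by Definition~\ref{LongDef} it is a function of the three ``complexity'' invariants of $T$ (having the finite cover property, being unsuperstable, being non-$\omega$-stable) and of the side-parameters of $\hat V$ ($2^\omega$-richness, $\mathfrak p_{\hat V}$, whether $\hat V$ is $\aleph_1$-saturated, $\mu_{\hat V}$, $|\hat V|$); and — as a preliminary lemma — every combination of those side-parameters realized by some $\omega$-nonstandard model of $ZFC^-$ is realized by some $\mathbf{j}:V\preceq\hat V$ over any prescribed countable transitive $V$. Hence the existential quantifier over $V$ is immaterial, and $T_0 \trianglelefteq^\times_1 T_1$ holds iff $\lambda_{\hat V}(T_0)\geq\lambda_{\hat V}(T_1)$ for every $\omega$-nonstandard $\hat V\models ZFC^-$. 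The same reasoning, now via Theorem~\ref{SatAndPseudoThm1} together with Theorem~\ref{SaturationLevelsThm}, shows that $T_0\trianglelefteq^*_1 T_1$ holds iff $\lambda'_{\hat V}(T_0)\geq\lambda'_{\hat V}(T_1)$ for every $\hat V\models ZFC^-_{\pre}$.

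Given this, part (A) amounts to the statement that ``$\lambda_{\hat V}(T_0)\geq\lambda_{\hat V}(T_1)$ for all $\hat V$'' (resp. the $\lambda'$-version) is equivalent to the three implications: FCP, unsuperstability, and non-$\omega$-stability each transferring from $T_0$ to $T_1$. The ``$\Leftarrow$'' direction is an immediate clause-by-clause inspection of Definition~\ref{LongDef}: whenever a clause caps $\lambda_{\hat V}(T_0)$ — Clause~1 via non-$\omega$-stability of $T_0$, Clause~2 (resp. Clause~3 for $\lambda'$) via unsuperstability of $T_0$, Clause~4 via the finite cover property of $T_0$, Clause~5 trivially — the corresponding hypothesis on $T_1$ furnished by the three implications makes the same clause cap $\lambda_{\hat V}(T_1)$ at a value $\leq$ the cap for $T_0$; so $\lambda_{\hat V}(T_0)$, being the largest value consistent with all clauses, is $\geq\lambda_{\hat V}(T_1)$. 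For ``$\Rightarrow$'' we contrapose: for each of the three implications that fails, produce a witness $\hat V$ with $\lambda_{\hat V}(T_0)<\lambda_{\hat V}(T_1)$ (resp. with $\lambda'_{\hat V}(T_0)<\lambda'_{\hat V}(T_1)$, taking $\hat V\models ZFC^-_{\pre}$). If $T_0$ has the finite cover property while $T_1$ does not, take $\hat V$ $\aleph_1$-saturated with $|\hat V|>\mu_{\hat V}$: then $\lambda_{\hat V}(T_1)=\infty$ and $\lambda'_{\hat V}(T_1)=|\hat V|$, whereas Clause~4 forces $\lambda_{\hat V}(T_0),\lambda'_{\hat V}(T_0)\leq\mu_{\hat V}$. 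If $T_0$ is unsuperstable while $T_1$ is superstable, take $\hat V$ which is $2^\omega$-rich with $\mathfrak p_{\hat V}=\aleph_0$ but $\mu_{\hat V}$ large (for $\lambda'$: $2^\omega$-rich, not $\aleph_1$-saturated, $\mu_{\hat V}$ large — e.g. a countable increasing union of $\aleph_1$-saturated models, each adjoining a new largest integer): then Clause~2 (resp. Clause~3) fires for $T_0$ while no clause constrains $\lambda_{\hat V}(T_1)$ below $\mu_{\hat V}$. If $T_0$ is non-$\omega$-stable while $T_1$ is $\omega$-stable, take $\hat V$ not $2^\omega$-rich but with $\mu_{\hat V}\geq\aleph_1$: then Clause~1 gives $\lambda_{\hat V}(T_0)=\aleph_0$ by Theorem~\ref{BaselineEpsilonSmall}, whereas $\lambda_{\hat V}(T_1)\geq\mu_{\hat V}\geq\aleph_1$ by Theorem~\ref{InterpOnStable3}(C). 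In each case $\hat V$ is arranged over any prescribed $V$.

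For parts (B) and (C) one argues similarly across the stable/unstable divide, using that an unstable $T_1$ has $IP$ or $SOP_2$ (Theorem~\ref{NonStableDichotomy}). For (B), Theorems~\ref{SOP2Max}, \ref{BBound} and \ref{BaselineEpsilonSmall} give that an $\omega$-nonstandard $\hat V\models ZFC^-_{\pre}$ with $\mathfrak p_{\hat V}=\mathfrak b_{\hat V}=\aleph_0$ fails to $\aleph_1$-pseudosaturate $T_1$, while one can still arrange enough saturation ``below $\omega$'' that the stable $T_0$ is $\lambda^+$-pseudosaturated for the relevant $\lambda$; hence $T_1\not\leq T_0$. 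For (C), since $T_0$ is superstable its (pseudo)saturation level is governed only by $\mu_{\hat V}$, $2^\omega$-richness and $\mathfrak p_{\hat V}$ (and $\aleph_1$-saturation in the $\lambda'$-case), and these are all dominated by the bounds on the unstable $T_1$ coming from $SOP_2$/$IP$ (Theorems~\ref{SOP2Max}, \ref{BBound}); comparing the two via Definition~\ref{LongDef} yields $T_0\leq T_1$.

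The main obstacle I expect is the ``$\Rightarrow$'' direction of part (A), namely the construction of the separating models $\hat V$ with prescribed side-parameters. Making $\hat V$ $\aleph_1$-saturated is routine, but the other two cases are delicate: one needs a model that is $2^\omega$-rich (equivalently, realizes every $\Sigma_n$-type over a singleton, by Theorem~\ref{BaselineEpsilonSmall}) yet fails $\aleph_1$-saturation with $\mu_{\hat V}$ still uncountable, and — hardest — a model that is \emph{not} $2^\omega$-rich yet has $\mu_{\hat V}\geq\aleph_1$; the latter must omit a designated $\Sigma_n$-type while, in an $\omega_1$-length bookkeeping construction, inserting uncountably many predecessors below every nonstandard integer (and, in the $\trianglelefteq^*_1$ case, still admitting an expansion to $ZFC^-_*$). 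These constructions are precisely where the new invariant ($\omega$-stability) separating $\trianglelefteq^*_1$ from $\trianglelefteq^*_{\aleph_1}$ enters, and they are the adaptation to $ZFC^-$ of the delicate arguments of Malliaris and Shelah.
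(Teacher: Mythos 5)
Your overall framework---reducing the classification to a comparison of the cardinal invariants $\lambda_{\hat V}(T)$, $\lambda'_{\hat V}(T)$ of Definition~\ref{LongDef} and then checking clause-by-clause---matches the paper's strategy, and the ``$\Leftarrow$'' direction of part (A) is handled as you say. But there are two genuine gaps in the ``$\Rightarrow$'' side and in part (B).

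\paragraph{Part (B) cannot be proved the way you propose.} You suggest taking $\hat V$ with $\mathfrak{p}_{\hat V} = \mathfrak{b}_{\hat V} = \aleph_0$ so that the unstable $T_1$ is not $\aleph_1$-pseudosaturated, and then ``arranging enough saturation below $\omega$'' so that the stable $T_0$ still is. This cannot work for an \emph{arbitrary} stable $T_0$: if $T_0$ is unsuperstable then Clause~2 of Definition~\ref{LongDef} already gives $\lambda_{\hat V}(T_0) = \aleph_0$ whenever $\mathfrak{p}_{\hat V} = \aleph_0$, and if $T_0$ is not $\omega$-stable and $\hat V$ is not $2^\omega$-rich then Clause~1 does the same --- so you would lose pseudosaturation of $T_0$ along with that of $T_1$. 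The separating invariant for unstable theories in the paper's proof is not $\mathfrak{p}_{\hat V}$ but $\mathrm{lcf}_{\hat V}(\omega)$: Theorem~\ref{KeislerStableLemma0} shows $\lambda_{\hat V}(T_1) \leq \mathrm{lcf}_{\hat V}(\omega)$ for any unstable $T_1$, and Theorem~\ref{KeislerOnStableUlt} constructs an $\aleph_1$-good (hence $\aleph_1$-saturated) $\hat V$ with $\mathrm{lcf}_{\hat V}(\omega) < \mu_{\hat V}$. Because $\hat V$ is $\aleph_1$-saturated, every stable $T_0$ is pseudosaturated up to $\mu_{\hat V}$ (or $\infty$ if NFCP), while $T_1$ fails already at $\mathrm{lcf}_{\hat V}(\omega)^+$. (For Theorem~\ref{InterpOnKeisler3} specifically, part~(B) also follows formally from the $\trianglelefteq^*_{\aleph_1}$ case since $\trianglelefteq^*_1 \subseteq \trianglelefteq^*_{\aleph_1}$, but then one must have proved the $\aleph_1$ case correctly, which again needs $\mathrm{lcf}$ rather than $\mathfrak{p}$.)

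\paragraph{The $\omega$-stability separation uses the wrong construction.} You propose an $\omega_1$-length bookkeeping argument that omits a designated $\Sigma_n$-type while inserting uncountably many predecessors below each nonstandard integer. This is exactly the kind of simultaneous omission/realization construction that is prone to fail: each new stage you add to enlarge $\mu_{\hat V}$ risks realizing the type you are trying to omit, and there is no obvious mechanism forcing the bookkeeping to succeed. The paper avoids this entirely via an Ehrenfeucht--Mostowski hull (Theorem~\ref{omegaStableNonsat}): Skolemize the given $\hat V_*$, find in a saturated extension a long decreasing sequence of nonstandard integers lying below everything definable from the earlier ones, extract an indiscernible sequence of length $\lambda^+$ by Erd\H{o}s--Rado, and take the Skolem hull. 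The hull realizes only countably many types over the original countable $\hat V'$, so non-$2^\omega$-richness is automatic, and $\mu_{\hat V}$ is large by fiat. Your preliminary lemma asserting that arbitrary (realizable) combinations of side-parameters can be had over any prescribed $V$ is precisely what needs these explicit constructions; it should not be stated as a given.

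As a smaller point, for part~(C) the crucial inputs are Theorem~\ref{KeislerStableLemma0} (the $\mathrm{lcf}$ cap for unstable $T$) and Theorem~\ref{KeislerStableLemma0p5} ($\aleph_1$-pseudosaturating an unstable theory forces $2^\omega$-richness); you cite Theorems~\ref{SOP2Max} and \ref{BBound} instead, which concern $\mathfrak{p}_{\hat V}$ and $\mathfrak{b}_{\hat V}$ rather than the two invariants actually needed to dominate a superstable $T_0$'s pseudosaturation level.
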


To begin, we will finish proving the positive reductions $T_0 \leq T_1$. In view of Theorems~\ref{PseudosaturationLevelsThm} and \ref{SaturationLevelsThm}, the only ones we have left are the respective clause (C)'s. 

We will need some notation.

\begin{definition}
	If $(L, <)$ is a linear order with proper initial segment $\omega$, then let $\mbox{lcf}_{(L, <)}(\omega)$ be the least cardinal $\kappa$ such that there is a descending sequence $(a_\alpha: \alpha < \kappa)$ from $L \backslash \omega$  which is cofinal above $\omega$ (i.e., for every $a \in L$, if $a < a_\alpha$ for each $\alpha < \kappa$, then $a \in \omega$). So $\mbox{lcf}_{(L, <)}(\omega) \leq \mu_{(L,<)}$ always.
	
	Suppose $\hat{V} \models ZFC^-$ is $\omega$-nonstandard. Then let $\mbox{lcf}_{\hat{V}}(\omega) = \mbox{lcf}_{(\hat{\omega}, <)}(\omega)$.
	
\end{definition}

\begin{theorem}\label{KeislerStableLemma0}
	Suppose $V \models ZFC^-$ is transitive, and $\mathbf{j}: V \preceq \hat{V}$ is $\omega$-nonstandard, and suppose $T \in V$ is a countable unstable theory. Write $\lambda = \mbox{lcf}_{\hat{V}}(\omega)$. Then $\hat{V}$ does not $\lambda^+$-pseudosaturate $T$.
\end{theorem}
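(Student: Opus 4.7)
The plan is to mimic the classical ``order property $+$ lower cofinality'' lower bound for saturation, transposed into the pseudofinite setting over $\hat{V}$. Since $T$ is unstable, fix in $V$ an unstable formula $\phi(\overline{x}, \overline{y})$, a model $M \models T$, and sequences $(\overline{b}_n : n < \omega)$, $(\overline{a}_n : n < \omega)$ from $M$ with $M \models \phi(\overline{a}_n, \overline{b}_m)$ iff $m < n$. Write $(\overline{b}_{\hat{m}}: \hat{m} < \hat{\omega}) = \mathbf{j}(\overline{b}_n: n < \omega)$, and fix a strictly decreasing sequence $(\hat{n}_\alpha : \alpha < \lambda)$ in $\hat{\omega} \setminus \omega$ witnessing $\lambda = \mbox{lcf}_{\hat{V}}(\omega)$.

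Consider the partial type over $\mathbf{j}_{\std}(M)$
$$p(\overline{x}) := \{\phi(\overline{x}, \overline{b}_n) : n < \omega\} \cup \{\lnot \phi(\overline{x}, \overline{b}_{\hat{n}_\alpha}) : \alpha < \lambda\},$$
of cardinality $\lambda$. Its parameter set is contained in $\{\overline{b}_{\hat{m}} : \hat{m} \leq \hat{n}_0\}$, which is internal in $\hat{V}$ and finite in the sense of $\hat{V}$, so by Lemma~\ref{PseudoTypeChar} the type $p(\overline{x})$ is pseudofinite. Finite satisfiability is immediate: given finite $s \subseteq \omega$ and $t \subseteq \lambda$, set $N := \max(s \cup \{0\}) + 1$; since each $\hat{n}_\alpha$ is nonstandard we have $\hat{n}_\alpha \geq N$, so $\overline{a}_N \in \mathbf{j}_{\std}(M)$ satisfies $\phi(\overline{x}, \overline{b}_n)$ for $n \in s$ and $\lnot\phi(\overline{x}, \overline{b}_{\hat{n}_\alpha})$ for $\alpha \in t$.

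To show $p(\overline{x})$ is omitted, suppose towards contradiction that some $\overline{a}^* \in \mathbf{j}_{\std}(M)$ realizes it. By separation in $\hat{V}$ applied to the internal sequence $(\overline{b}_{\hat{m}}: \hat{m} < \hat{\omega})$ with parameter $\overline{a}^*$, the set
$$\hat{S} := \{\hat{m} \in \hat{\omega} : \mathbf{j}(M) \models \hat{\phi}(\overline{a}^*, \overline{b}_{\hat{m}})\}$$
belongs to $\hat{V}$. We have $\omega \subseteq \hat{S}$ while $\hat{n}_\alpha \in \hat{\omega} \setminus \hat{S}$ for each $\alpha < \lambda$. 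By well-ordering in $\hat{V}$, the set $\hat{\omega} \setminus \hat{S}$ has a minimum $\hat{n}_* \in \hat{\omega} \setminus \omega$, and $\hat{n}_* \leq \hat{n}_\alpha$ for every $\alpha$; this nonstandard lower bound contradicts the cofinality of $(\hat{n}_\alpha : \alpha < \lambda)$ above $\omega$. There is no real obstacle in the argument; the only point worth flagging is that the internality of the sequence $(\overline{b}_{\hat{m}})$---a consequence of its being $\mathbf{j}$ of a sequence in $V$---is exactly what lets us extract the witness $\hat{n}_*$ using separation and well-ordering inside $\hat{V}$, at which point it immediately collides with the defining property of $\mbox{lcf}_{\hat{V}}(\omega)$.
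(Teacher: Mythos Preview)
Your proof is correct and follows essentially the same strategy as the paper's: build a pseudofinite ``cut'' type using the order property along the $\mathbf{j}$-image of a witnessing sequence, with positive instances at standard indices and negative instances along the descending cofinal sequence, then derive a contradiction from any realization. The only cosmetic differences are that the paper uses a doubly-indexed family $(\overline{a}^n_m : m < n < \omega)$ and picks the row at a nonstandard $\hat{n}$, whereas you use a single $\omega$-indexed sequence and its $\mathbf{j}$-image directly; and the paper phrases the final contradiction via overflow of the property ``$\phi(b, \hat{\overline{a}}^{\hat n}_{\hat m})$ for all $\hat m < \hat m_*$'', while you take the minimum of the internal complement $\hat{\omega}\setminus \hat S$---these are equivalent moves.
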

\begin{proof}
	Choose $\phi(x, \overline{y})$ an unstable $T$-formula (we can suppose $x$ is a single variable by Theorem II.2.13 of \cite{ShelahIso}). Let $M \models T$ with $M \in V$.
	
	Choose $(\overline{a}^n_m: m < n < \omega)$ from $M^{|\overline{y}|}$ such that for each $m_* < n < \omega$, $M \models \exists x \bigwedge_{m < m_*} \phi(x, \overline{a}^n_m) \,\, \land \,\, \bigwedge_{m \geq m_*} \lnot \phi(x, \overline{a}^n_m)$. Let $(\hat{\overline{a}}^{\hat{n}}_{\hat{m}}: \hat{m} < \hat{n} < \omega) = \mathbf{j}((\overline{a}^n_m: m < n < \omega))$. 
	
	Let $\hat{n} < \hat{\omega}$ be nonstandard. Let $(\hat{c}_\alpha: \alpha < \lambda)$ be a decreasing sequence from $\hat{\omega}$ with $\hat{c}_0 = \hat{n}$, which is cofinal above $\omega$; this is possible by the definition of $\lambda$.
	
	Let $p(x)$ be the pseudofinite type over $\mathbf{j}_{\std}(M)$ defined by: $p(x) = \{\phi(x, \hat{\overline{a}}^{\hat{n}}_{i}): i < \omega\} \cup \{\lnot \phi(x, \hat{\overline{a}}^{\hat{n}}_{\hat{c}_\alpha}): \alpha < \lambda\}$. It suffices to show $p(x)$ is omitted by $\mathbf{j}_{\std}(M)$; so suppose towards a contradiction $b \in \mathbf{j}_{\std}(M)$ realized it. Let $Q(\hat{m}_*)$ be the property: for all $\hat{m} < \hat{m}_*$, $\phi(b, \hat{\overline{a}}^{\hat{n}}_{\hat{m}})$. $Q(i)$ holds for all $i < \omega$, but since $(\hat{c}_\alpha: \alpha < \lambda)$ is cofinal above $\omega$, $Q(\hat{m}_*)$ fails for all nonstandard $\hat{m}_*$. This contradicts overflow.
\end{proof}

We have thus proven almost all of the positive reductions in Theorems~\ref{InterpOnKeisler1}, \ref{InterpOnKeisler2}, \ref{InterpOnKeisler3}, except that in Theorem~\ref{InterpOnKeisler3}(C) we only have the result so far when $T_0$ is $\omega$-stable. To pass to superstable, we need the following:

\begin{theorem}\label{KeislerStableLemma0p5}
	Suppose $V \models ZFC^-$ is transitive, and $\mathbf{j}: V \preceq \hat{V}$ is $\omega$-nonstandard, and suppose $T \in V$ is a countable unstable theory. If $\hat{V}$ $\aleph_1$-pseudosaturates $T$, then $\hat{V}$ is $2^\omega$-rich.
\end{theorem}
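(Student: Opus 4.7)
The plan is to split on the dichotomy of Theorem~\ref{NonStableDichotomy}: since $T$ is unstable, either $T$ has $IP$ or $T$ has $SOP_2$, and I will handle each case separately.

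\textbf{The $IP$ case.} I will fix $M \models T$ in $V$, a formula $\phi(x,\overline{y})$ with $IP$, and a sequence $(\overline{a}_n : n < \omega) \in V$ in $M$ witnessing $IP$; set $(\hat{\overline{a}}_{\hat{n}} : \hat{n} < \hat{\omega}) = \mathbf{j}(\overline{a}_n : n < \omega)$. Given $\eta \in 2^\omega$, I would form the partial type $p(x) = \{\phi(x, \hat{\overline{a}}_n)^{\eta(n)} : n < \omega\}$ over $\mathbf{j}_{\std}(M)$, where $\phi^1 = \phi$ and $\phi^0 = \lnot\phi$. Consistency of $p(x)$ follows from $IP$ applied (via elementarity) to finite subsets of $\omega \subseteq \hat{\omega}$. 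The parameter set $\{\hat{\overline{a}}_n : n < \omega\}$ is pseudofinite, being contained in $\{\hat{\overline{a}}_{\hat{m}} : \hat{m} < \hat{n}_*\} \in \hat{V}$ for any nonstandard $\hat{n}_* < \hat{\omega}$, so by Lemma~\ref{PseudoTypeChar} $p(x)$ itself is pseudofinite. Then $\aleph_1$-pseudosaturation of $T$ yields a realization $b \in \mathbf{j}_{\std}(M)$, and inside $\hat{V}$ I would define $\hat{\eta} \in 2^{\hat{\omega}}$ by $\hat{\eta}(\hat{n}) := 1 \Leftrightarrow \mathbf{j}(M) \models \phi(b, \hat{\overline{a}}_{\hat{n}})$; this satisfies $\hat{\eta}(n) = \eta(n)$ for each $n < \omega$.

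\textbf{The $SOP_2$ case.} Since $SOP_2$-configurations are tree-like rather than independent, the direct encoding of $\eta$ from the previous case is unavailable, so I will take an indirect route. Theorem~\ref{SOP2Max} tells us that $\hat{V}$ fails to $\mathfrak{p}_{\hat{V}}^+$-pseudosaturate $T$; combined with our hypothesis that $\hat{V}$ does $\aleph_1$-pseudosaturate $T$, this forces $\mathfrak{p}_{\hat{V}} \geq \aleph_1$. Now, given $\eta \in 2^\omega$, I would fix any nonstandard $\hat{n} < \hat{\omega}$ and consider the partial type
\[
p(x) \;=\; \{x \in 2^{\hat{n}}\} \,\cup\, \{x(n) = \eta(n) : n < \omega\}
\]
over $\hat{V}$. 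Its parameters all lie in $\hat{n} \cup \{\hat{n}\}$, hence are pseudofinite; it is countable; and it contains the ``$x \in \hat{X}$" clause with $\hat{X} := 2^{\hat{n}}$ finite in $\hat{V}$. Theorem~\ref{localSaturation} then supplies a realization $\hat{\eta}' \in 2^{\hat{n}}$, and any extension inside $\hat{V}$ to $\hat{\eta} \in 2^{\hat{\omega}}$ witnesses $2^\omega$-richness.

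\textbf{Main obstacle.} The harder half is the $SOP_2$ case, because the tree property of $SOP_2$ does not provide the ``independent'' parameters that made the $IP$ case transparent. The key move is to use the maximality of $SOP_2$ (Theorem~\ref{SOP2Max}) as a lever: $\aleph_1$-pseudosaturation of an $SOP_2$ theory must already propagate up to $\mathfrak{p}_{\hat{V}} \geq \aleph_1$, after which the local pseudosaturation machinery (Theorem~\ref{localSaturation}) applied inside $\hat{V}$ itself, rather than inside $\mathbf{j}_{\std}(M)$, produces the desired function directly. Everything else is routine manipulation.
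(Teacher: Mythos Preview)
Your proof is correct and follows the same overall strategy as the paper: split on Theorem~\ref{NonStableDichotomy}, use Theorem~\ref{SOP2Max} to get $\mathfrak{p}_{\hat{V}} \geq \aleph_1$ in the $SOP_2$ case, and use a realization of an $IP$-pattern type in the other. The only noteworthy difference is in the $IP$ case: the paper indexes its $IP$ witnesses by $2^{<\omega}$, realizes the type $\{\phi(x,\hat b_s): s\subseteq\eta\}\cup\{\lnot\phi(x,\hat b_s): s\not\subseteq\eta\}$, and then needs an overflow argument to check that $\{\hat s\in 2^{<\hat n}:\phi(a,\hat b_{\hat s})\}$ is a chain for some nonstandard $\hat n$; your $\omega$-indexed version lets you define $\hat\eta\in 2^{\hat\omega}$ directly in $\hat V$ by comprehension from the realization $b$, which is a bit cleaner. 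In the $SOP_2$ case you invoke Theorem~\ref{localSaturation} on a type explicitly containing ``$x\in 2^{\hat n}$'', whereas the paper cites Theorem~\ref{localSaturation2}(A); these amount to the same thing.
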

\begin{proof}
Either $T$ has $SOP_2$ or else $T$ has $IP$, by Theorem~\ref{NonStableDichotomy}. If $T$ has $SOP_2$, then $\mathfrak{p}_{\hat{V}} \geq \aleph_1$ by Theorem~\ref{SOP2Max}, and hence $\hat{V}$ is $2^\omega$-rich by Theorem~\ref{localSaturation2}(A). Suppose on the other hand $T$ has $IP$; choose a formula $\phi(\overline{x}, \overline{y})$ of $T$ with the independence property. For notational convenience we write it as $\phi(x, y)$.

Choose $M \models T$ and $(b_s: s \in 2^{<\omega})$ from $M$, such that $M, (b_s: s \in 2^{<\omega}) \in V$, and for all disjoint $u, v \subseteq 2^{<\omega}$, $\{\phi(x, b_s): s \in u\} \cup \{\lnot \phi(x, b_s): s \in v\}$ is consistent. Let $(\hat{b}_{\hat{s}}: \hat{s} \in 2^{<\hat{\omega}}) = \mathbf{j}(b_s: s \in 2^{<\omega})$.

Now let $\eta \in 2^\omega$ be given. Let $X_0 = \{s \in 2^{<\omega}: s \subseteq \eta\}$ and let $X_1 = \{s \in 2^{<\omega}: s \not \subseteq \eta\}$. Let $p(x) = \{\phi(x, \hat{b}_s): s \in X_0\} \cup \{\lnot \phi(x, \hat{b}_s): s \in X_1\}$. Then $p(x)$ is a countable, consistent partial type over $\mathbf{j}_{\std}(M)$; and $p(x)$ is pseudofinite, since it is a subset of $(\hat{b}_{\hat{s}}: \hat{s} \in 2^{<\hat{n}})$ for any nonstandard $\hat{n}< \hat{\omega}$. Hence $p(x)$ is realized by some $a \in \mathbf{j}_{\std}(M)$. By overflow, we can find some $\hat{n}< \hat{\omega}$ nonstandard such that $\bigcup \{\hat{s} \in 2^{<\hat{n}}: \phi(a, \hat{b}_{\hat{s}})\}$ is an element $\hat{s}_* \in 2^{\hat{n}}$. Let $\hat{\eta} \in 2^{\hat{\omega}}$ extend $\hat{s}_*$; then $\hat{\eta}(n) = \hat{s}_*(n)= \eta(n)$ for each $n < \omega$.  
\end{proof}

\noindent \textbf{Status Update.} We have proved all of the positive reductions in Theorems~\ref{InterpOnKeisler1}, \ref{InterpOnKeisler2}, \ref{InterpOnKeisler3}. 

\vspace{1 mm}

Next, we finish the proof of Theorem~\ref{InterpOnKeisler1}. We will need an ultrafilter construction.  Shelah's proof in \cite{ShelahIso} goes through a precursor to the Existence Theorem of \cite{DividingLine}. We will give a streamlined treatment in terms of complete Boolean algebras.

% Since we are also considering the orders $\trianglelefteq^\times_\kappa$, $\trianglelefteq^*_\kappa$ for $\kappa > \aleph_1$, we will need to slightly generalize his construction, by arranging that our ultrafilter is $\kappa$-good (i.e. $\lambda^+$-good for all $\lambda < \kappa$).

\begin{definition}
Suppose $\mathcal{B}$ is a complete Boolean algebra. Then $(\omega, <)^{\mathcal{B}}$, the $\mathcal{B}$-valued ultrapower of $\omega$, is a full $\mathcal{B}$-valued model of $\mbox{Th}(\omega, <)$ defined as follows. Its domain is the set of all partitions of $\mathbf{n}$ by $\omega$, that is, the set of all $\mathbf{n}: \omega \to \mathcal{B}$ such that for all $n < m$, $\mathbf{n}(n) \wedge \mathbf{n}(m) = 0$, and such that $\bigvee_n \mathbf{n}(n) = 1$. We have $\|\mathbf{n} < \mathbf{m}\|_{\omega^{\mathcal{B}}} = \bigvee_{n < m} \mathbf{n}(n) \wedge \mathbf{m}(m)$, and similarly for $\leq, =$, etc. If $\mathcal{U}$ is an ultrafilter on $\mathcal{B}$, then we view $(\omega, <)^{\mathcal{B}}/\mathcal{U}$ as an elementary extension of $(\omega, <)$. See \cite{BVModelsUlrich} for more details.
\end{definition}
\begin{definition}\label{KeislerStableLemma3}
	Suppose $\mathcal{U}$ is an $\aleph_1$-incomplete ultrafilter on the complete Boolean algebra $\mathcal{B}$. Then define $\mu_{\mathcal{U}} = \mu_{(\omega, <)^{\mathcal{B}}/\mathcal{U}}$ and define $\mbox{lcf}_{\mathcal{U}}(\omega) = \mbox{lcf}_{(\omega, <)^{\mathcal{B}}/\mathcal{U}}(\omega)$.
\end{definition}

\begin{remark}
$\mbox{lcf}_{\mathcal{U}}(\omega)  \leq \mu_{\mathcal{U}} \leq |\mathcal{B}|$, using that whenever $\mathcal{B}$ is an infinite complete Boolean algebra, then $|\mathcal{B}|^{\aleph_0} = \mathcal{B}$ \cite{CompleteBooleanAlgebraCard}.
\end{remark}

The following theorem and corollary will show that these are robust properties of $\mathcal{U}$.

\begin{theorem}\label{EndExtension0}
	Suppose $\mathcal{B}$ is a complete Boolean algebra, and $(\omega, <) \preceq (\mathbf{L}, <)$ is a $\mathcal{B}$-valued elementary extension such that $(\mathbf{L}, <)$ is $\aleph_1$-saturated. Then there is a unique embedding $\mathbf{i}: (\omega, <)^{\mathcal{B}} \preceq (\mathbf{L}, <)$. The image of $\mathbf{i}$ consists exactly of those $\mathbf{n} \in \mathbf{L}$ such that $\bigvee_{n < \omega} \|\mathbf{n} = n\|_{\mathbf{L}} = 1$. Thus, if $\mathbf{n}$ is in the image of $\mathbf{i}$ and $\|\mathbf{m} \leq \mathbf{n}\|_{\mathbf{L}} = 1$, then $\mathbf{m}$ is in the image of $\mathbf{i}$.
\end{theorem}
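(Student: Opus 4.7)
The plan is to realize each partition $\mathbf{n} \in (\omega, <)^{\mathcal{B}}$ by a unique element of $\mathbf{L}$, namely the one whose Boolean values $\|\cdot = k\|_{\mathbf{L}}$ agree with $\mathbf{n}(k)$. I begin with uniqueness: if $\mathbf{a}, \mathbf{b} \in \mathbf{L}$ both satisfy $\|\mathbf{a} = k\|_{\mathbf{L}} = \|\mathbf{b} = k\|_{\mathbf{L}}$ for each $k < \omega$ and $\bigvee_k \|\mathbf{a} = k\|_{\mathbf{L}} = 1$, then since the sentence $x = k \wedge y = k \to x = y$ is logically valid, axiom~(3) gives $\|\mathbf{a} = k\|_{\mathbf{L}} \wedge \|\mathbf{b} = k\|_{\mathbf{L}} \leq \|\mathbf{a} = \mathbf{b}\|_{\mathbf{L}}$ for every $k$, so $\|\mathbf{a} = \mathbf{b}\|_{\mathbf{L}} \geq \bigvee_k \|\mathbf{a} = k\|_{\mathbf{L}} = 1$, and axiom~(7) forces $\mathbf{a} = \mathbf{b}$. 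Any embedding $\mathbf{i}$ must send $\mathbf{n}$ to such an element, so uniqueness of $\mathbf{i}$ is immediate.

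For existence, fix $\mathbf{n} \in (\omega, <)^{\mathcal{B}}$ and apply Theorem~\ref{Compactness} with $X = \omega \cup \{\mathbf{n}^*\}$ (for a new symbol $\mathbf{n}^*$), $\Gamma = \mathcal{L}(\omega) \cup \{``\mathbf{n}^* = k": k < \omega\}$, and $F_0 = F_1 = F$, where $F \restriction \mathcal{L}(\omega) = \|\cdot\|_{(\omega, <)}$ and $F(``\mathbf{n}^* = k") = \mathbf{n}(k)$. Hypothesis~(B) is verified as in the proof of Lemma~\ref{SatAndPseudo3First}: given $\mathbf{c} \in \mathcal{B}_+$, pick an ultrafilter $\mathcal{U} \ni \mathbf{c}$; since $\mathbf{n}$ is a partition of $1$, there is a unique $k_0$ with $\mathbf{n}(k_0) \in \mathcal{U}$, and taking $M = (\omega, <)$ with $\tau(\mathbf{n}^*) = k_0$ and $\tau \restriction \omega$ the identity works. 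This produces a full $\mathcal{B}$-valued elementary extension $\mathbf{M}_1 \succeq (\omega, <)$ containing $\mathbf{n}^*$ with $\|\mathbf{n}^* = k\|_{\mathbf{M}_1} = \mathbf{n}(k)$, which I may take countable by downward L\"owenheim--Skolem for full Boolean-valued models from \cite{BVModelsUlrich}. Applying $\aleph_1$-saturation of $\mathbf{L}$ to the inclusion $(\omega, <) \preceq \mathbf{L}$ and $\mathbf{M}_1 \succeq (\omega, <)$ yields an elementary $f: \mathbf{M}_1 \preceq \mathbf{L}$ extending the inclusion; set $\mathbf{i}(\mathbf{n}) := f(\mathbf{n}^*)$, so that $\|\mathbf{i}(\mathbf{n}) = k\|_{\mathbf{L}} = \mathbf{n}(k)$.

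To see $\mathbf{i}$ is an elementary embedding of Boolean-valued models, I note that in any full $\mathcal{B}$-valued $\mathbf{N} \succeq (\omega, <)$, for any tuple $\mathbf{a}_1, \ldots, \mathbf{a}_r \in \mathbf{N}$ with each $\bigvee_k \|\mathbf{a}_j = k\|_{\mathbf{N}} = 1$ and any formula $\phi$,
\begin{equation*}
\|\phi(\mathbf{a}_1, \ldots, \mathbf{a}_r)\|_{\mathbf{N}} = \bigvee_{(k_1, \ldots, k_r):\, (\omega, <) \models \phi(k_1, \ldots, k_r)}\; \bigwedge_{j \leq r} \|\mathbf{a}_j = k_j\|_{\mathbf{N}}.
\end{equation*}
This follows by iterated substitution using the Leibniz identity $\|\mathbf{a}_j = k\|_{\mathbf{N}} \wedge \|\phi(\ldots \mathbf{a}_j \ldots)\|_{\mathbf{N}} = \|\mathbf{a}_j = k\|_{\mathbf{N}} \wedge \|\phi(\ldots k \ldots)\|_{\mathbf{N}}$ (an instance of axiom~(3) applied to the tautology $x = y \to (\phi(x) \leftrightarrow \phi(y))$), combined with the infinite distributivity of $\wedge$ over $\bigvee$ in every complete Boolean algebra, together with $\|\phi(k_1, \ldots, k_r)\|_{\mathbf{N}} \in \{0, 1\}$ inherited from $(\omega, <)$ by elementarity. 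Applying the formula in both $(\omega, <)^{\mathcal{B}}$ and $\mathbf{L}$, the matching atomic values $\mathbf{n}_j(k) = \|\mathbf{i}(\mathbf{n}_j) = k\|_{\mathbf{L}}$ yield $\|\phi(\mathbf{n}_1, \ldots, \mathbf{n}_r)\|_{(\omega, <)^{\mathcal{B}}} = \|\phi(\mathbf{i}(\mathbf{n}_1), \ldots, \mathbf{i}(\mathbf{n}_r))\|_{\mathbf{L}}$.

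For the image characterization: if $\mathbf{m} = \mathbf{i}(\mathbf{n})$ then $\bigvee_k \|\mathbf{m} = k\|_{\mathbf{L}} = \bigvee_k \mathbf{n}(k) = 1$; conversely, given $\mathbf{m} \in \mathbf{L}$ with $\bigvee_k \|\mathbf{m} = k\|_{\mathbf{L}} = 1$, the assignment $\mathbf{n}(k) := \|\mathbf{m} = k\|_{\mathbf{L}}$ defines a partition of $1$ (pairwise disjointness follows from elementarity applied to $(\omega, <) \models \forall y(y = k \to y \neq k')$ for distinct $k, k'$), so $\mathbf{m} = \mathbf{i}(\mathbf{n})$ by uniqueness. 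The initial-segment property follows: if $\mathbf{m} \in \im(\mathbf{i})$ and $\|\mathbf{m}' \leq \mathbf{m}\|_{\mathbf{L}} = 1$, then
\begin{equation*}
1 = \bigvee_k \|\mathbf{m} = k\|_{\mathbf{L}} \leq \bigvee_k \|\mathbf{m}' \leq k\|_{\mathbf{L}} = \bigvee_k \bigvee_{j \leq k} \|\mathbf{m}' = j\|_{\mathbf{L}} = \bigvee_j \|\mathbf{m}' = j\|_{\mathbf{L}},
\end{equation*}
where the middle equality uses elementarity applied to $(\omega, <) \models \forall y(y \leq k \leftrightarrow \bigvee_{j \leq k} y = j)$ for each fixed $k$. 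The chief technical subtlety is securing countability of $\mathbf{M}_1$ via downward L\"owenheim--Skolem for full Boolean-valued models; everything else is routine Boolean-valued bookkeeping.
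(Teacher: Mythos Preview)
Your proof is correct and follows essentially the same approach as the paper: realize each partition $\mathbf{n}$ by an element of $\mathbf{L}$ via $\aleph_1$-saturation, then verify uniqueness and the image characterization. Your uniqueness argument is in fact cleaner than the paper's (you avoid the maximal-antichain bookkeeping by directly bounding $\|\mathbf{a}=\mathbf{b}\|$ below by $\bigvee_k \|\mathbf{a}=k\|$), and you spell out the elementarity check and the existence step via Theorem~\ref{Compactness} where the paper simply asserts these.
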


\begin{proof}
	Given $\mathbf{n} \in (\omega, <)^{\mathcal{B}}$, we can by $\aleph_1$-saturation find $\mathbf{m} \in \mathbf{L}$ such that each $\|\mathbf{m}= n\|_{\mathbf{L}} = \mathbf{n}(n)$. I claim that this specifies $\mathbf{m}$ uniquely, so that we can set $\mathbf{i}(\mathbf{n}) = \mathbf{m}$. Indeed, suppose $\mathbf{m}'$ were another element of $\mathbf{L}$ with each $\|\mathbf{m}' = n\|_{\mathbf{L}} = \mathbf{n}(n)$. Then $\bigvee_n \|\mathbf{m} = n\|_{\mathbf{L}} = \bigvee_n \|\mathbf{m}' = n\| = 1$, so we can choose a maximal antichain $\mathbf{C}$ of $\mathcal{B}$ such that for each $\mathbf{c} \in \mathbf{C}$, there are $m(\mathbf{c}), m'(\mathbf{c}) < \omega$ with $\mathbf{c} \leq \|\mathbf{m} = m(\mathbf{c}) \land \mathbf{m}' = m'(\mathbf{c})\|_{\mathbf{L}}$. But then each $m(\mathbf{c}) = m'(\mathbf{c})$, as otherwise $\mathbf{c} \leq \mathbf{n}(m(\mathbf{c})) \wedge \mathbf{n}(m'(\mathbf{c})) =0$. Thus each $\mathbf{c} \leq \|\mathbf{m} = \mathbf{m}'\|_{\mathbf{L}}$, and this happens on a maximal antichain. Thus $\|\mathbf{m} = \mathbf{m}'\|_{\mathbf{L}} = 1$, and so $\mathbf{m} = \mathbf{m}'$.
	
	It is simple to see that $\mathbf{i}$ is an elementary embedding, and that we had no choice. Moreover, if $\mathbf{m} \in \mathbf{L}$ has $\bigvee_n \|\mathbf{m}  = n\|_{\mathbf{L}} = 1$, then we can define $\mathbf{n} \in (\omega, <)^{\mathcal{B}}$ with each $\mathbf{n}(n) = \|\mathbf{m} = n\|_{\mathbf{L}}$, so $\mathbf{i}(\mathbf{n}) = \mathbf{m}$.
	
	The final claim follows easily, since we have the identity: $\bigvee_n \|\mathbf{n} = n\|_{\mathbf{L}} = 1$ if and only if $\bigvee_n \|\mathbf{n} \leq n\|_{\mathbf{L}} = 1$.
\end{proof}

\begin{corollary}\label{EndExtension}
	Suppose $\mathcal{B}$ is a complete Boolean algebra and $\mathcal{U}$ is an ultrafilter on $\mathcal{B}$. Suppose $(\mathbf{L}, <) \succeq (\omega, <)$ is $\aleph_1$-saturated. Then $(\mathbf{L}, <)/\mathcal{U}$ is an end extension of $(\omega, <)^{\mathcal{B}}/\mathcal{U}$. Hence $\mu_{(\mathbf{L}, <)/\mathcal{U}} = \mu_{\mathcal{U}}$ and $\mbox{lcf}_{(\mathbf{L}, <)/\mathcal{U}} = \mbox{lcf}_{\mathcal{U}}(\omega)$.
\end{corollary}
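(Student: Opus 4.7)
The plan is to apply Theorem~\ref{EndExtension0} to produce the embedding at the Boolean-valued level and then pass to $\mathcal{U}$-quotients. Theorem~\ref{EndExtension0} gives a unique elementary embedding $\mathbf{i}:(\omega,<)^{\mathcal{B}} \preceq (\mathbf{L},<)$, which descends to an elementary embedding $[\mathbf{i}]_{\mathcal{U}}:(\omega,<)^{\mathcal{B}}/\mathcal{U} \preceq (\mathbf{L},<)/\mathcal{U}$; I will identify $(\omega,<)^{\mathcal{B}}/\mathcal{U}$ with its image. What remains is to check (i) that this inclusion is an end extension, and (ii) that the invariants $\mu$ and $\mbox{lcf}(\omega)$ are preserved.

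For (i), suppose $[\mathbf{m}]_{\mathcal{U}} \leq [\mathbf{i}(\mathbf{n})]_{\mathcal{U}}$, and set $\mathbf{c} = \|\mathbf{m} \leq \mathbf{i}(\mathbf{n})\|_{\mathbf{L}} \in \mathcal{U}$. The idea is to replace $\mathbf{m}$ by a witness that is bounded by $\mathbf{i}(\mathbf{n})$ with Boolean value $1$. Since $\min$ is definable from $<$ and $\mathbf{L}$ is full (being $\aleph_1$-saturated), there is $\mathbf{m}' \in \mathbf{L}$ with $\|\mathbf{m}' = \min(\mathbf{m}, \mathbf{i}(\mathbf{n}))\|_{\mathbf{L}} = 1$. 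Unpacking the definition of $\min$ one then reads off $\|\mathbf{m}' \leq \mathbf{i}(\mathbf{n})\|_{\mathbf{L}} = 1$ and $\mathbf{c} \leq \|\mathbf{m}' = \mathbf{m}\|_{\mathbf{L}}$. The downward-closure clause of Theorem~\ref{EndExtension0} then places $\mathbf{m}'$ in the image of $\mathbf{i}$, and $[\mathbf{m}']_{\mathcal{U}} = [\mathbf{m}]_{\mathcal{U}}$, as required. This is the technical heart of the argument, and the main place where fullness of $\mathbf{L}$ (inherited from $\aleph_1$-saturation) is used.

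For (ii), I may assume $\mathcal{U}$ is $\aleph_1$-incomplete, as is implicit for $\mu_{\mathcal{U}}$ and $\mbox{lcf}_{\mathcal{U}}(\omega)$ to be defined (Definition~\ref{KeislerStableLemma3}); splitting $1$ as $\bigvee_n \mathbf{c}_n$ with each $\mathbf{c}_n \notin \mathcal{U}$ and setting $\mathbf{n}(n) = \mathbf{c}_n$ produces a nonstandard element of $(\omega,<)^{\mathcal{B}}/\mathcal{U}$. Both preservation claims then follow from a general fact about end extensions of linear orders with $(\omega,<)$ as a common proper initial segment. For $\mu$: any initial segment of $(\omega,<)^{\mathcal{B}}/\mathcal{U}$ properly containing $\omega$ is an initial segment of $(\mathbf{L},<)/\mathcal{U}$ by (i), while any initial segment $L_0$ of $(\mathbf{L},<)/\mathcal{U}$ properly containing $\omega$ either contains a nonstandard element of $(\omega,<)^{\mathcal{B}}/\mathcal{U}$ (and so bounds an initial segment of $(\omega,<)^{\mathcal{B}}/\mathcal{U}$ of size $\leq |L_0|$ properly containing $\omega$) or contains all of $(\omega,<)^{\mathcal{B}}/\mathcal{U}$; both cases yield $|L_0| \geq \mu_{\mathcal{U}}$. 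For $\mbox{lcf}$: a descending sequence in $(\omega,<)^{\mathcal{B}}/\mathcal{U}$ cofinal above $\omega$ remains cofinal in $(\mathbf{L},<)/\mathcal{U}$, since by (i) any element of $(\mathbf{L},<)/\mathcal{U} \setminus (\omega,<)^{\mathcal{B}}/\mathcal{U}$ dominates $(\omega,<)^{\mathcal{B}}/\mathcal{U}$; conversely a descending cofinal sequence in $(\mathbf{L},<)/\mathcal{U}$ must eventually enter $(\omega,<)^{\mathcal{B}}/\mathcal{U}$ (else it could not go below a nonstandard element there), and its tail then lies in $(\omega,<)^{\mathcal{B}}/\mathcal{U}$ and has the same cardinality.
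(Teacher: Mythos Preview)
Your proof is correct and follows essentially the same route as the paper: use Theorem~\ref{EndExtension0} to embed $(\omega,<)^{\mathcal{B}}$ into $\mathbf{L}$, then for the end-extension claim replace an element below the image by $\min$ (available by fullness) and invoke the downward-closure clause. The paper simply asserts that the $\mu$ and $\mbox{lcf}$ identities follow immediately from the end-extension property, whereas you spell out the easy order-theoretic verification; otherwise the arguments coincide.
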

\begin{proof}
	It suffices to show the first claim. By the previous theorem, we can suppose after relabeling $(\omega, <)^{\mathcal{B}} \preceq (\mathbf{L}, <)$. Thus we can also view $(\omega,<)^{\mathcal{B}}/\mathcal{U} \preceq (\mathbf{L}, <)/\mathcal{U}$, and so the statement of the corollary makes sense.
	
	Suppose $\mathbf{n} \in \mathbf{L}$ and $\mathbf{m} \in (\omega, <)^{\mathcal{B}}$ with $\|\mathbf{n} \leq \mathbf{m}\|_{\mathbf{L}} \in \mathcal{U}$. Note that $\mbox{min}(\mathbf{n}, \mathbf{m})$ makes sense by fullness of $\mathbf{L}$, and that $[\mbox{min}(\mathbf{n}, \mathbf{m})]_{\mathcal{U}} = [\mathbf{n}]_{\mathcal{U}}$.  By Corollary~\ref{EndExtension0}, $\mbox{min}(\mathbf{n}, \mathbf{m}) \in (\omega,<)^{\mathcal{B}}$, so $[\mathbf{n}]_{\mathcal{U}} \in (\omega,<)^{\mathcal{B}}/\mathcal{U}$.
\end{proof}

Thus:

\begin{corollary}\label{KeislerStableLemma5}
	Suppose $\mathcal{U}$ is an ultrafilter on the complete Boolean algebra $\mathcal{B}$ and $T$ is a complete countable theory. If $T$ does not have the finite cover property, then $\mathcal{U}$ $\lambda^+$-saturates $T$ for all $\lambda$.
	
	Suppose additionally $\mathcal{U}$ is $\aleph_1$-incomplete. If $T$ is stable with the finite cover property, then $\mathcal{U}$ $\lambda^+$-saturates $T$ if and only if $\lambda \leq \mu_{\mathcal{U}}$. If $T$ is unstable then $\mathcal{U}$ does not $\mbox{lcf}_{\mathcal{U}}(\omega)^+$-saturate $T$.
\end{corollary}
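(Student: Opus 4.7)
The strategy is to translate the $\mathcal{U}$-saturation statement into a pseudosaturation statement about $\hat{V} := \mathbf{V}/\mathcal{U}$ (where $\mathbf{V}$ is a suitably saturated full $\mathcal{B}$-valued extension of some transitive countable $V \models ZFC^-$ containing $T$), and then apply the computations of $\lambda_{\hat{V}}(T)$ from the previous section. Concretely, I would fix such $V$, a model $M \in V$ of $T$, and $\mathbf{i}: V \preceq \mathbf{V}$ with $\mathbf{V}$ $\lambda^+$-saturated, and let $\mathbf{j}: V \preceq \hat{V}$ be the composition with specialization. By Corollary~\ref{KeislerAndInterp2}, $\mathcal{U}$ $\lambda^+$-saturates $T$ iff $\hat{V}$ $\lambda^+$-pseudosaturates $T$. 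Applying Lemma~\ref{SatAndPseudo3First} to $(\omega,<) \in V$ shows that $\mathbf{i}_{\std}(\omega)$ is a $\lambda^+$-saturated $\mathcal{B}$-valued extension of $(\omega,<)$; Theorem~\ref{EndExtension0} canonically embeds $(\omega,<)^{\mathcal{B}}$ into $\mathbf{i}_{\std}(\omega)$, and Corollary~\ref{EndExtension} upgrades this to an end-extension $\hat{\omega} \supseteq (\omega,<)^{\mathcal{B}}/\mathcal{U}$, yielding the identifications $\mu_{\hat{V}} = \mu_{\mathcal{U}}$ and $\mbox{lcf}_{\hat{V}}(\omega) = \mbox{lcf}_{\mathcal{U}}(\omega)$.

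For the NFCP clause, $T$ is stable by Fact~\ref{Fact1}, so I would invoke Theorem~\ref{InterpOnStable3}. When $\mathcal{U}$ is $\aleph_1$-incomplete, $\hat{V}$ is $\omega$-nonstandard, and a direct descent argument (parallel to the proof of Lemma~\ref{SatAndPseudo3First}, but specializing by $\mathcal{U}$ at the end) shows that $\hat{V}$ inherits $\aleph_1$-saturation from $\mathbf{V}$; Corollary~\ref{SaturationCharacterization} then gives $\mathfrak{p}_{\hat{V}} \geq \aleph_1$, so condition (A) of Theorem~\ref{InterpOnStable3} is satisfied and its NFCP conclusion gives $\lambda^+$-pseudosaturation of $T$ for every $\lambda$. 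When $\mathcal{U}$ is $\aleph_1$-complete, the same descent step yields that $\mathbf{j}_{\std}(M)$ is already $\lambda^+$-saturated in the classical sense, which subsumes $\lambda^+$-pseudosaturation.

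For the remaining two clauses I assume $\mathcal{U}$ is $\aleph_1$-incomplete, so $\hat{V}$ is $\omega$-nonstandard and (as above) $\mathfrak{p}_{\hat{V}} \geq \aleph_1$. If $T$ is stable with the finite cover property, Theorem~\ref{InterpOnStable0} gives that $\hat{V}$ fails to $\mu_{\hat{V}}^+$-pseudosaturate $T$, while Theorem~\ref{InterpOnStable3}(A) gives $\mu_{\hat{V}}$-pseudosaturation, so $\hat{V}$ $\lambda^+$-pseudosaturates $T$ precisely in the advertised range after the identification $\mu_{\hat{V}} = \mu_{\mathcal{U}}$. If $T$ is unstable, Theorem~\ref{KeislerStableLemma0} directly exhibits a pseudofinite type of cardinality $\mbox{lcf}_{\hat{V}}(\omega)$ omitted in $\mathbf{j}_{\std}(M)$, which via $\mbox{lcf}_{\hat{V}}(\omega) = \mbox{lcf}_{\mathcal{U}}(\omega)$ yields the final clause. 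The principal obstacle is the descent-of-saturation step in the NFCP paragraph; I expect it to be a routine adaptation of the compactness argument proving Lemma~\ref{SatAndPseudo3First}, where one lifts a countable type over $\hat{V}$ to $\mathbf{V}$ and invokes Theorem~\ref{Compactness} with prescribed Boolean values chosen so that each $\|\phi(\mathbf{x},\mathbf{a})\|_{\mathbf{V}}$ lands in $\mathcal{U}$.
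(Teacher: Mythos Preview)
Your proposal is correct and follows essentially the same route as the paper: pass to $\hat{V} = \mathbf{V}/\mathcal{U}$ via Corollary~\ref{KeislerAndInterp2}, identify $\mu_{\hat{V}}$ and $\mbox{lcf}_{\hat{V}}(\omega)$ with $\mu_{\mathcal{U}}$ and $\mbox{lcf}_{\mathcal{U}}(\omega)$ via Lemma~\ref{SatAndPseudo3First} and Corollary~\ref{EndExtension}, and then read off the three clauses from Theorems~\ref{PseudosaturationLevelsThm}/\ref{InterpOnStable3}, \ref{InterpOnStable0}, and \ref{KeislerStableLemma0}.

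The one place where you make extra work for yourself is in establishing $\mathfrak{p}_{\hat{V}} \geq \aleph_1$ for the NFCP clause. You split on whether $\mathcal{U}$ is $\aleph_1$-complete and, in the incomplete case, propose a descent-of-saturation argument via Theorem~\ref{Compactness}; in the complete case you claim $\mathbf{j}_{\std}(M)$ is outright $\lambda^+$-saturated. The paper avoids all of this by simply noting that \emph{every} ultrafilter on a complete Boolean algebra is $\aleph_1$-good (given an $\aleph_0$-distribution $\mathbf{A}$ in $\mathcal{U}$, set $\mathbf{B}(\{n\}) = \mathbf{A}(\{0,\dots,n\})$ and extend multiplicatively), so $\hat{V}$ is $\aleph_1$-saturated regardless; this immediately yields $\omega$-nonstandardness and $\mathfrak{p}_{\hat{V}}\geq\aleph_1$, and Theorem~\ref{PseudosaturationLevelsThm} finishes uniformly. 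Your descent argument would in effect reprove this fact, so it works, but the case split is unnecessary and your $\aleph_1$-complete claim (that $\lambda^+$-saturation of $\mathbf{V}$ descends to $\mathbf{j}_{\std}(M)$) is not obviously true as stated and is not needed.
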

\begin{proof}
	Suppose $V \models ZFC^-$ is transitive with $T \in V$, and let $\lambda$ be an infinite cardinal, and choose  $\mathbf{i} := V \preceq \mathbf{V}$ with $\mathbf{V}$ $\lambda^+$-saturated. Write $\hat{V} = \mathbf{V}/\mathcal{U}$. By Theorem~\ref{KeislerAndInterp2}, $\mathcal{U}$ $\lambda^+$-saturates $T$ if and only if $\hat{V}$ $\lambda^+$-pseudosaturates $T$. Note also that $\hat{V}$ is $\aleph_1$-saturated, since $\mathcal{U}$ is $\aleph_1$-good.
	
	If $T$ does not have the finite cover property, then we conclude by Theorem~\ref{PseudosaturationLevelsThm} that $\hat{V}$ $\lambda^+$-pseudosaturates $T$, as desired.
	
	Suppose $\mathcal{U}$ is $\aleph_1$-incomplete. Then by Corollary~\ref{EndExtension} and Lemma~\ref{SatAndPseudo3First}, $\mu_{\mathcal{U}} = \mu_{\hat{V}}$ and $\mbox{lcf}_{\mathcal{U}} = \mbox{lcf}_{\hat{V}}$. Thus we conclude by Theorems \ref{KeislerStableLemma0} and \ref{PseudosaturationLevelsThm}.
\end{proof}

One can ask after the situation for $\aleph_1$-complete $\mathcal{U}$; in fact we will prove in \cite{InterpOrders2Ulrich} (Remark 8.13) that if $\mathcal{U}$ is $\aleph_1$-complete, then $\mathcal{U}$ $\lambda^+$-saturates every stable theory for every $\lambda$, so it is reasonable to set $\mu_{\mathcal{U}} = \infty$.

Now to finish the proof of Theorem~\ref{InterpOnKeisler1} it suffices to verify the following theorem. The argument is based on Theorem 3.12 from \cite{ShelahIso} Chapter 6. To handle the orders $\trianglelefteq^\times_\kappa, \trianglelefteq^*_\kappa$ for $\kappa > \aleph_1$, we arrange additionally that the ultrafilter we construct is $\kappa$-good. For ease of notation, we actually arrange that it is $\kappa^+$-good, which is more than needed.

\begin{remark}\label{GapRemark}
	There is a minor gap in the proof of Theorem 3.12 in \cite{ShelahIso}. Namely, Claim VI.3.18 (4) is false (all further references are to chapter VI). The problem is that the appeal to Claim 3.17 (2) is unjustified unless $D_\beta$ is maximal subject to (ii) alone. It is not hard to come up with counterexamples.
	
	This invalidates Claim 3.19(1), which is used in Claim 3.21, which is used in the proof of Theorem 3.12.
	
	Our proof looks very different due to the Boolean algebra terminology, but this is mostly a matter of presentation. The patch is to delete item (ii) in Claim 3.17, and arrange by hand that the hence clause in Claim 3.19(1) holds at the next stage of the ultrafilter construction. This is what Claim 1 in the following theorem accomplishes.
	
	The above references are all to the \emph{second} edition of Classification Theory. The first edition version is also flawed, but in a different way.
	
%	This need not be the case. For instance, suppose $\beta = 1$ and there are $f \in \mathcal{G}^1$ and $g \in \mathcal{G}_1$, both with domain $\lambda$. Let $D_1'$ be the filter generated by $D=D_0$ and $\{\lambda \backslash g^{-1}(\gamma): \gamma < \lambda\}$. Let $D_1 \supseteq D_1'$ be maximal subject to being supported on $\mbox{FI}_s(\mathcal{G}^* \cup \mathcal{G}_1)$. Let $A = \bigcup_{\gamma < \lambda} f^{-1}(\{\gamma\}) \cap g^{-1}(\{\gamma\})$. Then $A$ is nonzero mod $D_1$, and for each $\gamma< \lambda$, $g^{-1}(\gamma) \cap A \subseteq f^{-1}(\gamma) = 0$ mod $D_1$. This is a counterexample to (4).
	
\end{remark}

\begin{theorem}\label{KeislerOnStableUlt}
	Suppose $\aleph_0 \leq \kappa < \lambda \leq \mu$ are cardinals such that $\mu = \mu^{\kappa}$ and such that $\lambda$ is regular. Then there exists a complete Boolean algebra $\mathcal{B}$ with the $\kappa^+$-c.c. and an $\aleph_1$-incomplete, $\kappa^+$-good ultrafilter $\mathcal{U}$ on $\mathcal{B}$, such that $\mu_{\mathcal{U}} = \mu$ and $\mbox{lcf}_\mathcal{U}(\omega) = \lambda$.
\end{theorem}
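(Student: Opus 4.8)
The plan is to adapt Shelah's construction from \cite{ShelahIso}, Chapter~VI, Section~3, recasting it in terms of complete Boolean algebras as in \cite{BVModelsUlrich} and incorporating the correction flagged in Remark~\ref{GapRemark}. For $\mathcal{B}$ I would take a complete Boolean algebra of cardinality $\mu$ with the countable chain condition (hence the $\kappa^{+}$-c.c.) carrying a family $\langle \bar{\mathbf{a}}^{\zeta} : \zeta < \mu\rangle$ of $\mu$ mutually (finitely) independent countable partitions of $1$ — for instance the regular open algebra $\mathrm{RO}(\omega^{\mu})$ with $\omega$ discrete, which is ccc (a product of separable spaces is ccc) and of cardinality $\mu^{\aleph_{0}} = \mu$ (it has a dense subset of size $\mu$, each regular open set is pinned down by a countable antichain of basic opens, and $\mu^{\aleph_{0}} \le \mu^{\kappa} = \mu$). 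Partition the index set $\mu$ in advance into pairwise disjoint blocks: a block of size $\kappa$ for each goodness task, one coordinate for each slot of an $\mathrm{lcf}$-skeleton, and blocks of size $\mu$ for the predecessor tasks. The mechanism behind everything is that any finite set of conditions, each mentioning only finitely many coordinates of the independent family (or, at a goodness step, one full $\kappa$-block), is jointly satisfiable and hence can always be added to the filter being built.

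I would then construct $\mathcal{U}$ as the union of an increasing chain of filters $\mathcal{U}_{\zeta}$, each generated by fewer than $\mu$ elements, along a bookkeeping of length $\mu$, performing at each stage one of the following tasks, each addressed cofinally: (a) deciding an element $\mathbf{b} \in \mathcal{B}$, so $\mathcal{U}$ becomes an ultrafilter; (b) given a monotone $g : [\kappa]^{<\aleph_{0}} \to \mathcal{U}_{\zeta}$, producing from the next $\kappa$-block a multiplicative refinement $g' \le g$ with range in $\mathcal{U}_{\zeta+1}$, via the usual independent-functions argument, so that (every $\kappa$-distribution being enumerated somewhere) $\mathcal{U}$ is $\kappa^{+}$-good in the sense of \cite{BVModelsUlrich}; (c) at stage $0$, throwing a $\le$-decreasing $\omega$-sequence with meet $0$ into $\mathcal{U}_{0}$, giving $\aleph_{1}$-incompleteness; (d) maintaining a $\le$-decreasing skeleton $(\mathbf{c}_{\eta})_{\eta < \lambda}$ in $(\omega,<)^{\mathcal{B}}$, built one coordinate at a time with commitments forcing each $[\mathbf{c}_{\eta}]_{\mathcal{U}}$ nonstandard and forcing the element $\mathbf{n} \in (\omega,<)^{\mathcal{B}}$ enumerated at the current stage, if it is still permitted to be nonstandard, to lie above a fresh $[\mathbf{c}_{\eta}]_{\mathcal{U}}$; and (e) for that same $\mathbf{n}$, once declared nonstandard, forcing $\mu$-many pairwise $\mathcal{U}$-distinct elements $\le \mathbf{n}$ into existence from a fresh size-$\mu$ block. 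Task (d) gives $\mathrm{lcf}_{\mathcal{U}}(\omega) \le \lambda$; and since $\lambda$ is regular it also gives $\mathrm{lcf}_{\mathcal{U}}(\omega) \ge \lambda$, because any $\le$-decreasing sequence of nonstandard elements $([\mathbf{m}_{i}])_{i<\delta}$ with $\delta < \lambda$ has each $[\mathbf{m}_{i}]$ above some $[\mathbf{c}_{\eta_{i}}]$, hence is bounded below by the nonstandard $[\mathbf{c}_{\eta}]$ with $\eta := \sup_{i}(\eta_{i}+1) < \lambda$, so is not coinitial above $\omega$. Task (e) makes every nonstandard element of $(\omega,<)^{\mathcal{B}}/\mathcal{U}$ have $\ge \mu$ predecessors, so with $\mu_{\mathcal{U}} \le |\mathcal{B}|^{\aleph_{0}} = \mu$ we get $\mu_{\mathcal{U}} = \mu$.

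The one step that is not routine — and is exactly where Shelah's argument has the gap recorded in Remark~\ref{GapRemark} — is the feasibility of task (d): at a stage one must simultaneously add a multiplicative refinement for goodness and insert a new skeleton term $\mathbf{c}_{\eta}$ forced to lie strictly between $\omega$ and $\min(\mathbf{c}_{\eta-1}, \mathbf{n})$ (and below all earlier $\mathbf{c}_{\eta'}$), and one must check that such a term exists and is compatible with the current filter and with the refinement. Shelah's original proof invoked a maximality property here that does not in fact hold; the repair — which I would isolate as a \emph{Claim}, playing the role of the corrected Claim~VI.3.19(1) — is to construct the new term cell by cell inside a coordinate partition of the independent family that has not yet been touched. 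Such an untouched coordinate is available because the coordinates were blocked off in advance and $\mu = \mu^{\kappa}$ makes the $\mu$ available coordinates comfortably cover all of tasks (b), (d) and (e); independence of the fresh partition from everything committed so far then yields the required compatibility. The principal obstacle throughout is precisely this coordination: goodness presses to collapse coordinates of the independent family, whereas the skeleton and predecessor tasks need large untouched reserves sitting below the nonstandard cut — reconciling the two via the disjoint-block allocation, and verifying in the Claim that $\mu = \mu^{\kappa}$ and $\lambda$ regular keep the reserves from running out, is the heart of the proof.
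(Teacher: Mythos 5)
Your architecture is the right one — build an increasing chain of filters on a Boolean algebra carrying a family of mutually independent fresh partitions, interleaving goodness tasks with skeleton-pushing tasks, and repair Shelah's gap by constructing each new skeleton term from an untouched coordinate. This matches the paper's proof in everything except the choice of Boolean algebra, and that choice is where the proposal breaks for $\kappa > \aleph_0$. You take $\mathcal{B} = \mathrm{RO}(\omega^\mu)$, whose independent partitions are \emph{countable}; but the multiplicative-refinement step for a $\kappa$-distribution $\mathbf{A}$ needs a fresh \emph{maximal antichain indexed by $[\kappa]^{<\aleph_0}$} (one defines $\mathbf{B}(s) = \bigvee_{s \subseteq t} \mathbf{A}(t) \wedge \mathbf{c}_t$, which only makes sense when the fresh antichain $(\mathbf{c}_t)_{t \in [\kappa]^{<\aleph_0}}$ is available). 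A ccc algebra has no $\kappa$-sized antichains when $\kappa > \aleph_0$, so the ``usual independent-functions argument'' you invoke does not have the objects it needs; in fact, by the bound $\mathfrak{p}_{\mathcal{U}} \le \mathrm{c.c.}(\mathcal{B})^+$ cited in Section~\ref{SurveyMaxSOP2Sec}, an $\aleph_1$-incomplete ultrafilter on a ccc algebra cannot be $\kappa^+$-good once $\kappa \geq \aleph_2$, so the gap is not merely presentational. Passing a $\kappa$-block of $\omega$-partitions through the Marczewski argument does not help, because one fresh $\kappa$-sized antichain (independent of what has been committed so far) is what's needed, and finitely-independent countable partitions do not assemble into one.

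The fix is exactly what the paper does: use finite partial functions from the index set into $\kappa$ rather than into $\omega$ — i.e.\ $P = \mathrm{Fn}(\mu\lambda, \kappa)$ and $\mathcal{B} = \mathrm{RO}(P)$, or equivalently $\mathrm{RO}(\kappa^{\mu\lambda})$. This has the $\kappa^+$-c.c.\ by the $\Delta$-system lemma (not ccc, but the theorem only requires $\kappa^+$-c.c.), has $|\mathcal{B}| = \mu$ under $\mu^\kappa = \mu$, and each fresh coordinate now carries a $\kappa$-sized maximal antichain that is compatible with every nonzero element of the already-used subalgebra, which is precisely what Claim~2 of the paper's proof consumes. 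With that change your task allocation, your repair of the gap (your ``Claim'', paralleling the paper's Claim~1), your coordinate-counting under $\mu^\kappa = \mu$, and your computations of $\mathrm{lcf}$ and $\mu_{\mathcal{U}}$ (including the observation that regularity of $\lambda$ gives the lower bound on $\mathrm{lcf}$) go through as you describe, and the argument coincides with the paper's. Two small further remarks: your separate task~(e) for manufacturing $\mu$-many predecessors is unnecessary — if the skeleton has order type $\mu\lambda$ (ordinal product), as in the paper, the tail of the skeleton below any nonstandard element already has size $\mu$ — and for $\kappa = \aleph_0$ your $\mathrm{RO}(\omega^\mu)$ is fine, since then $\mathrm{Fn}(\cdot,\kappa) = \mathrm{Fn}(\cdot,\omega)$ anyway.
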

\begin{proof}
	
	Let $\alpha_*$ be the ordinal product $\mu \lambda$; so $\mbox{cof}(\alpha_*) = \lambda$, and for all $\alpha < \alpha_*$, $|\alpha_* \backslash \alpha| = \mu$. For each $\alpha \leq \alpha_*$, let $P_\alpha$ denote the set of all finite partial functions from $\alpha$ to $\kappa$, ordered by reverse inclusion; let $\mathcal{B}_\alpha$ be its Boolean algebra completion (see \cite{Jech} for a discussion of Boolean algebra completions in the context of forcing). 
	
	We recall some relevant terminology; see \cite{BVModelsUlrich} for a fuller discussion. A $\kappa$-distribution in $\mathcal{B}$ is a function $\mathbf{A}: [\kappa]^{<\aleph_0} \to \mathcal{B}$ such that $\mathbf{A}(\emptyset) = 1$ and such that $s \subseteq t$ implies $\mathbf{A}(s) \geq \mathbf{A}(t)$. $\mathbf{A}$ is in $\mathcal{U}$ if each $\mathbf{A}(s) \in \mathcal{U}$. $\mathbf{A}$ is multiplicative if each $\mathbf{A}(s) = \bigwedge_{\alpha \in s} \mathbf{A}(\{\alpha\})$. If $\mathbf{B}$ is another $\kappa$-distribution, then $\mathbf{B}$ refines $\mathbf{A}$ if each $\mathbf{B}(s) \subseteq \mathbf{A}(s)$. An ultrafilter $\mathcal{U}$ on $\mathcal{B}_{\alpha_*}$ is $\kappa^+$-good if and only if every $\kappa$-distribution in $\mathcal{U}$ has a multiplicative refinement in $\mathcal{U}$.

	Note that for each $\alpha \leq \alpha_*$, $P_\alpha$ has the $\kappa^+$-c.c., by the $\Delta$-system lemma; hence also $\mathcal{B}_\alpha$ has the $\kappa^+$-c.c., and every element of $\mathcal{B}_\alpha$ can be written as the join of $\kappa$-many elements from $P_\alpha$. Hence, each $|\mathcal{B}_\alpha| \leq |P_\alpha^\kappa| \leq \mu^\kappa = \mu$, and in particular $|\mathcal{B}_{\alpha_*}| = |\alpha_*| = \mu$. Further, since $\mbox{cof}(\alpha_*) = \lambda > \kappa$, $[\mathcal{B}_{\alpha_*}]^\kappa = \bigcup_{\alpha < \alpha_*} [\mathcal{B}_\alpha]^\kappa$. 
	
	Also, note that for all $\alpha < \alpha' \leq \alpha_*$, $(\omega, <)^{\mathcal{B}_\alpha} \preceq (\omega, <)^{\mathcal{B}_{\alpha'}}$, and so if $\mathbf{n}, \mathbf{m} \in (\omega, <)^{\mathcal{B}_\alpha}$, then $\|\mathbf{n} < \mathbf{m}\|_{(\omega, <)^{\mathcal{B}_\alpha}} = \|\mathbf{n} < \mathbf{m}\|_{(\omega, <)^{\mathcal{B}_{\alpha'}}}$, etc. Thus, we can omit the subscripts without ambiguity in what follows.

	Let $I = \{2 \cdot \alpha: \alpha < \alpha_*\}$ and $J = \{2 \cdot \alpha+1: \alpha < \alpha_*\}$.
	\vspace{1 mm}
	
	\noindent \textbf{Claim 0.} We can find ultrafilters $\mathcal{U}_\alpha$ on $\mathcal{B}_\alpha$ for each $\alpha < \alpha_*$, such that:
	
	\begin{itemize}
		\item[(I)]  For $\alpha < \alpha'$, $\mathcal{U}_\alpha \subseteq \mathcal{U}_{\alpha'}$;
		\item[(II)] For each $\alpha \in I$, there is some $\mathbf{n}_{\alpha} \in (\omega, <)^{\mathcal{B}_{\alpha+1}}$ which is $\mathcal{U}_{\alpha+1}$-nonstandard (i.e. for each $m < \omega$, $\|\mathbf{n}_{\alpha} \geq m\| \in \mathcal{U}_{\alpha+1}$), satisfying that for each $\mathcal{U}_\alpha$-nonstandard $\mathbf{m} \in (\omega, <)^{\mathcal{B}_\alpha}$, $\|\mathbf{n}_\alpha < \mathbf{m}\| \in \mathcal{U}_{\alpha+1}$;
		\item[(III)] For each $\alpha < \alpha_*$ and for each $\kappa$-distribution $\mathbf{A}$ in $\mathcal{U}_\alpha$, there is some $\beta \geq \alpha$ such that $\mathbf{A}$ has a multiplicative refinement in $\mathcal{U}_{\beta+1}$.
	\end{itemize}
	
	Towards proving Claim 0, we prove two additional claims; the first will show that we can handle (II), and the second will show that we can handle each individual $\lambda$-distribution $\mathbf{A}$ in (III). This suffices to prove Claim 0, since there are only $|\mathcal{B}_{\alpha_*}^\kappa| = \mu$ $\kappa$-distributions in total.
	
	\vspace{1 mm}
	
	\noindent \textbf{Claim 1}. Suppose $\alpha \in I$ and $\mathcal{U}_\alpha$ is an ultrafilter on $\mathcal{B}_\alpha$. For each $\gamma < \kappa$ let $\mathbf{c}_\gamma = \{\langle \alpha, \gamma \rangle \} \in P_{\alpha+1}$, so $(\mathbf{c}_\gamma: \gamma < \kappa)$ is a maximal antichain of $\mathcal{B}_{\alpha+1}$. Let $f: \kappa \to \omega$ be surjective, and define $\mathbf{n}_\alpha \in (\omega, <)^{\mathcal{B}_{\alpha+1}}$ via $\|\mathbf{n}_\alpha = m\| = \bigvee_{f(\gamma) = m} \mathbf{c}_\gamma$. Then there is an ultrafilter $\mathcal{U}_{\alpha+1}$ on $\mathcal{B}_{\alpha+1}$ extending $\mathcal{U}_\alpha$, such that $\mathbf{n}_\alpha$ is $\mathcal{U}_{\alpha+1}$-nonstandard, and such that for each $\mathcal{U}_\alpha$-nonstandard $\mathbf{m} \in (\omega, <)^{\mathcal{B}_\alpha}$, $\|\mathbf{n}_\alpha < \mathbf{m}\| \in \mathcal{U}_{\alpha+1}$. 
	
	\begin{proof}Define $\mathcal{X} \subseteq \mathcal{B}_{\alpha+1}$ to be $\mathcal{U}_\alpha \cup \{\|\mathbf{n}_\alpha \geq m\|: m < \omega\} \cup \{\|\mathbf{n}_\alpha < \mathbf{n}\|: \mathbf{n} \in (\omega, <)^{\mathcal{B}_\alpha}\}$.  It suffices to show $\mathcal{X}$ has the finite intersection property, since then any ultrafilter on $\mathcal{B}_{\alpha+1}$ extending $\mathcal{X}$ will be as desired.
	
	So suppose towards a contradiction this were not the case. Then there must be some $m_* < \omega$  and some finite tuple $(\mathbf{m}_i: i < i_*)$ of $\mathcal{U}_{\alpha}$-nonstandard elements from $(\omega, <)^{\mathcal{B}_\alpha}$ and some $\mathbf{a} \in \mathcal{U}_\alpha$, such that $\mathbf{a} \wedge \|\mathbf{n}_\alpha \geq m_*\| \wedge \bigwedge_{i < i_*} \|\mathbf{n}_\alpha < \mathbf{m}_i\| = 0$.
	
	Choose some $\gamma < \kappa$ with $f(\gamma) = m_*$; note $\mathbf{c}_\gamma \leq \|\mathbf{n}_\alpha = m_*\|$. By wedging the preceding equation with $\mathbf{c}_\gamma$, we get that $\mathbf{a}\wedge \mathbf{c}_\gamma \wedge \bigwedge_{i < i_*} \|m_* < \mathbf{m}_i\| = 0$. But $\mathbf{a}\wedge \bigwedge_{i < i_*} \|m_* < \mathbf{m}_i\| \in \mathcal{U}_\alpha$ is nonzero, so we can find $g \in P_\alpha$ below it; then $g \cup \{(\alpha, \gamma)\} = g \wedge \mathbf{c}_\gamma \in P_{\alpha+1}$ is nonzero, a contradiction. Thus we have proven Claim 1.
	\end{proof}
	\noindent \textbf{Claim 2.}  Suppose $\alpha \in J$, and $\mathcal{U}_\alpha$ is an ultrafilter on $\mathcal{B}_\alpha$, and $\mathbf{A}$ is a $\kappa$-distribution in $\mathcal{U}_\alpha$. Then there is an ultrafilter $\mathcal{U}_{\alpha+1}$ on $\mathcal{B}_{\alpha+1}$ extending $\mathcal{U}_\alpha$, such that $\mathbf{A}$ has a multiplicative refinement in $\mathcal{U}_{\alpha+1}$.
	
	\begin{proof}Choose a bijection $\rho: [\kappa]^{<\aleph_0} \to \kappa$. For each $s \in [\kappa]^{<\aleph_0}$, let $\mathbf{c}_s = \{(\alpha, \rho(s))\} \in P_{\alpha+1}$; so $(\mathbf{c}_s: s \in [\kappa]^{<\aleph_0})$ is a maximal antichain of $\mathcal{B}_{\alpha+1}$, and whenever $\mathbf{a} \in \mathcal{B}_\alpha$ is nonzero, then $\mathbf{a} \wedge \mathbf{c}_s$ is nonzero for all $s$. For each $s \in [\lambda]^{<\aleph_0}$, define $\mathbf{B}(s) = \bigvee\{\mathbf{A}(t) \wedge \mathbf{c}_t: s \subseteq t \in [\kappa]^{<\aleph_0}$. 
	
	$\mathbf{B}$ is clearly a $\kappa$-distribution. 
	
	I claim that $\mathbf{B}$ is multiplicative; let $s \in [\kappa]^{<\aleph_0}$. Suppose towards a contradiction $\mathbf{e} := \left(\bigwedge_{\alpha \in s} \mathbf{B}_{\{\alpha\}}\right) \wedge (\lnot \mathbf{B}(s))$ were nonzero. Then we can find $\mathbf{e}' \leq \mathbf{e}$ nonzero, and $(s_\alpha: \alpha \in s)$ a sequence from $[\kappa]^{<\aleph_0}$, such that each $\alpha \in s_\alpha$, and such that $\mathbf{e}' \leq \mathbf{A}(s_\alpha) \wedge \mathbf{c}_{s_\alpha}$ for each $\alpha \in s$. Since $(\mathbf{c}_s: s \in [\kappa]^{<\aleph_0})$ is an antichain this implies $s_\alpha = s_{\alpha'} = t$ say, for all $\alpha, \alpha' \in s$. Visibly then $s\subseteq t$, and so $\mathbf{e}' \leq \mathbf{A}(t) \wedge \mathbf{c}_t$, contradicting that $\mathbf{e}' \wedge \mathbf{B}(s) = 0$. 
	
	I claim that $\mathcal{U}_\alpha \cup \{\mathbf{B}(s): s \in [\kappa]^{<\aleph_0}\}$ has the finite intersection property, which suffices.  So suppose towards a contradiction it did not; then we can find $s \in [\kappa]^{<\aleph_0}$ and $\mathbf{a} \in \mathcal{U}_\alpha$ such that $\mathbf{a} \wedge \mathbf{B}(s) = 0$. But then $\mathbf{a} \wedge \mathbf{A}(s) \wedge \mathbf{c}_s = 0$, so $\mathbf{a} \wedge \mathbf{A}(s) = 0$, but $\mathbf{A}(s) \in \mathcal{U}_\alpha$ so this is a contradiction. This finishes the proof of Claim 2.
	\end{proof}

	As remarked above, Claims 1 and 2 suffice to prove Claim 0.
	
	Let $\mathcal{U} = \bigcup_{\alpha< \alpha_*} \mathcal{U}_\alpha$. Since $[\mathcal{B}_{\alpha_*}]^\kappa = \bigcup_{\alpha < \alpha_*} [\mathcal{B}_\alpha]^\kappa$, we have that $\mathcal{U}$ is a $\kappa^+$-good ultrafilter on $\mathcal{B}_{\alpha_*}$, and also $(\omega, <)^{\mathcal{B}_{\alpha_*}} = \bigcup_{\alpha < \alpha_*} (\omega, <)^{\mathcal{B}_\alpha}$. From the latter it follows that $([\mathbf{n}_\alpha]_{\mathcal{U}_{\alpha+1}}: \alpha \in I)$ is a cofinal sequence above $\omega$ in $(\omega, <)^{\mathcal{B}_{\alpha_*}}/\mathcal{U}$, and hence $\mbox{lcf}_{\mathcal{U}}(\omega) = \mbox{cof}(\alpha_*) = \lambda$ and $\mu_{\mathcal{U}} \geq \min\{\alpha_* \backslash \alpha: \alpha < \alpha_*\}= \mu$. But also $\mu_{\mathcal{U}} \leq |\mathcal{B}_{\alpha_*}| \leq |P_{\alpha_*}|^{\aleph_0} = \mu$, and so $\mu_{\mathcal{U}} = \mu$.
\end{proof}

\begin{remark}
The hypotheses in Theorem~\ref{KeislerOnStableUlt} are sharp (except that with additional notational difficulties, if $\kappa$ is an uncountable regular limit cardinal and $\lambda \geq \kappa$, we can replace $\kappa^+$ by $\kappa$). $\lambda$ clearly must be regular. Further, it is not hard to check that if $\hat{V} \models ZFC^-$ and $\mathfrak{p}_{\hat{V}} \geq \aleph_1$, then $\mu_{\hat{V}}^{\aleph_0} = \mu_{\hat{V}}$, and so the hypothesis that $\mu_{\mathcal{U}}^{\aleph_0} = \mu_{\mathcal{U}}$ is necessary; see Exercise 2.10 of \cite{ShelahIso}, or just note that by Theorem~\ref{localSaturation2}, given $\hat{n}< \hat{\omega}$ nonstandard, if we choose $\hat{m} < \hat{n}$ nonstandard with $|\hat{m}^{\hat{m}}|_{\hat{V}} < \hat{n}$, then every $f: \omega \to \hat{m}$ extends to some function $\hat{f}: \hat{m} \to \hat{m}$ with $\hat{f} \in \hat{V}$, and so $|\hat{m}|_{\mathbb{V}}^{\aleph_0} \leq |\hat{n}|_{\mathbb{V}}$.
\end{remark}

\noindent \emph{Proof of Theorem~\ref{InterpOnKeisler1}.} Let $\leq$ be either $\trianglelefteq^*_\kappa$ or $\trianglelefteq^\times_\kappa$ for some $\kappa \geq \aleph_1$, or else $\trianglelefteq$; if $\leq = \trianglelefteq$ then write $\kappa = \aleph_1$. We have noted that all the positive reductions hold  in Theorem~\ref{InterpOnKeisler1}. Thus we have to show that the negative reductions in (A) hold, and also that (B) holds. Suppose $T_0$ is unstable, and $T_1$ is stable with the finite cover property, and $T_2$ does not have the finite cover property. It suffices to show $T_2 \not \leq T_1 \not \leq T_0$. 

Let $\lambda = \kappa^+$ and choose $\mu > \lambda$ with $\mu^\kappa = \mu$. By Theorem~\ref{KeislerOnStableUlt}, we can find a complete Boolean algebra $\mathcal{B}$ with the $\kappa^+$-c.c. and an $\aleph_1$-incomplete, $\kappa^+$-good ultrafilter $\mathcal{U}$ on $\mathcal{B}$ with $\mu_{\mathcal{U}} = \mu$ and with $\mbox{lcf}_{\mathcal{U}}(\omega) = \lambda$. Note that $\mathcal{U}$ $\infty$-saturates $T_2$, and $\mathcal{U}$ $\mu$-saturates $T_1$ but does not $\mu^+$-saturate $T_1$, and $\mathcal{U}$ does not $\lambda^+$-saturate $T_0$. 

Suppose $\leq = \trianglelefteq$. Then by Theorem~\ref{KeislerChar}, we get that $T_2 \not \trianglelefteq T_1 \not \trianglelefteq T_0$. Since $\trianglelefteq^\times_{\aleph_1}$ and $\trianglelefteq^*_{\aleph_1}$ refine $\trianglelefteq$, it follows that if $\leq$ is either of these, then $T_2 \not \leq T_1 \not \leq T_0$.

So it remains to consider the case where $\kappa > \aleph_1$.

Let $i < j < 3$. We wish to show $T_j \not \leq T_i$. If $i = 0$ then write $\sigma = \lambda$, and otherwise write $\sigma = \mu$.

Let $V$ be any transitive model of $ZFC^-$ with $T_i, T_j \in V$. Then we claim that whenever $\mathbf{j}: V \preceq \hat{V}$ and whenever $\hat{V}_*$ is an expansion of $\hat{V}$, then there is some $\mathbf{j}': \hat{V}_* \preceq \hat{W}_*$ such that $\hat{W}_*$ is $\kappa$-saturated, and $\hat{W}_*$ does not $\sigma^+$-pseudosaturate $T_j$, and for every $N \in V$ with $N \models T_i$, $\mathbf{j}_{\std}(N)$ is $\sigma^+$-saturated. Note that this suffices to prove $T_j \not \trianglelefteq^*_\kappa T_i$ and $T_j \not \trianglelefteq^\times_\kappa T_i$ simultaneously.

So let $\hat{V}_*$ be given. Choose an embedding $\mathbf{i}: \hat{V}_* \preceq \mathbf{V}_*$ into a $\sigma^+$-saturated $\mathcal{B}$-valued model of $\mbox{Th}(\hat{V}_*)$. Write $\hat{W}_* = \mathbf{V}_*/\mathcal{U}$ and let $\mathbf{j}': \hat{V}_* \preceq \hat{W}_*$ be the composition $[\cdot]_{\mathcal{U}} \circ \mathbf{i}$. Then this works, by choice of $\mathcal{U}$; here we are using that $\mathcal{U}$ is $\kappa^+$-good, and hence $\hat{W}_*$ is $\kappa^+$-saturated (we just needed $\kappa$-saturated). \qed

\vspace{1 mm}

Next, we finish the proof of Theorem~\ref{InterpOnKeisler2}. We need to show that if $\leq$ is one of $\trianglelefteq^*_{\aleph_0}$ or $\trianglelefteq^\times_{\aleph_0}$, and if $T_0$ is unsuperstable and $T_1$ is superstable, then $T_0 \not \leq T_1$. This follows from the following simple fact, combined with Theorems~\ref{SaturationLevelsThm} and \ref{PseudosaturationLevelsThm}. 

\begin{theorem}\label{SuperStableNonsat} Suppose $\hat{V}_*$ is an expansion of a model of $ZFC^-$, and suppose $\lambda$ is an infinite cardinal. Then we can find an elementary extension $\hat{V}_* \preceq \hat{W}_*$ such that $\hat{W}_*$ is $\aleph_0$-saturated and $\mbox{lcf}_{\hat{W}_*}(\omega) = \aleph_0$ and $\mu_{\hat{W}_*} \geq \lambda$. 
\end{theorem}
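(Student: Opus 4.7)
The plan is to construct $\hat{W}_*$ as the union of an elementary chain $\hat{V}_* \preceq \hat{M}_0 \preceq \hat{M}_1 \preceq \cdots$ of highly saturated extensions, where we successively choose nonstandard elements $c_0 > c_1 > \cdots$ so that $(c_n)_{n<\omega}$ becomes cofinal above $\omega$ in the limit.

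To begin, I would take $\hat{M}_0 \succeq \hat{V}_*$ to be $\lambda^+$-saturated, and pick any nonstandard $c_0 \in \hat{\omega}_{\hat{M}_0}$. Inductively, given $\hat{M}_n$ and $c_n \in \hat{\omega}_{\hat{M}_n}$ nonstandard, I would take $\hat{M}_{n+1} \succeq \hat{M}_n$ to be $|\hat{M}_n|^+$-saturated, and pick $c_{n+1} \in \hat{M}_{n+1}$ realizing the partial type
\[
q_n(x) := \{x > m : m < \omega\} \cup \{x \leq e : e \in \hat{\omega}_{\hat{M}_n} \text{ nonstandard}, \, e \leq c_n\}.
\]
The key observation is that $q_n(x)$ is finitely satisfiable in $\hat{M}_n$: given any finite collection $e_1, \ldots, e_k$ of nonstandard elements $\leq c_n$, their minimum is itself a nonstandard element of $\hat{M}_n$ satisfying the relevant finite subtype. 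Hence the saturation of $\hat{M}_{n+1}$ yields such a $c_{n+1}$. Finally, set $\hat{W}_* := \bigcup_{n<\omega} \hat{M}_n$.

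The three required properties of $\hat{W}_*$ then follow by unwinding the construction. For $\aleph_0$-saturation: any $1$-type over a finite subset of $\hat{W}_*$ has its parameters in some $\hat{M}_n$, and so is realized there by the $\aleph_1$-saturation of $\hat{M}_n$. For $\mbox{lcf}_{\hat{W}_*}(\omega) = \aleph_0$: I would check that $(c_n)$ is cofinal above $\omega$ by noting that if $\hat{e} \in \hat{W}_*$ is nonstandard with $\hat{e} \in \hat{M}_n$, then either $\hat{e} > c_n$ or the construction forces $c_{n+1} \leq \hat{e}$. For $\mu_{\hat{W}_*} \geq \lambda$: since every nonstandard element of $\hat{W}_*$ is $\geq c_n$ for some $n$ by the previous point, it suffices to show that $|c_n|_{\mathbb{V}} \geq \lambda$; but the type ``$x > m$ for all $m < \omega$, $x < c_n$'' is non-algebraic in $\hat{M}_n$ (being witnessed by $c_n - 1, c_n - 2, \ldots$), and so by $\lambda^+$-saturation of $\hat{M}_n$ it has at least $\lambda$ distinct realizations inside $\hat{M}_n \subseteq \hat{W}_*$.

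I do not foresee a substantive obstacle. The only delicate point is that $c_{n+1}$ is chosen to lie below every nonstandard element of $\hat{M}_n$ bounded by $c_n$, which is precisely what prevents any nonstandard element introduced at a finite stage from slipping beneath the entire sequence $(c_n)$ and rendering it non-cofinal above $\omega$.
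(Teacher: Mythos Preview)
Your proof is correct and follows the same strategy as the paper: build an $\omega$-chain of elementary extensions, introducing at each stage a new nonstandard natural below all nonstandard naturals of the previous stage, so that the resulting descending $\omega$-sequence is cofinal above $\omega$ in the union. You are in fact more explicit than the paper, which only requires each $\hat{W}_{n+1}$ to be $\aleph_0$-saturated and then asserts the conclusion ``clearly works''; your choice of $\lambda^+$-saturated (and then $|\hat{M}_n|^+$-saturated) extensions is precisely what makes the verification of $\mu_{\hat{W}_*} \geq \lambda$ go through.
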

\begin{proof}
Write $\hat{V}_* = \hat{W}_0$. For each $n$, choose some $\aleph_0$-saturated elementary extension $\hat{W}_{n+1}$ of $\hat{W}_n$, such that $\hat{W}_{n+1}$ contains a nonstandard natural number below every nonstandard natural number of $\hat{W}_n$. Write $\hat{W}_* = \bigcup_n \hat{W}_n$. Then $\hat{W}_*$ clearly works.
\end{proof}

Finally, we finish the proof of Theorem~\ref{InterpOnKeisler3}. We need to show that if $\leq$ is one of $\trianglelefteq^*_{1}$ or $\trianglelefteq^\times_{1}$, and if $T_0$ is not $\omega$-stable and $T_1$ is $\omega$-stable, then $T_0 \not \leq T_1$. This follows from the following simple fact, combined with Theorems~\ref{SaturationLevelsThm} and \ref{PseudosaturationLevelsThm}:

\begin{theorem}\label{omegaStableNonsat}
	Suppose $\hat{V}_*$ is a countable expansion of a model of $ZFC^-$ in a countable language, and $\lambda$ is an infinite cardinal. Then we can find an elementary extension $\hat{V}_* \preceq \hat{W}_*$ such that $\hat{W}_*$ is not $2^\omega$-rich and $\mu_{\hat{W}_*} \geq \lambda$.
\end{theorem}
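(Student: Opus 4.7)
The plan is to mimic the construction of Theorem~\ref{SuperStableNonsat}, but aiming to omit a fixed countable type rather than to shorten the cofinality of $\omega$. Since $\hat{V}_*$ is countable, it contains at most countably many elements of $2^{\hat{\omega}}$ and hence witnesses at most countably many $\eta \in 2^\omega$ in the sense of $2^\omega$-richness; fix some $\eta \in 2^\omega$ for which no element $\hat{\eta} \in \hat{V}_*$ with $\hat{V}_* \models \hat{\eta} \in 2^{\hat{\omega}}$ satisfies $\hat{\eta}(n) = \eta(n)$ for every standard $n < \omega$. Let $p_\eta(\hat{\eta}) := \{\hat{\eta}(n) = \eta(n) : n < \omega\}$, a countable partial type unrealized in $\hat{V}_*$.

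I will then inductively build an elementary chain $\hat{V}_* = \hat{W}_0 \preceq \hat{W}_1 \preceq \cdots$ indexed by $\alpha < \lambda$, maintaining at each stage the two invariants that $\hat{W}_\alpha$ omits $p_\eta$ and that, for every $\beta < \alpha$ and every nonstandard $\hat{n} \in \hat{W}_\beta$, there exists $d \in \hat{W}_\alpha \setminus \hat{W}_\beta$ with $\omega < d < \hat{n}$. At successor stages the construction applies the Omitting Types Theorem to the theory
\[
T'_\alpha := \Eldiag(\hat{W}_\alpha) \cup \bigcup_{\hat{n}} \bigl(\{\omega < c_{\hat{n}} < \hat{n}\} \cup \{c_{\hat{n}} \neq a : a \in \hat{W}_\alpha\}\bigr),
\]
with $\hat{n}$ ranging over the nonstandard naturals of $\hat{W}_\alpha$ and the $c_{\hat{n}}$ fresh Henkin constants, requiring the resulting model to still omit $p_\eta$. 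At limit stages I take unions. Consistency of $T'_\alpha$ follows from a routine finite-consistency check using that each nonstandard interval $(\omega, \hat{n})$ has nonstandard internal size in $\hat{W}_\alpha$.

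Setting $\hat{W}_* := \bigcup_{\alpha < \lambda} \hat{W}_\alpha$, the desired properties then follow: any $\hat{\eta} \in \hat{W}_*$ realizing $p_\eta$ lies in some $\hat{W}_\alpha$ and contradicts the inductive hypothesis, so $\hat{W}_*$ is not $2^\omega$-rich; and for a nonstandard $\hat{n} \in \hat{W}_*$ first appearing at some stage $\alpha_0 < \lambda$, the elements $d_\beta \in \hat{W}_{\beta+1} \setminus \hat{W}_\beta$ with $\omega < d_\beta < \hat{n}$, obtained at each $\alpha_0 \leq \beta < \lambda$, form $\lambda$ pairwise distinct predecessors of $\hat{n}$, giving $|\hat{n}|_{\mathbb{V}} \geq \lambda$ and hence $\mu_{\hat{W}_*} \geq \lambda$.

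The main obstacle is establishing, at each successor stage, that $p_\eta$ is non-principal over $T'_\alpha$ so that the Omitting Types Theorem applies. The standard strategy is a compactness-and-overflow argument: if some formula $\phi(\hat{\eta}, \bar{c}, \bar{a})$ isolated $p_\eta$ over $T'_\alpha$, then for each $n$ some finite $\Sigma_n \subseteq T'_\alpha$ would satisfy $\Sigma_n \cup \{\phi\} \vdash \hat{\eta}(n) = \eta(n)$, and on universally quantifying out $\bar{c}$ and invoking overflow inside $\hat{W}_\alpha$ one should extract $\hat{\eta}_0 \in \hat{W}_\alpha$ realizing $p_\eta$, contradicting the inductive hypothesis. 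The delicate point is that the $\neq$-axioms in $\Sigma_n$ vary with $n$, so a single $\bar{y}_0 \in \hat{W}_\alpha$ playing the role of the $c_{\hat{n}}$'s must avoid an accumulating countable set of exceptions; handling this (perhaps by a careful upfront choice of $\eta$ diagonalizing against the countably many $\eta' \in 2^\omega$ ever isolated in the expanding language, or by replacing the strict chain by a simultaneous Henkin construction) is where the real work lies.
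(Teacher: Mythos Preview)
Your approach is genuinely different from the paper's, and the gap you flag is real --- but there is a second, more serious obstacle you do not mention.

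\textbf{The acknowledged gap is not obviously closable.} You need $p_\eta$ to remain non-isolated over $T'_\alpha$. After compactness, isolation gives for each $m$ an interval $(\ell_m,\hat n_m)$ with $\ell_m<\omega$ standard and $\hat n_m$ nonstandard, such that in $\hat W_\alpha$ every $y$ in that interval forces $x(m)=\eta(m)$. To derive a contradiction from $\hat W_\alpha$ omitting $p_\eta$, you would want a single $y\in\hat W_\alpha$ lying in \emph{all} these intervals with $\exists x\,\phi(x,y,\bar a)$; but if the $\hat n_m$ descend externally to $\omega$ (which nothing prevents), no such $y$ exists. Your overflow sketch does not get around this: the finite fragments $\Sigma_n$ involve different upper bounds $\hat n_m$, so there is no single internal statement to overflow. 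The diagonalization idea is circular (the bad $\eta$'s at each stage depend on $\hat W_\alpha$, which depends on $\eta$), and a simultaneous Henkin construction faces the same problem below.

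\textbf{The unacknowledged gap.} Your chain has length $\lambda$. For $\lambda>\aleph_1$, the union $\hat W_{\omega_1}$ is uncountable, so at the next successor stage you would be invoking the Omitting Types Theorem over an uncountable language. That theorem can fail in uncountable languages, even for a single countable type: one cannot in general reduce to a single isolating formula, only to a countable isolating \emph{type}, and non-principality in your sense does not rule that out.

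\textbf{What the paper does instead.} The paper bypasses omitting types entirely. It Skolemizes $\hat V_*$, then uses Erd\H os--Rado to extract a $\lambda^+$-long decreasing sequence of nonstandard naturals that is indiscernible over $\hat V_*$ and coinitial in the nonstandard part at each step. The Skolem hull $\hat W_*$ of $\hat V_*$ together with this sequence then has $\mu_{\hat W_*}\geq\lambda$ trivially; and because every element is a term in finitely many indiscernibles and parameters from the countable $\hat V_*$, only countably many $1$-types over $\hat V_*$ are realized in $\hat W_*$. Since $2^\omega$-richness would require realizing continuum-many such types (one $p_\eta$ per $\eta$), $\hat W_*$ cannot be $2^\omega$-rich. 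This type-counting argument replaces your omitting-types step and works uniformly for all $\lambda$.
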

\begin{proof}
	Let $\hat{V}'$ be a Skolemization of $\hat{V}_*$. Let $\hat{W}^{\dagger}$ be a sufficiently saturated expansion of $\hat{V}'$. Choose a sufficiently long decreasing sequence $(\hat{n}_\alpha: \alpha < \alpha_*)$ of nonstandard natural numbers of $\hat{W}^{\dagger}$, such that each $\hat{n}_\alpha$ is below every nonstandard natural number in the Skolem-closure of $\hat{V}' \cup \{\hat{n}_\beta: \beta < \alpha\}$. By a typical application of the Erd\"{o}s-Rado theorem (see for instance Lemma 1.1.5 of \cite{SimplicityTheory}), we can find a decreasing sequence $(\hat{m}_\beta: \beta < \lambda^+)$ with the same properties, which is moreover indiscernible over $\hat{V}'$. Let $\hat{W}'$ be the Skolem hull of $\{\hat{m}_\beta: \beta < \lambda^+\} \cup \hat{V}'$; then $\hat{W}'$ (or more precisely, its reduct to the language of $\hat{V}_*$) is as desired: visibly $\mu_{\hat{W}'} = \lambda$, and $\hat{W}'$ realizes only countably many types over $\hat{V}'$, and hence cannot be $2^\omega$-rich.
\end{proof}

\begin{remark}\label{LanguageCardRemark}
	This is one of the rare times where it matters we restrict to countable languages. Namely, if we allowed $\hat{V}_*$ to have size continuum then Theorem~\ref{omegaStableNonsat} is false, since if $\hat{V}_*$ is $2^\omega$-rich then so is every elementary extension.
\end{remark}

\section{Conclusion}\label{DiscussionSec}

We take stock.

First of all, we have proven the following:

\begin{theorem}
Suppose $V \models ZFC^-$ is transitive, and $\mathbf{j}: V \preceq \hat{V} \models ZFC^-_{\pre}$, and $T \in V$ is a complete countable theory. Suppose $M_1, M_2 \in V$ are models of $T$. Then for all infinite cardinals $\kappa$, $\mathbf{j}_{\std}(M_1)$ is $\kappa$-saturated if and only if $\mathbf{j}_{\std}(M_2)$ is.
\end{theorem}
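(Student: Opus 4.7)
The plan is to dispatch the theorem by a case split that appeals directly to results already established in the paper. Everything is already in place; we just need to observe that in each regime the invariant ``$\kappa$-saturation of $\mathbf{j}_{\std}(M)$'' reduces to an invariant depending only on $(\hat{V}, T)$.

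First I would dispose of the case $\kappa = \aleph_0$. Here Corollary~\ref{ModKeislerOrder2} gives precisely the desired conclusion: $\mathbf{j}_{\std}(M)$ is $\aleph_0$-saturated if and only if either $T$ is small or $\hat{V}$ is $2^\omega$-rich, and neither of these conditions mentions $M$. So $\aleph_0$-saturation of $\mathbf{j}_{\std}(M_1)$ and $\mathbf{j}_{\std}(M_2)$ coincide.

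Next, for uncountable $\kappa$, I would split on stability of $T$. If $T$ is unstable, Corollary~\ref{SatAndPseudoCor0} (which uses the hypothesis $\hat{V} \models ZFC^-_{\pre}$ essentially, via Theorem~\ref{BBound}) says that for each $M \models T$ in $V$, $\mathbf{j}_{\std}(M)$ is $\kappa$-saturated iff $\hat{V}$ $\kappa$-pseudosaturates $T$. The right-hand side is model-independent by Theorem~\ref{ModKeislerOrder} (applied for $\kappa \geq \aleph_1$), so we are done in this subcase.

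If on the other hand $T$ is stable, I would invoke Theorem~\ref{SaturationLevelsThm} directly: it asserts that the least cardinal at which $\mathbf{j}_{\std}(M)$ fails to be $\lambda^+$-saturated is exactly $\lambda'_{\hat{V}}(T)$, a quantity defined in terms of $\hat{V}$ and $T$ alone (via Definition~\ref{LongDef}, which refers only to cofinality invariants of $\hat{V}$, $2^\omega$-richness of $\hat{V}$, and stability-theoretic properties of $T$). Since this value does not depend on $M$, the saturation spectra of $\mathbf{j}_{\std}(M_1)$ and $\mathbf{j}_{\std}(M_2)$ agree, completing the proof. There is no genuinely hard step; the content is that the earlier case analysis on stability already produced invariants that see only $\hat{V}$ and $T$, and the role of $ZFC^-_{\pre}$ is to make the unstable case (via Theorem~\ref{BBound}) and the stable case (via Theorem~\ref{SaturationLevelsThm}) both available simultaneously.
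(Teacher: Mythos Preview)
Your proposal is correct and takes essentially the same approach as the paper: both proofs invoke exactly the three ingredients Corollary~\ref{ModKeislerOrder2} (for $\kappa=\aleph_0$), Corollary~\ref{SatAndPseudoCor0} (for $T$ unstable, $\kappa>\aleph_0$), and Theorem~\ref{SaturationLevelsThm} (for $T$ stable). The only difference is organizational---you split first on $\kappa$ and then on stability, whereas the paper splits first on stability and then patches the $\kappa=\aleph_0$ case---but the content is identical.
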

\begin{proof}
If $T$ is stable, then Theorem~\ref{SaturationLevelsThm} shows $\mathbf{j}_{\std}(M_i)$ is $\kappa$-saturated if and only if $\kappa \leq \lambda'_{\hat{V}}(T)$. If $\kappa = \aleph_0$, then by Corollary~\ref{ModKeislerOrder2}, $\mathbf{j}_{\std}(M_i)$ is $\kappa$-saturated if and only if either $T$ is small or else $\hat{V}$ is $2^\omega$-rich. Finally, if $T$ is unstable and $\kappa > \aleph_0$, then by Corollary~\ref{SatAndPseudoCor0}, $\mathbf{j}_{\std}(M_i)$ is $\kappa$-saturated if and only if $\hat{V}$ $\kappa$-pseudosaturates $T$. 
\end{proof}

Thus the following definition makes sense:

\begin{definition}
	Suppose $V \models ZFC^-$ is transitive, and $\mathbf{j}: V \preceq \hat{V} \models ZFC^-_{\pre}$, and $T \in V$ is a complete countable theory. If $\kappa$ is an infinite cardinal, then say that $\hat{V}$ $\kappa$-saturates $T$ if for some or every $M \models T$ with $M \in V$, $\mathbf{j}_{\std}(M)$ is $\kappa$-saturated.
\end{definition}

We also take this moment to extend the definitions of $\lambda_{\hat{V}}(T)$, $\lambda'_{\hat{V}}(T)$ to any theory $T$ (possibly unstable).

\begin{definition}
	Suppose $V \models ZFC^-$ is transitive, and $\mathbf{j}: V \preceq \hat{V}$ is $\omega$-nonstandard, and $T \in V$ is a complete countable theory. Then let $\lambda_{\hat{V}}(T)$ be the least infinite cardinal (possibly $\infty$) such that $\hat{V}$ does not $\lambda_{\hat{V}}(T)^+$-pseudosaturate $T$. If $\hat{V} \models ZFC^-_{\pre}$, then let $\lambda'_{\hat{V}}(T)$ be the least infinite cardinal (at most $|\hat{V}|$) such that $\hat{V}$ does not $\lambda'_{\hat{V}}(T)^+$-saturate $T$. 
\end{definition}

In view of Theorems \ref{PseudosaturationLevelsThm} and \ref{SaturationLevelsThm}, this does indeed generalize our definitions for stable theories.

\begin{remark}
	In the above setup, if $\hat{V} \models ZFC^-_{\pre}$ and $T$ is unstable, then $\lambda_{\hat{V}}(T) = \lambda'_{\hat{V}}(T)$ by Corollary~\ref{SatAndPseudoCor0}. If $T$ is stable, then there are exceptions, which can be read off from Definition~\ref{LongDef}. Hence, it suffices to understand $(\hat{V}, T) \mapsto \lambda_{\hat{V}}(T)$.
\end{remark}

\begin{remark}
	Suppose $V \models ZFC^-$ is transitive, and $\mathbf{j}: V \preceq \hat{V}$ is $\omega$-nonstandard, and $T \in V$ is a complete countable theory. Then $\lambda_{\hat{V}}(T)\geq \mathfrak{p}_{\hat{V}}$, and if $T$ has $SOP_2$ then $\lambda_{\hat{V}}(T)= \mathfrak{p}_{\hat{V}}$. If $\hat{V} \models ZFC^-_{\pre}$ then the same holds for $\lambda'_{\hat{V}}$.
	
\end{remark}

We can now obtain the following characterizations of $\trianglelefteq^*_\kappa$: 

\begin{theorem}\label{SatAndPseudoCor}
	Suppose $T_0, T_1$ are complete countable theories, and suppose $\kappa$ is infinite or $1$. Then the following are equivalent:
	
	\begin{itemize}
		\item[(A)] $T_0 \trianglelefteq^*_{\kappa} T_1$;
		\item[(B)] For all cardinals $\lambda$, there is some countable transitive $V \models ZFC^-$ with $T_0, T_1 \in V$, such that for all $\mathbf{j}: V \preceq \hat{V} \models ZFC^-_{\pre}$ with $\hat{V}$ $\kappa$-saturated, if $\hat{V}$ $\lambda^+$-pseudosaturates  $T_1$, then $\hat{V}$ $\lambda^+$-pseudosaturates $T_0$.
		\item[(C)] For all cardinals $\lambda$, there is some countable transitive $V \models ZFC^-$ with $T_0, T_1 \in V$, such that for all $\mathbf{j}: V \preceq \hat{V} \models ZFC^-_{\pre}$ with $\hat{V}$ $\kappa$-saturated, if $\hat{V}$ $\lambda^+$-saturates  $T_1$, then $\hat{V}$ $\lambda^+$-saturates $T_0$.
	\end{itemize}
	
	In particular, $\trianglelefteq^\times_\kappa \subseteq \trianglelefteq^*_\kappa$.
\end{theorem}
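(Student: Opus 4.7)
The plan is to pivot the equivalence through (C), which is the most convenient of the three formulations.

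First I would establish (A) $\Leftrightarrow$ (C). This is essentially formal given Theorem~\ref{SatAndPseudoThm1} together with the well-definedness of ``$\hat{V}$ $\lambda^+$-saturates $T$'' (Theorem~\ref{ModKeislerOrderSatFirst}), which was just established at the beginning of this section. For (A) $\Rightarrow$ (C): fixing $\lambda$, apply Theorem~\ref{SatAndPseudoThm1} to produce a countable transitive $V \models ZFC^-$ and specific $M_0, M_1 \in V$ with $M_i \models T_i$ such that $\lambda^+$-saturation of $\mathbf{j}_{\std}(M_1)$ implies $\lambda^+$-saturation of $\mathbf{j}_{\std}(M_0)$ along every $\mathbf{j}: V \preceq \hat{V} \models ZFC^-_{\pre}$ with $\hat{V}$ $\kappa$-saturated. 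Since ``$\hat{V}$ $\lambda^+$-saturates $T_i$'' does not depend on the choice of $M_i$, this same $V$ witnesses (C). Conversely, given $V$ witnessing (C) for the fixed $\lambda$, any $M_0, M_1 \in V$ with $M_i \models T_i$ form a witness to Theorem~\ref{SatAndPseudoThm1}(B), giving $T_0 \trianglelefteq^*_{\lambda \kappa} T_1$; as $\lambda$ is arbitrary, (A) follows.

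Next I would establish (B) $\Leftrightarrow$ (C), with the same $V$ working in both clauses, by case analysis on the stability of $T_0, T_1$. For (C) $\Rightarrow$ (B): if $T_1$ is unstable, then any $\hat{V}$ that $\lambda^+$-pseudosaturates $T_1$ has $\lambda < \mathfrak{b}_{\hat{V}}$ by Theorem~\ref{BBound}, so Lemma~\ref{SatAndPseudo1} collapses pseudosaturation and saturation at level $\lambda^+$ for every countable theory in $V$, and the two implications coincide; if instead $T_1$ is stable and $T_0$ is unstable, the classification (Theorems~\ref{InterpOnKeisler1}, \ref{InterpOnKeisler2}, \ref{InterpOnKeisler3}) already gives $T_0 \not\trianglelefteq^*_\kappa T_1$, so by (A) $\Leftrightarrow$ (C) the hypothesis (C) is vacuously handled. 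For (B) $\Rightarrow$ (C): symmetrically, when $T_0$ is unstable Corollary~\ref{SatAndPseudoCor0} equates pseudosaturation and saturation of $T_0$ in $ZFC^-_{\pre}$, so pseudosat $T_0$ upgrades to sat $T_0$; combined with the always-valid implication sat $T_1 \Rightarrow$ pseudosat $T_1$, the translation is automatic.

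The main obstacle is the case where both $T_0$ and $T_1$ are stable, which requires comparing the two ``for all $\hat{V}$'' implications directly. Here I would invoke Theorems~\ref{PseudosaturationLevelsThm} and~\ref{SaturationLevelsThm}: pseudosat $T$ at level $\lambda^+$ is equivalent to $\lambda < \lambda_{\hat{V}}(T)$, and sat $T$ to $\lambda < \lambda'_{\hat{V}}(T)$, with both cardinals given by Definition~\ref{LongDef}. An inspection clause-by-clause, combined with the constructions of Section~\ref{KeislerOnStableSection} (Theorem~\ref{KeislerOnStableUlt} for $\kappa \geq \aleph_1$ and Theorems~\ref{SuperStableNonsat}, \ref{omegaStableNonsat} for $\kappa \in \{1,\aleph_0\}$) which exhibit $\hat{V} \models ZFC^-_{\pre}$ with prescribed behavior, shows that the universally quantified conditions $\lambda_{\hat{V}}(T_1) \leq \lambda_{\hat{V}}(T_0)$ and $\lambda'_{\hat{V}}(T_1) \leq \lambda'_{\hat{V}}(T_0)$ cut out the same pairs of stable theories, namely those conforming to the classification $T_0 \trianglelefteq^*_\kappa T_1$. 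The verification is tedious but direct, since $\lambda_{\hat{V}}$ and $\lambda'_{\hat{V}}$ differ only through clauses (1), (2)/(3), and (5), and the auxiliary $\hat{V}$ built to refute one implication also refutes the other.

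Finally, the inclusion $\trianglelefteq^\times_\kappa \subseteq \trianglelefteq^*_\kappa$ is immediate from (A) $\Leftrightarrow$ (B): a single $V$ witnessing $T_0 \trianglelefteq^\times_\kappa T_1$ witnesses (B) for every $\lambda$, because restricting the universal $\hat{V}$-clause from ``$\omega$-nonstandard'' to ``$\hat{V} \models ZFC^-_{\pre}$'' only strengthens the hypothesis on $\hat{V}$.
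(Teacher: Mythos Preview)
Your proposal is correct and follows essentially the same skeleton as the paper: first reduce (A) $\Leftrightarrow$ (C) formally via Theorem~\ref{SatAndPseudoThm1}, then handle (B) by a case split on the stability of $T_0,T_1$, with the unstable cases going through Theorem~\ref{BBound}/Corollary~\ref{SatAndPseudoCor0} and the stable--stable case through the classification and Definition~\ref{LongDef}.

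The one place your route differs from the paper is in the stable--stable case. You propose to prove (B) $\Leftrightarrow$ (C) symmetrically by a clause-by-clause comparison of $\lambda_{\hat V}$ against $\lambda'_{\hat V}$. The paper instead proves (A) $\Rightarrow$ (B) $\Rightarrow$ (C) asymmetrically: for (A) $\Rightarrow$ (B) it simply observes that Theorems~\ref{InterpOnKeisler1}--\ref{InterpOnKeisler3} show $\trianglelefteq^*_\kappa$ and $\trianglelefteq^\times_\kappa$ coincide on stable theories, so (A) already gives $T_0 \trianglelefteq^\times_\kappa T_1$, and (B) is immediate. This avoids one half of your ``tedious but direct'' verification entirely. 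For (B) $\Rightarrow$ (C) the paper does exactly the clause analysis you describe, isolating the only two ways $\lambda'_{\hat V}(T_0) < \lambda_{\hat V}(T_0)$ can occur (Clause~3 activated, or only Clause~5 activated) and disposing of each; your sketch has the right ingredients (in particular the appeal to Theorem~\ref{SuperStableNonsat} to rule out the Clause~3 discrepancy), but you should be aware that this step is where the actual content lies, and ``tedious but direct'' undersells it slightly. Also, your case split for (B) $\Rightarrow$ (C) explicitly covers only ``$T_0$ unstable'' and ``both stable''; the remaining case ``$T_0$ stable, $T_1$ unstable'' follows from the same $\mathfrak{b}_{\hat V}$ argument you gave for (C) $\Rightarrow$ (B), but you should say so.
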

\begin{proof}
	Note that (A) is equivalent to (C) by Theorem \ref{SatAndPseudoThm1}. So it suffices to show that (A) and (B) are equivalent.
	
	If $T_1$ is unstable, use Corollaries~\ref{SatAndPseudoCor0} and \ref{SatAndPseudoCor1}. 
	
	Next, suppose $T_0$ is unstable and $T_1$ is stable. Then (A) fails by Theorem~\ref{InterpOnKeisler1}(B), and (B) fails by the proof of Theorem~\ref{InterpOnKeisler1}(B).
%	
%	
%	Write $\lambda = 2^{\aleph_0}$. By Theorem~\ref{KeislerOnStableUlt}, we can find a complete Boolean algebra $\mathcal{B}$ and an  ultrafilter $\mathcal{U}$ on $\mathcal{B}$, such that $\mbox{lcf}_{\mathcal{U}}(\omega) = \aleph_1 \leq \lambda$ and $\mu_{\mathcal{U}} = \lambda^+$. By Corollary~\ref{KeislerStableLemma5}, $\mathcal{U}$ does not $\lambda^+$-saturate any unstable theory, and $\mathcal{U}$ $\lambda^+$-saturates every stable theory. Hence $T_0 \not \trianglelefteq_\lambda T_1$, so in particular (A) fails. We need to show that (B) fails. So suppose $V_0 \models ZFC^-$ is countable transitive with $T_0, T_1 \in V_0$. Choose $\mathbf{j}_0: V_0 \preceq \hat{V}_1 \models ZFC^-_{\pre}$. Let $\hat{V}_*$ be an expansion of $\hat{V}_1$ to a model of $ZFC^-_*$, and let $\mathbf{j}_1: \hat{V}_* \preceq \mathbf{V}_*$ be an elementary extension to a $\lambda^+$-saturated $\mathcal{B}$-valued model of $ZFC^-_*$. Let $\mathbf{V}$ be the reduct of $\mathbf{V}_*$ to the language of set theory; then $\mathbf{V}$ is $\lambda^+$-saturated by Lemma~\ref{SatAndPseudo3}. Let $\hat{V} = \mathbf{V}/\mathcal{U} \models ZFC^-_{\pre}$ and let $\mathbf{j}: V_0 \preceq \hat{V}$ be the composition $[\cdot]_{\mathcal{U}} \circ \mathbf{j}_1 \circ \mathbf{j}_0$. By choice of $\mathcal{U}$, $\hat{V}$ $\lambda^+$-pseudosaturates $T_1$ but not $T_0$. So (B) fails.
	
	Thus, we can assume $T_0$ and $T_1$ are both stable. We show (A) implies (B) implies (C).
	
	(A) implies (B): suppose (A) holds. By Theorems \ref{InterpOnKeisler1}, \ref{InterpOnKeisler2} and \ref{InterpOnKeisler3}, we get that $T_0 \trianglelefteq^\times_\kappa T_1$ (since $\trianglelefteq^*_\kappa$ and $\trianglelefteq^\times_\kappa$ coincide on stable theories). From this it follows immediately that (B) holds.
	
	(B) implies (C): suppose (B) holds. Let $\lambda$ be given. Choose a countable transitive $V \models ZFC^-$ with $T_0, T_1 \in V$ witnessing (B) holds. We claim $V$ witnesses (C) holds as well. Indeed, suppose $\mathbf{j}: V \preceq \hat{V} \models ZFC^-_{\pre}$ with $\hat{V}$ $\kappa$-saturated and $\omega$-nonstandard. We can suppose $\hat{V}$ $\lambda^+$-saturates $T_1$. Hence it $\lambda^+$-pseudosaturates $T_1$, and so by choice of $\hat{V}$, it $\lambda^+$-pseudosaturates $T_0$. In other words, $\lambda < \lambda_{\hat{V}}(T_0)$. We need to show that $\lambda < \lambda'_{\hat{V}}(T_0)$ (i.e. $\hat{V}$ $\lambda^+$-saturates $T_0$). Note that $\lambda_{\hat{V}}(T_0)$ is uncountable. We can suppose that $\lambda'_{\hat{V}}(T_0) < \lambda_{\hat{V}}(T_0)$, as otherwise we are done. Considering Definition~\ref{LongDef}, we see either Clause 5 is the only activated clause, or else Clause 3 is activated.
	
	We claim that Clause 3 cannot be activated. Indeed, suppose towards a contradiction that $T_0$ is not superstable and $\hat{V}$ is not $\aleph_1$-saturated.  Note that since $\hat{V}$ $\aleph_1$-saturates $T_1$, it follows that $T_1$ must be superstable (since otherwise it would be nonsupersimple, and so $\hat{V}$ would be $\aleph_1$-saturated Corollary~\ref{supersimpleCor1}). But then Theorem~\ref{SuperStableNonsat} implies that (B) fails, contrary to hypothesis. 
	
	So Clause 5 must be the only activated clause; i.e. $\lambda_{\hat{V}}(T_0) = \infty$ and $\lambda'_{\hat{V}}(T_0) = |\hat{V}|$. But then, since $\hat{V}$ $\lambda^+$-saturates $T_1$, it follows that $\lambda < |\hat{V}|$, and so in any case, $\lambda < \lambda'_{\hat{V}}(T_0)$. 
\end{proof}

We have also obtained the following (in fact, we could have observed it in Section~\ref{InterpOnStableSec2}):

\begin{corollary}\label{supersimpleCor2}
	Suppose $T_0, T_1$ are complete countable theories,  and $T_1$ is unsupersimple. Then $T_0 \trianglelefteq^*_1 T_1$ if and only if $T_0 \trianglelefteq^*_{\aleph_1} T_1$. In particular, $\trianglelefteq^*_1$ and $\trianglelefteq^*_{\aleph_1}$ coincide on unsupersimple theories.
\end{corollary}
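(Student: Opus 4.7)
The forward direction is immediate from the inclusion $\trianglelefteq^*_1 \subseteq \trianglelefteq^*_{\aleph_1}$, so my plan focuses on the converse. The guiding observation is that Theorem~\ref{SatAndPseudoCor} packages $\trianglelefteq^*_\kappa$ for both $\kappa = 1$ and $\kappa = \aleph_1$ into the \emph{same} $V$-quantifier pattern, with the sole difference being whether we are allowed to assume $\hat{V}$ is $\aleph_1$-saturated. When $T_1$ is unsupersimple, that extra hypothesis becomes free of charge via Corollary~\ref{supersimpleCor1}, which says that for $\hat{V} \models ZFC^-_{\pre}$, $\aleph_1$-pseudosaturating any unsupersimple theory automatically upgrades to full $\aleph_1$-saturation.

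Concretely, assume $T_0 \trianglelefteq^*_{\aleph_1} T_1$ and fix $\lambda$. Using Theorem~\ref{SatAndPseudoCor}(A)$\Rightarrow$(B) with $\kappa = \aleph_1$, choose a countable transitive $V \models ZFC^-$ containing $T_0, T_1$ with the property that whenever $\mathbf{j}: V \preceq \hat{V} \models ZFC^-_{\pre}$ has $\hat{V}$ $\aleph_1$-saturated and $\lambda^+$-pseudosaturating $T_1$, then $\hat{V}$ $\lambda^+$-pseudosaturates $T_0$. I will check that this same $V$ witnesses clause (B) of Theorem~\ref{SatAndPseudoCor} for $\kappa = 1$ and the same $\lambda$, which will give $T_0 \trianglelefteq^*_1 T_1$. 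So suppose $\mathbf{j}: V \preceq \hat{V} \models ZFC^-_{\pre}$ is arbitrary and $\hat{V}$ $\lambda^+$-pseudosaturates $T_1$. Since $\lambda^+ \geq \aleph_1$, in particular $\hat{V}$ $\aleph_1$-pseudosaturates the unsupersimple theory $T_1$; Corollary~\ref{supersimpleCor1} then yields that $\hat{V}$ is $\aleph_1$-saturated, so the defining property of $V$ applies and $\hat{V}$ $\lambda^+$-pseudosaturates $T_0$, as required.

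For the ``in particular'' clause, suppose both $T_0$ and $T_1$ are unsupersimple; apply the main equivalence symmetrically to both $T_0 \trianglelefteq^*_\kappa T_1$ and $T_1 \trianglelefteq^*_\kappa T_0$ to conclude that the two orderings agree on such pairs. There is essentially no obstacle here: the substantive analytical work has already been completed in Theorem~\ref{BaselineSupersimple} and Corollary~\ref{supersimpleCor1}, and the only care required in the writeup is to invoke the matching clause of Theorem~\ref{SatAndPseudoCor} so that a single witness $V$ serves simultaneously for $\kappa = 1$ and $\kappa = \aleph_1$.
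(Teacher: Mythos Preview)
Your argument is correct and follows essentially the same route as the paper's proof: both pick a witness $V$ via Theorem~\ref{SatAndPseudoCor} for $\kappa=\aleph_1$, then observe that when $\hat{V}$ $\lambda^+$-(pseudo)saturates the unsupersimple $T_1$, Corollary~\ref{supersimpleCor1} forces $\hat{V}$ to be $\aleph_1$-saturated, so the same $V$ witnesses the $\kappa=1$ clause. The only cosmetic difference is that you run the argument through clause~(B) (pseudosaturation) of Theorem~\ref{SatAndPseudoCor} while the paper runs it through clause~(C) (saturation); this makes no substantive difference.
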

\begin{proof}
	Suppose $T_0 \trianglelefteq^*_{\lambda\aleph_1} T_1$; it suffices to show that $T_0 \trianglelefteq^*_{\lambda 1} T_1$. By Theorem~\ref{SatAndPseudoCor}(C) for $\kappa = \aleph_1$, we can choose some countable transitive $V \models ZFC^-$ with $T_0, T_1 \in V$, such that whenever $\mathbf{j}:V \preceq \hat{V} \models ZFC^-_{\pre}$, if $\hat{V}$ is $\aleph_1$-saturated and if $\hat{V}$ $\lambda^+$-saturates $T_1$, then $\hat{V}$ $\lambda^+$-saturates $T_0$. 
	
	It suffices show that whenever $\mathbf{j}: V \preceq \hat{V} \models ZFC^-_{\pre}$, if $\hat{V}$ $\lambda^+$-saturates $T_1$, then $\hat{V}$ $\lambda^+$-saturates $T_0$. But if $\hat{V}$ $\lambda^+$-saturates $T_1$, then by Corollary~\ref{supersimpleCor1}, $\hat{V}$ is $\aleph_1$-saturated, so this follows from our hypothesis.
	%Necessarily $T_1$ is unstable. We apply Corollary~\ref{SatAndPseudoCor1}. By hypothesis there is some countable transitive $V \models ZFC^-$ with $T_0, T_1 \in V$, such that whenever $\mathbf{j}: V \preceq \hat{V} \models ZFC^-_{\pre}$ with $\hat{V}$ $\aleph_1$-saturated, if $\hat{V}$ $\lambda^+$-pseudosaturates $T_1$, then it $\lambda^+$-pseudosaturates $T_0$. Suppose $\mathbf{j}: V \preceq \hat{V} \models ZFC^-_{\pre}$ is arbitrary; it suffices to show that if $T_1$ is $\lambda^+$-pseudosaturated, then so is $T_0$. But if $T_1$ is $\lambda^+$-pseudosaturated, then by Theorem~\ref{SupersimpleThm1}, $\hat{V}$ is $\aleph_1$-pseudosaturated, and by Theorem~\ref{BBound}, $\mathfrak{b}_{\hat{V}} \geq \aleph_1$. Thus $\hat{V}$ is $\aleph_1$-saturated, and so $\hat{V}$ $\lambda^+$-pseudosaturates $T_0$.
\end{proof}

As a result of this, we can import a couple of theorems on $\trianglelefteq^*_1$ to the context of $\trianglelefteq^*_{\aleph_1}$:

\begin{corollary}\label{supersimpleCor3}
	It is consistent that $SOP_2$ theories are exactly the maximal theories of $\trianglelefteq^*_{\aleph_1}$.
\end{corollary}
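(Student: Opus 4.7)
The forward inclusion is already established: Corollary~\ref{SOP2MaxStatement} shows every $SOP_2$ theory is maximal in $\trianglelefteq^\times_1$, and Theorem~\ref{SatAndPseudoCor} gives $\trianglelefteq^\times_1 \subseteq \trianglelefteq^*_1 \subseteq \trianglelefteq^*_{\aleph_1}$, so $SOP_2$ theories are absolutely $\trianglelefteq^*_{\aleph_1}$-maximal. The content of the corollary is therefore the consistent converse: assuming the set-theoretic hypothesis of Malliaris and Shelah \cite{pEqualsT2} under which $SOP_2$ characterizes maximality in $\trianglelefteq^*_1$, I want to show that every $NSOP_2$ theory $T$ is non-maximal in $\trianglelefteq^*_{\aleph_1}$. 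The plan is a three-way case analysis on $T$, reflecting the partition of $NSOP_2$ theories into stable, simple unstable, and $TP_2$ ones, where in each case a different dividing-line or transfer result is invoked.

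If $T$ is stable, then Theorem~\ref{InterpOnKeisler1}(B) directly gives non-maximality, since any unstable theory (e.g.\ the random graph) fails to be $\trianglelefteq^*_{\aleph_1}$-below $T$; this step uses no consistency hypothesis. If $T$ is simple and unstable, I would appeal to the $\trianglelefteq^*_{\aleph_1}$ version of the simplicity dividing line (Corollary~\ref{supersimpleCor4}, proved in parallel in the same section); taking $T'$ to be any unsimple theory (for instance, any $SOP_2$ theory, which is unsimple via Fact~\ref{Fact1}), one gets $T' \not\trianglelefteq^*_{\aleph_1} T$. Finally, if $T$ is unsimple and $NSOP_2$ (so $T$ has $TP_2$), then because supersimplicity implies simplicity, $T$ is unsupersimple; the consistency hypothesis provides some $T'$ with $T' \not\trianglelefteq^*_1 T$, and Corollary~\ref{supersimpleCor2}, applied with the upper theory $T_1 := T$, upgrades this to $T' \not\trianglelefteq^*_{\aleph_1} T$.

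The main obstacle is the third case, and it is exactly the point at which Corollary~\ref{supersimpleCor2} must be deployed: the witness $T'$ of non-maximality lives \emph{a priori} only in the finer order $\trianglelefteq^*_1$, and the unsupersimplicity of $T$ is what allows $\trianglelefteq^*_1$ and $\trianglelefteq^*_{\aleph_1}$ to agree above $T$ and so lets us lift the witness. The three cases are exhaustive for $NSOP_2$ theories, so combining them with the forward inclusion yields the consistent equivalence between $SOP_2$ and $\trianglelefteq^*_{\aleph_1}$-maximality.
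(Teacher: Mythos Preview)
Your argument is correct, but the paper's proof is uniform rather than case-based. Instead of splitting into stable / simple unstable / unsimple $NSOP_2$, the paper observes that for \emph{any} $NSOP_2$ theory $T$ one can form $T \oplus T'$ (disjoint union on a new sort) with $T'$ a strictly stable theory; the result is still $NSOP_2$ but is now unsupersimple, so Corollary~\ref{supersimpleCor2} applies directly to give $\mbox{Th}(\mathbb{Q},<) \not\trianglelefteq^*_{\aleph_1} T \oplus T'$, and since $T \trianglelefteq^*_{\aleph_1} T \oplus T'$ this yields non-maximality of $T$. This single trick covers all three of your cases at once and avoids the forward reference to Corollary~\ref{supersimpleCor4} (whose proof, incidentally, uses exactly the same $T \oplus T'$ device). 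Your decomposition is more hands-on and perhaps more transparent about which dividing line is doing the work in each region, but the paper's argument is shorter and shows that the only new ingredient beyond the $\trianglelefteq^*_1$ result is Corollary~\ref{supersimpleCor2} together with the ability to make any $NSOP_2$ theory unsupersimple without leaving $NSOP_2$.
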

\begin{proof}
	In \cite{pEqualsT2} the corresponding statement for $\trianglelefteq^*_1$ is proven (using results of \cite{SH692}, \cite{SOP2pt2} and \cite{pEqualsTref}). So it suffices to show that if every $NSOP_2$ theories is nonmaximal in $\trianglelefteq^*_1$, then the same holds of $\trianglelefteq^*_{\aleph_1}$. Let $T$ be $NSOP_2$. Let  $T'$ be some strictly stable theory (i.e. a stable theory which is not superstable); let $T \oplus T'$ be the theory obtained by adding a new sort to $T$ and putting a copy of $T'$ in it. Then $\mbox{Th}(\mathbb{Q}, <) \not \trianglelefteq^*_{\aleph_1} T \oplus T'$ by Corollary~\ref{supersimpleCor2}, so $T \oplus T'$ is not maximal in $\trianglelefteq^*_{\aleph_1}$, and hence neither is $T$.
\end{proof}

\begin{corollary}\label{supersimpleCor4}
	Simplicity is a dividing line in $\trianglelefteq^*_{\aleph_1}$ (i.e. it is downward closed: if $T_0$ is not simple and $T_1$ is simple then $T_0 \not \trianglelefteq^*_{\aleph_1} T_1$).
\end{corollary}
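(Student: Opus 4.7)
The plan is to mirror the argument of Corollary~\ref{supersimpleCor3}, bootstrapping from the analogous theorem for $\trianglelefteq^*_1$ by passing through an auxiliary unsupersimple companion theory. Suppose toward a contradiction that $T_0$ is unsimple, $T_1$ is simple, and yet $T_0 \trianglelefteq^*_{\aleph_1} T_1$. I would fix an auxiliary countable strictly stable theory $T'$ (for instance the theory of infinitely many independent cross-cutting equivalence relations, or $\Th(\bigoplus_\omega \mathbb{Z}/2\mathbb{Z})$ with predicates for the coordinate projections), and form the disjoint union $T_* := T_1 \oplus T'$ obtained by adding a fresh sort carrying a copy of $T'$ to the language of $T_1$.

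The first step is to verify that $T_*$ is simultaneously simple and unsupersimple. Simplicity is a standard disjoint-union fact: any formula $\phi(\overline{x},\overline{y})$ with the tree property in $T_*$ must have all parameters in a single sort (no atomic formula spans the two sorts), and so $TP$ is inherited by whichever of $T_1$, $T'$ the formula lives in, contradicting simplicity of both factors. Unsupersimplicity is witnessed in the $T'$-sort: the tree of formulas and tuples provided by Lemma~\ref{SupersimpleCharacterization} applied to the strictly stable theory $T'$ is already a witness for $T_*$.

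Next I would observe $T_1 \trianglelefteq^*_{\aleph_1} T_*$: the $T_1$-sort provides a trivial interpretation of $T_1$ inside $T_*$, and $\aleph_1$-saturation of a model of $T_*$ forces $\aleph_1$-saturation of its $T_1$-part. Combining with the hypothesized reduction and transitivity of $\trianglelefteq^*_{\aleph_1}$ gives $T_0 \trianglelefteq^*_{\aleph_1} T_*$. Since $T_*$ is unsupersimple, Corollary~\ref{supersimpleCor2} now applies and upgrades this to $T_0 \trianglelefteq^*_1 T_*$. But $T_0$ is unsimple and $T_*$ is simple, contradicting Malliaris and Shelah's theorem from \cite{InterpOrders} that simplicity is a dividing line in $\trianglelefteq^*_1$.

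There is no real obstacle beyond the disjoint-union bookkeeping: all the delicate analysis has already been packaged into Corollary~\ref{supersimpleCor2} (which transfers reductions with unsupersimple codomain between $\trianglelefteq^*_1$ and $\trianglelefteq^*_{\aleph_1}$) and into the Malliaris--Shelah dividing line result for $\trianglelefteq^*_1$. The conceptual point is simply that adding a strictly stable, unsupersimple stable direct summand to $T_1$ lifts the known $\trianglelefteq^*_1$ result to the $\trianglelefteq^*_{\aleph_1}$ level without destroying simplicity.
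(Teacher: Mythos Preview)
Your proposal is correct and follows essentially the same approach as the paper: form $T_1 \oplus T'$ for a strictly stable $T'$, use that this is simple but unsupersimple, chain the reductions $T_0 \trianglelefteq^*_{\aleph_1} T_1 \trianglelefteq^*_{\aleph_1} T_1 \oplus T'$, invoke Corollary~\ref{supersimpleCor2} to pass to $\trianglelefteq^*_1$, and contradict the Malliaris--Shelah dividing line for $\trianglelefteq^*_1$. The paper's proof is terser (and swaps the roles of $T_0, T_1$ in its notation), but the content is identical.
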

\begin{proof}
	The corresponding statement for $\trianglelefteq^*_1$ is proven in \cite{InterpOrders}. Now suppose $T_0$ is simple and $T_1$ is not; we know that $T_1 \not \trianglelefteq^*_1 T_0$, and we wish to show that $T_1 \not \trianglelefteq^*_{\aleph_1} T_0$. As above, we can replace $T_0$ by $T_0 \oplus T'$ for some strictly stable $T'$, and so conclude from Corollary~\ref{supersimpleCor2}.
\end{proof}

\begin{remark}
	Supersimplicity is not a dividing line in $\trianglelefteq^*_{\aleph_1}$, since if $T$ is any stable theory, then $T \trianglelefteq^*_{\aleph_1} T_{rg}$. But supersimplicity is a dividing line in $\trianglelefteq^*_1$, as proved by Malliaris and Shelah \cite{InterpSeparation}.
\end{remark}

\bibliography{mybib}

\end{document}